\newcommand{\C}{{\mathbb C} }
\newcommand{\R}{{\mathbb R} }
\newcommand{\fK}{{\mathbb K } }
\newcommand{\bS}{{\mathbb S } }
\newcommand{\Z}{{\mathbb Z} }
\newcommand{\cO}{{\mathcal O}}
\newcommand{\cM}{{\mathcal M}}
\newcommand{\cD}{{\mathcal D}}
\newcommand{\cE}{{\mathcal E}}
\newcommand{\cF}{{\mathcal F}}
\newcommand{\cX}{{\mathfrak X}}
\newcommand{\XX}{\mathcal{X} }
\newcommand{\DD}{\mathcal{D} }
\newcommand{\TT}{\mathcal{T} }
\newcommand{\smf}{C^{\infty}}
\newcommand{\at}{\alpha}
\newcommand{\embk}{[\text{ }, \text{ }]}
\newcommand{\nbk}{\textup{\textlbrackdbl ~,~  \textrbrackdbl} }
\newcommand{\Mod}{
(C_{\cM}^{\infty}, \, Q)\mathrm{-}\mathbf{mod}
}
\newcommand{\Dp}{\DD_{\mathrm{poly}}^1}
\newcommand{\Dpp}{\DD_{\mathrm{poly}}}
\newcommand{\Lp}{\functorL(\DD_{\mathrm{poly}}^1) }
\newcommand{\tot}{\mathrm{tot}\, }
\newcommand{\hkr}{\mathrm{hkr}\, }
\newcommand{\pbw}{\mathrm{pbw}\, }
\newcommand{\mult}{\mathrm{m} }
\newcommand{\Bott}{\nabla^{\mathrm{Bott}} }
\newcommand{\fedo}{d_{L}^{\nabla^{\lightning}}}
\newcommand{\Mfedo}{\cM_{\mathrm{F}}}
\newcommand{\Qfedo}{Q_{\mathrm{F}}}
\newcommand{\LQF}{L_{\Qfedo}}
\newcommand{\cMmodule}{\mathfrak{N}}
\newcommand{\abs}[1]{{|#1|}}
\newcommand{\Tp}{\TT_{\mathrm{poly}}^1}
\newcommand{\Tpp}{\TT_{\mathrm{poly}}}
\newcommand{\id}{\mathrm{id}}
\newcommand{\Hom  }{\mathrm{Hom }}
\newcommand{\End}{\mathrm{End}}
\newcommand{\Ext }{\mathrm{Ext }}
\newcommand\functorL{\mathrm{L}}
\newcommand{\Verticalthings}[1]{\mathsf{#1}}
\newtheorem{definition}{Definition}[section]
\newtheorem{lemma}[definition]{Lemma}
\newtheorem{proposition}[definition]{Proposition}
\newtheorem{corollary}[definition]{Corollary}
\newtheorem{theorem}[definition]{Theorem}\newtheorem{remark}[definition]{Remark}
\begin{document}

\title{Hopf algebras arising from dg manifolds}

\thanks{Research partially supported by NSFC grants 12071241 and 11471179.}

\author{Jiahao Cheng}
\address{College of Mathematics and Information Science, Nanchang Hangkong University}
\email{jiahaocheng\underline{\hspace{0.15cm}}math@yeah.net \quad jiahaocheng@nchu.edu.cn}

\author{Zhuo Chen}
\address{Department of Mathematics, Tsinghua University}
\email{chenzhuo@tsinghua.edu.cn}

\author{Dadi Ni}
\address{Department of Mathematics, Tsinghua University}
\email{ndd18@mails.tsinghua.edu.cn}
 
\date{}

\begin{abstract}
Let $(\cM, \, Q)$ be a dg manifold.
The space of  vector fields with shifted degrees $(\XX(\cM)[-1], \, L_Q)$ is a Lie algebra object in the homology category
$\mathrm{H}(\Mod)$ of dg modules over $(\cM, \, Q)$, the Atiyah class $\at_{\cM}$ being its Lie bracket.
The triple $(\XX(\cM)[-1], \, L_Q;$
$\at_{\cM})$ is also a Lie algebra object  in the Gabriel-Zisman homotopy category $\Pi(\Mod)$.
In this paper, we describe the universal enveloping algebra of $(\XX(\cM)[-1], \, L_Q; \, \at_{\cM})$ and prove that it is a Hopf algebra object in $\Pi(\Mod)$.
As an application, we study Fedosov dg Lie algebroids and recover a result of Sti\'enon, Xu, and the second author on the Hopf algebra arising from a Lie pair.
\end{abstract}

\maketitle

~\\
\hspace*{1cm} {\it Keywords:~}\\\hspace*{1.2cm} Atiyah class, dg manifold, Hopf algebra, HKR-theorem, Fedosov dg manifold.\\
\hspace*{1cm} {\it AMS subject classification(2020):~}\\
\hspace*{1.2cm} Primary 58A50; Secondary 16E45,16T05, 17B01,17B70, 53C12.

\tableofcontents

\section{Introduction}
Atiyah classes were introduced by Atiyah \cite{Atiyah} as the obstruction to the existence of a holomorphic connection on a complex manifold.
It was shown by Kapranov \cite{Kapranov} that the Atiyah class of a complex manifold $X$ endows the shifted holomorphic tangent bundle $T_{X}[-1]$ with a Lie algebra structure in the derived category of coherent sheaves of $\cO_X$-modules, and thus Atiyah classes form  a bridge between complex geometry and Lie theory.
This structure plays an important role in the construction of Rozansky-Witten invariants \cite{Kontsevich1}. In his work on the deformation quantization of Poisson manifolds \cite{Kontsevich2}, Kontsevich indicated a deep link between the Todd genus of complex manifolds and the Duflo element of Lie algebras.
For a wealth of further investigation, see \cites{Calaque-VandeBergh, Costello, G-G, Shoikhet}.

Lie algebroid pairs  (Lie pairs in short) arise naturally in a number of classical areas of mathematics such as Lie theory, complex geometry,
foliation theory, and Poisson geometry.
Recall that a Lie algebroid over
$\fK$ where $\fK$ denotes either of the fields $\mathbb{R}$ or $\mathbb{C}$, is a $\fK$-vector bundle $L \to M$ together with a bundle map $\rho$:~ $L\to T_M\otimes_{\mathbb{R}}\fK$  called anchor and a Lie bracket $[~,~]$ on the sections of $L$ such that $\rho$: $\Gamma(L)\to \mathcal{X}(M)\otimes_{\mathbb{R}}\fK$ is a morphism of Lie algebras and
$[X, fY ] = f[X, Y ] +
(\rho(X)f)  Y$,
for all $X$, $Y\in \Gamma(L)$   and $f \in C^\infty(M,\fK)$. By a Lie pair $(L,A)$, we mean an inclusion $A\hookrightarrow L$ of Lie
algebroids over a smooth manifold $M$.

 A complex manifold $X$ determines a Lie pair over $\mathbb{C}$: $L = T_X\otimes \mathbb{C}$
and $A = T^{0,1}_X$. A foliation $\mathfrak{F}$ on a smooth manifold $M$ determines a Lie pair over $\mathbb{R}$: $L = T_M$ and $A = T_\mathfrak{F}$
(the integrable distribution on $M$ tangent to the foliation $\mathfrak{F}$). A $\mathfrak{g}$-manifold, i.e., a smooth manifold equipped with an infinitesimal action of a Lie
algebra $\mathfrak{g}$,
gives rise to a Lie pair in a natural way
(see \cite{Mokri}*{Example 5.5},
\cite{Mackenzie}, and \cite{L-S-X3}).

Sti\'enon, Xu, and the second author introduced the Atiyah class of Lie   pairs in \cite{C-S-X2}, which encodes both the Atiyah class of complex manifolds and the Molino class \cite{Molino} of foliations as special cases. 
A recent work \cite{LJL} on the Atiyah class of generalized holomorphic vector bundles  is also  connected with Lie pairs.  
Towards a different direction, Mehta, Sti\'enon, and Xu \cite{M-S-X} introduced the Atiyah class and the Todd class of dg manifolds. By a dg manifold, we mean a $\mathbb{Z}$-graded manifold $\cM$ endowed with a homological vector field, i.e. a vector field
$Q\in \mathcal{X}(\cM)$ of degree $(+1)$ satisfying $ Q^2 = 0$.
They are sometimes referred to as $Q$-manifolds in the mathematical
physics literature, for instance, in Schwarz's pioneering work on geometry of Batalin-Vilkovisky
	quantization \cite{Schwarz}, and that of Alexandrov-Kontsevich-Schwarz-Zaboronsky on the geometry of the master 	equation and topological quantum field theory (known as AKSZ) \cite{AKSZ}. They also arise
naturally in many situations in geometry, Lie theory, and mathematical physics.
For instance, see \cite{L-S-X} for the formality theorem for smooth dg manifolds, which implies the Kontsevich-Duflo theorem for Lie algebras and Kontsevich's theorem for complex manifolds \cite{Kontsevich2} under a unified framework.

In the present paper, we  study   a dg manifold
$(\cM, \, Q)$, and the associated Atiyah class, which
is   an element
\[\at_{\cM}\in
\Hom_{\mathrm{H}(\Mod)}( (\XX(\cM)[-1], \, L_Q)\otimes_{\smf_{\cM}}(\XX(\cM)[-1], \, L_Q), (\XX(\cM)[-1], \, L_Q)),
\]
where $\Mod$ is the category of dg modules over
$(\cM, \, Q)$ and
$\mathrm{H}(\Mod)$ is the homology category of $\Mod$.
In Mehta-Sti\'enon-Xu \cite{M-S-X}, it is shown that
the $\fK$-vector space $ \XX(\cM)[-1] $ admits an $L_{\infty}$-algebra structure with
$L_Q=[Q, \, \cdot \,]$ as the unary  bracket and  the Atiyah cocycle that represents the Atiyah class $\at_{\cM}$ as the binary bracket. Consequently,  the triple
$(\XX(\cM)[-1], \, L_Q; \, \at_{\cM})$ is a Lie algebra object in the homology category $\mathrm{H}(\Mod)$  and in the Gabriel-Zisman  homotopy category $\Pi(\Mod)$ as well,
where the Atiyah class $\at_{\cM}$ is regarded as a Lie bracket on $\XX(\cM)[-1]$  (see Sections \ref{Section4} and \ref{Section5} for more details).

We are motivated by Ramadoss's work \cite{Ramadoss} where it is proved that for a   smooth scheme $(X, \cO_{X})$ over $\C$  the universal enveloping algebra of the Lie algebra object $T_X[-1]$ in $ D^{+}(\cO_X) $, the derived category of bounded below complexes of $\cO_X$-modules, is the Hochschild cochain complex $(D_{\mathrm{poly}}^{\bullet}, \, d_H)$ of polydifferential operators on $X$, which is also a Hopf algebra object in $ D^{+}(\cO_X) $.   This result  played an important role in the study of the Riemann-Roch theorem \cite{MR2472137}, the Chern character \cite{Ramadoss}, and the Rozansky-Witten invariants \cite{MR2661534}.

It is well known that every ordinary Lie algebra $\mathfrak{g}$ admits a universal enveloping algebra $U(\mathfrak{g})$, which is a Hopf algebra.
 We are thus led to the natural question: does there exist  a universal enveloping algebra for $(\XX(\cM)[-1], \, L_Q;
 \, \at_{\cM})$ in the homotopy category $\Pi(\Mod)$ and, if so, is it a Hopf algebra object?

In this paper, we give a positive answer to the questions above.
 Consider the space of differential operators on $\cM$ which we denote by $\cD_{\cM}$.
Let $(\DD_{\mathrm{poly}}^1, \, L_Q, \, 0)$ be
the
$(\smf_{\cM}, \, Q)$-complex (see Section \ref{Section2} for this notion)
which is a copy of $(\cD_{\cM}, \, L_Q)$ concentrated in horizontal degree $(+1)$.
We then form the space of polydifferential operators $
 \DD_{\mathrm{poly}} :~~=\widetilde{\otimes}^\bullet \DD_{\mathrm{poly}}^1
 $.
 Together with $L_Q$ and the Hochschild differential $d_H$,
 the triple $(\Dpp, \, L_Q, \, d_H)$ is a Hopf algebra object in the category $Ch(\Mod)$ of
 $(\smf_{\cM}, \, Q)$-complexes.
Let $(\Lp; \, \nbk)$ be the free Lie algebra
generated over $\smf_{\cM}$ by
 $\Dp$.
We have the Lie algebra object
$(\Lp, \, L_Q, \, d_H; \, \nbk)$
in $Ch(\Mod)$
(see  Sections \ref{Section2} and \ref{Section3}  for details).

The first main result  of this paper is the following theorem.
\begin{theorem}\emph{[see Section \ref{Section5}, Theorem \ref{maintheorem}]}
\label{maintheoreminintroduction}\
	
	\begin{itemize}
		\item[(1)]
		The natural inclusion map $\theta:~ (\XX(\cM)[-1], \, L_Q) \rightarrow (\tot\Lp, \, L_Q+d_H)$ is an isomorphism in the homotopy category $\Pi(\Mod)$.
		Moreover, it is an isomorphism of Lie algebra objects
$(\XX(\cM)[-1], \, L_Q; \, \at_{\cM}) \rightarrow (\tot\Lp, \, L_Q+d_H; \, \nbk)$,
i.e., the following diagram commutes in $\Pi(\Mod)$:
		\begin{equation}\label{maindiagraminintroduction}
		\xymatrix{
			(\XX(\cM)[-1], \, L_Q) \otimes_{\smf_{\cM}} (\XX(\cM)[-1], \, L_Q)  \ar[r]^>>>>>>>>>>>{\at_{\cM}} \ar[d]_{\theta \otimes \theta} &
			(\XX(\cM)[-1], \, L_Q)\ar[d]^{\theta}\\
			(\tot\Lp, \, L_Q+d_H)
			\otimes_{\smf_{\cM}}
			(\tot\Lp, \, L_Q+d_H)\ar[r]^>>>>>>{\nbk} & (\tot \functorL(\DD_{\mathrm{poly}}^1), \, L_Q+d_H) \, .\\
		}
		\end{equation}
		
		\item[(2)]
		The $(\smf_{\cM}, \, Q)$-module $(\tot \Dpp, \, L_Q+d_H)$ is the universal enveloping algebra of
		the Lie algebra object $(\XX(\cM)[-1], \, L_Q; \,\at_{\cM} )$, and is a Hopf algebra object in $\Pi(\Mod)$.	
	\end{itemize}
\end{theorem}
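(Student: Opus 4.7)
The plan is to mimic Ramadoss's strategy in the smooth scheme case, translated into the dg manifold setting via a Hochschild--Kostant--Rosenberg type quasi-isomorphism together with the Milnor--Moore dictionary between tensor and free Lie algebras. The entire content of the theorem amounts to showing that in $\Pi(\Mod)$, the inclusion $\theta$ identifies $(\XX(\cM)[-1], L_Q; \at_\cM)$ with $(\tot \Lp, L_Q + d_H; \nbk)$ as Lie algebra objects; part (2) then follows formally, since at the chain level the tensor algebra $\Dpp$ is tautologically the universal enveloping algebra of the free Lie algebra $\Lp$, and a Hopf algebra object in $Ch(\Mod)$ passes to a Hopf algebra object in $\Pi(\Mod)$.

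First I would check that the inclusion $\theta$, sending a vector field to itself viewed as a first-order differential operator sitting in horizontal degree $1$, is a well-defined chain map, noting that $d_H$ vanishes on the generators of $\Lp$ and that $L_Q$ restricts to the expected differential on $\XX(\cM)$. The central step is then to prove that $\theta$ is a quasi-isomorphism of $(\smf_\cM, Q)$-complexes. I would filter $(\tot \Lp, L_Q + d_H)$ by horizontal degree, so that the first page of the resulting spectral sequence is the $d_H$-cohomology of $\Lp$. The Lie-algebra version of HKR (equivalently, applying Milnor--Moore to the classical HKR quasi-isomorphism $\Dpp \simeq \Tpp$) should concentrate this cohomology in horizontal degree $1$, where it recovers $(\XX(\cM)[-1], L_Q)$. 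In the dg manifold setting this is best carried out via a Fedosov/pbw-type trivialization, which yields an explicit contracting homotopy for $d_H$ off the image of $\theta$. I expect the main obstacle to lie in this step: the HKR-type computation must be performed $(\smf_\cM, Q)$-linearly and compatibly with both differentials, and one must restrict carefully to the free-Lie-algebra summand inside the tensor algebra rather than working with all of $\Dpp$.

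Next I would verify the commutativity of diagram \eqref{maindiagraminintroduction} by comparing the Atiyah cocycle of Mehta--Sti\'enon--Xu with the Lie-algebra commutator $\nbk$ on $\Lp$: using a torsion-free connection to represent $\at_\cM$ explicitly, the two brackets should agree modulo $d_H$-coboundaries, which is precisely what is required for equality in $\Pi(\Mod)$. For part (2), once $\theta$ is established as an isomorphism of Lie algebra objects in $\Pi(\Mod)$, the Hopf algebra structure on $(\tot \Dpp, L_Q + d_H)$ in $Ch(\Mod)$ descends to $\Pi(\Mod)$, and the universal property of the tensor algebra as the enveloping algebra of the free Lie algebra, combined with $\theta$, identifies $(\tot \Dpp, L_Q + d_H)$ as the universal enveloping algebra of $(\XX(\cM)[-1], L_Q; \at_\cM)$ in $\Pi(\Mod)$.
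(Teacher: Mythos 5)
Your overall architecture is the paper's (and Ramadoss's): reduce everything to showing that $\theta$ is an isomorphism of Lie algebra objects in $\Pi(\Mod)$, then get part (2) from the fact that $(\Dpp, L_Q, d_H)$ is the universal enveloping algebra of $(\Lp, L_Q, d_H;\nbk)$ at the level of $Ch(\Mod)$ and $D(\Mod)$ (Corollary \ref{UL2}) together with the $\tot$ functor; that part of your plan matches the paper, though note it is not quite "tautological": since $d_H$ is nonzero on the generators $\Dp$ (it vanishes only on vector fields, which is what makes $\theta$ a chain map), one needs the lemma $d_H(\Dp)\subset \Lp\cap \DD_{\mathrm{poly}}^2$ and the Ramadoss-type Theorem \ref{UL}. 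The genuine gap is in the two load-bearing steps of part (1). For the quasi-isomorphism of $\theta$, your spectral-sequence plan misidentifies the first page: filtering $\tot\Lp$ by horizontal degree makes the associated-graded differential $L_Q$, not $d_H$, so $E_1$ is the $L_Q$-cohomology; to see $d_H$ first you would have to filter by the vertical degree, which is unbounded in both directions here (functions and operators on a $\Z$-graded manifold involve formal series and arbitrarily negative degrees), so convergence is not automatic. Moreover the "Lie-algebra version of HKR" you invoke is essentially the statement to be proved: knowing $H(\Dpp,d_H)\simeq\Tpp$ does not by itself give $H(\Lp,d_H)\simeq\XX(\cM)$. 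The paper obtains this without any spectral sequence by factoring the HKR map as $\hkr=\pbw\circ S^{\bullet}(\theta)$, observing that $\pbw$ is an isomorphism of $(\smf_{\cM},Q)$-complexes and that $\hkr$ is a quasi-isomorphism by Theorem \ref{LSX}; hence $S^{\bullet}(\theta)$ is a quasi-isomorphism and, being diagonal with respect to symmetric powers, so is its first component $\theta$ (Proposition \ref{betaquasi-isomorphism}).

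For the commutativity of Diagram \eqref{maindiagraminintroduction}, "the two brackets agree modulo coboundaries" is indeed the statement to be proved, but your sketch supplies no mechanism for producing the homotopy, and this is where the paper's computational content lies. The paper argues in two steps: (i) functoriality of Atiyah classes (Proposition \ref{functoriality}) applied to the module morphism $\theta$ gives $\theta\circ\at_{\cM}=\at_{\tot\Lp}\circ(\id\otimes\theta)$ in $\mathrm{H}(\Mod)$, the homotopy being $\theta\circ\nabla-\nabla^{\mathrm{can}}\circ(\id\otimes\theta)$; and (ii) the Atiyah cocycle of $(\Dpp,L_Q,d_H)$ with respect to the canonical connection $\nabla^{\mathrm{can}}_X D=X\circ D$ is computed to be exactly $\textup{\textlbrackdbl}\overline{X},\,\cdot\,\textup{\textrbrackdbl}$ (Lemma \ref{canonicalconnectionandDelta}, Proposition \ref{bracket}, Corollary \ref{bracket2}), which restricts to $\Lp$. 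Step (ii) rests on the fact that $\nabla^{\mathrm{can}}$ commutes with the coproduct $\Delta$, proved via the Appendix formula expressing $\Delta(D)$ in free Lie brackets; nothing in your proposal substitutes for this computation, and choosing a torsion-free connection on $\XX(\cM)$ alone does not produce the required homotopy. So the plan is structurally sound, but both the quasi-isomorphism step (as proposed) and the bracket-comparison step are left unestablished, and the former would need to be repaired along the lines of the $\pbw$ factorization rather than the filtration you describe.
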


 This result is an analogue of Ramadoss's Theorem $2$ in \cite{Ramadoss} in the context of dg manifolds.
In Diagram \eqref{maindiagraminintroduction}, the Atiyah class
$\at_{\cM}$ on the upper side reflects the geometric information of $(\cM, \, Q)$,
while the Lie bracket of the lower side is the naive free Lie algebra bracket over $\smf_{\cM}$, which is a pure algebraic operation.
Thus Diagram \eqref{maindiagraminintroduction} can be seen as a universal representation of Atiyah classes of dg manifolds.

We are also motivated by Chen-Sti\'enon-Xu's \cites{C-S-X,C-S-X2}  where it is shown that the quotient $L/A[-1]$ of a Lie pair $(L, \, A)$ is a Lie algebra object in the derived category $D^b(\mathcal{A})$ of the category of $A$-modules, the Atiyah class
\[
\at_{L/A}\in H^1_{\mathrm{CE}}(A, \Hom   (L/A \otimes L/A, L/A))\cong \Hom_{D^b(\mathcal{A})}(\Gamma(L/A)[-1]\otimes_{\smf_M}\Gamma(L/A)[-1],\Gamma(L/A)[-1])
\]
being its Lie bracket
  (see Section \ref{Section6} for more details).
Recall also the constructions of
a Hopf algebra object
$(D^{\bullet}_{\mathrm{poly}}({L/A}), \, d_H)$
and
a Lie algebra object
$(\functorL(D_{\mathrm{poly}}^{1}(L/A)),\, d_H ; \, \nbk)$
in the category of cochain complexes of $A$-modules in \cite{C-S-X}. The graded Lie algebra
$\functorL(D_{\mathrm{poly}}^{1}(L/A))$
is freely
generated over $\smf_{M}$ by
$D_{\mathrm{poly}}^{1}(L/A)$, where $D_{\mathrm{poly}}^{1}(L/A)$ is placed in degree $(+1)$.
Moreover, we have:
\begin{theorem}\cite{C-S-X}*{Proposition 4.2}\label{C-S-Xinintroduction}\

Let $\beta:~~ \Gamma(L/A)[-1] \rightarrow (\functorL(D_{\mathrm{poly}}^{1}(L/A)), \, d_H)$ be the natural inclusion map.
\begin{itemize}
		\item[(1)]	
		The map $\beta$ is an isomorphism in the derived category $D^b(\mathcal{A})$ of $A$-modules. Moreover, it is an isomorphism of Lie algebra objects, i.e., the following diagram commutes in $D^b(\mathcal{A})$:
\begin{equation}\label{Eqt:olddiagram}		\xymatrix{
			\Gamma(L/A)[-1] \otimes_{\smf_{M}} \Gamma(L/A)[-1] \ar[r]^-{\at_{L/A}} \ar[d]_{\beta \otimes \beta} &
			\Gamma(L/A)[-1]\ar[d]^{\beta}\\
			(\functorL(D_{\mathrm{poly}}^{1}(L/A)), \, d_H)
			\otimes_{\smf_{M}} (\functorL(D_{\mathrm{poly}}^{1}(L/A)), \, d_H)\ar[r]^-{\nbk} & (\functorL(D_{\mathrm{poly}}^{1}(L/A)), \, d_H) \, .\\
		}
		\end{equation}
		\item[(2)]
		The cochain complex of $A$-modules
		$(D^{\bullet}_{\mathrm{poly}}({L/A}), \, d_H)$ is the universal enveloping algebra of the Lie algebra object
		$( \Gamma(L/A)[-1]; \, \at_{L/A})$, and is a Hopf algebra object in $D^b(\mathcal{A})$.
	\end{itemize}	
\end{theorem}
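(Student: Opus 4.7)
The plan is to deduce Theorem \ref{C-S-Xinintroduction} from Theorem \ref{maintheoreminintroduction} by specializing it to the Fedosov dg manifold attached to the Lie pair $(L, \, A)$. Recall that to $(L, \, A)$ one associates a dg manifold $(\Mfedo, \, \Qfedo)$ whose base is the graded manifold $A[1]\oplus L/A$ over $M$ and whose homological vector field $\Qfedo$ combines the Chevalley-Eilenberg differential of $A$ with a Fedosov iteration encoding an $L$-connection on $L/A$. There is a natural functor from the homotopy category $\Pi$ of $(\smf_{\Mfedo}, \, \Qfedo)$-modules to the derived category $D^b(\mathcal{A})$ of $A$-modules, under which the invariants of $(\Mfedo, \, \Qfedo)$ descend to the Lie pair invariants of $(L, \, A)$.

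The first step is to match the three pieces of data entering Diagram \eqref{maindiagraminintroduction} with their Lie pair analogues. A PBW-type isomorphism $\pbw$ identifies $(\XX(\Mfedo)[-1], \, L_{\Qfedo})$ with a resolution of $\Gamma(L/A)[-1]$ in the category of $A$-modules, and identifies the polydifferential operators $(\Dpp, \, L_{\Qfedo}, \, d_H)$ on $\Mfedo$ with a resolution of $(D^{\bullet}_{\mathrm{poly}}(L/A), \, d_H)$, intertwining Hochschild differentials on both sides. Under these identifications, the Atiyah class $\at_{\Mfedo}$ in $\Pi(\Mod)$ pulls back to the Lie pair Atiyah class $\at_{L/A}$ in $D^b(\mathcal{A})$. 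Part (1) of Theorem \ref{C-S-Xinintroduction} then follows by applying Part (1) of Theorem \ref{maintheoreminintroduction} to $(\Mfedo, \, \Qfedo)$ and pushing Diagram \eqref{maindiagraminintroduction} through the PBW identifications to obtain Diagram \eqref{Eqt:olddiagram}.

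Part (2) is obtained by transporting the universal property and the Hopf algebra structure: the product, coproduct, antipode, and associativity/cocommutativity homotopies witnessed on $(\tot\Dpp, \, L_{\Qfedo}+d_H)$ by Theorem \ref{maintheoreminintroduction} descend to $(D^{\bullet}_{\mathrm{poly}}(L/A), \, d_H)$, and the universal property of the enveloping algebra is preserved since the PBW map is an isomorphism on the relevant Lie algebra objects. In particular, the free Lie algebra $\functorL(D_{\mathrm{poly}}^1(L/A))$ inherits from $\Lp(\Mfedo)$ the double bracket $\nbk$ and the quasi-isomorphism with $\Gamma(L/A)[-1]$.

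The main obstacle will be the precise matching of the Atiyah class under the PBW isomorphism. The Fedosov construction depends on auxiliary data---a torsion-free $L$-connection on $L/A$ together with a fiberwise flat Fedosov connection $\nabla^{\lightning}$---and one must verify that the resulting Atiyah cocycle on $\Mfedo$ represents, modulo Chevalley-Eilenberg coboundaries, the Lie pair Atiyah cocycle of \cite{C-S-X2}. Establishing this compatibility, which amounts to identifying the binary $L_{\infty}$-bracket on $\XX(\Mfedo)[-1]$ with the Kapranov-type bracket on $\Gamma(L/A)[-1]$, is where the bulk of the work lies; once available, the universal enveloping algebra and Hopf algebra assertions transport formally across the isomorphism in $\Pi$.
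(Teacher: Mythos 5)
Your overall strategy---using the Fedosov construction attached to $(L,A)$ as a bridge between Theorem \ref{maintheoreminintroduction} and Theorem \ref{C-S-Xinintroduction}---is in the spirit of what the paper does in Section \ref{Section6}, but the specific reduction you propose has a genuine gap. First, a factual slip: the Fedosov dg manifold is $\Mfedo=L[1]\oplus L/A$, not $A[1]\oplus L/A$. More seriously, you propose to apply Theorem \ref{maintheoreminintroduction} to the whole dg manifold $(\Mfedo,\Qfedo)$, i.e.\ to $\XX(\Mfedo)[-1]$, to the full polydifferential operators $\DD_{\mathrm{poly}}(\Mfedo)$, and to the Atiyah class $\at_{\Mfedo}$ of the full tangent bundle, and then to identify these via a PBW map with $\Gamma(L/A)[-1]$, $D^{\bullet}_{\mathrm{poly}}(L/A)$ and $\at_{L/A}$. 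This identification is false: the tangent complex of $\Mfedo$ contains the horizontal directions along $L[1]$ and is not a resolution of $\Gamma(L/A)[-1]$, and accordingly $\at_{\Mfedo}$ does not correspond to the Lie pair Atiyah class. What does correspond to the Lie pair data is the Fedosov dg Lie algebroid $\cF=\ker\pi_{\ast}$, the \emph{vertical} tangent bundle along $\pi:\Mfedo\to L[1]$: one has $\iota^{\ast}(\cF,\LQF)=(B^{!},d_{\mathrm{CE}}^{B^{!}})$ and the $\cF$-Atiyah class $\at_{\cF}$ (not $\at_{\Mfedo}$) restricts to $\at_{L/A}$ (Proposition \ref{leftface}, after Liao--Sti\'enon--Xu), while it is the \emph{vertical} polydifferential operators $\Verticalthings{D}_{\mathrm{poly}}$ --- not $\DD_{\mathrm{poly}}(\Mfedo)$ --- that map quasi-isomorphically onto $D^{\bullet}_{\mathrm{poly}}(L/A)\otimes_{R}\Omega(A)$ via $I=(\id\otimes_R\iota^{\ast})\circ(\pbw^{\nabla,j}\otimes_R\id)$ (Diagram \eqref{Imap}).

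Consequently Theorem \ref{maintheoreminintroduction} cannot simply be ``specialized'' to $(\Mfedo,\Qfedo)$; what is needed is a separate foliated analogue, Theorem \ref{betaforfoliation}, asserting the corresponding statements for the Lie algebra object $(\Gamma(\cF)[-1],\LQF;\,\at_{\cF})$ and for $(\tot\functorL(\Verticalthings{D}_{\mathrm{poly}}^1),\LQF+d_H;\,\nbk)$, with $(\tot\Verticalthings{D}_{\mathrm{poly}},\LQF+d_H)$ as universal enveloping algebra. Proving this vertical version (built on the $\cF$-Atiyah class and the vertical HKR-type statements) is where the substantive work lies, and it is not covered by your reduction, which compares the wrong objects. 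Once Theorem \ref{betaforfoliation} is available, the rest proceeds essentially as you sketch: the cube of Theorem \ref{bigdiagram} commutes because its top, bottom, back, left and right faces do, the maps $\iota^{\ast}$ and $I$ are isomorphisms in $\Pi((\smf_{\Mfedo},\Qfedo)\mathrm{-}\mathbf{mod})$, hence the front face commutes, and the statements on the universal enveloping algebra and the Hopf structure then descend to $D^b(\mathcal{A})$. So your outline identifies the right bridge, but the key matching step (full Atiyah class of $\Mfedo$ with $\at_{L/A}$, full $\DD_{\mathrm{poly}}(\Mfedo)$ with $D^{\bullet}_{\mathrm{poly}}(L/A)$) must be replaced throughout by the vertical/foliated versions.
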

Certainly, one can regard Diagram \eqref{Eqt:olddiagram} as a universal representation of Atiyah classes of Lie pairs.
It is therefore natural to ask how the Diagrams \eqref{maindiagraminintroduction} and \eqref{Eqt:olddiagram} are related and how the Hopf algebras $(\tot \Dpp, \, L_Q+d_H)$ and $(D^{\bullet}_{\mathrm{poly}}({L/A}), \, d_H)$ are related.

This question can indeed be answered by considering the
 Fedosov dg manifold and the
 Fedosov dg Lie algebroid arising from a Lie pair --- 
Sti\'enon and Xu introduced these objects
in \cites{S-X}. 
 Given a Lie pair $(L, \, A)$ and having chosen some additional geometric data, one can endow the graded manifold
 $\Mfedo: =L[1]\oplus L/A$ with a structure of dg manifold $(\Mfedo, \, \Qfedo)$, called a Fedosov dg manifold. It turns
 out that there exists a natural dg integrable distribution
 $\cF\hookrightarrow T_{\Mfedo}$ on
 $(\Mfedo, \, \Qfedo)$. They call this dg Lie algebroid $(\cF, \, \LQF)$ a Fedosov dg Lie algebroid. In this situation, we  need to consider the space
 $\Verticalthings{D}_{\mathrm{poly}}$ of vertical polydifferential operators along the natural projection 
 $\Mfedo \rightarrow L[1]$,
and we have a Lie algebra object
$(\functorL(\Verticalthings{D}_{\mathrm{poly}}^1), \, L_{\Qfedo}, \, d_H; \, \nbk)$ in the category
$Ch( (\smf_{\Mfedo}, \, \Qfedo)\mathrm{-}\mathbf{mod} )$
of
$(\smf_{\Mfedo}, \, \Qfedo)$-complexes.
See Section \ref{Section6} for details.

Our second main result is as follows:
\begin{theorem}\emph{[see Section \ref{Section6}, Theorem \ref{bigdiagram}]}\label{bigdiagraminintroduction}

The natural inclusion map
 	\[
 	\beta:~~ (\Gamma(L/A)[-1]\otimes_{\smf_M}\Omega(A), \, d_{\mathrm{CE}}^{L/A}) \rightarrow (\functorL(D_{\mathrm{poly}}^{1}(L/A))\otimes_{\smf_M}\Omega(A),  \, d_{\mathrm{CE}}^{L/A}+d_H)
 	\]
 	is an isomorphism in the homotopy category
 	$\Pi((\smf_{\Mfedo}, \, \Qfedo)\mathrm{-}\mathbf{mod})$
 	of $(\smf_{\Mfedo}, \, \Qfedo)$-modules.
 	Moreover, we have the following commutative diagram
 	in $\Pi(  (\smf_{\Mfedo}, \, \Qfedo)\mathrm{-}\mathbf{mod} )$:
 	\begin{equation*}\label{CUBEDIAGRAMinintroduction}
 	\begin{tiny}
 	\xymatrixcolsep{-2.0pc}
 	 	\xymatrix{
 		\otimes_{\smf_{\Mfedo}}^{2}
 		(\Gamma(\cF)[-1], \, \LQF)\ar[rd]^{(\iota^{\ast})^{\otimes 2}}
 		\ar[rr]^{\theta^{\otimes 2}}
 		\ar[dd]^{\alpha_{\cF}}&&
 		\otimes_{\smf_{\Mfedo}}^{2}(\tot \functorL(\Verticalthings{D}_{\mathrm{poly}}^1), \, \LQF+d_H )\ar[rd]^{I^{\otimes 2}} \ar[dd]_<<<<<<<{\textup{\textlbrackdbl}\; , \; \textup{\textrbrackdbl}}|(0.5)\hole\\
 		&\otimes_{\Omega(A)}^{2}(\Gamma(L/A)[-1]{\otimes_{\smf_M}}\Omega(A), \, d_{\mathrm{CE}}^{L/A})\ar[rr]^>>>>>>>{\beta^{\otimes 2}}\ar[dd]_<<<<<<<{\alpha_{L/A}} &&
 		\otimes_{\Omega(A)}^{2}(\functorL(D_{\mathrm{poly}}^{1}(L/A)){\otimes_{\smf_M}}\Omega(A),  \, d_{\mathrm{CE}}^{L/A}+d_H) \ar[dd]_{\textup{\textlbrackdbl}\; , \; \textup{\textrbrackdbl}}
 		\\
 		(\Gamma(\cF)[-1], \, \LQF)\ar[rd]^{\iota^{\ast}}\ar[rr]^>>>>>>>>>>>{\theta}|(0.46)\hole && (\tot \functorL(\Verticalthings{D}_{\mathrm{poly}}^1), \, \LQF+d_H ) \ar[rd]^{I} \hole\\
 		&(\Gamma(L/A)[-1]{\otimes_{\smf_M}}\Omega(A), \, d_{\mathrm{CE}}^{L/A})\ar[rr]^>>>>>>>>>>>{\beta}&&
 		(\functorL(D_{\mathrm{poly}}^{1}(L/A)){\otimes_{\smf_M}}\Omega(A),  \, d_{\mathrm{CE}}^{L/A}+d_H) \, , \\
 	}
 	\end{tiny}
 	\end{equation*}
where the maps $\iota^{\ast}$ and $I$ are a pair of
natural isomorphisms in
$\Pi( (\smf_{\Mfedo}, \, \Qfedo)\mathrm{-}\mathbf{mod} )$.
 \end{theorem}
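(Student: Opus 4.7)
The plan is to decompose the cube into its six faces and verify each in the homotopy category $\Pi((\smf_{\Mfedo}, \, \Qfedo)\mathrm{-}\mathbf{mod})$. The back face, involving only Fedosov data, is an instance of Diagram~\eqref{maindiagraminintroduction} of Theorem~\ref{maintheoreminintroduction} applied to $(\Mfedo, \, \Qfedo)$ together with its integrable distribution $\cF$; this simultaneously yields both its commutativity and the fact that $\theta$ is an isomorphism in $\Pi$. The front face is obtained from Diagram~\eqref{Eqt:olddiagram} of Theorem~\ref{C-S-Xinintroduction} by extending coefficients from $\smf_M$ to the Chevalley-Eilenberg algebra $\Omega(A)$; after observing that an isomorphism in $D^b(\mathcal{A})$ remains an isomorphism in $\Pi((\smf_{\Mfedo}, \, \Qfedo)\mathrm{-}\mathbf{mod})$ once $\Omega(A)$-coefficients are used, one obtains both the front-face commutativity and the isomorphism property of $\beta$.

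It then remains to build the two diagonal maps $\iota^{\ast}$ and $I$ and to check the four outstanding faces. I would define $\iota^{\ast}$ by pulling a section of $\cF$ along the zero section of $L[1]\to M$ and reading off its vertical $L/A$-component relative to the Fedosov splitting; this identifies $(\Gamma(\cF)[-1], \, \LQF)$ with a deformation retract of $(\Gamma(L/A)[-1]\otimes_{\smf_M}\Omega(A), \, d_{\mathrm{CE}}^{L/A})$, so the homological perturbation argument underlying the Fedosov resolution (see \cites{S-X}) makes $\iota^{\ast}$ an isomorphism in $\Pi$. The map $I$ is set up in an entirely parallel fashion, restricting a vertical polydifferential operator to its fibre at the zero section of $L[1]$ to produce an element of $\functorL(D_{\mathrm{poly}}^{1}(L/A))\otimes_{\smf_M}\Omega(A)$; the same Fedosov contraction forces $I$ to be an isomorphism in $\Pi$ as well. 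At the chain level, $I\circ\theta=\beta\circ\iota^{\ast}$ holds strictly, so the bottom face commutes on the nose, and the right face commutes by naturality of the free graded Lie algebra functor $\functorL$ under the morphism of generators induced by $I$.

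The main obstacle is the left face, namely the assertion that $\iota^{\ast}$ intertwines the Fedosov Atiyah class $\alpha_{\cF}$ with the Lie pair Atiyah class $\alpha_{L/A}$ up to coherent homotopy. My approach is to represent both classes by explicit cocycles built from the Fedosov connection: on the Fedosov side this is the binary bracket of the $L_\infty$-algebra on $\Gamma(\cF)[-1]$ produced by Mehta-Sti\'enon-Xu's construction in \cite{M-S-X} applied to $(\Mfedo, \, \Qfedo)$, and on the Lie pair side it is the Atiyah cocycle of \cite{C-S-X2} computed from a torsion-free $L$-connection extending the $A$-action on $L/A$. Because the Fedosov connection restricts along the zero section to precisely such an $L$-connection, feeding it into both cocycle formulas and tracking the result through $\iota^{\ast}$ should exhibit the two representatives as differing by an explicit $\LQF$-coboundary. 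Once this compatibility is in place, the top face follows automatically from the already-established five faces, completing the cube and confirming that every horizontal and diagonal arrow is an isomorphism in $\Pi((\smf_{\Mfedo}, \, \Qfedo)\mathrm{-}\mathbf{mod})$.
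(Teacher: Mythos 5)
Your overall skeleton coincides with the paper's: both arguments assemble the cube face by face and exploit that $\iota^{\ast}$ and $I$ are isomorphisms in $\Pi((\smf_{\Mfedo},\Qfedo)\mathrm{-}\mathbf{mod})$. The essential difference is the direction of the logic at the front face. The paper proves the top and bottom faces strictly, the back face via Theorem \ref{betaforfoliation}, the left face via Proposition \ref{leftface} (quoted from \cite{B-S-X} and \cite{L-S-X2}), and the right face via Proposition \ref{rightface}; it then \emph{deduces} the front face and the isomorphism property of $\beta$ from the other five faces, precisely because $\iota^{\ast}$ and $I$ are invertible in $\Pi$. This ordering is what allows the paper to recover Theorem \ref{C-S-Xinintroduction}, which was stated \emph{without proof} in \cite{C-S-X}. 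You instead feed Theorem \ref{C-S-Xinintroduction} in as an input for the front face. That is a genuinely different route, and it has two costs: the cube then rests on a statement with no published proof (so you lose the corollary, and arguably the theorem itself unless you supply an independent proof of \ref{C-S-Xinintroduction}), and the step "an isomorphism/commutative square in $D^b(\mathcal{A})$ remains one in $\Pi((\smf_{\Mfedo},\Qfedo)\mathrm{-}\mathbf{mod})$ after extending coefficients to $\Omega(A)$ and restricting along $\iota^{\ast}$" is asserted rather than justified; it is exactly the kind of bookkeeping the paper's ordering avoids.

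Two further points need repair or acknowledgement. First, the back face is \emph{not} an instance of Theorem \ref{maintheoreminintroduction}: that theorem concerns $\XX(\Mfedo)[-1]$ and the full algebra $\Dpp(\Mfedo)$, whereas the back face involves only the vertical objects $\Gamma(\cF)[-1]$, $\at_{\cF}$, and $\Verticalthings{D}_{\mathrm{poly}}$. One needs the foliated analogue (Theorem \ref{betaforfoliation}), whose proof replaces Theorem \ref{LSX} by the fiberwise HKR quasi-isomorphism for the dg foliation $(\cF,\LQF)$ from \cite{B-S-X}; your parenthetical "together with its integrable distribution $\cF$" gestures at this but does not supply it. Second, your "main obstacle," the left face, is precisely Proposition \ref{leftface}, available in \cite{B-S-X} and \cite{L-S-X2}; there, with the canonical $\cF$-connection $\nabla^{\cF}$, the map $\iota^{\ast}$ sends the Atiyah cocycle to $\at^{\nabla}_{B}$ on the nose (not merely up to coboundary), so your planned computation duplicates a citable but nontrivial result and is only sketched. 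Minor corrections: $\iota^{\ast}$ is restriction along the embedding $A[1]\hookrightarrow\Mfedo$ (not along the zero section of $L[1]\to M$), and the top face does not "follow from the other five faces" by a diagram chase, since the vertical bracket maps are not monomorphisms; it is simply the tensor square of your strictly commuting bottom face, which is all you need.
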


The left face of this cubic diagram is a relation of the Lie pair Atiyah class $\at_{L/A}$ and the Atiyah class $\alpha_{\cF}$ of the Fedosov dg Lie algebroid $(\cF, \, \LQF)$ discovered by Liao, Sti\'enon, and Xu \cite{L-S-X2}.
Our contribution is the right face of this diagram, namely
the relation between the two Lie algebras
$(\tot \functorL(\Verticalthings{D}_{\mathrm{poly}}^1),  \, \LQF+d_H; \, \nbk )$
and
$(\functorL(D_{\mathrm{poly}}^{1}(L/A)), \, d_H; \,  \nbk)$.
We also remark that
Theorem \ref{bigdiagraminintroduction}
implies Theorem \ref{C-S-Xinintroduction} (which was stated without proofs in \cite{C-S-X}).


This paper is structured as follows. In Section \ref{Section2},
we recall the definition of graded manifolds and the notions such as polyvector fields and polydifferential operators. In Section \ref{Section3}, we recall the notions of dg manifolds, dg structures on polyvector fields and polydifferential operators, and the HKR theorem. In Section \ref{Section4}, we study the Atiyah class of a dg manifold which is the main subject in this paper.
Section \ref{Section5} is devoted to proving our first main result (Theorem \ref{maintheorem}).
Section \ref{Section6}, which begins by recalling Atiyah classes of Lie pairs and Fedosov dg Lie algebroids associated with Lie pairs, is devoted to proving our second main result
(Theorem \ref{bigdiagram}). In Section \ref{Section7}, we compare Ramadoss's work with our results. Appendix \ref{Section8} is an appendix for technical formulas.

\subsection*{Terminology and notations}\

\smallskip

\textit{Field $\fK$ and the ring $R$.}
We use the symbol $\fK$ to denote the field of either real or complex numbers.
The symbol $R$ always denotes the algebra of smooth functions on a manifold $M$ with values in $\fK$.

\bigskip

\textit{Gradings.}
Unless specified otherwise, all vector spaces, algebras, modules, etc. are understood to be 
$\Z$-graded objects over the field $\fK$.
Given a graded object $V=\oplus_{i\in \Z} V^{i}$,
we write $V[k]$ to denote the graded object obtained by shifting the grading on $V$
according to the rule $(V[k])^i=V^{k+i}$.
An element $x\in V$ is called homogeneous, if it belongs to $V^{i}$ for some $i$.

Given a bigraded object
$\Upsilon=\oplus_{(p,q)\in \Z^2}\Upsilon^{p,q}$,
an element $x\in \Upsilon$ is called homogeneous if it belongs to $\Upsilon^{p,q}$ for some
$(p,\, q)$.

\bigskip

\textit{Tensor products and free algebras.}
Unless specified otherwise, we adopt the following conventions.
Given a graded commutative ring $\mathcal{A}$
and two graded
$\mathcal{A}$-modules $V_1$ and $V_2$,
the notation $V_1 \otimes_{\mathcal{A}} V_2$
denotes the tensor product of
$V_1$ and $V_2$ as $\mathcal{A}$-modules,
and the notation $a {\otimes_{\mathcal{A}}}b$
denotes the tensor product of elements
$a \in V_1$ and $b \in V_2$ over $\mathcal{A}$.
For any graded $\mathcal{A}$-module $V$,
the symbol $U(V)$ (resp. $S(V)$) denotes the free associative algebra (resp. free symmetric algebra) generated over
$\mathcal{A}$ by $V$.
The symbol ${\otimes_{\mathcal{A}}}$
(resp. ${\odot_{\mathcal{A}}} $) also denotes
the associative product of $U(V)$
(resp. commutative product of $S(V)$).

The symbol $\hat{S}(V)$ denotes the
$\mathfrak{m}$-adic completion of the symmetric algebra $S(V)$,
where $\mathfrak{m}$ is the ideal of $S(V)$
generated by $V$.

\bigskip

\textit{Shuffles.}
A $(p, \, q)$-shuffle is a permutation
$\sigma$ of the set
$\{1, 2, \cdots, p+q\}$
such that
$\sigma(1)<\cdots < \sigma(p)$
and
$\sigma(p+1)<\cdots < \sigma(p+q)$.
The symbol $\mathrm{Sh}(p, \, q)$ denotes the set of $(p, \, q)$-shuffles.

\bigskip

\textit{Koszul sign.}
The Koszul sign
$\kappa(\sigma; \, D_1, \,  \cdots, \, D_n)$ of
a permutation $\sigma$ of 
homogeneous elements $D_1, \cdots , D_n$
of a graded object $V=\oplus_{i\in\Z}V^{i}$
is determined by the equality
\begin{equation*}
D_1 {\odot_{\mathcal{A}}} \cdots {\odot_{\mathcal{A}}}  D_n=\kappa(\sigma; \, D_1, \, \cdots, \, D_n)
D_{\sigma(1)}{\odot_{\mathcal{A}}} \cdots {\odot_{\mathcal{A}}}  D_{\sigma(n)}	
\end{equation*}
in the graded commutative algebra $S(V)$.
Unless specified otherwise,
we will use the abbreviated notation
$\kappa(\sigma)$.

\bigskip

\textit{Lie algebroids.}
In this paper `Lie algebroid' always means
`Lie $\fK$-algebroid'.

\subsection*{Acknowledgements}
We would like to thank Ping Xu and Mathieu Sti\'enon for fruitful discussions and useful comments.
We gratefully acknowledge the critical reviews by the two anonymous reviewers on earlier versions of the manuscript.

\section{Graded manifolds, polyvector fields and polydifferentials}\label{Section2}

We recall the notion of dg manifold.
For more about this subject, see \cite{M-S-X}.
By a finite dimensional \textbf{graded manifold}
$\cM$, we mean a pair
$(M, \cO_{\cM})$, where $M$ is a smooth manifold and
$\cO_{\cM}$ is a sheaf of graded commutative algebras over $M$
locally of the form
\[
\cO_{\cM}(U)
\simeq C^{\infty}(U, \fK)\otimes_{\fK} \hat{S}(W^{\vee}) \, ,
\]
for a finite dimensional graded vector space
$W$ over $\fK$.

We denote by $C^{\infty}_{\cM}$ the space of global sections of
$\cO_{\cM}$, by $T_{\cM}$ the tangent vector bundle over $\cM$, and by $\XX(\cM):~=\mathrm{Der}(C^{\infty}_{\cM})$ the space of vector fields on $\cM$. The degree of a homogeneous function $f\in C^{\infty}_{\cM}$ is denoted by $\widetilde{f}$. Similarly,
the degree of a homogeneous vector field $X \in \XX(\cM)$ is denoted by $\widetilde{X}$.

\subsection{$\smf_{\cM}$-modules and complexes}\label{Sec:gradedstuff}\

By a \textbf{$\smf_{\cM}$-module}, we mean a graded (left) $\smf_{\cM}$-module, say $\cMmodule$. The degree of an element $\xi\in \cMmodule$ is denoted by $\widetilde{\xi}$.  The tensor product of two $\smf_{\cM}$-modules $\cMmodule$ and $\cMmodule'$ is denoted by $\cMmodule\otimes_{\smf_{\cM}} \cMmodule'$.
By a morphism of $\smf_{\cM}$-modules $\phi:\cMmodule \rightarrow \cMmodule'$,
we mean a $\smf_{\cM}$-linear and degree $0$ map $\phi$ from $\cMmodule$ to $\cMmodule'$.

By a \textbf{$\smf_{\cM}$-complex} $(\Upsilon, \, \delta)$, we mean a sequence of $\smf_{\cM}$-modules $\Upsilon=\{\Upsilon^p\}_{p\in \Z}$ together with a
degree $0$ operator $\delta: ~\Upsilon^{\bullet}\to \Upsilon^{\bullet+1}$ which satisfies
$\delta^2=0$ and is  $\smf_{\cM}$-linear in the sense that
\[
\delta(f\xi)=(-1)^{\widetilde{f}}f\delta(\xi) \, , \quad
\forall \xi \in \Upsilon^{\bullet} \, ,
f\in \smf_{\cM} \, .
\]

  Let $\Upsilon^{p,q}$ be the degree $q$ subspace of $\Upsilon^p$. Thus, $\Upsilon$ is a
bigraded object
$$
\Upsilon=\{ \Upsilon^p \}_{p\in \Z}=
\{\oplus_{q\in \Z}\Upsilon^{p,q}\}_{p\in \Z} \, .
$$
It is convenient to denote such a
$\smf_{\cM}$-complex
$(\Upsilon \, , \delta)$
by a diagram of the form:~
\[\begin{array}{*{20}{c}}
  {}&{}&{\oplus}&{}&{\oplus}&{} \\
  {\cdots}&{\stackrel{\delta}{\longrightarrow}    }&{\Upsilon^{p, q+1}}&{\stackrel{\delta}{\longrightarrow}    }&{\Upsilon^{p+1, q+1}}&{\stackrel{\delta}{\longrightarrow}}&{\cdots} \\
  {}&{}&{  \oplus}&{}&{ \oplus}&{} \\
  {\cdots}&{\stackrel{\delta}{\longrightarrow}        }&{\Upsilon^{p,q}}&{\stackrel{\delta}{\longrightarrow}       }&{\Upsilon^{p+1,q}}&{\stackrel{\delta}{\longrightarrow}      }&{\cdots} \\
  {}&{}&{\oplus}&{}&{\oplus}&{}&{\quad\quad .}
\end{array}\]	
\

For $x \in \Upsilon^{p,q}$, we will refer to $p$ as the \textbf{horizontal degree}, $q$ the \textbf{vertical degree}, $(p,q)$ the bi-degree,
and $p+q$ the \textbf{total degree}. The total degree is also denoted by $|x|=p+q$.
We will call $\delta$ the \textbf{horizontal differential}
as it only raises the horizontal degrees.
Note that $\smf_{\cM}$-multiplications to
$\Upsilon$
affect the vertical degrees and
preserve the horizontal degrees.

By a morphism of $\smf_{\cM}$-complexes
$\varphi:~ (\Upsilon_1, \, \delta_1) \rightarrow (\Upsilon_2, \,  \delta_2)$, we mean
a degree $(0,0)$ morphism of bigraded
vector spaces
$\varphi:~ \Upsilon_1^{\bullet, \bullet} \rightarrow \Upsilon_2^{\bullet, \bullet}$ such that
the $(p,q)$-components
$\varphi^{p,q}:~ \Upsilon_1^{p,q} \rightarrow \Upsilon_2^{p,q}$ satisfy the following conditions:
\begin{itemize}
\item[(1)]for each  $p \in \Z$, $\varphi^{p}=\oplus_{q\in \Z}\, \varphi^{p,q}:~ \Upsilon_1^{p,\bullet} \rightarrow \Upsilon_2^{p,\bullet}$ is a
$\smf_{\cM}$-linear map;
\item[(2)]
$\varphi$ intertwines $\delta_1$ and
$\delta_2$:
$\delta_2 \circ \varphi^{p,q}=\varphi^{p+1,q} \circ \delta_1$.
\end{itemize}

We now describe the tensor product $(\Upsilon_1 \widetilde{\otimes}  \Upsilon_2,  \, \widetilde{\delta}   )  $ of two $\smf_{\cM}$-complexes $(\Upsilon_1, \, \delta_1)$ and $(\Upsilon_2, \, \delta_2)$. Here and in the sequel, the symbol $\widetilde\otimes$ is reserved for such a tensor product.
The $n$-th horizontal term of $\Upsilon_1 \widetilde{\otimes}  \Upsilon_2$ is the $\smf_{\cM}$-module
$$(\Upsilon_1 \widetilde{\otimes}  \Upsilon_2)^n=\oplus_{p+q=n}(\Upsilon_1^p[-p] \otimes_{\smf_{\cM}}\Upsilon_2^q[-q])[n]\,.$$
The $\smf_{\cM}$-module structure of $(\Upsilon_1 \widetilde{\otimes}  \Upsilon_2)^n$ is given by
\[
f(x\widetilde{\otimes}y):~~=
fx \widetilde{\otimes} y = (-1)^{\widetilde{f}|x|}x \widetilde{\otimes} fy \, ,
\quad \forall f\in \smf_{\cM} \, ,
\quad x\widetilde{\otimes} y \in  (\Upsilon_1 \widetilde{\otimes}  \Upsilon_2)^n \, .
\]
We also have a bigrading on $\Upsilon_1 \widetilde{\otimes} \Upsilon_2$.  The convention is that 
$(\Upsilon_1 \widetilde{\otimes} \Upsilon_2)^{n,m}$ consists of $\fK$-linear combinations of elements of the form $x\widetilde\otimes y$, for $x\in \Upsilon_1^{p,r}$ and $y\in\Upsilon_2^{q,s}$ such that $p+q=n, r+s=m$.
Finally, the horizontal differential $\widetilde\delta:~(\Upsilon_1 \widetilde{\otimes}  \Upsilon_2)^n\to (\Upsilon_1 \widetilde{\otimes}  \Upsilon_2)^{n+1}$ is defined by:
\[
\widetilde{\delta} (x \widetilde{\otimes} y):~~=\delta_1(x) \widetilde{\otimes} y + (-1)^{|x|}x \widetilde{\otimes} \delta_2(y) \, .
\]


For a $\smf_{\cM}$-complex $(\Upsilon, \, \delta)$, we always regard the symmetric group 
$\mathrm{S}_p$ 
as acting on $\widetilde{\otimes}^{p}(\Upsilon, \, \delta)$  
 by
$$
\sigma(x_1 \widetilde{\otimes} x_2 \widetilde{\otimes}\cdots \widetilde{\otimes} x_p)=
\kappa(\sigma; \, x_1, \, \cdots, \, x_p)
x_{\sigma(1)} \widetilde{\otimes} x_{\sigma(2)} \widetilde{\otimes}\cdots \widetilde{\otimes} x_{\sigma(p)} \, , \quad \sigma\in \mathrm{S}_p \, ,
$$ where each $x_i\in \Upsilon^{\bullet,\bullet}$ has homogenous total degree. 
 To  compute the Koszul sign $\kappa(\sigma; \, x_1, \, \cdots, \, x_p)$, one should use  the total degrees of $x_1$, $\cdots, x_p$.

Let us denote by
\[
S^{\bullet}(\Upsilon, \, \delta):=\oplus_{p\geqslant 0}\widetilde{\odot}^{p}(\Upsilon, \, \delta)
\]
the free symmetric algebra generated by
$(\Upsilon, \, \delta)$ over $\smf_{\cM}$. 
In other words, 
the $\smf_{\cM}$-complex 
$\widetilde{\odot}^{p}(\Upsilon, \, \delta)$ is the subspace of $ \widetilde{\otimes}^{p}(\Upsilon, \, \delta)$ which is fixed under the above action of the symmetric group $\mathrm{S}_p$. 
We call $\widetilde{\odot}^{p}(\Upsilon, \, \delta)$ the $p$-th symmetric product of $(\Upsilon, \, \delta)$ over $\smf_{\cM}$.

The total space of a $\smf_{\cM}$-complex
$(\Upsilon, \, \delta)$ is the direct sum
$$
\tot\Upsilon=\oplus_{p\in \Z} \Upsilon^p[-p]
\, .
$$
With respect to the total degree, $\tot\Upsilon$ is   a $\smf_{\cM}$-module.
Moreover, $\delta$ can be considered as  a degree $(+1)$ $\smf_{\cM}$-endomorphism on $ \tot\Upsilon$.
In the sequel, for $x\in \Upsilon$, the notation $\overline{x}$ denotes the corresponding element in $\tot\Upsilon$.
In particular, if $x\in \Upsilon^{p,q}$,
we have $\widetilde{x}=q$ and
$\overline{x} \in \Upsilon^p [-p]$.
We also have
$$\tot(\Upsilon_1 \widetilde{\otimes}  \Upsilon_2)=(\tot\Upsilon_1)\otimes_{\smf_{\cM}}(\tot\Upsilon_2) \, ,$$
for any two $\smf_{\cM}$-complexes
$(\Upsilon_1, \, \delta_1)$ and
$(\Upsilon_2, \, \delta_2)$.

\subsection{The functor $\mathrm{L}($\underline{$\, \cdot \,$}$)$ and the PBW map}\label{the functor L and the PBW map}\

Let $V$ be a $\smf_{\cM}$-module.
Denote by
$\underline{V}=(\underline{V}, \,  \,  \delta=0)$
the $\smf_{\cM}$-complex whose horizontal differential is trivial, and
$\underline{V}$ consists of
a single copy of $V$ concentrated in horizontal degree $(+1)$.
In other words,
$\underline{V}^{\, 1}=V$ and $\underline{V}^{\, n}=0$ for $n\neq 1$.
The tensor algebra of $\underline{V}$ is denoted by
\begin{equation*}
T^{\bullet}\underline{V}:~=\oplus_{p\geqslant 0} \underline{V}^{\widetilde{\otimes}p} \, ,
\end{equation*}
which is also
a $\smf_{\cM}$-complex with the trivial horizontal differential.
Note that in $T^{\bullet}\underline{V}$,
a homogeneous element $D\in T^i \underline{V}$ has total degree $|D|=\widetilde{D}+i$.
The space $T^{\bullet}\underline{V}$ is also a Lie algebra object in the category of $\smf_{\cM}$-complexes.
The canonical Lie bracket of any two homogeneous elements
$D\in T^i \underline{V}$ and $E\in T^j \underline{V}$ is defined by:
\begin{equation}\label{freeLiebracket}
\textup{\textlbrackdbl}D,E\textup{\textrbrackdbl}=D\widetilde{\otimes} E - (-1)^{|D| |E|}E\widetilde{\otimes} D \in T^{i+j}\underline{V} \, .
\end{equation}
Note that $\nbk$ is  $\smf_{\cM}$-bilinear.

We introduce a functor,
denoted by
$\functorL(\underline{\, \cdot \,})$ and called the \textbf{shifted free Lie algebra functor},
from
the category of $\smf_{\cM}$-modules
to the category of
$\smf_{\cM}$-complexes.
For each $\smf_{\cM}$-module $V$,
define $\functorL(\underline{V})$ to be the smallest Lie subalgebra of
$T^{\bullet}\underline{V}$ containing $\underline{V}=T^1 \underline{V}$.
In other words, the space
$\functorL(\underline{V})$ is spanned
(over $\fK$) by elements of the form
$\textup{\textlbrackdbl} D_1,\cdots , \textup{\textlbrackdbl}
D_{n-1}, D_{n} \textup{\textrbrackdbl},\cdots  \textup{\textrbrackdbl}$ with
$D_1 , \cdots ,D_n \in \underline{V}$.
For each morphism $\phi:~ V \rightarrow W$
of $\smf_{\cM}$-modules,
denote by $\underline{\phi}:~ \underline{V} \rightarrow \underline{W}$ the corresponding morphism of $\smf_{\cM}$-complexes, and
define
$\functorL(\underline{\phi}):~ \functorL(\underline{V}) \rightarrow \functorL(\underline{W})$ to be the unique Lie algebra morphism
which extends $\underline{\phi}$.

We note that both
$\tot T^{\bullet}\underline{V}$ and $\tot \functorL(\underline{V})$
are Lie algebra objects in the category of
$\smf_{\cM}$-modules.
The Lie bracket on
$\tot T^{\bullet}\underline{V}$ (resp. $\tot \functorL(\underline{V})$)
is also denoted by $\nbk$, i.e.,
\begin{equation*}\label{freeLiebracketonTOTALcomplex}
\textup{\textlbrackdbl}\overline{D},\overline{E}\textup{\textrbrackdbl}=\overline{D}\otimes_{\smf_{\cM}} \overline{E} - (-1)^{|{D}||{E}|}\overline{E}\otimes_{\smf_{\cM}} \overline{D} \, ,
\end{equation*}
for $\overline{D}, \, \overline{E} \in \tot T^{\bullet}\underline{V}$ (resp. $\overline{D}, \, \overline{E} \in \tot \functorL(\underline{V})$).\\

We have the standard \textbf{Poincar\'e-Birkhoff-Witt map}
\begin{equation}\label{Eqt:pbwstandard}
\pbw:~~ S^{\bullet}(\functorL(\underline{V})) \rightarrow T^{\bullet}\underline{V} \, ,
\end{equation}
\[
\pbw(D_1 \widetilde{\odot} \cdots  \widetilde{\odot} D_n)
:~~=\frac{1}{n!}\sum_{\sigma \in S_{n}}
\kappa(\sigma) D_{\sigma(1)}\widetilde{\otimes}\cdots
\widetilde{\otimes} D_{\sigma(n)}
\]
which is an isomorphism of
$\smf_{\cM}$-complexes (with trivial horizontal differentials).
Here $D_1, \cdots, D_n$
$\in \functorL(\underline{V})$ are homogeneous elements,
and $\kappa(\sigma)$ is the Koszul sign determined by the equality
\begin{equation}\label{Koszulsign}
D_1\widetilde{\odot}\cdots \widetilde{\odot} D_n=\kappa(\sigma)
D_{\sigma(1)}\widetilde{\odot}\cdots \widetilde{\odot} D_{\sigma(n)}
\end{equation}
in $S^{\bullet}(\functorL(\underline{V}))$.

\subsection{The Schouten-Nijenhuis algebra $\Tpp$ of polyvector fields}\

Let $\XX(\cM)$ be the space of vector fields on the graded manifold $\cM$.
We consider $\Tp:~=\underline{\XX(\cM)}$.
In other words,
$\Tp$ is the $\smf_{\cM}$-complex
$\Tp=(\Tp, \, \delta=0)$
which consists of a single copy of
$\XX(\cM)$ concentrated in horizontal degree
$(+1)$.
An element in $\Tp$ is just some vector field $X\in \XX(\cM)$. The only difference is that it has a double grading $(1,\widetilde{X})$. If we treat $X \in \Tp$, then $\widetilde{X}$ means the original degree of $X$ as a vector field and $\abs{X}=1+\widetilde{X}$ means the total degree.
Meanwhile we denote by
$\overline{X}$ the element in
$\tot\Tp=\XX(\cM)[-1]$ that corresponds to $X$.

The $\smf_{\cM}$-complex of \textbf{polyvector fields}, denoted by $\Tpp$, is the symmetric algebra of $\Tp$:
\[
\Tpp=(\Tpp, \, \delta=0)
:~~= S^{\bullet}(\Tp, \, \delta=0) \, .
\]
The symmetric product in $\TT_{\mathrm{poly}}$ is denoted by $\widetilde{\odot}$.
Elements of
$\TT_{\mathrm{poly}}^{n}:~=S^n(\Tp)$ are
called $n$-polyvector fields on $\cM$.
They are finite sums of elements of the form
\[
X=X_1\widetilde{\odot} X_2\widetilde{\odot} \cdots \widetilde{\odot} X_n \, ,
\]
where $X_i\in \TT_{\mathrm{poly}}^{1}$ are homogeneous elements.
The horizontal degree of $X$ is $n$,
whereas the vertical degree of $X$ is $\widetilde{X}_1+\cdots+\widetilde{X}_n$, and the total degree
of $X$ is $|X|=n+\widetilde{X}_1+\cdots+\widetilde{X}_n$.\\

The space $\Tpp$ is a degree $(-1)$ Poisson algebra as it admits a canonical
Schouten-Nijenhuis algebra structure.
See \cite{B-C-S-X3} for more details.

\subsection{The Hopf algebra $\Dpp$ of polydifferential operators}\label{Dpoly}\

Let $\cM$ be a graded manifold and let
$\cD_{\cM}$ denote its algebra of differential operators.
The tangent bundle $T_{\cM}$ is a Lie algebroid over $\cM$. Its universal enveloping algebra coincides with
$\cD_{\cM}$.
Similar to the relation between
$\XX(\cM)$ and $\TT_{\mathrm{poly}}^1$,
we use
$\DD_{\mathrm{poly}}^1$ to denote the bigraded object which consists of a single copy of
$\cD_{\cM}$ concentrated in horizontal degree
$(+1)$.
If a differential operator $D \in \cD_{\cM}$ is considered as in $\DD_{\mathrm{poly}}^1$,
then it has horizontal degree $(+1)$, vertical degree $\widetilde{D}$, and total degree $|D|=1+\widetilde{D}$.
The notation $\overline{D}$ denotes the corresponding element in
$\tot\DD_{\mathrm{poly}}^1=\cD_{\cM}[-1]$.

Elements of the space
$
\DD_{\mathrm{poly}}^n:~~=\widetilde{\otimes}^n \DD_{\mathrm{poly}}^1
$
are called $n$-\textbf{polydifferential operators}.
Denote by $\DD_{\mathrm{poly}}^{n,m}\subset \DD_{\mathrm{poly}}^n$ the subset consisting of polydifferential operators
with horizontal degree $n$ and vertical degree $m$.
In other words, $\DD_{\mathrm{poly}}^{n,m}$ is spanned (over $\fK$) by elements of the form
\[
D=D_1\widetilde{\otimes}\cdots \widetilde{\otimes}D_n
\]
where $D_i \in \DD_{\mathrm{poly}}^1$ are homogeneous elements and $\widetilde{D}_1+ \cdots  + \widetilde{D}_n =m$. Note that the total degree $|D|$ equals $n+m$.

A $n$-polydifferential operator $D\in \DD_{\mathrm{poly}}^n$ can be identified with a map
\[
D: \quad \otimes_{\fK}^n \smf_{\cM} \rightarrow \smf_{\cM}
\]
such that for each position $i$  and
$f_1, \cdots ,f_{i-1}, f_{i+1}, \cdots ,f_n\in \smf_{\cM}$, the map
\[\smf_{\cM}\to \smf_{\cM},\quad
f \mapsto 
D(f_1, \cdots  f_{i-1}, f, f_{i+1}, \cdots ,f_n)
\]
is a differential operator.
Following the convention of degrees and signs of
Tamarkin and Tsygan \cite{Tamarkin-Tsygan},
if $D=D_1\widetilde{\otimes} \cdots  \widetilde{\otimes}D_n \in \DD_{\mathrm{poly}}^n$, where
$D_i \in \DD_{\mathrm{poly}}^1$ are homogeneous,
then $D$ viewed as a map $\otimes_{\fK}^n \smf_{\cM} \rightarrow \smf_{\cM}$ is given by
\[
D(f_1, f_2, \cdots, f_n):~~=(-1)^{\star}D_1(f_1)\cdots  D_n(f_n) \, ,
\]
where
$\star=\sum_{i=1}^{n}\sum_{j=i+1}^{n}(\widetilde{f}_i +1)|D_j|$.

\begin{remark}\label{two f}
By convention,
we identify
$\DD_{\mathrm{poly}}^0$ with $\smf_{\cM}$.
However, a function $f \in \smf_{\cM}$
can also be regarded an element in
$\DD_{\mathrm{poly}}^1$, i.e., the differential operator
$f' \mapsto f\cdot f'$,
$\forall f' \in \smf_{\cM}$.
To avoid ambiguity, instead of writing
$f\in \DD_{\mathrm{poly}}^1$,
we use the notation
$f\cdot\id_{\smf_{\cM}}$
to denote the aforesaid differential operator.	
\end{remark}

The space of polydifferential operators
\[
\DD_{\mathrm{poly}}=\oplus_{n \geqslant 0}\DD_{\mathrm{poly}}^n
=\oplus_{n \geqslant 0, m\in \Z}\DD_{\mathrm{poly}}^{n,m}
\]
admits the natural cup product
\[
\cup:~~ \DD_{\mathrm{poly}}^p \widetilde{\otimes} \DD_{\mathrm{poly}}^q
\rightarrow
\DD_{\mathrm{poly}}^{p+q} \, , \qquad
D\cup E :~~ = D\widetilde{\otimes}E \, .
\]
By conventions in \cites{Tamarkin-Tsygan, Gerstenhaber}, $D\cup E$
in terms of a map from $\otimes_{\fK}^{p+q}\smf_{\cM}$
to $\smf_{\cM}$ reads
\begin{equation}\label{cupproductformula}
(D\cup E)(f_1, \cdots  f_{p+q})=
(-1)^{\flat}D(f_1,\cdots ,f_p)E(f_{p+1},\cdots ,f_{p+q}) \, ,
\end{equation}
for homogeneous elements
$f_1, \cdots , f_{p+q}\in \smf_{\cM}$, where
$\flat=|E|(\widetilde{f}_1+\cdots +\widetilde{f}_{p} + p)$.
Note that $\DD_{\mathrm{poly}}^{p,r}\cup \DD_{\mathrm{poly}}^{q,s} \subset \DD_{\mathrm{poly}}^{p+q, r+s}$.
\newline

The space $\Dp$ has a coassociative coproduct:
\begin{equation}\label{Delta}
\begin{split}	
\Delta:~
&  \Dp  \rightarrow \DD_{\mathrm{poly}}^2=\Dp\widetilde{\otimes}\Dp \, , \\
\Delta(D)(f_1, f_2):~
&
=(-1)^{\widetilde{f_1}+1}D(f_1 f_2) , \qquad \forall D\in \Dp, \quad \forall f_1, f_2 \in \smf_{\cM} \, .
\end{split}
\end{equation}
Note that we have $\Delta(X)=X \widetilde{\otimes} \id_{\smf_{\cM}} + (-1)^{\widetilde{X}}\id_{\smf_{\cM}} \widetilde{\otimes} X$,
for all homogeneous elements $X \in \Tp$.
A general formula of $\Delta(D)$, for $D\in \Dp$, is given by Proposition \ref{Deltaformula} in the Appendix.
	
The \textbf{Hochschild differential}
$d_H:~ \DD_{\mathrm{poly}}^{n} \rightarrow
\DD_{\mathrm{poly}}^{n+1}$ is defined as follows:
\begin{equation}\label{Hochschild differential definition}
\begin{split}
d_H(D_1\widetilde{\otimes} \cdots  \widetilde{\otimes} D_n)= & \,
(-1)^{\sum_{i=1}^{n}|D_i|}
\bigl(D_1 \widetilde{\otimes}\cdots  \widetilde{\otimes} D_n \widetilde{\otimes} \id_{\smf_{\cM}} 
\\
& \quad - \sum_{i=1}^{n} (-1)^{\sum^{n}_{j=i+1}|D_j|}D_1\widetilde{\otimes} \cdots \widetilde{\otimes} D_{i-1} \widetilde{\otimes} \Delta(D_i) \widetilde{\otimes} D_{i+1}  \widetilde{\otimes} \cdots \widetilde{\otimes} D_{n} \\
& \quad\quad
-(-1)^{\sum_{i=1}^n|D_i|}\id_{\smf_{\cM}}\widetilde{\otimes} D_1 \widetilde{\otimes}\cdots  \widetilde{\otimes} D_n \bigr)\, ,
\end{split}
\end{equation}
where
$D_i \in \DD_{\mathrm{poly}}^1$ are homogeneous elements.
In particular, for a homogeneous element
$D\in \DD_{\mathrm{poly}}^1$, we have
\begin{equation}\label{coproduct}
d_H(D)=(-1)^{|D|}(D\widetilde{\otimes} \id_{\smf_{\cM}} -\Delta(D) -(-1)^{|D|}\id_{\smf_{\cM}}\widetilde{\otimes} D) \, .
\end{equation}
For $f\in \DD_{\mathrm{poly}}^0=\smf_{\cM}$,
we have $d_H(f)=0$.
For
$f\cdot \id_{\smf_{\cM}} \in \DD_{\mathrm{poly}}^1$ (see Remark \ref{two f}),
we have
$d_H(f\cdot \id_{\smf_{\cM}})=
(-1)^{\widetilde{f}+1}f\cdot (\id_{\smf_{\cM}}\widetilde{\otimes}\id_{\smf_{\cM}})$.

It is standard that
the pair $(\Dpp,  \,  \delta=d_H)$ is a $\smf_{\cM}$-complex,
which we call the $\smf_{\cM}$-complex of polydifferential operators.
The following lemma is needed.
\begin{lemma}\cites{Gerstenhaber, Tamarkin-Tsygan}\label{Hochschildandcupproduct}
For all homogeneous elements $D,E\in \DD_{\mathrm{poly}}$, we have
\begin{equation*}
d_H(D\cup E)=d_H D\cup E +(-1)^{|D|}D \cup d_H E.
\end{equation*}	
\end{lemma}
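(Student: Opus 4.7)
The plan is to verify this Leibniz identity by a direct, but careful, book-keeping of the formulas in Section~\ref{Dpoly}. This is a standard fact going back to Gerstenhaber and appears in the sign conventions of Tamarkin--Tsygan; our job is essentially to check that the definitions chosen here are compatible. By $\fK$-bilinearity of both $d_H$ and $\cup$, it suffices to prove the identity for decomposable elements
\[
D=D_1\widetilde{\otimes}\cdots\widetilde{\otimes} D_p,\qquad E=E_1\widetilde{\otimes}\cdots\widetilde{\otimes} E_q,
\]
where $D_i,E_j\in\DD_{\mathrm{poly}}^1$ are homogeneous. Then $D\cup E=D_1\widetilde{\otimes}\cdots\widetilde{\otimes} D_p\widetilde{\otimes} E_1\widetilde{\otimes}\cdots\widetilde{\otimes} E_q$ is itself a decomposable element of length $p+q$.

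Next, I would expand $d_H(D\cup E)$ using \eqref{Hochschild differential definition}. This produces three kinds of summands: (i) the ``outer'' term $D\cup E\,\widetilde{\otimes}\,\id_{\smf_{\cM}}$ with prefactor $(-1)^{|D|+|E|}$; (ii) the ``outer'' term $\id_{\smf_{\cM}}\,\widetilde{\otimes}\,D\cup E$ with prefactor $-1$; and (iii) an inner sum of $\Delta$-insertions, indexed by $i=1,\dots,p+q$, which I will split into the two ranges $i\in[1,p]$ and $i\in[p+1,p+q]$. I would then separately expand $d_H D\cup E=(d_HD)\widetilde{\otimes}E$ and $(-1)^{|D|}D\cup d_HE=(-1)^{|D|}D\widetilde{\otimes}(d_HE)$ by the same formula.

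Term-by-term comparison is then straightforward. The ``outer'' term with $\id_{\smf_{\cM}}$ on the left of $d_H(D\cup E)$ matches the corresponding term in $d_H D\cup E$, and the ``outer'' term with $\id_{\smf_{\cM}}$ on the right matches the corresponding term in $(-1)^{|D|}D\cup d_HE$. The $\Delta$-insertions for $i\in[1,p]$ match those coming from $d_HD\cup E$, while the $\Delta$-insertions for $i\in[p+1,p+q]$ match those coming from $(-1)^{|D|}D\cup d_HE$; here the factor $(-1)^{|D|}$ in the statement is exactly what is needed to compensate for the fact that in $d_HE$ the sign attached to a $\Delta$-insertion at position $j\in[1,q]$ is $(-1)^{\sum_{k>j}|E_k|}$, whereas in $d_H(D\cup E)$ at position $p+j$ it is $(-1)^{\sum_{k>j}|E_k|}$ as well but gets an extra global $(-1)^{|D|+|E|}$. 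The only remaining summands are the ``inner boundary'' pieces: the term $(-1)^{|D|}D\widetilde{\otimes}\id_{\smf_{\cM}}\widetilde{\otimes}E$ produced by the outer-right identity of $d_H D$, and the term $-(-1)^{|D|}D\widetilde{\otimes}\id_{\smf_{\cM}}\widetilde{\otimes}E$ produced by the outer-left identity of $d_H E$ (shifted by the global $(-1)^{|D|}$ prefactor). These cancel precisely, yielding the desired identity.

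The main obstacle is purely combinatorial: keeping track of the Koszul signs. One must remember that in this convention the total degree of $D_i\in\DD_{\mathrm{poly}}^1$ is $|D_i|=\widetilde{D_i}+1$, that cup product and $\widetilde{\otimes}$ coincide on elements but not on their evaluations on functions, and that the internal signs $(-1)^{\sum_{j>i}|D_j|}$ in \eqref{Hochschild differential definition} are attached to the positions \emph{as counted within the full tensor word}. Once the signs have been correctly aligned under the range split $[1,p+q]=[1,p]\sqcup[p+1,p+q]$, the cancellation is immediate and the identity follows.
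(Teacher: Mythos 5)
Your proposal is correct, but note that the paper does not actually prove this lemma: it is quoted with citations to Gerstenhaber and Tamarkin--Tsygan, so your direct verification supplies an argument the paper delegates to the literature. Your term-matching is the right one: writing $D=D_1\widetilde{\otimes}\cdots\widetilde{\otimes}D_p$, $E=E_1\widetilde{\otimes}\cdots\widetilde{\otimes}E_q$ and using $|D|=\sum_i|D_i|$, $|E|=\sum_j|E_j|$, the two outer terms of $d_H(D\cup E)$ match the outer-left term of $d_HD\cup E$ and the outer-right term of $(-1)^{|D|}D\cup d_HE$; a $\Delta$-insertion at a position $i\leqslant p$ carries the internal sign $(-1)^{\sum_{j>i}|D_j|+|E|}$, whose $|E|$ cancels against the $|E|$ in the global prefactor $(-1)^{|D|+|E|}$, reproducing the corresponding term of $d_HD\cup E$; an insertion at position $p+k$ matches the corresponding term of $d_HE$ after the statement's factor $(-1)^{|D|}$ absorbs the discrepancy between the global prefactors $(-1)^{|D|+|E|}$ and $(-1)^{|E|}$; and the two leftover cross terms $\pm(-1)^{|D|}D\widetilde{\otimes}\id_{\smf_{\cM}}\widetilde{\otimes}E$ cancel exactly as you say. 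The only small omission is that your reduction to decomposables with factors in $\DD_{\mathrm{poly}}^1$ silently assumes $D$ and $E$ have horizontal degree at least one; the remaining case where one of them lies in $\DD_{\mathrm{poly}}^0=\smf_{\cM}$ is immediate from $d_H(f)=0$, the identification $f\cup E=fE$, and the graded $\smf_{\cM}$-linearity $d_H(fE)=(-1)^{\widetilde{f}}f\,d_H(E)$, so it is worth one sentence but does not affect the argument.
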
\

The \textbf{Gerstenhaber product} of two homogeneous elements $D\in \DD_{\mathrm{poly}}^n$ and $E \in \DD_{\mathrm{poly}}^m$, denoted by
$D\circ E \in \DD_{\mathrm{poly}}^{n+m-1}$, is the operator
\begin{equation}\label{Gerstenhaberproductformula}
(D\circ E)(f_1,\cdots ,f_{n+m-1})=
\sum_{j\geqslant 0}(-1)^{\star_j}
D(f_1,\cdots , f_j,E(f_{j+1},\cdots , f_{j+m}),\cdots ) \, ,
\end{equation}
where $\star_j=(|E|+1)(\widetilde{f}_1+\cdots +\widetilde{f}_j+j)$.
The \textbf{Gerstenhaber bracket} in
$\DD_{\mathrm{poly}}$ is defined by
\begin{equation*}\label{Gernstenhaberbracket}
[D,\, E]=D\circ E-(-1)^{(|D|+1)(|E|+1)}E\circ D \, ,
\end{equation*}
for homogeneous elements
$D\in \DD_{\mathrm{poly}}^n$ and $E \in \DD_{\mathrm{poly}}^m$.
It satisfies the following well known properties (see \cite{Tamarkin-Tsygan}):
\begin{equation*}
[D,E]=-(-1)^{(|D|+1)(|E|+1)}[E,D] \, ,
\end{equation*}
\begin{equation}\label{Jacobi}
\bigl[D,[E,F]\bigr]=\bigl[[D,E],F\bigr]
+(-1)^{(|D|+1)(|E|+1)}\bigl[E,[D,F]\bigr] \, .
\end{equation}\

Consider the multiplication
$\mult:~ \otimes_{\fK}^2\smf_{\cM} \rightarrow \smf_{\cM}$ defined by
\begin{equation*}\label{multsign}
\mult(f_1, f_2)
=(-1)^{\widetilde{f}_1}f_1 f_2 \, , \quad \forall f_1, f_2 \in \smf_{\cM} \, .
\end{equation*}
(Our convention of $\mult$ coincides with \cite{Tamarkin-Tsygan}.)
By Equation  \eqref{cupproductformula},
the operator $\mult$ is identically
the element
$- \mathrm{id}_{\smf_{\cM}}\widetilde{\otimes}\mathrm{id}_{\smf_{\cM}} \in \DD_{\mathrm{poly}}^{2, 0}$.
The Hochschild differential $d_H$ can also be expressed in terms of $\mult$ and the Gerstenhaber bracket $[\,\cdot\, , \cdot \,]$
(see \cite{Tamarkin-Tsygan}):
\begin{equation}\label{dH via bracket}
d_H=[\mult, \, \cdot \,]:~~ \DD_{\mathrm{poly}}^{n,\bullet} \rightarrow \DD_{\mathrm{poly}}^{n+1,\bullet} \, .
\end{equation}\

Finally, $(\DD_{\mathrm{poly}}, \, d_H)$ is a Hopf algebra object in the category of $\smf_{\cM}$-complexes:
\begin{itemize}
\item
The multiplication is the cup product
$\cup$ (see Equation  \eqref{cupproductformula}).

\item
The comultiplication is the shuffle coproduct
\[
\widetilde{\Delta}:~~ \DD_{\mathrm{poly}} \rightarrow \DD_{\mathrm{poly}} \widetilde{\otimes} \DD_{\mathrm{poly}} \, ,
\]
\begin{equation*}\label{coproductformulaofDpoly}
\widetilde{\Delta}(D_1\widetilde{\otimes} \cdots  \widetilde{\otimes} D_n)=\sum_{\substack{p+q=n, \\ \sigma\in \mathrm{Sh}(p, \, q)}}\kappa(\sigma)(D_{\sigma(1)}\widetilde{\otimes} \cdots  \widetilde{\otimes} D_{\sigma(p)}) \widetilde{\bigotimes}
(D_{\sigma(p+1)}\widetilde{\otimes} \cdots  \widetilde{\otimes} D_{\sigma(p+q)}) \, ,
\end{equation*}
where $D_i\in \DD_{\mathrm{poly}}^1$ are homogeneous elements,
$\mathrm{Sh}(p, \, q)$ stands for the set of
$(p,q)$-shuffles, and
$\kappa(\sigma)$ is the Koszul sign
(see Equation  \eqref{Koszulsign}).

\item
The unit is the natural inclusion
$\eta: \smf_{\cM}=\DD_{\mathrm{poly}}^0\hookrightarrow  \Dpp$.

\item
The counit
$\varepsilon: \DD_{\mathrm{poly}} \twoheadrightarrow  \DD_{\mathrm{poly}}^0=\smf_{\cM}$ is the natural projection.

\item
The antipode is the map
\[
t: \DD_{\mathrm{poly}} \rightarrow \DD_{\mathrm{poly}} \, ,
\]
\[
t(D_1 \widetilde{\otimes} D_2 \widetilde{\otimes} \cdots  \widetilde{\otimes} D_n)
=
(-1)^{\natural}D_n \widetilde{\otimes} \cdots \widetilde{\otimes} D_2 \widetilde{\otimes} D_1 \, ,
\]
where $\natural=\sum_{i=0}^{n-1}|D_{n-i}|(|D_1|+ \cdots  +|D_{n-i-1}|)$.
\end{itemize}


\section{Dg manifolds}\label{Section3}
\subsection{Dg manifolds and dg modules}\

A \textbf{dg manifold} is a pair $(\cM, \, Q)$ where $\cM$ is a graded manifold and $Q\in \XX(\cM)$ is a homological vector field, i.e. a degree $(+1)$ vector field such that $[Q,Q]=2Q^2=0$.
A \textbf{dg vector bundle} is a vector bundle object in the category of dg manifolds. In other words, a dg vector bundle over $(\cM, \, Q)$ is a pair $(\cE, \, L_Q)$ where
 $\cE$ is a vector bundle over $\cM$ and $L_Q:~ \Gamma(\cE) \rightarrow \Gamma(\cE)$
is a degree $(+1)$ differential, i.e.,
\[
L_Q^2=0
\]
and
\[
L_Q(fs)=Q(f)s+(-1)^{\widetilde{f}}fL_Q(s),
\]
for all homogeneous elements
$f \in \smf_{\cM}$  and
$s\in \Gamma(\cE)$.

In this note, we abuse the notation $L_Q$ to denote differentials of all dg vector bundles,   unless the differential  is  otherwise specified. For example, the tangent space $ T_{\cM} $ of $(\cM, \, Q)$ is a dg vector bundle, whose differential $L_{Q}$ equals $[Q, \cdot\,]$.

\begin{definition}
A \textbf{dg module over $(\cM, \, Q)$}, or a
\textbf{$(\smf_{\cM}, \, Q)$-module}, is a pair $(\cMmodule, \, L_Q)$ where
$\cMmodule$ is a  $\smf_{\cM}$-module and
$L_Q:~ \cMmodule \rightarrow \cMmodule$ is a degree $(+1)$ differential, i.e.,
\[
L_Q^2=0
\]
and
\[
L_Q(f\xi)=Q(f)\xi+(-1)^{\widetilde{f}}fL_Q(\xi)
\]
for all homogeneous elements $f\in \smf_{\cM}$ and $\xi \in \cMmodule$.
 \end{definition}

Again, the notation $L_Q$ is abused to denote differentials of all $(\smf_{\cM}, \, Q)$-modules,   unless stated otherwise.
For a dg vector bundle $(\cE, \, L_Q)$ over
$(\cM, \, Q)$, the pair
$(\Gamma(\cE), \, L_Q)$ is a
$(\smf_{\cM}, \, Q)$-module.
In particular,
$ (\XX(\cM), \, L_Q=[Q,\cdot\,])$
is a $(\smf_{\cM}, \, Q)$-module.

A morphism of $(\smf_{\cM}, \, Q)$-modules from $(\cMmodule, L_{Q})$ to
$(\cMmodule',L_{Q})$ is a $\smf_{\cM}$-linear and degree $0$ map $\phi:~ \cMmodule \rightarrow \cMmodule'$ that satisfies
$\phi \circ L_{Q}=L_{Q} \circ \phi$.
A morphism
$\phi:(\cMmodule, L_{Q}) \rightarrow (\cMmodule', L_{Q})$
is called a \textbf{quasi-isomorphism} if the morphism of cohomology groups
$H(\phi): H(\cMmodule, \, L_Q) \rightarrow H(\cMmodule', \, L_Q)$
induced by $\phi$ is an isomorphism.

Fix a dg manifold $(\cM, \, Q)$.
The category of $(\smf_{\cM}, \, Q)$-modules
is denoted by $\Mod$.
The \textbf{homology category $\mathrm{H}(\Mod)$}
is the category whose objects are
$(\smf_{\cM}, \, Q)$-modules,
and whose morphisms are morphisms of
$(\smf_{\cM}, \, Q)$-modules modulo cochain homotopies (see \cites{G-Z,Keller}).
The \textbf{homotopy category $\Pi(\Mod)$}
is the Gabriel-Zisman localization of
$\Mod$ by the set of quasi-isomorphisms
in $\Mod$ (see \cite{G-Z}).
We have a sequence of natural functors between these categories:
\[
\Mod \longrightarrow \mathrm{H}(\Mod) \longrightarrow \Pi(\Mod).
\]

\subsection{Dg complexes}\label{Sec:dgcomplexes}

\begin{definition}
A \textbf{dg complex
over $(\cM, \, Q)$},
or a \textbf{$(\smf_{\cM}, \, Q)$-complex},
is a $\smf_{\cM}$-complex
$(\Upsilon, \, \delta)$ such that each $\Upsilon^p$ ($p\in\Z$) is endowed with a
$(\smf_{\cM}, \, Q)$-module structure:
$$
L_Q:~\Upsilon^{p,\bullet} \rightarrow \Upsilon^{p,\bullet+1}
$$
which anti-commutes with  $\delta$, i.e.,
\begin{equation}\label{Eqn:LQdeltacompatible}
\delta \circ L_Q + L_Q \circ \delta =0.
\end{equation}
\end{definition}
So,
a $(\smf_{\cM}, \, Q)$-complex is indeed a triple
$(\Upsilon, \, L_Q,  \,  \delta)$.
It is convenient to denote such a
$(\smf_{\cM}, \, Q)$-complex
 by a commutative
 diagram:~
 $$
 \begin{tikzcd}[row sep=3em, column sep=2em]
 	& \, & \, &   \\
 	\cdots \arrow[r, "\delta"] & \Upsilon^{p, \, q+1} \arrow[r, "\delta"] \ar[u, shorten >= 1.5em] & \Upsilon^{p+1, \, q+1} \arrow[r, "\delta"] \ar[u, shorten >= 1.5em] & \cdots \\
 	\cdots \arrow[r, "\delta"] &
 	\Upsilon^{p, \, q} \arrow[u, "(-1)^p L_Q"'] \arrow[r, "\delta"] & \Upsilon^{p+1, \, q} \arrow[u, "(-1)^{p+1}L_Q"'] \arrow[r, "\delta"]  & \cdots \\
 	& \, \ar[u, shorten <= 1.5em] & \, \ar[u, shorten <= 1.5em] &
 \end{tikzcd}
$$

We call $\delta$ the \textbf{horizontal differential}  and $L_Q$ the \textbf{vertical differential}.
With respect to the total grading
$(\tot \Upsilon)^n=\oplus_{p+q=n}\Upsilon^{p,q}$,
$(\tot \Upsilon, \, L_Q+\delta)$
is a $(\smf_{\cM}, \,Q)$-module,
by the compatibility condition
Equation  \eqref{Eqn:LQdeltacompatible}.

A morphism of $(\smf_{\cM}, \, Q)$-complexes
$\varphi:~ (\Upsilon_1, \, L_Q,  \,  \delta_1) \rightarrow (\Upsilon_2, \, L_Q,  \,  \delta_2)$
is a degree $(0,0)$ morphism of bigraded
vector spaces
$\varphi:~ \Upsilon_1^{\bullet, \bullet} \rightarrow \Upsilon_2^{\bullet, \bullet}$ such that
the $(p,q)$-components
$\varphi^{p,q}:~ \Upsilon_1^{p,q} \rightarrow \Upsilon_2^{p,q}$ satisfy the following conditions:
\begin{itemize}
\item[(1)]for each  $p \in \Z$, $\varphi^{p}=\oplus_{q\in \Z}\, \varphi^{p,q}:~ \Upsilon_1^{p,\bullet} \rightarrow \Upsilon_2^{p,\bullet}$ is a morphism of
$(\smf_{\cM}, \, Q)$-modules, i.e.,
$\varphi^{p}$ is $\smf_{\cM}$-linear and satisfies
$L_Q \circ \varphi^{p}=\varphi^{p}\circ L_Q$;
	
\item[(2)]
$\varphi$ intertwines $\delta_1$ and
$\delta_2$:
\[
\delta_2 \circ \varphi^{p,q}=\varphi^{p+1,q} \circ \delta_1.
\]
\end{itemize}
Denote the
\textbf{category of $(\smf_{\cM}, \, Q)$-complexes} by $Ch(\Mod)$.
Given a morphism $\varphi:~ (\Upsilon_1, \, L_Q,  \,  \delta_1) \rightarrow (\Upsilon_2, \, L_Q,  \,  \delta_2)$ of
$(\smf_{\cM}, \, Q)$-complexes as above, it is clear that
$\tot \varphi:~= \oplus_{p,q \in \Z}\,\varphi^{p,q}
:~
(\tot\Upsilon_1, \, L_Q+\delta_1)\to (\tot\Upsilon_2, \, L_Q+\delta_2)$
is a morphism of
$(\smf_{\cM}, \, Q)$-modules.
Accordingly, $\tot$is a functor from the category
$Ch(\Mod)$ to the category $\Mod$.

\begin{definition}
A \textbf{quasi-isomorphism} $\varphi:~ (\Upsilon_1, \, L_Q,  \,  \delta_1) \rightarrow (\Upsilon_2, \, L_Q,  \,  \delta_2)$ of $(\smf_{\cM}, \, Q)$-complexes is a morphism in $Ch(\Mod)$ that induces a quasi-isomorphism
$\tot \varphi :~ (\tot\Upsilon_1,  L_Q+ \delta_1)
\rightarrow (\tot\Upsilon_2,  L_Q+\delta_2)$
of $(\smf_{\cM}, \, Q)$-modules.
The \textbf{derived category $D(\Mod)$} of
$(\smf_{\cM}, \, Q)$-complexes
is the Gabriel-Zisman localization of $Ch(\Mod)$ by the set of quasi-isomorphisms in $Ch(\Mod)$.
\end{definition}
The functor
$\tot:~ Ch(\Mod) \longrightarrow \Mod$
induces a functor
\[
\tot:~~ D(\Mod) \longrightarrow \Pi(\Mod) \, .
\]

We now explain the tensor product
 $(\Upsilon_1 \widetilde{\otimes}  \Upsilon_2, \, L_Q, \, \widetilde{\delta}   )$ of two
 $(\smf_{\cM}, \, Q)$-complexes
$(\Upsilon_1, \, L_Q, \, \delta_1)$
and $(\Upsilon_2, \, L_Q , \, \delta_2)$. The $\smf_{\cM}$-complex $(\Upsilon_1 \widetilde{\otimes}  \Upsilon_2,  \, \widetilde{\delta}   )$ is already defined in Section \ref{Sec:gradedstuff}. The vertical differential $L_Q$ is defined by the equation analogous to that of $\widetilde{\delta}$:
\[
L_Q(x \widetilde{\otimes} y ):~~=L_Q(x) \widetilde{\otimes} y + (-1)^{|x|}x \widetilde{\otimes} L_Q(y) , \,
\]
for all homogeneous $x \in \Upsilon_1$ and
$y \in \Upsilon_2$.
It is straightforward to verify that
$(\Upsilon_1 \widetilde{\otimes} \Upsilon_2, \, L_Q, \widetilde{\delta})$ is a
$(\smf_{\cM}, \, Q)$-complex.	

We also have the $p$-th symmetric product
$\widetilde{\odot}^p(\Upsilon, \, L_Q,  \,  \delta)$ of a $(\smf_{\cM}, \, Q)$-complex
$(\Upsilon, \, L_Q,  \,  \delta)$,  
which is the $(\smf_{\cM}, \, Q)$-complex that upgrades the $p$-th symmetric product 
$\widetilde{\odot}^p(\Upsilon, \, \delta)$
of $(\Upsilon, \, \delta)$ defined in
Section \ref{Sec:gradedstuff}.
We denote by
\[
S^{\bullet}(\Upsilon, \, L_Q,  \,  \delta):=\oplus_{p\geqslant 0}\widetilde{\odot}^{p}(\Upsilon, \, L_Q,  \,  \delta)
\]
the free symmetric algebra object generated over $\smf_{\cM}$ by
$(\Upsilon, \, L_Q,  \,  \delta)$ in $Ch(\Mod)$.\\

The following diagram of categories with natural functors as arrows
summarizes the relations between all the categories that we have introduced.
\[
\begin{xy}
(0, 20)*+{\Mod}="a";
(40, 20)*+{\mathrm{H}(\Mod)}="b";
(80,20)*+{\Pi(\Mod)}="c";
(0,0)*+{Ch(\Mod)}="d";
(80,0)*+{D(\Mod)}="e";
{\ar "a"; "b"};
{\ar "b"; "c"};
{\ar "d"; "e"};
{\ar_{\tot} "e";"c"};
{\ar_{\tot} "d";"a"};
\end{xy}
\]

\subsection{Dg complex structures on $\Dpp$ and $\Tpp$, and the HKR theorem}\label{Section  Hopf algebra}\

In the previous Section \ref{Section2},
we have introduced the $\smf_{\cM}$-complexes
$(\Tpp, \, \delta=0)$ and $(\Dpp, \, \delta=d_H)$
arising from a graded manifold $\cM$.
Now suppose that $\cM$ is equipped with a homological vector field $Q$ so that
$(\cM, \, Q)$ is a dg manifold.
The $\smf_{\cM}$-complex
$(\Tpp, \, \delta=0)$ is naturally upgraded
to a $(\smf_{\cM}, \, Q)$-complex
$(\Tpp, \, L_Q:~=[Q, \, \cdot\,], \, \delta=0)$.
We call $(\Tpp, \, L_Q,  \,  0)$
the
$(\smf_{\cM}, \, Q)$-complex of polyvector fields.
Moreover, we have
\begin{proposition}\label{Dpolyconstruction}
The triple
$(\Dpp, \, L_Q:~=[Q, \, \cdot \, ], \, \delta=d_H)$
is
a $(\smf_{\cM}, \, Q)$-complex, where $[\, \cdot \, , \cdot\,]$ is the Gerstenhaber bracket.
\end{proposition}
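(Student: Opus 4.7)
The plan is to verify the three defining properties of a $(\smf_{\cM}, Q)$-complex: (a) each $\DD_{\mathrm{poly}}^n$ becomes a $(\smf_{\cM}, Q)$-module under $L_Q = [Q, \cdot\,]$; (b) $L_Q$ has bidegree $(0, +1)$; (c) $L_Q$ anti-commutes with $d_H$. The underlying $\smf_{\cM}$-complex structure $(\Dpp, d_H)$ has already been recorded above.

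For (a), the Leibniz rule $L_Q(fD) = Q(f) D + (-1)^{\widetilde f} f L_Q(D)$ is a direct consequence of the derivation property of the Gerstenhaber bracket, together with the identity $[Q, f] = Q(f)$ for $f \in \DD_{\mathrm{poly}}^0 = \smf_{\cM}$. The square-zero condition $L_Q^2 = 0$ follows by specializing the graded Jacobi identity \eqref{Jacobi} to $D = E = Q$: this yields $2\,[Q, [Q, F]] = [[Q, Q], F]$, and $[Q, Q] = 0$ as a Gerstenhaber bracket because it reduces on a single function to $2 Q^2 = 0$. Property (b) is a bookkeeping check, since $Q \in \DD_{\mathrm{poly}}^{1,1}$ has $|Q| = 2$ and the Gerstenhaber bracket restricts to a map $\DD_{\mathrm{poly}}^{1,1} \otimes \DD_{\mathrm{poly}}^{n,m} \to \DD_{\mathrm{poly}}^{n, m+1}$.

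The heart of the argument is (c). Here the strategy is to exploit the expression $d_H = [\mult, \cdot\,]$ from \eqref{dH via bracket} and apply the graded Jacobi identity \eqref{Jacobi} with $D = Q$ and $E = \mult$. Since $|Q| + 1 = |\mult| + 1 = 3$, the sign factor is $(-1)^{9} = -1$, so that Jacobi produces
\[
L_Q \circ d_H + d_H \circ L_Q = \bigl[[Q, \mult],\, \cdot\,\bigr].
\]
The entire compatibility thus collapses to the single identity $[Q, \mult] = 0$.

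The main obstacle, and really the only nontrivial computation, is verifying $[Q, \mult] = 0$ directly. Writing $\mult = -\id_{\smf_{\cM}} \widetilde{\otimes} \id_{\smf_{\cM}}$ and expanding $Q \circ \mult$ and $\mult \circ Q$ via the Gerstenhaber product formula \eqref{Gerstenhaberproductformula}, one finds that the two contributions cancel term by term precisely because $Q$ is a derivation of $\smf_{\cM}$: the Leibniz rule $Q(f_1 f_2) = Q(f_1) f_2 + (-1)^{\widetilde{f}_1} f_1 Q(f_2)$ balances the sign $(-1)^{\widetilde{f}_1}$ built into the definition of $\mult$ against the total-degree prefactors $(-1)^{\star_j}$ appearing in \eqref{Gerstenhaberproductformula}. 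Conceptually, the identity merely expresses that $Q$ is a derivation of the commutative multiplication on $\smf_{\cM}$; the only delicate point is the careful sign check.
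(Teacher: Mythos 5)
Your proposal is correct and follows essentially the same route as the paper: express $d_H=[\mult,\,\cdot\,]$, apply the graded Jacobi identity with $Q$ and $\mult$, and reduce everything to $[Q,\mult]=0$ (which the paper dismisses as obvious, while you justify it via the derivation property of $Q$ — a fine, if routine, elaboration). The additional checks of the $(\smf_{\cM},Q)$-module structure and the bidegree of $L_Q$ are standard and consistent with what the paper takes for granted.
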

\begin{proof}
By the definition of $d_H$ in
Equation  \eqref{Hochschild differential definition}, it satisfies $d_{H}^{2}=0$ and is
$\smf_{\cM}$-linear.
To verify that $d_H$ and $L_Q$ are anti-commutative, we observe that the following equality holds:
\[
d_H\circ L_Q+L_Q \circ d_H
=\bigl[\mult,[Q, \cdot\,]\bigr]+\bigl[Q,[\mult, \cdot\,]\bigr]
=\bigl[[\mult,Q],\cdot\, \bigr]=0 \, .
\]
Here we have used Equations   \eqref{Jacobi} and
\eqref{dH via bracket}, and the obvious fact:
\[
[\mult, \, Q]=-[\id_{\smf_{\cM}}\widetilde{\otimes}\id_{\smf_{\cM}}, \, Q]=0 \, .
\]
\end{proof}
We call $(\Dpp, \, L_Q,  \,  d_H)$
the
$(\smf_{\cM}, \, Q)$-complex of polydifferential operators.
It is a Hopf algebra object in the category $Ch(\Mod)$ of $(\smf_{\cM}, \, Q)$-complexes.
The
multiplication $\cup$,
the comultiplication $\widetilde{\Delta}$,
the unit $\eta$,
the counit $\varepsilon$,
and the antipole $t$
are the same as those of
the Hopf algebra object $(\Dpp, \, d_H)$ in the category of $\smf_{\cM}$-complexes
(see the end of Section \ref{Dpoly}).
Taking the $\tot$functor to
$(\Dpp, \, L_Q, \, d_H)$,
we obtain a Hopf algebra object
$(\tot \Dpp, \, L_Q+d_H$)
in the category
$\Mod$ of $(\smf_{\cM}, \, Q)$-modules,
with the same structure maps as those of
$(\Dpp, \, d_H)$ and $(\Dpp, \, L_Q, \, d_H)$.\\

The \textbf{Hochschild-Kostant-Rosenberg map}
\[
\hkr:~~ (\Tpp, \, L_Q, \, 0) \rightarrow (\Dpp, \, L_Q, \, d_H )
\]
is a morphism of $(\smf_{\cM}, \, Q)$-complexes
defined by
\[
\hkr(X_1\widetilde{\odot}\cdots \widetilde{\odot} X_n)
:~~=\frac{1}{n!}\sum_{\sigma \in S_{n}}
\kappa(\sigma) X_{\sigma(1)}\widetilde{\otimes}\cdots
\widetilde{\otimes} X_{\sigma(n)},
\]
where the vector fields $X_i \in \Tp$ are
homogeneous elements and $\kappa(\sigma)$ is the Koszul sign (see Equation  \eqref{Koszulsign}).
The following Hochschild-Kostant-Rosenberg type theorem is due to Liao, Sti\'enon, and Xu (see \cite[Proposition 4.1]{L-S-X}).

\begin{theorem}\label{LSX}
The map $\hkr$ is a quasi-isomorphism of
$(\smf_{\cM}, \, Q)$-complexes
from $(\Tpp, \, L_Q, \, 0)$ to
$(\Dpp, \, L_Q, \, d_H)$.
\end{theorem}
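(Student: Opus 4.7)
The plan splits into two tasks: verifying that $\hkr$ is a morphism in $Ch(\Mod)$, and showing that the induced map of total complexes $\tot \hkr:~(\tot\Tpp,\, L_Q) \to (\tot\Dpp,\, L_Q+d_H)$ is a quasi-isomorphism of $(\smf_{\cM},\, Q)$-modules.

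For the morphism part, $\smf_{\cM}$-linearity on each horizontal component follows directly from the symmetrization formula together with the sign conventions for $\widetilde\otimes$ and the cup product \eqref{cupproductformula}. That $\hkr$ intertwines the vertical differentials $L_Q=[Q,\cdot\,]$ reduces to the fact that $L_Q$ acts as a graded derivation of the Schouten-Nijenhuis product $\widetilde{\odot}$ on $\Tpp$ and of the cup product on $\Dpp$; for the latter one uses the Jacobi identity \eqref{Jacobi} for the Gerstenhaber bracket. The key remaining check is $d_H \circ \hkr = 0$. By the compact description \eqref{dH via bracket}, this amounts to showing $[\mult,\, \hkr(X_1 \widetilde{\odot}\cdots \widetilde{\odot} X_n)] = 0$, which follows from the derivation property $[\mult,\, X_i] = 0$ for each single vector field, the antisymmetrization, and the graded Jacobi identity.

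For the quasi-isomorphism, I view $(\Dpp,\, L_Q,\, d_H)$ as a bicomplex and $(\Tpp,\, L_Q,\, 0)$ as a degenerate bicomplex; the map $\hkr$ preserves both horizontal and vertical degrees. I would endow $\tot\Dpp$ with the decreasing filtration by vertical degree, so that the associated spectral sequence has $E_0$-differential $d_H$. The crucial input is the purely algebraic Hochschild-Kostant-Rosenberg identification $H^\bullet(\Dpp,\, d_H) \cong \Tpp$ for the graded commutative algebra $\smf_{\cM}$, ignoring $Q$ altogether; under this identification, $\hkr$ induces the canonical isomorphism on $E_1$-pages and the $E_1$-differential matches $L_Q$ on both sides. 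The result then follows by a standard comparison-of-spectral-sequences argument, provided convergence holds.

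The principal obstacle is establishing the algebraic HKR identification for $\smf_{\cM}$. I would argue locally: on a chart $U$ the sheaf $\cO_{\cM}$ trivializes as $C^\infty(U,\fK) \otimes_{\fK} \hat{S}(W^\vee)$, and differential operators decompose into base and fiber contributions. The classical HKR theorem (\`a la Connes-Teleman) handles the smooth factor $C^\infty(U)$, while the Koszul/polynomial variant handles the graded-symmetric factor $\hat{S}(W^\vee)$, and a K\"unneth-type principle for Hochschild complexes of tensor-product algebras combines the two. Globalization to $M$ follows by a partition-of-unity argument together with the naturality of $\hkr$, exploiting that both $\Tpp$ and the cohomology presheaves of $(\Dpp,\, d_H)$ form sheaves of $\smf_{\cM}$-modules over $M$. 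Convergence of the vertical-degree spectral sequence is ensured because, in each fixed total degree of $\tot\Dpp$, the filtration is bounded below and exhaustive thanks to the $\mathfrak{m}$-adic structure in the fiber direction.
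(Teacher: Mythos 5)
The paper does not actually prove Theorem \ref{LSX}: it is quoted from Liao--Sti\'enon--Xu (their Proposition 4.1), so there is no internal proof to compare yours against line by line. Your outline follows the same standard architecture as that source: check that $\hkr$ is a morphism of $(\smf_{\cM},\,Q)$-complexes, establish the HKR quasi-isomorphism for the underlying graded manifold with $Q$ forgotten, and then pass to the total complexes by a filtration/spectral-sequence comparison in which $L_Q$ enters only on the $E_1$-page. That architecture is sound, and the convergence worry you flag is real but resolvable; however, your stated reason is off: filtering $\tot\Dpp$ by vertical degree, in each fixed total degree the filtration is exhaustive and vanishes in sufficiently high filtration degree simply because the horizontal degree is bounded below by $0$ (and the same holds on the $\Tpp$ side) --- this degree bound, not the $\mathfrak{m}$-adic structure of the fibers, is what makes the classical convergence and comparison theorems applicable.

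Two further points. First, the key check $d_H\circ\hkr=0$ does not require the Jacobi identity or the symmetrization at all: since $d_H$ is a graded derivation of the cup product (Lemma \ref{Hochschildandcupproduct}) and $d_H X=[\mult,X]=0$ for every vector field $X$ (this is exactly the Leibniz rule), each individual word $X_{\sigma(1)}\widetilde{\otimes}\cdots\widetilde{\otimes}X_{\sigma(n)}$ is already a Hochschild cocycle; your route through $[\mult,\,\cdot\,]$ being a derivation of the Gerstenhaber bracket does not directly address cup-product words. Second, the $E_1$-identification $H^{\bullet}(\Dpp,\,d_H)\cong\Tpp$ for the graded algebra $\smf_{\cM}$ is where the real content lies, and your treatment of it is a plan rather than a proof: the local model $C^{\infty}(U)\otimes_{\fK}\hat{S}(W^{\vee})$, the K\"unneth-type combination involving a completed tensor product, the restriction to polydifferential (rather than all Hochschild) cochains, and the sheaf-theoretic globalization all require care. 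This deferred step is precisely what the cited result of Liao--Sti\'enon--Xu supplies, so as a self-contained argument your proposal is incomplete there, though the strategy is the right one.
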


\subsection{The universal enveloping algebra of $(\Lp, \, L_Q, \, d_H)$}\

It is a well known fact in Lie theory that
the universal enveloping algebra of the free Lie algebra generated by a vector space $V$
is the free associative algebra generated by $V$.
Given a dg manifold $(\cM, \, Q)$,
in this part we describe a similar result
for the categories $Ch(\Mod)$ and $D(\Mod)$.
We are mainly inspired by Ramadoss's work \cite{Ramadoss}.

\begin{definition}\

If it exists, the universal enveloping algebra of a Lie algebra object $G$ in
$Ch(\Mod)$ (resp. $D(\Mod)$)
is an associative algebra object $U(G)$ in
$Ch(\Mod)$ (resp. $D(\Mod)$) together with a morphism of Lie algebra objects $i:~~G \rightarrow U(G)$ satisfying the following universal property:~~ given any associative algebra object $K$ and any morphism
of Lie algebra objects $\varphi:~~ G\rightarrow K$ in $Ch(\Mod)$ (resp. $D(\Mod)$), there exists a unique morphism  of associative algebra objects $\varphi':~~ U(G) \rightarrow K$
in $Ch(\Mod)$ (resp. $D(\Mod)$) such that $\varphi=\varphi'\circ i$.
\end{definition}
It can be easily verified that, if it exists, $U(G)$ is unique up to isomorphism in
$Ch( (\smf_{\cM}, \, Q)$
$\mathrm{-}\mathbf{mod})$
(resp. $D(\Mod)$).
	
The shifted free Lie algebra functor
$\functorL(\underline{\, \cdot \,})$
from
the category of $\smf_{\cM}$-modules
to the category of
$\smf_{\cM}$-complexes
(see Section \ref{the functor L and the PBW map})
can be naturally upgraded to a functor
from the category $\Mod$
of $(\smf_{\cM}, \,Q)$-modules
to the category $Ch(\Mod)$
of $(\smf_{\cM}, \, Q)$-complexes.
Applying this functor to a $(\smf_{\cM}, \,Q)$-module
$(V, \, \, L_Q)$,
we obtain the object
$\functorL(\underline{V}, \, L_Q)$
$:=(\functorL(\underline{V}), \, L_Q, \, 0)$
which is a $(\smf_{\cM}, \, Q)$-complex with trivial horizontal differential.
Here the vertical differential
$L_Q:~ \functorL(\underline{V}) \rightarrow \functorL(\underline{V})$
is obtained by extending the original $L_Q:~ V \rightarrow V$ in a unique manner such that
\begin{equation*}
L_Q(\textup{\textlbrackdbl}D, \, E\textup{\textrbrackdbl})
=
\textup{\textlbrackdbl}
L_QD, \, E
\textup{\textrbrackdbl}
+(-1)^{|D|}
\textup{\textlbrackdbl}
D, \, L_QE
\textup{\textrbrackdbl}
\, ,    	
\end{equation*}
for homogeneous $D, \, E \in \functorL(\underline{V})$.
The pair
$(\functorL(\underline{V}, \, L_Q);\, \nbk)$ is
a Lie algebra object in
$Ch( (\smf_{\cM}, \, Q)$
$\mathrm{-}\mathbf{mod})$.
Similarly,
we obtain
the tensor algebra
$(T^{\bullet}\underline{V}, \, \, L_Q, \, 0)$
which is an associative algebra object in $Ch(\Mod)$.
Moreover,
the universal enveloping algebra
$U(\functorL(\underline{V}, \, L_Q))$
of the Lie algebra object
$(\functorL(\underline{V}, \, L_Q);\, \nbk)\in Ch(\Mod)$
can be identified with
$(T^{\bullet}\underline{V}, \, L_Q, \, 0)$.
Note that both
$(\functorL(\underline{V}), \, L_Q, \, 0)$ and $(T^{\bullet}\underline{V}, \, L_Q, \, 0)$ have trivial horizontal differentials.
We then consider the general situation that the horizontal differentials in $\functorL(\underline{V})$
and $T^{\bullet}\underline{V}$ are non-trivial.
\begin{theorem}\label{UL}
Suppose that endowing 
$(T^{\bullet}\underline{V}, \, L_Q)$ with the horizontal differential $\delta$ makes it a $(\smf_{\cM}, \, Q)$-complex.
If
the horizontal differential $\delta$ satisfies
the following conditions:
\begin{itemize}
\item[(1)]
$\delta(D\widetilde{\otimes}E)=
\delta(D)\widetilde{\otimes}E+
(-1)^{|D|}D\widetilde{\otimes}\delta(E),
\quad \forall D, E \in T^{\bullet}\underline{V} \, ;$
\item[(2)]	
$\delta(\underline{V})
\, \subset
\functorL(\underline{V})\cap T^2 \underline{V}  \, ,$
\end{itemize}
then
\begin{itemize}
\item[(1)]
the horizontal differential $\delta$
in $T^{\bullet}\underline{V}$
 preserves
$\functorL(\underline{V})$,
$(\functorL(\underline{V}), \, L_Q,  \,  \delta)$
is a $(\smf_{\cM}, \, Q)$-subcomplex of
$(T^{\bullet}\underline{V}, \, L_Q,  \,  \delta)$  and 
$(\functorL(\underline{V}), \, L_Q,  \,  \delta; \, \nbk)$ is a Lie algebra object in $Ch(\Mod)$;

\item[(2)]
the universal enveloping algebra of $(\functorL(\underline{V}), \, L_Q,  \,  \delta; \, \nbk)$ is $(T^{\bullet}\underline{V}, \, L_Q, \, \delta)$ in the category $Ch(\Mod)$;

\item[(3)]
the universal enveloping algebra of $(\functorL(\underline{V}), \, L_Q,  \,  \delta; \, \nbk)$ is $(T^{\bullet}\underline{V}, \, L_Q, \, \delta)$ in the derived category $D(\Mod)$.
\end{itemize}
\end{theorem}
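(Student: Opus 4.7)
The plan for part (1) is to first verify that the horizontal differential $\delta$ is a graded derivation of the bracket $\nbk\cdot,\cdot\nbk$ on $T^{\bullet}\underline{V}$. Since $\textup{\textlbrackdbl}D,E\textup{\textrbrackdbl} = D\widetilde{\otimes}E - (-1)^{|D||E|} E\widetilde{\otimes}D$ by \eqref{freeLiebracket} and condition (1) makes $\delta$ a derivation of $\widetilde{\otimes}$, a direct Koszul-sign check yields
\[
\delta\textup{\textlbrackdbl}D,E\textup{\textrbrackdbl}
=\textup{\textlbrackdbl}\delta D,E\textup{\textrbrackdbl}
+(-1)^{|D|}\textup{\textlbrackdbl}D,\delta E\textup{\textrbrackdbl}.
\]
Combined with $\delta(\underline{V})\subset \functorL(\underline{V})$ from condition (2), induction on the nesting depth of iterated brackets then shows $\delta(\functorL(\underline{V}))\subset\functorL(\underline{V})$. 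The remaining compatibility conditions ($\delta^{2}=0$, anticommutation with $L_Q$, and $\smf_{\cM}$-linearity) restrict from $T^{\bullet}\underline{V}$, and the same derivation argument shows $L_Q$ is a derivation of $\nbk\cdot,\cdot\nbk$, confirming that $(\functorL(\underline{V}),L_Q,\delta;\nbk)$ is a Lie algebra object in $Ch(\Mod)$.

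For part (2), I would exploit the universal property of $T^{\bullet}\underline{V}$ as the free associative $\smf_{\cM}$-algebra on $\underline{V}$. Given a Lie algebra morphism $\varphi:(\functorL(\underline{V}),L_Q,\delta;\nbk)\to K$ in $Ch(\Mod)$, I would restrict $\varphi$ to $\underline{V}$ and extend it uniquely to an $\smf_{\cM}$-linear, $L_Q$-equivariant associative algebra morphism $\varphi':T^{\bullet}\underline{V}\to K$. The essential check is that $\varphi'$ also intertwines the horizontal differentials: both $\delta_K\circ\varphi'$ and $\varphi'\circ\delta$ are $\varphi'$-derivations $T^{\bullet}\underline{V}\to K$, hence determined by their values on the generators $\underline{V}$, and these values coincide because $\delta(\underline{V})\subset\functorL(\underline{V})$ on which $\varphi$ is $\delta$-equivariant by hypothesis. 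On $\functorL(\underline{V})$, the restriction $\varphi'|_{\functorL(\underline{V})}$ agrees with $\varphi$ because both are Lie algebra morphisms extending the same map on the generators, and uniqueness of the algebra morphism extension follows directly from the universal property.

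For part (3), the plan is to descend the $Ch(\Mod)$-level universal property to $D(\Mod)$. The PBW isomorphism \eqref{Eqt:pbwstandard} identifies $T^{\bullet}\underline{V}$ with $S^{\bullet}(\functorL(\underline{V}))$ as $\smf_{\cM}$-complexes, which exhibits $T^{\bullet}\underline{V}$ as a free (cofibrant-like) object built functorially from $\functorL(\underline{V})$. Given a Lie algebra morphism $\varphi:\functorL(\underline{V})\to K$ in $D(\Mod)$, represented by a roof $\functorL(\underline{V})\xleftarrow{s}X\xrightarrow{f}K$ with $s$ a quasi-isomorphism in $Ch(\Mod)$, one would lift $f$ to a chain-level Lie algebra morphism, apply part (2) to obtain an extension, and pass back to $D(\Mod)$. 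The main obstacle is establishing well-definedness on $D(\Mod)$-morphisms, that is, independence of the roof representative and compatibility with composition; this is the step where I expect the most care to be needed. A slicker route would be to exhibit $T^{\bullet}\underline{V}$ as cofibrant in an appropriate model structure on $(\smf_{\cM},Q)$-complexes, trivializing the zigzag issues. Uniqueness in $D(\Mod)$ would then follow by comparing any two extensions via an explicit chain homotopy built from the derivation structure of $\delta$ combined with the freeness supplied by $\pbw$.
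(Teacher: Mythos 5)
The paper itself gives no proof of Theorem \ref{UL}: it is stated with the remark that the proof is a ``straightforward adaptation of Ramadoss's argument and is thus omitted,'' so the comparison below is with that intended argument rather than with an in-paper proof. Your parts (1) and (2) are correct and are exactly the expected argument: condition (1) makes $\delta$ a graded derivation of $\nbk$, condition (2) plus induction on bracket depth gives $\delta(\functorL(\underline{V}))\subset\functorL(\underline{V})$, and the chain-level universal property follows from freeness of $T^{\bullet}\underline{V}$ as an associative $\smf_{\cM}$-algebra on $\underline{V}$, the agreement $\varphi'|_{\functorL(\underline{V})}=\varphi$ (which, as you note, is a purely algebraic fact independent of $\delta$, so your order of deductions is not circular), and the ``two $\varphi'$-derivations agreeing on generators'' trick for $\delta$- and $L_Q$-equivariance.

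Part (3), however, has a genuine gap as written. A morphism $\varphi\colon\functorL(\underline{V})\to K$ of Lie algebra objects in $D(\Mod)$ is only required to make the bracket square commute in the localized category; when you represent it by a roof $\functorL(\underline{V})\xleftarrow{s}X\xrightarrow{f}K$, the apex $X$ carries no Lie algebra structure and $f$ is not a Lie algebra morphism, so the step ``lift $f$ to a chain-level Lie algebra morphism and apply part (2)'' is precisely the unproved content, not a reduction of it. Likewise, cofibrancy of $T^{\bullet}\underline{V}$ in some model structure would only help compute $\Hom$-sets; the real issue is rectifying a Lie-algebra-object morphism in the localized category to a strict one, which is a homotopy-transfer/rectification problem and is not what is needed or used here. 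The adaptation of Ramadoss's argument avoids lifting altogether: since $\pbw\colon S^{\bullet}(\functorL(\underline{V}))\to T^{\bullet}\underline{V}$ is an isomorphism of $(\smf_{\cM},Q)$-complexes, one defines the extension directly in $D(\Mod)$ on each symmetric piece as the symmetrized composite of $\varphi^{\widetilde{\otimes}n}$ with the $n$-fold multiplication of $K$ (all tensor products of morphisms being taken in $D(\Mod)$), and then verifies multiplicativity and uniqueness as identities among morphisms of $D(\Mod)$, using only the PBW decomposition, the shuffle/symmetrization identities (the content of Theorem \ref{R1}), and the hypothesis that $\varphi$ intertwines the brackets in $D(\Mod)$; uniqueness follows because any algebra morphism out of $T^{\bullet}\underline{V}$ in $D(\Mod)$ is determined by its restriction to $\underline{V}$ via the direct-sum decomposition and the multiplication maps, all of which are genuine chain maps. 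Your sketch would need to be replaced by (or completed to) this direct construction for part (3) to stand.
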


The preceding theorem is analogous to a result of Ramadoss \cite{Ramadoss}.
Its proof is a straightforward adaptation of Ramadoss's argument and is thus omitted.

\vskip 0.5cm

Now we apply Theorem \ref{UL} to the special
case
$(V, \, L_Q)=(\cD_{\cM}, \, L_Q)$
where $(\cM, \, Q)$ is the given dg manifold.
In this situation, we have
$\underline{V}=\Dp$,
$T^{\bullet}(\underline{V})=\Dpp$, and
$(\functorL (\underline{V}), \, L_Q, \, 0)=(\functorL (\Dp), \, L_Q, \, 0)$.
Here $(\functorL (\Dp), \, L_Q, \, 0)$
is the free Lie algebra object generated over
$\smf_{\cM}$ by $\Dp$ in the category
$Ch(\Mod)$.

Recall that we have the Hochschild differential
$d_H=[\mult \, , \, \cdot\,]\,:~ \DD_{\mathrm{poly}}^{\bullet} \rightarrow \DD_{\mathrm{poly}}^{\bullet+1}$.
We need the following fact:
\begin{lemma}
We have
$d_H(\DD_{\mathrm{poly}}^1)\subset \Lp \cap \DD_{\mathrm{poly}}^2$.
\end{lemma}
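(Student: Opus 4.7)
The inclusion $d_H(\DD_{\mathrm{poly}}^1) \subseteq \DD_{\mathrm{poly}}^2$ is immediate from the definition \eqref{Hochschild differential definition}, so the content of the lemma is that $d_H(D) \in \Lp$ for every homogeneous $D \in \DD_{\mathrm{poly}}^1$. My plan is to identify $\Lp \cap \DD_{\mathrm{poly}}^2$ with the anti-symmetric part of $\DD_{\mathrm{poly}}^2$ under the Koszul twist, and then check that $d_H(D)$ lies in this anti-symmetric part.

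Concretely, let $\tau \colon \DD_{\mathrm{poly}}^2 \to \DD_{\mathrm{poly}}^2$ denote the graded transposition $\tau(A \widetilde{\otimes} B) = (-1)^{|A||B|}\, B \widetilde{\otimes} A$, so that $\textup{\textlbrackdbl} A, B \textup{\textrbrackdbl} = (1-\tau)(A \widetilde{\otimes} B)$ by \eqref{freeLiebracket}. Since $\tau^{2} = 1$, over a field of characteristic zero one has $\ker(1+\tau) = \mathrm{Im}(1-\tau)$, and $\Lp \cap \DD_{\mathrm{poly}}^2$ is exactly the $\fK$-span of the brackets $\textup{\textlbrackdbl} A, B \textup{\textrbrackdbl}$ for $A,B \in \DD_{\mathrm{poly}}^1$; hence
$$\Lp \cap \DD_{\mathrm{poly}}^2 \;=\; \{\xi \in \DD_{\mathrm{poly}}^2 : \tau(\xi) = -\xi\}.$$
It therefore suffices to check $\tau(d_H D) = -\, d_H D$.

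Applying $\tau$ to \eqref{coproduct} and using $\tau(D \widetilde{\otimes} \id_{\smf_{\cM}}) = (-1)^{|D|}\, \id_{\smf_{\cM}} \widetilde{\otimes} D$ (since $|\id_{\smf_{\cM}}| = 1$), a short calculation gives
$$d_H(D) + \tau\bigl(d_H(D)\bigr) \;=\; -(-1)^{|D|}\bigl(\Delta(D) + \tau(\Delta(D))\bigr),$$
the contributions of $D \widetilde{\otimes} \id_{\smf_{\cM}}$ and $\id_{\smf_{\cM}} \widetilde{\otimes} D$ cancelling automatically. The proof is thus reduced to the single identity $\Delta(D) + \tau(\Delta(D)) = 0$ in $\DD_{\mathrm{poly}}^2$, which is a graded cocommutativity statement for the coproduct $\Delta$ on $\cD_{\cM}$ twisted by the horizontal-degree shift built into $\DD_{\mathrm{poly}}^1$. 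I would verify it by evaluating both sides on an arbitrary homogeneous pair $(f_1, f_2) \in \smf_{\cM} \otimes \smf_{\cM}$: by \eqref{Delta} one has $\Delta(D)(f_1,f_2) = (-1)^{\widetilde{f_1}+1} D(f_1 f_2)$, while the convention \eqref{cupproductformula} combined with the definition of $\tau$ expresses $\tau(\Delta(D))(f_1,f_2)$ as a Koszul sign times $D(f_2 f_1)$; the graded commutativity $f_1 f_2 = (-1)^{\widetilde{f_1}\widetilde{f_2}} f_2 f_1$ of $\smf_{\cM}$ then produces the desired cancellation.

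The only obstacle is the sign bookkeeping in this last step: one has to combine the Koszul sign from $\tau$, the sign from the polydifferential-operator convention \eqref{cupproductformula}, the sign from \eqref{Delta}, and the commutativity sign of $\smf_{\cM}$, and check that their total has the correct parity. This is mechanical but not wholly transparent, because \eqref{cupproductformula} was tuned to the Gerstenhaber structure rather than for direct comparison with the twist $\tau$.
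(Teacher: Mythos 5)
Your proposal is correct, but it takes a genuinely different route from the paper's. The paper also starts from \eqref{coproduct}, rewriting $d_H(D)=(-1)^{|D|}\bigl(\textup{\textlbrackdbl}D,\id_{\smf_{\cM}}\textup{\textrbrackdbl}-\Delta(D)\bigr)$, but then it invokes the explicit appendix formula (Proposition \ref{Deltaformula}, proved by induction on the length of $D=X_1\circ\cdots\circ X_n$), which exhibits $\Delta(D)$ as a $\fK$-combination of brackets and hence as an element of $\Lp$. You instead identify the horizontal-degree-two part of $\Lp$ with the graded antisymmetric part $\{\xi\in\DD_{\mathrm{poly}}^2:\tau(\xi)=-\xi\}$ --- for the lemma only the inclusion $\ker(1+\tau)\subseteq\Lp$ is needed, and it follows from $\xi=\tfrac12(1-\tau)(\xi)$, valid since $\fK$ has characteristic zero --- and reduce everything to the shifted cocommutativity $\tau(\Delta(D))=-\Delta(D)$. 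The sign check you flag as the remaining obstacle does close: for any $P\in\DD_{\mathrm{poly}}^2$ the conventions \eqref{cupproductformula} and the definition of $\tau$ give $\tau(P)(f_1,f_2)=(-1)^{(\widetilde{f}_1+1)(\widetilde{f}_2+1)}P(f_2,f_1)$, and then \eqref{Delta} together with $f_2f_1=(-1)^{\widetilde{f}_1\widetilde{f}_2}f_1f_2$ yields $\tau(\Delta(D))(f_1,f_2)=(-1)^{(\widetilde{f}_1+1)(\widetilde{f}_2+1)+\widetilde{f}_2+1+\widetilde{f}_1\widetilde{f}_2}D(f_1f_2)=(-1)^{\widetilde{f}_1}D(f_1f_2)=-\Delta(D)(f_1,f_2)$, as required; note this step uses (as the paper does implicitly throughout) that an element of $\DD_{\mathrm{poly}}^2$ is faithfully determined by its values as a bidifferential operator on $\smf_{\cM}$. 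As for what each approach buys: yours is softer and avoids the appendix induction entirely, but it is special to horizontal degree two, whereas the paper's explicit bracket expansion of $\Delta(D)$ is stronger information that is reused elsewhere (notably in the proof of Lemma \ref{canonicalconnectionandDelta}), so the paper obtains the present lemma essentially for free from machinery it needs anyway.
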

\begin{proof}
For a homogeneous element $D \in \Dp$,
by Equation  \eqref{coproduct} we have
\begin{equation*}
\begin{split}
d_H(D)&
=(-1)^{|D|}(D\widetilde{\otimes} \id_{\smf_{\cM}} -\Delta(D) -(-1)^{|D|}\id_{\smf_{\cM}}\widetilde{\otimes} D) \\
& \quad =(-1)^{|D|}(
\textup{\textlbrackdbl}
D, \, \id_{\smf_{\cM}}
\textup{\textrbrackdbl}
-\Delta(D)
) \, .
\end{split}	
\end{equation*}
The term $\textup{\textlbrackdbl}
D, \, \id_{\smf_{\cM}}
\textup{\textrbrackdbl}$ belongs to $\functorL(\Dp)$.
By Proposition \ref{Deltaformula} in the Appendix,
we also see that the term $\Delta(D)$ belongs to $\functorL(\Dp)$.
Thus the lemma is proved. 	
\end{proof}

A direct consequence of Theorem \ref{UL} and the above lemma is the following
\begin{corollary}\label{UL2}\ With the same notations as above, we have
\begin{itemize}
\item[(1)]
the quadruple
$(\Lp, \, L_Q, \, \delta=d_H; \, \nbk)$ is a Lie algebra object in $Ch(\Mod)$;

\item[(2)]
the universal enveloping algebra
of $(\Lp, \, L_Q, \, d_H; \, \nbk)$
is $(\Dpp, \, L_Q, \, d_H)$
in the category
$Ch(\Mod)$;

\item[(3)]
the universal enveloping algebra
of $(\Lp, \, L_Q, \, d_H; \, \nbk)$
is $(\Dpp, \, L_Q, \, d_H)$
in the derived category
$D(\Mod)$.
\end{itemize}
\end{corollary}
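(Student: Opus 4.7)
The plan is to deduce this corollary as a direct application of Theorem \ref{UL} to the special input
$(V, L_Q) = (\cD_{\cM}, L_Q)$, combined with the lemma established immediately above. Under this identification we have $\underline{V} = \Dp$, $T^{\bullet}\underline{V} = \Dpp$ and $\functorL(\underline{V}) = \Lp$, and the horizontal differential we wish to install on $\Dpp$ is the Hochschild differential $\delta = d_H$. So the only real task is to check that $d_H$ satisfies the two hypotheses of Theorem \ref{UL}.

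First I would verify that $(\Dpp, L_Q, d_H)$ is a $(\smf_{\cM},Q)$-complex in the sense of Section \ref{Sec:dgcomplexes}. But this is precisely the content of Proposition \ref{Dpolyconstruction}: it exhibits $d_H$ as $[\mult, \cdot\,]$, gives $d_H^2 = 0$ and $\smf_{\cM}$-linearity, and shows anticommutativity with $L_Q = [Q,\cdot\,]$ via the Jacobi identity \eqref{Jacobi} together with $[\mult, Q] = 0$. Next I would check hypothesis~(1) of Theorem \ref{UL}, namely the graded Leibniz rule of $d_H$ with respect to $\widetilde{\otimes} = \cup$; this is exactly Lemma \ref{Hochschildandcupproduct}. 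Finally, hypothesis~(2), $d_H(\Dp) \subset \Lp \cap \DD_{\mathrm{poly}}^2$, is the lemma just proved before the corollary.

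With both hypotheses confirmed, Theorem \ref{UL} applies verbatim and yields all three conclusions simultaneously: part~(1) of Theorem \ref{UL} gives that $d_H$ preserves $\Lp$ and that $(\Lp, L_Q, d_H; \nbk)$ is a Lie algebra object in $Ch(\Mod)$, which is conclusion~(1) of the corollary; parts~(2) and~(3) of Theorem \ref{UL} then give the universal enveloping algebra statements in $Ch(\Mod)$ and in $D(\Mod)$ respectively, which are conclusions~(2) and~(3).

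Since the argument is essentially a specialization, there is no genuine obstacle to overcome; the only delicate point worth double-checking is that the sign and bracket conventions in Section \ref{the functor L and the PBW map} (where $\nbk$ on $T^\bullet\underline{V}$ is defined via the total degree $|D| = \widetilde{D}+i$) match those used in the Gerstenhaber calculus on $\Dpp$, so that the restriction of $\nbk$ to $\Lp$ is the one intended. This is a bookkeeping check rather than a substantive step, and no further computation beyond the lemma and Proposition \ref{Dpolyconstruction} is required.
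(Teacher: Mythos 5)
Your proposal is correct and matches the paper's own argument: the paper states Corollary \ref{UL2} as ``a direct consequence of Theorem \ref{UL} and the above lemma,'' i.e., exactly the specialization $(V,L_Q)=(\cD_{\cM},L_Q)$ with hypothesis (1) supplied by Lemma \ref{Hochschildandcupproduct}, hypothesis (2) by the preceding lemma, and the $(\smf_{\cM},Q)$-complex structure by Proposition \ref{Dpolyconstruction}. Nothing further is needed.
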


Consider the Poincar\'e-Birkhoff-Witt map
$\pbw:~S^{\bullet}( \Lp)\rightarrow U(\Lp) = \Dpp$.
By definition
(see Equation  \eqref{Eqt:pbwstandard}),
the map $\pbw$ commutes with $L_Q$ and $d_H$.
Thus it is an isomorphism of
$(\smf_{\cM}, \, Q)$-complexes:
\[
\pbw:~~ S^{\bullet}(\Lp, \, L_Q, \, d_H)
\rightarrow U((\Lp, \, L_Q, \, d_H))=( \Dpp, \, L_Q, \, d_H ).
\]
Denote by
$$\theta:~~(\Tp, \, L_Q , \,0) \rightarrow (\Lp, \, L_Q, \, d_H )$$
the natural inclusion map.

\begin{proposition}\label{betaquasi-isomorphism}
The map $\theta $
is a quasi-isomorphism in $Ch(\Mod)$.
\end{proposition}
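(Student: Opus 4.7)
The plan is to factor the HKR map through the symmetric-algebra extension of $\theta$, reducing the statement to a direct-sum splitting argument.

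The key identity is
\[
\hkr \;=\; \pbw \circ \widetilde{S}^{\bullet}(\theta),
\]
where $\widetilde{S}^{\bullet}(\theta): S^{\bullet}(\Tp, L_Q, 0) \to S^{\bullet}(\Lp, L_Q, d_H)$ is the symmetric-algebra extension of $\theta$ in $Ch(\Mod)$. This identity is visible from the formulas for $\hkr$ and $\pbw$ in \eqref{Eqt:pbwstandard}: both are the same symmetrization map with identical Koszul signs, and $\theta$ is just the inclusion of a vector field into $\cD_{\cM} = \Lp\cap\DD_{\mathrm{poly}}^1$. Since $\pbw$ is an isomorphism of $(\smf_{\cM}, Q)$-complexes (recorded immediately above the proposition) and $\hkr$ is a quasi-isomorphism by Theorem \ref{LSX}, it follows that $\widetilde{S}^{\bullet}(\theta)$ is a quasi-isomorphism of $(\smf_{\cM}, Q)$-complexes.

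The final step descends from this to $\theta$ itself by using that symmetric algebras split into subcomplexes indexed by symmetric degree. On $\tot S^{\bullet}(\Tp, L_Q, 0)$ the total differential is just $L_Q$, which acts as a derivation of $\widetilde{\odot}$ and thus preserves each $\widetilde{\odot}^{n}$. On $\tot S^{\bullet}(\Lp, L_Q, d_H)$ the total differential is $L_Q + d_H$; both act as derivations of $\widetilde{\odot}$, and by Corollary \ref{UL2} the operator $d_H$ stabilizes $\Lp$, so the splitting by symmetric degree is preserved. Consequently,
\[
\tot S^{\bullet}(\Tp, L_Q, 0) = \bigoplus_{n \geq 0} \tot \widetilde{\odot}^{n}(\Tp, L_Q, 0), \quad \tot S^{\bullet}(\Lp, L_Q, d_H) = \bigoplus_{n \geq 0} \tot \widetilde{\odot}^{n}(\Lp, L_Q, d_H)
\]
as direct sums of subcomplexes, and $\widetilde{S}^{\bullet}(\theta) = \bigoplus_n \widetilde{\odot}^{n}(\theta)$ respects this decomposition. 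Since a direct sum of cochain maps is a quasi-isomorphism if and only if every summand is, each $\widetilde{\odot}^{n}(\theta)$ is a quasi-isomorphism; the $n = 1$ summand is exactly $\theta$, concluding the proof.

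The substantive ingredient is the factorization $\hkr = \pbw \circ \widetilde{S}^{\bullet}(\theta)$; once it is written down, everything else is a formal consequence of HKR, PBW, and compatibility with the symmetric-degree splitting. There is no serious obstacle, but one must keep careful track of the two different gradings at play: the symmetric-degree grading on $S^{\bullet}(\Lp)$ (which is preserved by $L_Q + d_H$) and the horizontal-degree grading on $\Dpp$ (the two do not match under $\pbw$, but this mismatch is not needed for the argument).
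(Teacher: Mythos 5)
Your proposal is correct and follows essentially the same route as the paper: the factorization $\hkr=\pbw\circ S^{\bullet}(\theta)$ together with Theorem \ref{LSX} and the invertibility of $\pbw$ is exactly the paper's argument. The only difference is that you spell out the final passage from $S^{\bullet}(\theta)$ to its symmetric-degree-one component $\theta$ via the splitting into subcomplexes, a step the paper compresses into ``in particular, $\theta$, the first component of $S^{\bullet}(\theta)$, is a quasi-isomorphism.''
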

\begin{proof}

Recall that we have the HKR quasi-isomorphism
$\hkr: (\Tpp, \, L_Q, \,0) \rightarrow (\Dpp, \, L_Q, \, d_H)$ (see Theorem \ref{LSX}).
We can decompose it as
\[
\hkr=\pbw \circ S^{\bullet}(\theta),
\]
where $S^{\bullet}(\theta)$ is the symmetrization of
$\theta$ in the   category
$Ch(\Mod) $:~~
\[
S^{\bullet}(\theta):~~
(\Tpp, \, L_Q, \, 0)=S^{\bullet}(\Tp, \, L_Q,\, 0)
\rightarrow
S^{\bullet}(\Lp, \, L_Q , \, d_H).
\]

As the map $\pbw$ is an isomorphism and the map $\hkr$ is a quasi-isomorphism of
$(\smf_{\cM}, \, Q)$-complexes,
the map $S^{\bullet}(\theta)$ is a quasi-isomorphism in 	 $Ch(\Mod)$ as well.
In particular,
  $\theta$,   the first component of $S^{\bullet}(\theta)$, is a quasi-isomorphism of
$(\smf_{\cM}, \, Q)$-complexes.
\end{proof}

The following theorem is an analogue of \cite[Theorem $1$]{Ramadoss}. The proof is essentially the same and thus omitted.
\begin{theorem}\label{R1}
The following diagram commutes in the category $Ch(\Mod)$:
\begin{equation*}\label{R1diagram}
\xymatrix{
S^{\bullet}(\Lp, \, L_Q, \, d_H) \widetilde{\otimes} (\Lp, \, L_Q, \, d_H)  \ar[r]^<<<<<{\mu \circ \frac{\omega}{1-e^{- \omega}}} \ar[d]_{\pbw \otimes \id} &
S^{\bullet}(\Lp, \, L_Q, \, d_H)\ar[d]^{\pbw}\\
(\Dpp, \, L_Q, \, d_H)
\widetilde{\otimes}
(\Lp, \, L_Q, \, d_H)\ar[r]^<<<<<<<<<<{\cup} & (\Dpp, \, L_Q, \, d_H) \, . \\
}
\end{equation*}
Here $\mu$ denotes the natural symmetric product of $S^\bullet(\Lp, \, L_Q, \, d_H)$ and the map
\[
\omega:~~ S^{\bullet}(\Lp, \, L_Q, \, d_H) \widetilde{\otimes} (\Lp, \, L_Q, \, d_H) \rightarrow
S^{\bullet}(\Lp, \, L_Q, \, d_H) \widetilde{\otimes} (\Lp, \, L_Q, \, d_H)
\]
is defined by
\begin{equation*}\label{omegaformula}
\omega( (G_1 \widetilde{\odot} \cdots \widetilde{\odot} G_n) \widetilde{\otimes} G):~~=
\sum_{i=1}^n (-1)^{(|G_{i+1}|+ \cdots +|G_n|)|G_i|}(G_1 \widetilde{\odot} \cdots \hat{G}_i \cdots \widetilde{\odot} G_n) \widetilde{\otimes}
\textup{\textlbrackdbl}
G_i, \, G
\textup{\textrbrackdbl} \, ,
\end{equation*}
for homogeneous elements $G_i, \, G \in \Lp$.
\end{theorem}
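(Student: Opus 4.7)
The plan is to adapt Ramadoss's argument for his Theorem 1 in \cite{Ramadoss} directly to the present dg setting. The proof splits naturally into two parts: (i) establishing the combinatorial identity encoded by the diagram at the level of the underlying bigraded $\smf_{\cM}$-modules, and (ii) verifying that every map appearing in the diagram is a morphism in $Ch(\Mod)$, i.e., respects both the vertical differential $L_Q$ and the horizontal differential $d_H$.

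For part (i), I would forget both differentials and observe that the statement is purely algebraic. It asserts that for the free Lie algebra $\functorL(\underline{V})$ generated over a graded commutative ring by a graded module $\underline{V}$, with universal enveloping algebra $T^{\bullet}\underline{V}$, the PBW isomorphism $\pbw$ intertwines the right cup-action of $T^{\bullet}\underline{V}$ on itself with a twisted right action of $\functorL(\underline{V})$ on $S^{\bullet}(\functorL(\underline{V}))$ built out of the symmetric product $\mu$ and the adjoint-type operator $\omega$. This identity, obtained by expanding $\omega/(1-e^{-\omega})$ as a formal power series in $\omega$ and matching coefficients inductively (equivalently, by induction on the symmetric degree of the first argument), is Ramadoss's key computation. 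Since $(\Lp, \nbk)$ is precisely the free Lie algebra generated over $\smf_{\cM}$ by $\Dp$ in the sense of Section \ref{the functor L and the PBW map}, the argument transfers verbatim, with Koszul signs taken with respect to the total degree.

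For part (ii), compatibility with $L_Q$ and $d_H$ is essentially automatic. The symmetrization formula \eqref{Eqt:pbwstandard} shows $\pbw$ commutes with any derivation of the tensor algebra; both $L_Q$ (defined as the extension of $[Q, \cdot]$ as a derivation of $\widetilde{\otimes}$) and $d_H$ act as derivations of the cup product on $\Dpp$, the latter by Lemma \ref{Hochschildandcupproduct}. The free Lie bracket $\nbk$ commutes with $L_Q$ and $d_H$ by Corollary \ref{UL2}, so the operator $\omega$ and the symmetric product $\mu$ also commute with both differentials. The main obstacle, should one arise, lies in the sign bookkeeping for the operator $\omega/(1-e^{-\omega})$ in the Koszul convention; the sign $(-1)^{(|G_{i+1}|+\cdots+|G_n|)|G_i|}$ in the stated formula for $\omega$ is precisely what is required to make the generating-series identity close. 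Once parts (i) and (ii) are established, the commutativity of the diagram in $Ch(\Mod)$ follows immediately.
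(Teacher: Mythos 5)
Your plan coincides with what the paper itself does: the authors state that Theorem \ref{R1} is the analogue of Ramadoss's Theorem 1 and omit the proof precisely because it is a straightforward adaptation of his argument, which is exactly the transfer you describe (the purely algebraic PBW/$\omega/(1-e^{-\omega})$ identity for the free Lie algebra over $\smf_{\cM}$, plus the routine check that $\pbw$, $\cup$, $\mu$ and $\omega$ are compatible with $L_Q$ and $d_H$, the latter resting on Lemma \ref{Hochschildandcupproduct} and Corollary \ref{UL2}). So your proposal is correct and takes essentially the same route as the paper.
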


\section{Atiyah classes of dg manifolds}\label{Section4}
\subsection{The jet sequence}\

Given a graded manifold $\cM$, consider the space
$\cD_{\cM}^{\leqslant 1}$ of order $\leqslant 1$ differential operators on $\cM$.
Namely, $\cD_{\cM}^{\leqslant 1}$ consists of those operators on the algebra $\smf_{\cM}$ that are
the sum of a derivation and the multiplication by an element of $\smf_{\cM}$.
Indeed, $\cD_{\cM}^{\leqslant 1}$ can be identified to
$\XX(\cM)\oplus \smf_{\cM}$ in a canonical way.
Moreover, $\cD_{\cM}^{\leqslant 1}$
is a
$\smf_{\cM}$-bimodule whose bimodule structure is as follows:
\begin{equation*}
\begin{split}
f \cdot ( X +g)&=fX+fg \, , \\
(X +g)\cdot f &=X(f \cdot \,)  + gf =(-1)^{\widetilde{f}\widetilde{X}}fX +(X(f)+gf )  \,  ,
\end{split}	
\end{equation*}
where $X+g \in \cD_{\cM}^{\leqslant 1}=\XX(\cM)\oplus \smf_{\cM}$ and $f\in \smf_{\cM}$.

Let $\cMmodule$ be a $\smf_{\cM}$-module.
Since $\cD_{\cM}^{\leqslant 1}$ is a $\smf_{\cM}$-bimodule, we can form the space
$\cD_{\cM}^{\leqslant 1} \underset{\smf_{\cM}}{\otimes} \cMmodule$
which is a (left) $\smf_{\cM}$-module.
So we get an exact sequence of
$\smf_{\cM}$-modules:~~
\begin{equation}\label{Short}
0\rightarrow \cMmodule
\stackrel{i}{\longrightarrow}
\cD_{\cM}^{\leqslant 1} \underset{\smf_{\cM}}{\otimes} \cMmodule
\stackrel{j}{\longrightarrow}
\XX(\cM)\otimes_{\smf_{\cM}}\cMmodule
\rightarrow 0 \, .
\end{equation}
Here the maps $i$ and $j$ are defined by:~~
\begin{equation*}
\begin{split}
i(\xi) & =1 \underset{\smf_{\cM}}{\otimes}\xi
\, , \\
j(X\underset{\smf_{\cM}}{\otimes}\xi) & =X\otimes_{\smf_{\cM}}\xi \, , \\
j(f\underset{\smf_{\cM}}{\otimes}\xi) &=0 \, ,
\end{split}
\end{equation*}
for all $\xi \in \cMmodule$
, $X\in \XX(\cM)$, and $f\in \smf_{\cM}$.
We call Equation  \eqref{Short} the (first-)\textbf{jet sequence} associated with the $\smf_{\cM}$-module
$\cMmodule$.

Now suppose that $(\cM,\, Q)$ is a dg manifold 
and $(\cMmodule, \, L_Q)$ is a
$(\smf_{\cM}, \, Q)$-module.
Then the space
$\cD_{\cM}^{\leqslant 1} \underset{\smf_{\cM}}{\otimes} \cMmodule$
can be endowed with a $(\smf_{\cM}, \, Q)$-module structure by setting
\[
L_Q(D\underset{\smf_{\cM}}{\otimes} \xi):~~=[Q,\, D]\underset{\smf_{\cM}}{\otimes} \xi +(-1)^{\widetilde{D}}D\underset{\smf_{\cM}}{\otimes} L_Q(\xi) \, ,
\]
where $D\in \cD_{\cM}^{\leqslant 1}$ and $\xi \in \cMmodule$ are homogeneous elements.
Moreover, the jet sequence \eqref{Short} becomes an exact sequence in the category $\Mod$ of
$(\smf_{\cM}, \, Q)$-modules.

\subsection{Smooth connections and Atiyah classes of dg modules}
\begin{definition}\label{smooth connection}
A smooth connection $\nabla$ on
a $\smf_{\cM}$-module $\cMmodule$ is a degree $0$ map
\[
\nabla:~~ \XX(\cM)
\otimes_{\fK} \cMmodule
\rightarrow \cMmodule \, ,
\]
\[
(X, \, \xi) \mapsto
\nabla_{X}\xi
\]
satisfying
\[
\nabla_{fX}\xi=f\nabla_{X}\xi \, ,
\]
\[
\nabla_{X}(f\xi)=X(f)\xi+(-1)^{\widetilde{f}\widetilde{X}}f\nabla_{X}\xi \, ,
\]
for all homogeneous elements
$f\in \smf_{\cM}$,
$X \in \XX(\cM)$, and
$\xi\in \cMmodule$.
\end{definition}

\begin{lemma}
The set of smooth connections on $\cMmodule$ is in one-to-one correspondence with the set of $\smf_{\cM}$-linear splittings of the jet sequence \eqref{Short}.
\end{lemma}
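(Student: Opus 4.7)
The plan is to construct the bijection explicitly in both directions and then verify the required properties.

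Given a smooth connection $\nabla$, I would define the candidate splitting
$$s_{\nabla}(X \otimes_{\smf_{\cM}} \xi) := X \otimes_{\smf_{\cM}} \xi - 1 \otimes_{\smf_{\cM}} \nabla_X \xi,$$
where on the right $X$ is viewed through the inclusion $\XX(\cM) \hookrightarrow \cD_{\cM}^{\leqslant 1}$. Conversely, given an $\smf_{\cM}$-linear splitting $s$, the element $X \otimes_{\smf_{\cM}} \xi - s(X \otimes_{\smf_{\cM}} \xi)$ lies in $\ker j$, which equals the image of $i$; since $i$ is injective, there is a unique $\nabla^s_X \xi \in \cMmodule$ determined by
$$1 \otimes_{\smf_{\cM}} \nabla^s_X \xi \;=\; X \otimes_{\smf_{\cM}} \xi - s(X \otimes_{\smf_{\cM}} \xi).$$

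The required checks are then: (i) $s_{\nabla}$ descends to a well-defined map on $\XX(\cM) \otimes_{\smf_{\cM}} \cMmodule$; (ii) $s_{\nabla}$ is left $\smf_{\cM}$-linear and satisfies $j \circ s_{\nabla} = \id$; (iii) $\nabla^s$ is $\smf_{\cM}$-linear in $X$ and satisfies the graded Leibniz rule in $\xi$; (iv) the two assignments are mutual inverses. Items (ii) and (iv) are immediate from the formulas and the injectivity of $i$. For (iii), $\smf_{\cM}$-linearity in $X$ follows at once from $s(fX \otimes_{\smf_{\cM}} \xi) = f \, s(X \otimes_{\smf_{\cM}} \xi)$, while the Leibniz rule requires expanding $X \otimes_{\smf_{\cM}} f \xi$ using the bimodule identity below.

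The substantive step is (i), and the sole non-trivial ingredient, shared with the Leibniz portion of (iii), is the graded bimodule identity
$$X \cdot f \;=\; X(f) \,+\, (-1)^{\widetilde{f}\widetilde{X}} fX \qquad \text{in } \cD_{\cM}^{\leqslant 1},$$
which produces an extra term $X(f) \otimes_{\smf_{\cM}} \xi = 1 \otimes_{\smf_{\cM}} X(f)\xi$ when comparing the two tensor products $\XX(\cM) \otimes_{\smf_{\cM}} \cMmodule$ and $\cD_{\cM}^{\leqslant 1} \otimes_{\smf_{\cM}} \cMmodule$. This term is exactly cancelled by the Leibniz contribution of $\nabla_X(f\xi)$, which is precisely why $s_{\nabla}$ descends; conversely, it is what forces $\nabla^s$ to satisfy Leibniz. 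The main obstacle is thus careful sign bookkeeping around this one identity, and no further geometric input (in particular, no use of the homological vector field $Q$) is required.
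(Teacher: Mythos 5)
Your proposal is correct and follows essentially the same route as the paper: both directions are given by the single formula $s(X\otimes_{\smf_{\cM}}\xi)=X\otimes_{\smf_{\cM}}\xi-\nabla_X\xi$ (the paper's Equation \eqref{Eqt:nableetc}), with the converse read off via the injectivity of $i$. Your extra bookkeeping—checking that the bimodule identity $X\cdot f=X(f)+(-1)^{\widetilde f\widetilde X}fX$ cancels against the Leibniz term of $\nabla$—is exactly the verification the paper leaves as "easy to verify," so there is nothing further to add.
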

\begin{proof}
Given a smooth connection
$\nabla:~ \XX(\cM)
\otimes_{\fK} \cMmodule
\rightarrow \cMmodule$
defined as earlier by Definition \ref{smooth connection},
we construct a map
\begin{equation*}
s:~ \XX(\cM)\otimes_{\smf_{\cM}}\cMmodule
\rightarrow
\cD_{\cM}^{\leqslant 1} \underset{\smf_{\cM}}{\otimes} \cMmodule
=
(\XX(\cM)\underset{\smf_{\cM}}{\otimes}\cMmodule)
\oplus \cMmodule
\end{equation*}
by setting
\begin{equation}\label{Eqt:nableetc}
s(X\otimes_{\smf_{\cM}}\xi)
=
X \underset{\smf_{\cM}}{\otimes}\xi \, -
\nabla_{X}\xi  \, 
\end{equation}
for all $X\in \XX(\cM)$ and $\xi\in \cMmodule$. 
It is easy to verify that $s$ is $\smf_{\cM}$-bilinear and thus it is well defined.
Clearly, $s$ is a splitting of the jet sequence \eqref{Short}.

Conversely, given a splitting
$s:~ \XX(\cM)\otimes_{\smf_{\cM}}\cMmodule
\rightarrow
\cD_{\cM}^{\leqslant 1} \underset{\smf_{\cM}}{\otimes} \cMmodule $
of the jet sequence \eqref{Short},
one can define
a smooth connection $\nabla$ by Equation \eqref{Eqt:nableetc}. 
 
\end{proof}

Now suppose that $\nabla$ is a smooth connection on
the $\smf_{\cM}$-module $\cMmodule$.
If $\cMmodule$ is endowed with a differential $L_Q$ so that
$(\cMmodule, \, L_Q)$ is a
$(\smf_{\cM}, \, Q)$-module,
then one can define a map
\[
\at_{\cMmodule}^{\nabla}:~
\XX(\cM)\otimes_{\fK}\cMmodule \rightarrow \cMmodule
\, ,
\]
by setting
\[
\at_{\cMmodule}^{\nabla}(X, \, \xi):~=[L_Q,\nabla](X,\xi) =L_Q(\nabla_X \xi)-\nabla_{[Q,X]}\xi-(-1)^{\widetilde{X}}\nabla_{X}L_Q(\xi) \, ,
\]
for homogeneous elements
$X \in \XX(\cM)$ and $\xi\in \cMmodule$.
It is easy to verify that
$\at_{\cMmodule}^{\nabla}$ is
a degree $(+1)$ and $\smf_{\cM}$-bilinear map.
Moreover, it is a degree $0$ cocycle in the cochain complex
$(\Hom_{\smf_{\cM}}^{\bullet}(\XX(\cM)\otimes_{\smf_{\cM}}\cMmodule, \cMmodule[1])	 , \, \, L_Q)$, i.e.,
$L_Q(\at_{\cMmodule}^{\nabla})=0$, or
\[
\at_{\cMmodule}^{\nabla} \in
\Hom_{\Mod}( (\XX(\cM), \, L_Q)\otimes_{\smf_{\cM}}
(\cMmodule, \, L_Q),
(\cMmodule[1], \, L_Q) )
\, .
\]
Instead, we will take the following convention in this paper:
\[
\at_{\cMmodule}^{\nabla} \in
\Hom_{\Mod}
(
(\XX(\cM)[-1], \, L_Q)
\otimes_{\smf_{\cM}}
(\cMmodule, \, L_Q),
(\cMmodule, \, L_Q)
)
\, .
\]
Note that a sign correction is necessary:
\begin{equation}\label{Atiyahcocycleformula}
\at_{\cMmodule}^{\nabla}(\overline{X}, \xi)=
(-1)^{\widetilde{X}}
(
L_Q(\nabla_X \xi)-\nabla_{[Q,X]}\xi-(-1)^{\widetilde{X}}\nabla_{X}L_Q(\xi) )
\, .
\end{equation}
Here the homogeneous element $\overline{X} \in \XX(\cM)[-1]$ corresponds to the homogeneous element $X \in \XX(\cM)$.

\begin{definition}
We call $\at^{\nabla}_{\cMmodule}$
defined in Equation  \eqref{Atiyahcocycleformula} the \textbf{Atiyah cocycle} of
the $(\smf_{\cM}, \, Q)$-module $(\cMmodule, \, L_Q)$
with respect to the smooth connection $\nabla$.
We call
\begin{equation*}
\begin{split}
\at_{\cMmodule}:~=[\at^{\nabla}_{\cMmodule}]
& \in
H^0(
\Hom^{\bullet}_{\smf_{\cM}}(\XX(\cM)[-1]\otimes_{\smf_{\cM}}\cMmodule, \cMmodule), \, L_Q
)
\\
& \quad =
\Hom_{\mathrm{H}(\Mod)}
(
(\XX(\cM)[-1], \, L_Q)
\otimes_{\smf_{\cM}}
(\cMmodule, \, L_Q),
(\cMmodule, \, L_Q)
)
\end{split}
\end{equation*}
the \textbf{Atiyah class} of
the
$(\smf_{\cM}, \, Q)$-module $(\cMmodule, \, L_Q)$.
\end{definition}
Here the Atiyah class $\at_{\cMmodule}$
is regarded as a morphism
\[
\at_{\cMmodule}: (\XX(\cM)[-1], \, L_Q)\otimes_{\smf_{\cM}}(\cMmodule, \, L_Q) \rightarrow (\cMmodule, \, L_Q)
\]
in the homology category $\mathrm{H}(\Mod)$.
By the natural functor
$\mathrm{H}(\Mod) \rightarrow \Pi(\Mod)$,
the above
$\at_{\cMmodule}$ can also be treated as a morphism in the homotopy category $\Pi(\Mod)$.

\begin{remark}
In general, the existence of a smooth connection on
the $\smf_{\cM}$-module $\cMmodule$
is not guaranteed.
So one can not use the definition of Atiyah class as above.
Instead,  we define the Atiyah class
$\at_{\cMmodule}$ of
the $(\smf_{\cM}, \, Q)$-module $(\cMmodule, \, L_Q)$ to be the element
\[
\at_{\cMmodule}\in
\Ext ^1_{\Mod}(
(\XX(\cM), \, L_Q)
\otimes_{\smf_{\cM}}
(\cMmodule, \, L_Q),
(\cMmodule, \, L_Q)
)
\]
corresponding  to the jet sequence \eqref{Short} in the category of $(\smf_{\cM}, \, Q)$-modules.
We have the standard isomorphism:
\begin{equation*}
\begin{split}
& \quad \Ext ^1_{\Mod}(
(\XX(\cM), \, L_Q)
\otimes_{\smf_{\cM}}
(\cMmodule, \, L_Q),
(\cMmodule, \, L_Q)
) \\
& \quad\quad\quad\quad
\simeq
\Hom_{\Pi(\Mod)}(
(\XX(\cM)[-1], \, L_Q)
\otimes_{\smf_{\cM}}
(\cMmodule, \, L_Q),
(\cMmodule, \, L_Q)
)
\, .
\end{split}
\end{equation*}
Thus $\at_{\cMmodule}$ can be considered as
a morphism from
$(\XX(\cM)[-1], \, L_Q)
\otimes_{\smf_{\cM}}
(\cMmodule, \, L_Q)$
to
$(\cMmodule, \, L_Q)$
in the homotopy category $\Pi(\Mod)$,
and it is represented by the following roof
in the category $\Mod$:
\begin{equation*}\label{roof}
\begin{xy}
(0,0)*+{(\XX(\cM)[-1], \, L_Q)\otimes_{\smf_{\cM}}(\cMmodule, \, L_Q)}="a";
(60,0)*+{(\cMmodule, \, L_Q) \,\,.}="b";
(30,20)*+{(\mathrm{Cone}(i), \, L_Q)}="c";
{\ar_{q} "c";"a"};
{\ar^{p} "c";"b"};
{\ar@{-->}^-{\at_{\cMmodule}} "a";"b"};
\end{xy}
\end{equation*}
Here
the $(\smf_{\cM}, \, Q)$-module
\[
(\mathrm{Cone}(i), \, L_Q):~=
(
(\cD_{\cM}^{\leqslant 1}[-1]\underset{\smf_{\cM}}{\otimes} {\cMmodule})
\oplus \cMmodule, \, L_Q
)
\]
is indeed the mapping cone of $i$
with the natural induced differential $L_Q$.
The left roof pane $q:=j\circ \mathrm{pr}_1$ is a quasi-isomorphism
in $\Mod$ and the right roof pane is $p:=\mathrm{pr}_2$, where
$\mathrm{pr}_1$ and $\mathrm{pr}_2$ are projections to respectively the first and second direct summands of $\mathrm{Cone}(i)$.
\end{remark}

The following lemma is easily verified, and is left to the reader.
 \begin{proposition}
Suppose that the jet sequence \eqref{Short}
splits in the category of $\smf_{\cM}$-modules. The following statements are equivalent.
\begin{itemize}
\item[(1)]
The jet sequence \eqref{Short} splits in the category $\Mod$ of $(\smf_{\cM}, \, Q)$-modules.
 \item[(2)]
There exists a smooth connection $\nabla$ on
$\cMmodule$ which is compatible with
the $(\smf_{\cM}, \, Q)$-module structures
of $(\XX(\cM), \, L_Q)$ and
$(\cMmodule, \, L_Q)$, i.e.,
the equality $L_Q \nabla = \nabla L_Q$ holds.

\item[(3)]
The Atiyah class $\at_{\cMmodule}$ vanishes.
\end{itemize}
\end{proposition}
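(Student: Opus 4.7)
The plan is to deduce (1) $\Leftrightarrow$ (2) from the one-to-one correspondence between smooth connections on $\cMmodule$ and $\smf_{\cM}$-linear splittings of \eqref{Short} (already established in the lemma above), and then to obtain (2) $\Leftrightarrow$ (3) by identifying the Atiyah cocycle with the defect of $\nabla$ commuting with $L_Q$ and exploiting the cocycle/coboundary structure of the complex $(\Hom^{\bullet}_{\smf_{\cM}}(\XX(\cM)[-1] \otimes_{\smf_{\cM}} \cMmodule, \, \cMmodule), \, L_Q)$.

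For the equivalence (1) $\Leftrightarrow$ (2), I would start from the splitting $s(X \otimes_{\smf_{\cM}} \xi) = X \underset{\smf_{\cM}}{\otimes} \xi - \nabla_X \xi$ attached to $\nabla$ via Equation~\eqref{Eqt:nableetc}, and compute
\[
(L_Q \circ s - s \circ L_Q)(X \otimes_{\smf_{\cM}} \xi) = -\bigl(L_Q(\nabla_X \xi) - \nabla_{[Q,X]}\xi - (-1)^{\widetilde{X}}\nabla_X L_Q(\xi)\bigr).
\]
The right-hand side is, up to the sign $(-1)^{\widetilde X}$ of Equation~\eqref{Atiyahcocycleformula}, exactly the Atiyah cocycle $\at^{\nabla}_{\cMmodule}(\overline X, \xi)$. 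Hence the splitting $s$ is a morphism of $(\smf_{\cM}, Q)$-modules (i.e., statement (1)) if and only if $L_Q \nabla = \nabla L_Q$ (i.e., statement (2)). The $\smf_{\cM}$-bilinearity of the resulting expression, together with the lemma above, identifies compatible splittings with compatible connections.

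For (2) $\Rightarrow$ (3), one simply observes that if $\nabla$ satisfies $L_Q\nabla = \nabla L_Q$, then $\at^{\nabla}_{\cMmodule}\equiv 0$ by Equation~\eqref{Atiyahcocycleformula}, so the represented class $\at_{\cMmodule} = [\at^{\nabla}_{\cMmodule}] = 0$. For the harder converse (3) $\Rightarrow$ (2), I would pick any smooth connection $\nabla_0$ on $\cMmodule$ (which exists by the standing hypothesis via the correspondence with splittings of \eqref{Short}) and consider its Atiyah cocycle $\at^{\nabla_0}_{\cMmodule}$, a degree $0$ cocycle in the cochain complex
\[
(\Hom^{\bullet}_{\smf_{\cM}}(\XX(\cM)[-1] \otimes_{\smf_{\cM}} \cMmodule, \, \cMmodule), \, L_Q).
\]
Since the cohomology class $\at_{\cMmodule}$ of this cocycle vanishes, there exists a degree $(-1)$ element $\phi$ in this $\Hom$-complex with $L_Q\phi = \at^{\nabla_0}_{\cMmodule}$. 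A degree $(-1)$ element $\phi$ here is nothing but a $\smf_{\cM}$-bilinear degree $0$ map $\XX(\cM) \otimes_{\smf_{\cM}} \cMmodule \to \cMmodule$, which is the same datum as the $C^\infty_{\cM}$-linear part that one is free to add to a smooth connection. Thus defining
\[
\nabla := \nabla_0 - \phi,
\]
one checks directly from Equation~\eqref{Atiyahcocycleformula} that $\at^{\nabla}_{\cMmodule} = \at^{\nabla_0}_{\cMmodule} - L_Q\phi = 0$, i.e.\ $[L_Q, \nabla] = 0$, giving (2).

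The only delicate point is ensuring the sign conventions are consistent in the bookkeeping of (3) $\Rightarrow$ (2): one must verify that modifying a smooth connection by an arbitrary $\smf_{\cM}$-bilinear tensor $\phi$ produces precisely the coboundary $L_Q\phi$ in the cochain complex above, with the same sign as in the shifted convention of Equation~\eqref{Atiyahcocycleformula}. Once this matching is confirmed, the three implications close up. The rest of the verification is routine and uses only the derivation property of $L_Q$ on $\XX(\cM)$, the $\smf_{\cM}$-bilinearity of connections and of $\phi$, and the fact that affine combinations of connections (over $\smf_{\cM}$-linear tensors) remain connections.
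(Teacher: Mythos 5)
Your proof is correct, and since the paper explicitly leaves this proposition to the reader, it supplies exactly the intended verification: (1)$\Leftrightarrow$(2) via the connection--splitting bijection and the identity $L_Q\circ s - s\circ L_Q = -[L_Q,\nabla]$, and (3)$\Rightarrow$(2) by absorbing a primitive $\phi$ of $\at^{\nabla_0}_{\cMmodule}$ into the connection. The one point you flag does check out: the difference of two smooth connections is precisely an $\smf_{\cM}$-bilinear degree-$0$ tensor $\phi$, and a direct computation with the convention of Equation~\eqref{Atiyahcocycleformula} gives $\at^{\nabla_0-\phi}_{\cMmodule}=\at^{\nabla_0}_{\cMmodule}-L_Q\phi$ in the shifted $\Hom$-complex, so the argument closes as you describe.
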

\begin{proposition}\label{Atiyahcocyleoftensorproduct}
Let $\nabla^1$ and $\nabla^2$ be, respectively, smooth connections on $\smf_{\cM}$-modules
$\cMmodule_1$ and $\cMmodule_2$.
Then
\[
\nabla_X (\xi_1 \otimes_{\smf_{\cM}} \xi_2):~~ =(\nabla^1_X \xi_1)\otimes_{\smf_{\cM}} \xi_2 +(-1)^{\widetilde{X}\widetilde{\xi}_1} \xi_1 \otimes_{\smf_{\cM}} \nabla^2_X \xi_2 \, ,\quad \forall \xi_1 \in \cMmodule_1, \, \xi_2 \in \cMmodule_2 \, ,
\]
defines a smooth connection $\nabla$ on
$\cMmodule_1 \otimes_{\smf_{\cM}} \cMmodule_2$.
Moreover, if $(\cMmodule_1, \, L_Q)$ and
$(\cMmodule_2, \, L_Q)$ are
$(\smf_{\cM}, \, Q)$-modules,
then the Atiyah cocycle
\[
\at^{\nabla}_{\cMmodule_1 \otimes_{\smf_{\cM}} \cMmodule_2 }:~~ \XX(\cM)[-1] \otimes_{\smf_{\cM}} (\cMmodule_1 \otimes_{\smf_{\cM}} \cMmodule_2) \rightarrow
\cMmodule_1 \otimes_{\smf_{\cM}} \cMmodule_2
\]
is given by
\begin{equation*}\label{Atiyahcocyleoftensorproductformula}
\at^{\nabla}_{\cMmodule_1 \otimes_{\smf_{\cM}} \cMmodule_2 }(\overline{X}, \xi_1 \otimes_{\smf_{\cM}} \xi_2)=
\at^{\nabla^1}_{\cMmodule_1}(\overline{X} , \xi_1)\otimes_{\smf_{\cM}} \xi_2 + (-1)^{(\widetilde{X}+1)\widetilde{\xi}_1} \xi_1 \otimes_{\smf_{\cM}}
\at^{\nabla^2}_{\cMmodule_2}(\overline{X} , \xi_2) \, ,
\end{equation*}
for all homogeneous elements
$X \in \XX(\cM)$, $\xi_1 \in \cMmodule_1$, and $\xi_2 \in \cMmodule_2$.
\end{proposition}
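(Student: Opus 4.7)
The proposition splits naturally into two claims: first, that the formula for $\nabla$ defines a smooth connection on $\cMmodule_1 \otimes_{\smf_{\cM}} \cMmodule_2$; second, that its Atiyah cocycle obeys the stated Leibniz-type identity. The plan is to verify the first claim by a direct check of the axioms in Definition \ref{smooth connection}, and then to derive the formula for the Atiyah cocycle by expanding $\at^\nabla$ according to Equation \eqref{Atiyahcocycleformula}, repeatedly applying the graded Leibniz rule for $L_Q$ on a tensor product and the definitions of $\nabla^1$, $\nabla^2$, and carefully tracking Koszul signs.

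For the first claim, I would first verify that the assignment $(\xi_1, \xi_2) \mapsto (\nabla^1_X \xi_1)\otimes_{\smf_{\cM}} \xi_2 + (-1)^{\widetilde{X}\widetilde{\xi}_1}\xi_1 \otimes_{\smf_{\cM}} \nabla^2_X \xi_2$ is $\smf_{\cM}$-balanced; this reduces to a short sign computation using the Leibniz rule $\nabla^i_X(f\xi)=X(f)\xi+(-1)^{\widetilde{f}\widetilde{X}}f\nabla^i_X\xi$, after which the two contributions of the form $X(f)\xi_1 \otimes \xi_2$ cancel against $\xi_1 \otimes X(f)\xi_2$ modulo the relation $\xi_1 \otimes_{\smf_{\cM}} f\xi_2 = (-1)^{\widetilde{f}\widetilde{\xi}_1} f\xi_1 \otimes_{\smf_{\cM}} \xi_2$. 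The $\smf_{\cM}$-linearity in $X$ is immediate from the same property for $\nabla^1$ and $\nabla^2$, and the Leibniz rule for $\nabla$ on a tensor is then a routine consequence.

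For the second claim, I would write $\at^\nabla_{\cMmodule_1 \otimes_{\smf_{\cM}} \cMmodule_2}(\overline{X}, \xi_1 \otimes_{\smf_{\cM}} \xi_2)$ as
\[
(-1)^{\widetilde{X}}\bigl(L_Q\nabla_X(\xi_1\otimes_{\smf_{\cM}}\xi_2) - \nabla_{[Q,X]}(\xi_1\otimes_{\smf_{\cM}}\xi_2) - (-1)^{\widetilde X}\nabla_X L_Q(\xi_1\otimes_{\smf_{\cM}}\xi_2)\bigr),
\]
and expand each of the three terms. Using $L_Q(\eta_1\otimes\eta_2) = L_Q\eta_1\otimes\eta_2 + (-1)^{\widetilde{\eta_1}}\eta_1\otimes L_Q\eta_2$ together with the Leibniz rule for $\nabla$, each term produces four summands; regrouping according to which tensor factor carries $L_Q$ and $\nabla$ yields exactly the combinations
\[
\bigl(L_Q\nabla^1_X\xi_1 - \nabla^1_{[Q,X]}\xi_1 - (-1)^{\widetilde X}\nabla^1_X L_Q\xi_1\bigr)\otimes_{\smf_{\cM}}\xi_2
\]
and its partner in the second slot, up to overall signs. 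Multiplying by the prefactor $(-1)^{\widetilde{X}}$ and simplifying using $\overline{X}\in \XX(\cM)[-1]$ converts these to $\at^{\nabla^1}_{\cMmodule_1}(\overline{X},\xi_1)\otimes_{\smf_{\cM}}\xi_2$ and $(-1)^{(\widetilde{X}+1)\widetilde{\xi}_1}\xi_1\otimes_{\smf_{\cM}}\at^{\nabla^2}_{\cMmodule_2}(\overline{X},\xi_2)$; the ``cross'' terms involving one $L_Q$ and one $\nabla$ on different factors cancel in pairs.

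The only real obstacle is bookkeeping of Koszul signs: one must be consistent about the degrees of $\nabla^i_X\xi_i$ (namely $\widetilde{X}+\widetilde{\xi}_i$) and about the shift present in $\overline{X}$ versus $X$, which flips the overall sign convention for the Atiyah cocycle. A disciplined, one-term-at-a-time expansion tabulating the sign contribution from each move (Koszul swap, Leibniz term, prefactor $(-1)^{\widetilde X}$ from Equation \eqref{Atiyahcocycleformula}) is essentially the whole proof; no conceptual ingredient beyond the definitions is required.
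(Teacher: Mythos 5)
Your proposal is correct and follows essentially the same route as the paper: the connection axioms are checked directly (the paper dismisses this as an easy verification), and the cocycle formula is obtained by expanding $\at^{\nabla}_{\cMmodule_1 \otimes_{\smf_{\cM}} \cMmodule_2}(\overline{X}, \xi_1 \otimes_{\smf_{\cM}} \xi_2)$ via Equation \eqref{Atiyahcocycleformula}, applying the Leibniz rules for $L_Q$ and $\nabla$ on the tensor product, and observing that the mixed terms cancel while the remaining groups reassemble into $\at^{\nabla^1}_{\cMmodule_1}(\overline{X},\xi_1)\otimes_{\smf_{\cM}}\xi_2$ and $(-1)^{(\widetilde{X}+1)\widetilde{\xi}_1}\xi_1\otimes_{\smf_{\cM}}\at^{\nabla^2}_{\cMmodule_2}(\overline{X},\xi_2)$. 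The only caveat is a minor bookkeeping slip in your sketch (the middle term $\nabla_{[Q,X]}(\xi_1\otimes_{\smf_{\cM}}\xi_2)$ produces only two summands, not four), which does not affect the argument.
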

\begin{proof}
It is easy to check that $\nabla$ is a smooth connection on
$\cMmodule_1\otimes_{\smf_{\cM}} \cMmodule_2$.
By Equation  \eqref{Atiyahcocycleformula}, we have:
\begin{equation*}
\begin{split}
\at_{\cMmodule_1 \otimes_{\smf_{\cM}} \cMmodule_2}^{\nabla}(\overline{X},
\xi_1 \otimes_{\smf_{\cM}} \xi_2)
& =
(-1)^{\widetilde{X}}
(
L_Q(
\nabla_X (\xi_1 \otimes_{\smf_{\cM}} \xi_2)
)-
\nabla_{[Q,X]}
(\xi_1 \otimes_{\smf_{\cM}} \xi_2) \\
& \quad -(-1)^{\widetilde{X}}\nabla_{X}L_Q(\xi_1 \otimes_{\smf_{\cM}} \xi_2) )  \\
& =
(-1)^{\widetilde{X}}
(
L_Q(\nabla_X^{1} \xi_1)-\nabla_{[Q,X]}^{1}\xi_1-(-1)^{\widetilde{X}}\nabla_{X}^{1}L_Q\xi_1)\otimes_{\smf_{\cM}}\xi_2 \\
& \quad +
(-1)^{(\widetilde{X}+1)\widetilde{\xi}_1 + \widetilde{X} }
\xi_1 \otimes_{\smf_{\cM}}
(L_Q(\nabla_X^{2} \xi_2)-\nabla_{[Q,X]}^{2}\xi_2-(-1)^{\widetilde{X}}\nabla_{X}^{2}L_Q\xi_2) \\
& =
\at^{\nabla^1}_{\cMmodule_1}(\overline{X} , \xi_1)\otimes_{\smf_{\cM}} \xi_2 + (-1)^{(\widetilde{X}+1)\widetilde{\xi}_1} \xi_1 \otimes_{\smf_{\cM}}
\at^{\nabla^2}_{\cMmodule_2}(\overline{X} , \xi_2)
\, .
\end{split}
\end{equation*}
\end{proof}

We also have the following functorial property of Atiyah classes.
\begin{proposition}\label{functoriality}
If both $(\cMmodule_1, \, L_Q)$ and $(\cMmodule_2, \, L_Q)$  admit smooth connections
and $\varphi:~~ (\cMmodule_1, \, L_Q)\rightarrow (\cMmodule_2, \, L_Q)$ is a morphism of
$(\smf_{\cM}, \, Q)$-modules,
then the following diagram
\[
\xymatrix{
(\XX(\cM)[-1], \, L_Q) {\otimes_{\smf_{\cM}}} (\cMmodule_1, \, L_Q)
\ar[r]^{\id \otimes \varphi} \ar[d]_{\at_{\cMmodule_1}} &
(\XX(\cM)[-1], \, L_Q){\otimes_{\smf_{\cM}}} (\cMmodule_2, \, L_Q)
\ar[d]^{\at_{\cMmodule_2}}\\
(\cMmodule_1, \, L_Q) \ar[r]^{\varphi} &
(\cMmodule_2, \, L_Q) ~~ \\
}
\]
commutes
in the homology category $\mathrm{H}(\Mod)$.
\end{proposition}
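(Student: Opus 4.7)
The plan is to produce an explicit cochain homotopy between the two paths around the square after choosing smooth connections as representatives. Pick smooth connections $\nabla^1$ and $\nabla^2$ on $\cMmodule_1$ and $\cMmodule_2$ respectively. By Equation~\eqref{Atiyahcocycleformula}, these give Atiyah cocycles $\at^{\nabla^1}_{\cMmodule_1}$ and $\at^{\nabla^2}_{\cMmodule_2}$ which represent the classes $\at_{\cMmodule_1}$ and $\at_{\cMmodule_2}$ in the $\smf_{\cM}$-linear Hom complex. It therefore suffices to exhibit a $\smf_{\cM}$-bilinear map of total degree $(-1)$
\[
h:~ \XX(\cM)[-1] \otimes_{\smf_{\cM}} \cMmodule_1 \to \cMmodule_2
\]
whose $L_Q$-differential (in the Hom complex) equals
$\varphi \circ \at^{\nabla^1}_{\cMmodule_1} - \at^{\nabla^2}_{\cMmodule_2} \circ (\id \otimes \varphi)$.

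The natural candidate is
\[
h(\overline{X}, \, \xi) \, := \, (-1)^{\widetilde{X}}\bigl( \varphi(\nabla^1_X \xi) - \nabla^2_X \varphi(\xi)  \bigr),
\]
for homogeneous $X \in \XX(\cM)$ and $\xi \in \cMmodule_1$. First I would check that $h$ is genuinely $\smf_{\cM}$-bilinear: the potentially non-linear contributions from each $\nabla^i$ are of the form $X(f)\,(\cdot)$ by the Leibniz rule of Definition~\ref{smooth connection}, and these cancel between $\varphi(\nabla^1_X (f\xi))$ and $\nabla^2_X \varphi(f\xi)$ because $\varphi$ is $\smf_{\cM}$-linear. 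The same observation shows that the difference $\varphi \circ \nabla^1 - \nabla^2 \circ \varphi$ is a genuine tensor in $\Hom_{\smf_{\cM}}(\XX(\cM) \otimes_{\smf_{\cM}} \cMmodule_1, \, \cMmodule_2)$.

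Next I would compute $dh$ in the Hom complex, where the differential of a degree $(-1)$ map is $dh = L_Q \circ h + h \circ L_Q$. Expanding and using the fact that $\varphi$ commutes with $L_Q$, the four terms arising from $L_Q(\varphi \nabla^1_X \xi)$, $\varphi \nabla^1_{[Q,X]}\xi$, $\varphi \nabla^1_X L_Q \xi$ assemble precisely into $\varphi \circ \at^{\nabla^1}_{\cMmodule_1}(\overline{X}, \xi)$ via Equation~\eqref{Atiyahcocycleformula}, while the four analogous terms from the $\nabla^2$ side assemble into $\at^{\nabla^2}_{\cMmodule_2}(\overline{X}, \varphi(\xi))$. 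The resulting identity $dh = \varphi \circ \at^{\nabla^1}_{\cMmodule_1} - \at^{\nabla^2}_{\cMmodule_2} \circ (\id \otimes \varphi)$ shows that these two cocycles are cohomologous, so the diagram commutes in $\mathrm{H}(\Mod)$.

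The main obstacle is purely bookkeeping: tracking the $(-1)^{\widetilde{X}}$ shift built into the Atiyah cocycle together with the Koszul sign $(-1)^{|\overline{X}|} = (-1)^{\widetilde{X}+1}$ picked up when $L_Q$ crosses $\overline{X}$ in the tensor product $\XX(\cM)[-1] \otimes_{\smf_{\cM}} \cMmodule_1$. Once these signs are reconciled, the computation is a direct, term-by-term expansion. No other ingredient is required; if either $\cMmodule_1$ or $\cMmodule_2$ fails to admit a smooth connection, one passes instead to the roof representation of $\at_{\cMmodule_i}$ via the mapping cones of the respective jet sequences and mimics the above homotopy at the level of roofs, but the hypothesis of the proposition lets us avoid this detour.
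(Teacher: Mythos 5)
Your proposal is correct and follows essentially the same route as the paper: the paper's proof consists precisely of the observation that $\varphi\circ \at_{\cMmodule_1}^{\nabla_1} - \at_{\cMmodule_2}^{\nabla_2}\circ(\id\otimes\varphi) = L_Q\bigl(\varphi\circ\nabla_1 - \nabla_2\circ(\id\otimes\varphi)\bigr)$, i.e.\ the same homotopy $h$ (up to the sign convention $(-1)^{\widetilde{X}}$ you track explicitly). Your additional checks of $\smf_{\cM}$-bilinearity and the degree bookkeeping simply flesh out what the paper leaves as a direct verification.
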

\begin{proof}
Let $\nabla_1$ and $\nabla_2$ be smooth connections
on $(\cMmodule_1, \, L_Q)$ and $(\cMmodule_2, \, L_Q)$
respectively.
We only need to show that the following diagram
\[
\xymatrix{
(\XX(\cM)[-1], \, L_Q) {\otimes_{\smf_{\cM}}} (\cMmodule_1, \, L_Q)
\ar[r]^{\id \otimes \varphi} \ar[d]_{\at_{\cMmodule_1}^{\nabla_1}} &
(\XX(\cM)[-1], \, L_Q){\otimes_{\smf_{\cM}}} (\cMmodule_2, \, L_Q)
\ar[d]^{\at_{\cMmodule_2}^{\nabla_2}}\\
(\cMmodule_1, \, L_Q) \ar[r]^{\varphi} &
(\cMmodule_2, \, L_Q) ~~ \\
}
\]
commutes up to a cochain homotopy
between $(\smf_{\cM}, \, Q)$-modules  in the category $\Mod$. In fact, we have the equality
\begin{equation*}
\varphi\circ \at_{\cMmodule_1}^{\nabla_1}
- \at_{\cMmodule_2}^{\nabla_2}\circ (\id_{\XX(\cM)[-1]}\otimes \varphi)
=
L_Q(\varphi \circ \nabla_1 - \nabla_2 \circ (\id_{\XX(\cM)[-1]} \otimes \varphi )) \, ,	
\end{equation*}
which can be verified directly.
\end{proof}

\subsection{Atiyah classes of dg complexes}\

Now we consider a $(\smf_{\cM}, \, Q)$-complex
$(\Upsilon, \, L_Q , \, \delta)$.
Recall that the corresponding total space
 $(\tot \Upsilon, \, L_Q+\delta)$ is a
 $(\smf_{\cM}, \, Q)$-module.

\begin{definition}
Let $(\Upsilon, \, L_Q, \, \delta)$ be a
$(\smf_{\cM}, \, Q)$-complex such that
$(\tot \Upsilon, \, L_Q+\delta)$ admits a smooth connection.
We define the \textbf{Atiyah class} $\at_{\Upsilon}$ of
$(\Upsilon, \, L_Q, \, \delta)$ to be the
Atiyah class of the $(\smf_{\cM}, \, Q)$-module
$(\tot\Upsilon, \, L_Q+\delta)$, i.e.,
\begin{equation*}
\begin{split}
\at_{\Upsilon}:~~ =\at_{\tot \Upsilon}
& \in
H^0(\Hom^{\bullet}_{\smf_{\cM}}(
\XX(\cM)[-1]\otimes_{\smf_{\cM}}\tot \Upsilon, \tot \Upsilon), \, L_Q+\delta) \\
& \quad \quad \simeq
\Hom_{\mathrm{H}(\Mod)}(
(\XX(\cM)[-1], \, L_Q)
\otimes_{\smf_{\cM}}
(\tot \Upsilon, \, L_Q+\delta),
(\tot \Upsilon, \, L_Q+\delta)
) \, .
\end{split}
\end{equation*}
\end{definition}

\begin{remark}\label{remark on connections on dg complexes}
Suppose that each component $\Upsilon^{n,\bullet}$ of
$(\Upsilon, \, L_Q, \, \delta)$ admits a smooth connection $\nabla_n$, then $\nabla^{\tot}=\oplus_n \nabla_n$ defines a smooth connection on $\tot \Upsilon$.
In this case, we have
\begin{equation*}
\begin{split}
\at^{\nabla^{\tot}}_{\tot\Upsilon}
&=[L_Q + \delta, \nabla^{\tot}]\\
& \quad\quad ={}
(L_Q\circ \nabla^{\tot} - \nabla^{\tot}\circ L_Q)+
(\delta\circ \nabla^{\tot} - \nabla^{\tot}\circ (\id  \otimes\delta) ) \\
& \quad\quad\quad ={}
\sum_n \at_{\Upsilon^{n,\bullet}}^{\nabla_n} +
\sum_n
(\delta\circ \nabla_n - \nabla_{n+1}\circ (\id  \otimes\delta)).
\end{split}
\end{equation*}
We observe that
the first part
$\sum\limits_n
\at_{\Upsilon^{n,\bullet}}^{\nabla_n}$ sends
the component
$(\Tp \widetilde{\otimes} \Upsilon)^{p,q}$ to
$\Upsilon^{p-1,q+1}$, while the second part
$\sum\limits_n (\delta\circ \nabla_n - \nabla_{n+1}\circ (\id  \otimes\delta))$ sends the component
$(\Tp \widetilde{\otimes} \Upsilon)^{p,q}$ to
$\Upsilon^{p,q}$.
For this reason, we can \emph{not} regard
the Atiyah cocycle
$\at^{\nabla^{\tot}}_{\tot\Upsilon}$
as a morphism of $(\smf_{\cM}, \, Q)$-complexes from $\Tp \widetilde{\otimes}
(\Upsilon, \, L_Q, \, \delta)$ to $(\Upsilon, \, L_Q, \, \delta)$.
\end{remark}

\section{Lie structures}\label{Section5}
\subsection{The Atiyah class as a Lie bracket on
$\XX(\cM)[-1]$}\

Let $(\cM, \, Q)$ be a dg manifold.
By results of Mehta, Sti\'enon, and Xu \cite{M-S-X},
the $\fK$-vector space $ \XX(\cM)[-1] $ admits an $L_{\infty}$-algebra structure. In fact, one can choose a torsion free smooth connection $\nabla$ on
$\XX(\cM)$ to construct a sequence of  operations:~~
\begin{equation*}\label{Linfinity}
\begin{split}
R_1=L_Q :~ & \XX({\cM})[-1] \rightarrow \XX({\cM})[-1]\, , \\
R_2=\at_{\XX(\cM)[-1]}^{\nabla}:~ &  \wedge^2(\XX({\cM})[-1]) \rightarrow
\XX({\cM})[-1] \, , \\
& \quad \quad \quad \quad \cdots \\
R_k :~ & \wedge^k(\XX({\cM})[-1]) \rightarrow \XX({\cM})[-1] \, , \cdots .
\end{split}
\end{equation*}
Then the operations $\{R_k\}_{k \geqslant 1 }$ turn
$\XX_{\cM}[-1]$ into an $L_{\infty}$-algebra.
One can find the details of
this construction in \cite{M-S-X}.

We denote by $\at_{\cM}$ the Atiyah class
  of the $(\smf_{\cM}, \, Q)$-module $(\XX(\cM)[-1], \, L_Q)$,
and treat it as a morphism
\[
\at_{\cM}:~~
(\XX({\cM})[-1], \, L_Q)\otimes_{\smf_{\cM}}(\XX({\cM})[-1], \, L_Q)
\rightarrow
(\XX({\cM})[-1], \, L_Q)
\]
in the homology category $\mathrm{H}(\Mod)$.
As $\XX({\cM})[-1]$ is an $L_{\infty}$-algebra whose binary bracket is the
Atiyah cocycle $\at_{\XX(\cM)[-1]}^{\nabla}$, we have
\begin{theorem}\label{Thm:AtiyahdefinesLiebracket}
The Atiyah class $\at_{\cM}$ defines a Lie bracket
on $(\XX({\cM})[-1], \, L_Q)$ in the homology category
$\mathrm{H}(\Mod)$. 	
\end{theorem}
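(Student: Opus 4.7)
The plan is to derive the theorem directly from the $L_\infty$-algebra structure on $\XX(\cM)[-1]$ established by Mehta--Sti\'enon--Xu, which was recalled immediately above. First I would fix a torsion-free smooth connection $\nabla$ on $T_\cM$ and form the associated sequence of operations $\{R_k\}_{k\geq 1}$ with $R_1 = L_Q$ and $R_2 = \at^{\nabla}_{\XX(\cM)[-1]}$. By construction the cohomology class of $R_2$ in the Hom-complex
$\Hom_{\smf_{\cM}}^{\bullet}(\wedge^2(\XX(\cM)[-1]),\XX(\cM)[-1])$ is precisely the Atiyah class $\at_\cM$, and this class is independent of $\nabla$ by the general discussion of Section~\ref{Section4}.

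The next step is to interpret the low-order $L_\infty$-axioms. The $k=2$ axiom is the cocycle identity $L_Q(\at^{\nabla}_{\XX(\cM)[-1]})=0$ already noted in Section~\ref{Section4}, which says that $R_2$ is a chain map and therefore that $\at_\cM$ is a well-defined morphism in $\mathrm{H}(\Mod)$. The graded antisymmetry of $R_2$ is built into the $L_\infty$-formalism (since $R_2$ is defined on $\wedge^2(\XX(\cM)[-1])$ rather than on the tensor square), and descends verbatim to the antisymmetry of $\at_\cM$ as a morphism in $\mathrm{H}(\Mod)$.

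The decisive step is the Jacobi identity. The $k=3$ axiom asserts that the Jacobiator of $R_2$ equals the total $L_Q$-boundary of $R_3$ in the Hom-complex, schematically
\[
\sum_{\mathrm{cyclic}} R_2\bigl(R_2(\overline{X},\overline{Y}),\,\overline{Z}\bigr)
\;=\; L_Q\bigl(R_3(\overline{X},\overline{Y},\overline{Z})\bigr)
\;+\; \sum_{\mathrm{cyclic}} R_3\bigl(L_Q\overline{X},\,\overline{Y},\,\overline{Z}\bigr).
\]
Since the right-hand side is a coboundary in
$\Hom_{\smf_{\cM}}^{\bullet}(\wedge^3(\XX(\cM)[-1]),\XX(\cM)[-1])$, it vanishes when the operations are viewed as morphisms in $\mathrm{H}(\Mod)$. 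Consequently the Jacobi identity for $\at_\cM$ holds strictly in $\mathrm{H}(\Mod)$, and the triple $(\XX(\cM)[-1],L_Q;\,\at_\cM)$ is a Lie algebra object there.

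The main obstacle I anticipate is the Koszul sign bookkeeping. One must reconcile the MSX conventions for brackets on the shifted space $\XX(\cM)[-1]$ with the sign convention adopted in Equation~\eqref{Atiyahcocycleformula}, and then translate the $L_\infty$-axioms (written on exterior powers with the unary bracket in degree $+1$) into the graded antisymmetry and Jacobi identity on the tensor powers $(\XX(\cM)[-1])^{\otimes 2}$ and $(\XX(\cM)[-1])^{\otimes 3}$ that are required for a Lie algebra object in the symmetric monoidal category $\mathrm{H}(\Mod)$. Once this translation is in place, the theorem is a formal consequence of the MSX construction and the definition of the homology category.
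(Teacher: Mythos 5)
Your proposal is correct and follows essentially the same route as the paper: Theorem \ref{Thm:AtiyahdefinesLiebracket} is obtained there precisely as a consequence of the Mehta--Sti\'enon--Xu $L_\infty$-structure on $\XX(\cM)[-1]$ whose unary bracket is $L_Q$ and whose binary bracket is the Atiyah cocycle, so that the cocycle condition gives a well-defined morphism in $\mathrm{H}(\Mod)$ and the $k=3$ axiom exhibits the Jacobiator as an $L_Q$-coboundary, hence zero in $\mathrm{H}(\Mod)$. Your write-up simply makes explicit the translation (antisymmetry and Koszul signs on the shifted space) that the paper leaves implicit.
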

Therefore, $\at_{\cM}$ also defines a Lie bracket on $(\XX({\cM})[-1], \, L_Q)$ in the homotopy category $\Pi(\Mod)$.
The following fact  is analogous to that of Kapranov \cite{Kapranov}, Chen-Sti\'enon-Xu \cite{C-S-X2} and Chen-Liu-Xiang \cite{C-L-X-HHA}.
\begin{theorem}\label{Atiyahclassasmodulestructuremap}
Let $(\cMmodule, \, L_Q)$ be a
$(\smf_{\cM}, \, Q)$-module.
The Atiyah class
\[
\at_{\cMmodule}:~
(\XX(\cM)[-1], \, L_Q)
\otimes_{\smf_{\cM}}
(\cMmodule, \, L_Q)
\rightarrow
(\cMmodule, \, L_Q)
\]
turns $(\cMmodule, \, L_Q)$ into a Lie algebra module object over the aforesaid Lie algebra object
$(\XX(\cM)[-1],$
$L_Q)$ in the homology category
$\mathrm{H}(\Mod)$. 	 	
\end{theorem}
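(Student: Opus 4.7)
The plan is to mimic the strategy underlying Theorem \ref{Thm:AtiyahdefinesLiebracket}: extend the Mehta--Sti\'enon--Xu $L_\infty$-algebra on $\XX(\cM)[-1]$ to an $L_\infty$-module structure on $(\cMmodule,\, L_Q)$ whose binary operation represents $\at_{\cMmodule}$, and then descend to $\mathrm{H}(\Mod)$. Concretely, I would first fix a torsion-free smooth connection $\nabla$ on $\XX(\cM)$ as in \cite{M-S-X}, giving the multibrackets $R_1=L_Q$, $R_2=\at^{\nabla}_{\XX(\cM)[-1]}$, $R_3, R_4, \ldots$ that make $\XX(\cM)[-1]$ an $L_\infty$-algebra whose induced cohomology bracket is $\at_{\cM}$. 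I next choose a smooth connection $\nabla^{\cMmodule}$ on $\cMmodule$ (which exists via a graded partition of unity; otherwise replace $\cMmodule$ by a quasi-isomorphic module admitting one and invoke Proposition \ref{functoriality}).

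Using $\nabla$ and $\nabla^{\cMmodule}$ in tandem, I would recursively construct degree $+1$ operations
\[
\rho_k \colon\; \wedge^{k-1}\!\bigl(\XX(\cM)[-1]\bigr) \otimes_{\smf_{\cM}} \cMmodule \longrightarrow \cMmodule,\qquad k\geq 1,
\]
with $\rho_1=L_Q$ and $\rho_2=\at^{\nabla^{\cMmodule}}_{\cMmodule}$, the higher $\rho_k$ being built from iterated $\nabla$-covariant derivatives of the Atiyah cocycle $\at^{\nabla^{\cMmodule}}_{\cMmodule}$ and the curvature of $\nabla$, exactly paralleling the MSX formulas for the $R_k$ but with $\cMmodule$ in the output slot. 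The key claim is that $(\rho_k)_{k\ge 1}$ endows $(\cMmodule,L_Q)$ with an $L_\infty$-module structure over $(\XX(\cM)[-1],\, (R_k))$; the verification reduces, identity by identity, to graded Bianchi and Jacobi-type manipulations involving $\nabla$ and $\nabla^{\cMmodule}$ and is a line-by-line adaptation of the MSX argument.

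Granted the $L_\infty$-module structure, passing to $L_Q$-cohomology yields an honest Lie-module action of the cohomology Lie algebra (with bracket $[R_2]=\at_{\cM}$) on $H(\cMmodule,\, L_Q)$ (with action $[\rho_2]=\at_{\cMmodule}$). Reinterpreted in $\mathrm{H}(\Mod)$, this is precisely the statement that the module Jacobi identity
\[
\at_{\cMmodule}\circ\bigl(\at_\cM\otimes\id_{\cMmodule}\bigr)
\;=\;
\at_{\cMmodule}\circ\bigl(\id\otimes \at_{\cMmodule}\bigr)\circ\bigl(\id-\tau\bigr)
\]
holds in the homology category, where $\tau$ denotes the Koszul graded swap of the first two tensor factors. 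This is the assertion of the theorem.

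The hard part will be the explicit construction of the higher $\rho_k$ and the verification of the associated $L_\infty$-module identities; rather than repeating the MSX bookkeeping ad hoc, I would obtain these essentially for free by applying the MSX construction to the dg vector bundle $T_{\cM} \oplus \cMmodule$ equipped with the block-diagonal connection $\nabla \oplus \nabla^{\cMmodule}$ and the block-diagonal differential. The resulting $L_\infty$-algebra on $\XX(\cM)[-1] \oplus \cMmodule$ contains $\cMmodule$ as an abelian ideal, and the restriction of its structure maps to operations with one $\cMmodule$-slot is exactly the sought-after $L_\infty$-module structure. All coherence identities are then immediate consequences of \cite{M-S-X}, and the theorem follows.
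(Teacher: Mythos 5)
Your overall strategy -- upgrade the Mehta--Sti\'enon--Xu $L_\infty$-structure on $\XX(\cM)[-1]$ to a Kapranov-type $L_\infty$-module structure on $\cMmodule$ with $\rho_1=L_Q$, $\rho_2=\at^{\nabla^{\cMmodule}}_{\cMmodule}$, and then extract the module axiom in $\mathrm{H}(\Mod)$ from the arity-three identity (which supplies the required $\smf_{\cM}$-linear homotopy) -- is exactly what the paper has in mind: it states the theorem without proof, deferring to the analogues in Kapranov, Chen--Sti\'enon--Xu and Chen--Liu--Xiang. The problem is the step you rely on to get the coherence identities ``for free''. The construction of \cite{M-S-X} takes as input a dg manifold together with a torsion-free affine connection on its \emph{tangent} bundle; the higher brackets and the verification of the $L_\infty$ identities use the Lie bracket of vector fields (torsion-freeness, Bianchi/Jacobi manipulations). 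The object $T_{\cM}\oplus\cMmodule$ is not the tangent bundle of a dg manifold ($\cMmodule$ need not even be the sections of a vector bundle), its sections carry no bracket, so ``block-diagonal torsion-free connection'' is undefined; and any attempt to manufacture a bracket, e.g. $[X+e,Y+f]=[X,Y]+\nabla^{\cMmodule}_X f\mp\nabla^{\cMmodule}_Y e$, fails Jacobi (its Jacobiator is the curvature of $\nabla^{\cMmodule}$) and yields $[Q,e]=\nabla^{\cMmodule}_Q e\neq L_Q e$. Note also the circularity: an $L_\infty$-structure on $\XX(\cM)[-1]\oplus\cMmodule$ having $\cMmodule$ as an abelian ideal is \emph{equivalent data} to the $L_\infty$-module structure you are trying to build, so nothing is obtained for free. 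To close the gap you must either carry out the recursive construction of the $\rho_k$ and check the identities directly (in fact only the arity-$3$ identity is needed: a single explicit $\smf_{\cM}$-linear homotopy built from $\nabla$-derivatives of the Atiyah cocycles already gives commutativity of the module square up to homotopy, hence in $\mathrm{H}(\Mod)$), or pass through the tangent bundle of the total space of a dg vector bundle restricted along the zero section -- an argument you have not given.

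A second, smaller gap: the existence claim for $\nabla^{\cMmodule}$ is false in the stated generality. A graded partition of unity produces connections on (sections of) vector bundles, not on arbitrary $(\smf_{\cM},Q)$-modules; the paper itself warns of this in the Remark of Section \ref{Section4}. Your fallback ``replace $\cMmodule$ by a quasi-isomorphic module and invoke Proposition \ref{functoriality}'' does not operate in $\mathrm{H}(\Mod)$, where quasi-isomorphisms are not invertible (and Proposition \ref{functoriality} presupposes connections on both modules anyway). The correct reading is that the $\mathrm{H}(\Mod)$-statement tacitly assumes $\cMmodule$ admits a smooth connection, as in the paper's connection-based definition of $\at_{\cMmodule}$; under that hypothesis the first gap is the one that must be repaired.
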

Of course, one can also treat $(\cMmodule, \, L_Q)$ as a Lie algebra module object over $(\XX(\cM)[-1], \, L_Q )$ in the
homotopy category $\Pi(\Mod)$.

\subsection{Canonical Atiyah cocycle of the dg complex $(\Dpp, \, L_Q, \, d_H)$ }\

As a $\smf_{\cM}$-module, $\Dp$ admits the \textbf{canonical connection}
\[
\nabla^{\mathrm{can}}:~~ \XX(\cM)\otimes_{\fK} \Dp \rightarrow \Dp,
\]
\[
\nabla^{\mathrm{can}}_{X}D:~~=X\circ D,
\]
for all $X\in \XX(\cM)$ and $D\in \Dp$, where
$\circ$ is the Gerstenhaber product defined by Equation  \eqref{Gerstenhaberproductformula}.
The connection $\nabla^{\mathrm{can}}$ extends naturally to $\Dpp^n$, i.e.,
\begin{equation*}
\nabla^{\mathrm{can}}_{X}(D_1\widetilde{\otimes} \cdots  \widetilde{\otimes} D_n):~~=
\sum_{i=1}^{n}(-1)^{\sum_{j=1}^{i-1}|D_j|\widetilde{X}}D_1\widetilde{\otimes} \cdots  \widetilde{\otimes}\nabla^{\mathrm{can}}_XD_i \widetilde{\otimes} \cdots  \widetilde{\otimes}D_n \, ,
\end{equation*}
for all homogeneous elements
$X\in \XX(\cM)$ and
$D_i\in \Dp$.
The sum of all these connections
on $\DD_{\mathrm{poly}}^n$ ($n\geqslant 0$) gives a connection on $\tot \Dpp$
(see Remark \ref{remark on connections on dg complexes}),
which is also denoted by
$\nabla^{\mathrm{can}}$.
It is easy to see that the operator
$\nabla^{\mathrm{can}}_X$ preserves
the subspace $\Lp \subset \Dpp$, and thus
$\nabla^{\mathrm{can}}$ is also
a connection on $\tot\Lp$.

Recall the coproduct $\Delta:~~ \Dp \rightarrow \DD_{\mathrm{poly}}^2$ defined in
Equation  \eqref{Delta}. 
\begin{lemma}\label{canonicalconnectionandDelta}
We have $\nabla^{\mathrm{can}}_X \Delta = \Delta \nabla^{\mathrm{can}}_X,$ for all $X \in \XX(\cM)$.
\end{lemma}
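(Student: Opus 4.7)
My plan is to reduce the claimed commutation to a simple associativity statement, by re-expressing both sides in terms of the Gerstenhaber composition $\circ$.

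The first step is the identification
\[
\Delta(D) \;=\; -\,D \circ \mult \;\in\; \DD_{\mathrm{poly}}^{2}, \qquad D\in \Dp,
\]
which is a direct sign verification: applying the Gerstenhaber product formula \eqref{Gerstenhaberproductformula} with $n=1,\, m=2$ and using $\mult(f_1,f_2)=(-1)^{\widetilde{f}_1}f_1f_2$ gives $(D\circ\mult)(f_1,f_2)=(-1)^{\widetilde{f}_1}D(f_1f_2)$, which matches $-\Delta(D)(f_1,f_2)$ by \eqref{Delta}.

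The second, and main, step is the identity
\[
\nabla^{\mathrm{can}}_X E \;=\; X\circ E,\qquad E\in \DD_{\mathrm{poly}}^n,\quad X\in\XX(\cM).
\]
On $\DD_{\mathrm{poly}}^1$ this is the definition of $\nabla^{\mathrm{can}}_X$. For general $n$, since $X$ has horizontal degree $1$, the Gerstenhaber formula has only the summand $j=0$, so $(X\circ E)(f_1,\dots,f_n)=X\bigl(E(f_1,\dots,f_n)\bigr)$. The idea is that evaluating $E=D_1\widetilde\otimes\cdots\widetilde\otimes D_n$ via \eqref{cupproductformula} writes $E(f_1,\dots,f_n)$ as a signed product of the $D_i(f_i)$, and then the Leibniz rule for the derivation $X$ distributes $X$ across this product, producing exactly the $n$ summands of $\nabla^{\mathrm{can}}_X E$ obtained by replacing one $D_i$ by $X\circ D_i$. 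The Koszul signs in the definition of $\nabla^{\mathrm{can}}_X$ match those coming from Leibniz on the cup product (this is the place where one has to be careful: the shift $|D_i|=1+\widetilde{D}_i$ causes a compensating $(-1)^{\widetilde{X}}$ in each slot, as already verified for $n=2$ in the computation $\nabla^{\mathrm{can}}_X\Delta(D)(f_1,f_2)=X(\Delta(D)(f_1,f_2))$ in the paragraph below). The sign bookkeeping is the main (and only) technical obstacle.

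With the two identities above, the proof of the lemma is immediate. Because $X$ has horizontal degree $1$, left Gerstenhaber composition by $X$ is just post-composition with the function $X$ on outputs, and so is associative with any other Gerstenhaber product in the outer slot; in particular
\[
X\circ(D\circ\mult)\;=\;(X\circ D)\circ\mult.
\]
Chaining everything together,
\[
\nabla^{\mathrm{can}}_X\Delta(D)\;=\;X\circ(-D\circ\mult)\;=\;-(X\circ D)\circ\mult\;=\;\Delta(X\circ D)\;=\;\Delta(\nabla^{\mathrm{can}}_X D),
\]
which is the desired commutation. (An alternative, more conceptual packaging: the coproduct $\Delta$ on $\Dp$, together with the composition in $\cD_{\cM}$, realizes $\cD_{\cM}=U(T_{\cM})$ as a Hopf algebra in which vector fields are primitive; the lemma is then the statement that multiplication by a primitive element commutes with the coproduct.)
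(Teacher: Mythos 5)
Your proof is correct, but it takes a genuinely different route from the paper's. The paper proves the lemma only on generators $D=X_1\circ\cdots\circ X_n$: it expands $\Delta(D)$ via the shuffle-type formula of Proposition \ref{Deltaformula} from the appendix, applies the compatibility \eqref{Gerstenhaberproductandcupproduct} of $X\circ(\,\cdot\,)$ with $\nbk$, and matches the resulting families of brackets term by term. You instead observe $\Delta(D)=-D\circ\mult$ and that left Gerstenhaber composition by a $1$-cochain is post-composition of outputs, so that $X\circ(D\circ\mult)=(X\circ D)\circ\mult$; this collapses the lemma to a one-line associativity, works for arbitrary $D\in\Dp$ with no choice of generators and no appeal to the appendix formula, and the Hopf-theoretic reading (primitivity of vector fields in $U(T_{\cM})$) is a nice conceptual bonus. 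The one ingredient you do lean on, namely $\nabla^{\mathrm{can}}_X E=X\circ E$ on $\DD_{\mathrm{poly}}^2$ (equivalently $X\circ(D\widetilde{\otimes}E)=(X\circ D)\widetilde{\otimes}E+(-1)^{(|X|+1)|D|}D\widetilde{\otimes}(X\circ E)$), is exactly the unproved observation the paper itself invokes when it writes $\nabla_X\Delta(D)=X\circ\Delta(D)$, and your sign check in the needed case $n=2$ is the right one (it does hold); the only other implicit point worth a sentence in a write-up is that equality of elements of $\DD_{\mathrm{poly}}^2$ may be tested on their associated bidifferential maps, which is the identification the paper uses throughout. The trade-off is that the paper's heavier route reuses Proposition \ref{Deltaformula}, which it needs anyway for $d_H(\Dp)\subset\Lp\cap\DD_{\mathrm{poly}}^2$, whereas your argument is shorter and self-contained.
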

\begin{proof}
Suppose that $X\in \XX(\cM)$ is homogeneous.
Take $D=X_1 \circ \cdots \circ X_n \in \Dp$,
where $X_i \in  \Tp$ are homogeneous.
On the one hand,
by Equations   \eqref{Eqt:Deltaexpressioninbrackets}
and \eqref{Gerstenhaberproductandcupproduct}
in the Appendix, we have
\begin{equation}\label{L.H.S}
\begin{split}
\nabla_{X}\Delta(D)=X \circ \Delta(D)
& =\sum (\pm) X \circ \textup{\textlbrackdbl}X_1 \circ X_{\sigma(2)} \circ \cdots \circ X_{\sigma(p)}, \, X_{\sigma(p+1)} \circ \cdots \circ X_{\sigma(n)}
\textup{\textrbrackdbl} \\
& \quad\quad =\sum (\pm) \textup{\textlbrackdbl}X \circ X_1 \circ X_{\sigma(2)} \circ \cdots \circ X_{\sigma(p)}, \, X_{\sigma(p+1)} \circ \cdots \circ X_{\sigma(n)}
\textup{\textrbrackdbl} \\
& \quad\quad\quad\quad + (\pm)\textup{\textlbrackdbl}
X \circ X_{\sigma(p+1)} \circ \cdots \circ X_{\sigma(n)}, \, X_1 \circ X_{\sigma(2)} \circ \cdots \circ X_{\sigma(p)}\textup{\textrbrackdbl}.
\end{split}
\end{equation}
Here we denote by $(\pm)$ the appropriate signs which can be worked out explicitly as explained in the Appendix.

On the other hand,
we have
\begin{equation}\label{R.H.S}
\begin{split}
\Delta(\nabla_{X}D)=\Delta(X\circ D)
& =
\Delta(X\circ X_1 \circ \cdots \circ X_n) \\
& =\sum (\pm) \textup{\textlbrackdbl}X \circ X_{\sigma(1)} \circ X_{\sigma(2)} \circ \cdots \circ X_{\sigma(p)}, \, X_{\sigma(p+1)} \circ \cdots \circ X_{\sigma(n)}
\textup{\textrbrackdbl}
\end{split}	
\end{equation}
by Equation  \eqref{Eqt:Deltaexpressioninbrackets}.
From the right hand sides of Equations   \eqref{L.H.S}
and \eqref{R.H.S},
we see that
$\nabla_{X}\Delta(D)=\Delta(\nabla_{X}D)$.
\end{proof}

Now we consider the Atiyah cocycles of
$(\Dpp, \, L_Q, \, d_H)$ and $(\Lp, \, L_Q, \, d_H)$ associated to the connection
$\nabla^{\mathrm{can}}$:~~
\begin{equation*}
\begin{split}
& \at_{\DD_{\mathrm{poly}}}^{\nabla^{\mathrm{can}}}:~~\XX(\cM)[-1]\otimes_{\smf_{\cM}}\tot \DD_{\mathrm{poly}} \rightarrow \tot \DD_{\mathrm{poly}} \,  \\
\text{and \quad } & \at_{\functorL(\DD_{\mathrm{poly}}^1)}^{\nabla^{\mathrm{can}}}:~~\XX(\cM)[-1]\otimes_{\smf_{\cM}}\tot \functorL(\DD_{\mathrm{poly}}^1) \rightarrow \tot \functorL(\DD_{\mathrm{poly}}^1) \,.
\end{split}
\end{equation*}
Recall the multiplication map
\begin{equation*}
\begin{split}
\cup:~
(\tot\DD_{\mathrm{poly}}, \, L_Q+d_H){\otimes_{\smf_{\cM}}} (\tot\DD_{\mathrm{poly}}, \, L_Q+d_H)
& \rightarrow (\tot\DD_{\mathrm{poly}}, \, L_Q+d_H) \, , \\
\cup(\overline{P_1}\otimes_{\smf_{\cM}}
\overline{P_1})=
\overline{P_1}\cup\overline{P_2}
:~ &=
\overline{P_1\cup P_2}
\, ,
\quad\quad \forall P_1, \, P_2 \in \Dpp, \,	
\end{split}
\end{equation*}
of the Hopf algebra object
$(\tot \Dpp, \, L_Q+d_H)$ in the category $\Mod$ of $(\smf_{\cM}, \, Q)$-modules.
Here $\overline{P_1} \in \tot \Dpp$
(resp. $\overline{P_2} \in \tot \Dpp$)
is the corresponding element of $P_1 \in \Dpp$
(resp. $P_2 \in \Dpp$).

\begin{lemma}\label{inductionlemma}
The Atiyah cocycle $\at_{\DD_{\mathrm{poly}}}^{\nabla^{\mathrm{can}} }$ is a derivation
with respect to the multiplication $\cup$ of the
Hopf algebra object
$(\tot \Dpp, \, L_Q+d_H)$ in the category $\Mod$, i.e.,
\begin{equation*}\label{induction}
\at_{\DD_{\mathrm{poly}}}^{\nabla^{\mathrm{can}} }
(\overline{X}, \overline{P_1}\cup\overline{P_2})
=
\at_{\DD_{\mathrm{poly}}}^{\nabla^{\mathrm{can}} }
(\overline{X}, \overline{P_1})\cup\overline{P_2}
+
(-1)^{(\widetilde{X}+1)|P_1|}
\overline{P_1} \cup
\at_{\DD_{\mathrm{poly}}}^{\nabla^{\mathrm{can}} }
(\overline{X}, \overline{P_2}),
\end{equation*}
for all homogeneous elements
$X\in \XX(\cM)$, $P_1 \, , P_2\in \Dpp$,
and $\overline{X} \in \XX(\cM)[-1]$ the corresponding element of $X$.
\end{lemma}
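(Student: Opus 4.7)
The plan is to reduce the claim to a direct application of Proposition \ref{Atiyahcocyleoftensorproduct} (functoriality of Atiyah cocycles under tensor products). The key observation is that the canonical connection $\nabla^{\mathrm{can}}$ on $\DD_{\mathrm{poly}}^n = \widetilde{\otimes}^n \DD_{\mathrm{poly}}^1$, as defined just before the lemma, is precisely the $n$-fold tensor product of the canonical connection on $\DD_{\mathrm{poly}}^1$ in the sense of Proposition \ref{Atiyahcocyleoftensorproduct}. Indeed, the defining formula
\[
\nabla^{\mathrm{can}}_X(D_1 \widetilde{\otimes} \cdots \widetilde{\otimes} D_n) = \sum_{i=1}^n (-1)^{\widetilde{X}\sum_{j<i}|D_j|} D_1 \widetilde{\otimes} \cdots \widetilde{\otimes} \nabla^{\mathrm{can}}_X D_i \widetilde{\otimes} \cdots \widetilde{\otimes} D_n
\]
is exactly the tensor-product Leibniz rule iterated $n-1$ times.

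Next I would apply Proposition \ref{Atiyahcocyleoftensorproduct} with $\cMmodule_1 = \DD_{\mathrm{poly}}^p$ and $\cMmodule_2 = \DD_{\mathrm{poly}}^q$ (both equipped with $\nabla^{\mathrm{can}}$ and $L_Q$) to obtain
\[
\at^{\nabla^{\mathrm{can}}}_{\DD_{\mathrm{poly}}^p \widetilde{\otimes} \DD_{\mathrm{poly}}^q}(\overline{X}, P_1 \widetilde{\otimes} P_2)
= \at^{\nabla^{\mathrm{can}}}_{\DD_{\mathrm{poly}}^p}(\overline{X}, P_1) \widetilde{\otimes} P_2 + (-1)^{(\widetilde{X}+1)|P_1|} P_1 \widetilde{\otimes} \at^{\nabla^{\mathrm{can}}}_{\DD_{\mathrm{poly}}^q}(\overline{X}, P_2),
\]
for homogeneous $P_1 \in \DD_{\mathrm{poly}}^p$ and $P_2 \in \DD_{\mathrm{poly}}^q$. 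This requires a short verification that the tensor-product connection on $\DD_{\mathrm{poly}}^p \widetilde{\otimes} \DD_{\mathrm{poly}}^q = \DD_{\mathrm{poly}}^{p+q}$ coincides with $\nabla^{\mathrm{can}}$ on the latter, which follows by unpacking both Leibniz rules.

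Finally, I would translate the tensor-product identity into the cup product identity. Since $P_1 \cup P_2 = P_1 \widetilde{\otimes} P_2$ (by definition of $\cup$), and since passage to the total complex intertwines $\widetilde{\otimes}$ with $\otimes_{\smf_{\cM}}$ (as recorded at the end of Section \ref{Sec:gradedstuff}), the overline elements satisfy $\overline{P_1} \cup \overline{P_2} = \overline{P_1 \widetilde{\otimes} P_2}$, and the derivation property for $\cup$ on $\tot\DD_{\mathrm{poly}}$ follows immediately from the derivation property for $\widetilde{\otimes}$ on $\DD_{\mathrm{poly}}$.

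The only real work is the sign bookkeeping: one must verify that the sign $(-1)^{\widetilde{X}\sum_{j<i}|D_j|}$ appearing in the definition of $\nabla^{\mathrm{can}}$ on $\DD_{\mathrm{poly}}^n$ matches the iterated tensor-product Leibniz sign, and that the sign $(-1)^{(\widetilde{X}+1)|P_1|}$ from Proposition \ref{Atiyahcocyleoftensorproduct} (which uses the shifted degree $|\overline{X}| = \widetilde{X}+1$) reproduces the desired sign in the target identity. I expect this sign check to be the only delicate step; otherwise the lemma is a formal consequence of Proposition \ref{Atiyahcocyleoftensorproduct} once the connections are identified.
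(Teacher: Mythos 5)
There is a genuine gap in how you set up the modules, even though your overall strategy (reduce everything to Proposition \ref{Atiyahcocyleoftensorproduct}) is the same as the paper's. You apply Proposition \ref{Atiyahcocyleoftensorproduct} to $\cMmodule_1=\DD_{\mathrm{poly}}^p$ and $\cMmodule_2=\DD_{\mathrm{poly}}^q$ ``equipped with $\nabla^{\mathrm{can}}$ and $L_Q$''. But the cocycle in the Lemma is the Atiyah cocycle of the total complex $(\tot\Dpp,\,L_Q+d_H)$, i.e.\ $[L_Q+d_H,\nabla^{\mathrm{can}}]$, and its nontrivial part is precisely the $d_H$-component: a direct check (it is the second bullet in the proof of Proposition \ref{bracket}) shows $[L_Q,\nabla^{\mathrm{can}}]=0$, so the $L_Q$-only Atiyah cocycles of the pieces $\DD_{\mathrm{poly}}^p$ that you invoke vanish identically and your displayed identity degenerates to $0=0$, which says nothing about the Lemma. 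Nor can you reinterpret $\at^{\nabla^{\mathrm{can}}}_{\DD_{\mathrm{poly}}^p}$ as ``the restriction of the total cocycle'': $\DD_{\mathrm{poly}}^p$ is not $d_H$-stable, and the restriction of $\at^{\nabla^{\mathrm{can}}}_{\DD_{\mathrm{poly}}}$ to $\DD_{\mathrm{poly}}^{p+q}$ takes values in $\DD_{\mathrm{poly}}^{p+q}\oplus\DD_{\mathrm{poly}}^{p+q+1}$ (cf.\ Remark \ref{remark on connections on dg complexes}), so it is not the Atiyah cocycle of any fixed horizontal degree piece and Proposition \ref{Atiyahcocyleoftensorproduct} does not apply degreewise in the way you describe.

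The repair is exactly the paper's argument. Take $\cMmodule_1=\cMmodule_2=(\tot\Dpp,\,L_Q+d_H)$ with connection $\nabla^{\mathrm{can}}$ (then the sign in Proposition \ref{Atiyahcocyleoftensorproduct} is $(-1)^{(\widetilde{X}+1)|P_1|}$ because the module degree in $\tot\Dpp$ is the total degree, so your sign bookkeeping is fine), apply Proposition \ref{Atiyahcocyleoftensorproduct} to the tensor product module, and then push the resulting formula through $\cup:\tot\Dpp\otimes_{\smf_{\cM}}\tot\Dpp\to\tot\Dpp$. This transfer is legitimate because $\cup$ is a morphism of $(\smf_{\cM},Q)$-modules for the \emph{total} differentials --- this is where Lemma \ref{Hochschildandcupproduct} (compatibility of $d_H$ with $\cup$) enters, and your proposal never uses it --- and because $\cup$ is parallel for the connections, i.e.\ $\nabla^{\mathrm{can}}_X(P_1\cup P_2)=\nabla^{\mathrm{can}}_XP_1\cup P_2+(-1)^{\widetilde{X}|P_1|}P_1\cup\nabla^{\mathrm{can}}_XP_2$, which is the correct content of your ``the connections coincide'' check. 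For a parallel chain map the homotopy appearing in the proof of Proposition \ref{functoriality} vanishes, so the Atiyah cocycles commute with $\cup$ strictly; this is the commutative diagram in the paper's proof, and combined with Proposition \ref{Atiyahcocyleoftensorproduct} it yields the Lemma. In short: your connection computation is one half of the needed verification, but the half involving $d_H$ and the strict functoriality mechanism that carries the tensor-product formula across $\cup$ is missing, and it is exactly there that the nontrivial content of the Lemma lives.
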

\begin{proof}
It is easy to verify that the following diagram
\begin{footnotesize}
\begin{equation}	\label{pdt diagram}
\xymatrix{
(\XX(\cM)[-1], \, L_Q)\otimes_{\smf_{\cM}}(\tot\Dpp \otimes_{\smf_{\cM}} \tot\Dpp, \, L_Q+d_H)
\ar[r]^-{\mathrm{\id \otimes \cup}}
\ar[d]_{ \at_{ \tot\Dpp \otimes \tot\Dpp }^{\nabla^{\mathrm{can}}\otimes \id + \id \otimes \nabla^{\mathrm{can}}} } &
(\XX(\cM)[-1], \, L_Q)\otimes_{\smf_{\cM}}
(\tot \Dpp, \, L_Q+d_H)
\ar[d]^{\at_{\tot\Dpp}^{\nabla^{\mathrm{can}} } }\\
(\tot\Dpp \otimes_{\smf_{\cM}} \tot\Dpp, \, L_Q+d_H)
\ar[r]^-{\cup } &
(\tot \Dpp, \, L_Q+d_H)
}
\end{equation}
\end{footnotesize}
commutes in the category $\Mod$.
Therefore, we have
\begin{small}
\begin{equation*}
\begin{split}
\at_{\DD_{\mathrm{poly}}}^{\nabla^{\mathrm{can}} }
(
\overline{X},
\overline{P_1}\cup\overline{P_2}
)
&=
\cup(
\at_{\tot\DD_{\mathrm{poly}}{\otimes_{\smf_{\cM}}} \tot\DD_{\mathrm{poly}}}^{\nabla^{\mathrm{can}}\otimes \id + \id \otimes \nabla^{\mathrm{can}}}(\overline{X}, \overline{P_1}\otimes_{\smf_{\cM}} \overline{P_2}))
\quad\quad
\text{(by commutativity of Diagram \eqref{pdt diagram})}   \\
& \quad \quad =
\cup(
\at_{\tot\DD_{\mathrm{poly}}}^{\nabla^{\mathrm{can}} }
(\overline{X}, \overline{P_1})\otimes_{\smf_{\cM}} \overline{P_2}
+
(-1)^{(\widetilde{X}+1)|P_1|}
\overline{P_1} \otimes_{\smf_{\cM}}
\at_{\tot\DD_{\mathrm{poly}}}^{\nabla^{\mathrm{can}} }
(\overline{X}, \overline{P_2})
) \\
& \qquad \qquad \qquad \qquad \qquad \qquad \qquad \qquad \qquad \qquad \qquad \qquad \quad \quad \quad\quad \,
\text{(by Proposition \ref{Atiyahcocyleoftensorproduct})} \\
& \quad\quad\quad =
\at_{\DD_{\mathrm{poly}}}^{\nabla^{\mathrm{can}} }
(\overline{X}, \overline{P_1})\cup \overline{P_2}
+
(-1)^{(\widetilde{X}+1)|P_1|}
\overline{P_1} \cup
\at_{\DD_{\mathrm{poly}}}^{\nabla^{\mathrm{can}} }
(\overline{X}, \overline{P_2}) \, ,
\end{split}
\end{equation*}
\end{small}
for homogeneous elements $X \in \XX(\cM)$ 
and $P_1, \, P_2\in \Dpp$.
\end{proof}

Recall that
$\Dpp$ is endowed with
the canonical Lie bracket $\nbk$
 which is $\smf_{\cM}$-bilinear (see Equation  \eqref{freeLiebracket}).
It induces a Lie bracket on $\tot\Dpp$, which is also denoted by $\nbk$.
The subspace $\tot\Lp \subset \tot \Dpp$ is a
Lie subalgebra with respect to $\nbk$.
\begin{proposition}\label{bracket}
The Atiyah cocycle
$\at_{\DD_{\mathrm{poly}}}^{\nabla^{\mathrm{can}}}$ is given by
\[
\at_{\DD_{\mathrm{poly}}}^{\nabla^{\mathrm{can}}}
(\overline{X}, \overline{D_1}\cup \cdots \cup \overline{D_n})=
\textup{\textlbrackdbl}
\overline{X},
\overline{D_1}\cup \cdots \cup \overline{D_n}
\textup{\textrbrackdbl} \, ,
\]
for all homogeneous elements
$X \in \XX(\cM)$, $D_i\in \Dp$,
$\overline{X} \in \XX(\cM)[-1]$ the corresponding element of $X$, and
$\overline{D_i} \in \tot\Dpp$ the corresponding element of $D_i$.
\end{proposition}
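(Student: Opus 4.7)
The plan is to reduce to the case $n=1$ by an induction that exploits derivation properties on both sides. Lemma \ref{inductionlemma} exhibits $\at_{\DD_{\mathrm{poly}}}^{\nabla^{\mathrm{can}}}(\overline{X}, \cdot)$ as a graded derivation of the cup product on $(\tot \DD_{\mathrm{poly}}, \cup)$. On the other hand, by construction $\textup{\textlbrackdbl}\overline{X},\cdot\textup{\textrbrackdbl}$ is literally the graded commutator with $\overline{X}$ in this associative algebra, so it too is a graded derivation of $(\tot \DD_{\mathrm{poly}}, \cup)$. Since any derivation of this associative algebra is determined by its values on the generating subspace $\tot \DD_{\mathrm{poly}}^1$, it suffices to verify the identity when $n=1$.

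For that base case, I would expand $\at_{\DD_{\mathrm{poly}}}^{\nabla^{\mathrm{can}}}(\overline{X}, \overline{D})$ directly from Equation \eqref{Atiyahcocycleformula} applied to the $(\smf_{\cM}, Q)$-module $(\tot \DD_{\mathrm{poly}}, L_Q + d_H)$ and the connection $\nabla^{\mathrm{can}}$. Because $\nabla^{\mathrm{can}}_X D = X\circ D$ stays in $\DD_{\mathrm{poly}}^1$, the cocycle decomposes as $A_1 + A_2$, where
\[
A_1 = (-1)^{\widetilde{X}}\bigl( L_Q(X\circ D) - [Q, X]\circ D - (-1)^{\widetilde{X}} X\circ L_Q D \bigr) \in \DD_{\mathrm{poly}}^1
\]
carries the vertical contribution and
\[
A_2 = (-1)^{\widetilde{X}}\bigl( d_H(X\circ D) - (-1)^{\widetilde{X}} \nabla^{\mathrm{can}}_X(d_H D) \bigr) \in \DD_{\mathrm{poly}}^2
\]
carries the horizontal contribution. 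Since $\textup{\textlbrackdbl}\overline{X},\overline{D}\textup{\textrbrackdbl} = \overline{X\widetilde{\otimes}D} - (-1)^{|X||D|}\overline{D\widetilde{\otimes}X}$ lives entirely in horizontal degree two, the base case reduces to verifying $A_1 = 0$ and $A_2 = \textup{\textlbrackdbl}\overline{X},\overline{D}\textup{\textrbrackdbl}$.

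The vanishing of $A_1$ is purely algebraic: on $\DD_{\mathrm{poly}}^1 \otimes \DD_{\mathrm{poly}}^1$ the Gerstenhaber bracket agrees with the graded commutator of operators in $\cD_{\cM}$, so $L_Q(X\circ D) = [Q, X \cdot D]$, and the cancellation follows from the Leibniz rule $[Q, X \cdot D] = [Q, X] \cdot D + (-1)^{\widetilde{X}} X \cdot [Q, D]$ for the graded commutator. For $A_2$, I would substitute the explicit formula \eqref{coproduct} for $d_H$ on $\DD_{\mathrm{poly}}^1$ into both $d_H(X\circ D)$ and $d_H D$, apply $\nabla^{\mathrm{can}}$ to the resulting $\widetilde{\otimes}$-monomials using the evident identity $X\circ \id_{\smf_{\cM}} = X$, and invoke Lemma \ref{canonicalconnectionandDelta} to cancel the two copies of $\Delta(X\circ D)$ that appear. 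The surviving terms are precisely the monomials $X\widetilde{\otimes}D$ and $D\widetilde{\otimes}X$, with coefficients that must be shown to equal $1$ and $-(-1)^{|X||D|}$, respectively.

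The main obstacle is therefore the Koszul sign bookkeeping in the verification that $A_2 = \textup{\textlbrackdbl}\overline{X},\overline{D}\textup{\textrbrackdbl}$: the sign $(-1)^{|E|}$ in Equation \eqref{coproduct}, the intermediate sign $(-1)^{|D|\widetilde{X}}$ that appears when computing $\nabla^{\mathrm{can}}_X(D\widetilde{\otimes}\id_{\smf_{\cM}})$, and the outer prefactor $(-1)^{\widetilde{X}}$ from the definition of the Atiyah cocycle must all combine to reproduce the coefficients demanded by $\textup{\textlbrackdbl}\overline{X},\overline{D}\textup{\textrbrackdbl}$. I expect to discharge this by systematically reducing all sign exponents modulo $2$ in terms of $\widetilde{X}$ and $\widetilde{D}$, after which the identity falls out by direct comparison.
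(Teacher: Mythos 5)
Your proposal is correct and follows essentially the same route as the paper: Lemma \ref{inductionlemma} together with the derivation property of $\textup{\textlbrackdbl}\overline{X},\cdot\,\textup{\textrbrackdbl}$ reduces everything to the $n=1$ case, which is then settled by splitting the cocycle into the $L_Q$-part (vanishing by the Leibniz rule for $[Q,\cdot\,]$) and the $d_H$-part (computed via Equation \eqref{coproduct} and Lemma \ref{canonicalconnectionandDelta}). The sign bookkeeping you defer is exactly the computation carried out in the paper's proof and does work out as you expect.
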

\begin{proof}
By Lemma \ref{inductionlemma},
$\at_{\DD_{\mathrm{poly}}}^{\nabla^{\mathrm{can}}}(\overline{X}, \,\cdot \,)$ is a derivation of degree
$\widetilde{X}+1$ (with respect to the $\cup$ product).
Clearly,
$\textup{\textlbrackdbl}
\overline{X}, \, \cdot \,\textup{\textrbrackdbl}$ is also a derivation of degree $\widetilde{X}+1$ with respect to $\cup$.
Thus, to prove the proposition,
it suffices to show the $n=1$ case, i.e.,
\begin{equation}\label{Eqt:toproven=1case}
\at_{\DD_{\mathrm{poly}}}^{\nabla^{\mathrm{can}} }(\overline{X}, \overline{D})=\textup{\textlbrackdbl}
\overline{X}, \overline{D}\textup{\textrbrackdbl}\, ,
\end{equation}
 for all $X\in \XX(\cM)$ and $D \in \Dp$.
 In fact, by the definition of Atiyah cocycle in Equation  \eqref{Atiyahcocycleformula}, we have
\begin{equation*}
\begin{split}	
\at_{\DD_{\mathrm{poly}}}^{\nabla^{\mathrm{can}} }
(\overline{X}, \overline{D})
={} &
(-1)^{\widetilde{X}}
(d_H+L_Q)
\nabla^{\mathrm{can}}_{X}
(D) \\
& \quad -
(-1)^{\widetilde{X}}
\nabla^{\mathrm{can}}
(\id \otimes(d_H+L_Q)
+
L_Q\otimes \id)(X \widetilde{\otimes} D) \\
={} &
(-1)^{\widetilde{X}}( d_H\nabla^{\mathrm{can}}_{X}
(D)
-
(-1)^{\widetilde{X}}\nabla^{\mathrm{can}}_{X}
d_H(D)  )\\
& \quad + (-1)^{\widetilde{X}}
(L_Q\nabla^{\mathrm{can}}_{X}(D)
-
(-1)^{\widetilde{X}}\nabla^{\mathrm{can}}_{X}
L_Q(D)
-
\nabla^{\mathrm{can}}_{[Q,X]}(D)
) \, .
\end{split}
\end{equation*}
We now examine   the last two lines:
\begin{itemize}
\item
By Equation  \eqref{coproduct}, we have
\begin{equation*}
\begin{split}
d_H\nabla^{\mathrm{can}}_{X}D & = \, \, d_H(XD) \\
& \quad = \,(-1)^{\widetilde{X}+|D|}(XD\widetilde{\otimes}1-\Delta(XD)-(-1)^{\widetilde{X}+|D|}1\widetilde{\otimes}XD)\, ,
\end{split}	
\end{equation*}
and
\begin{equation*}
\begin{split}
(-1)^{\widetilde{X}}\nabla^{\mathrm{can}}_{X}d_H D
& = (-1)^{\widetilde{X}+|D|}\nabla^{\mathrm{can}}_{X}(D\widetilde{\otimes}1-\Delta(D)-(-1)^{|D|}1\widetilde{\otimes}D)  \\
& \quad = (-1)^{\widetilde{X}+|D|}(XD\widetilde{\otimes}1+(-1)^{\widetilde{X}|D|}D\widetilde{\otimes}X-\nabla^{\mathrm{can}}_{X}\Delta(D) \\
& \quad\quad -(-1)^{|D|}X\widetilde{\otimes}D -(-1)^{\widetilde{X}+|D|}1\widetilde{\otimes}XD )\, .
\end{split}
\end{equation*}
Thus by Lemma \ref{canonicalconnectionandDelta}, we see that
\begin{equation*}
\begin{split}
d_H\nabla^{\mathrm{can}}_{X}D
-
(-1)^{\widetilde{X}}\nabla^{\mathrm{can}}_{X}d_H D
& = (-1)^{\widetilde{X}}(X\widetilde{\otimes}D-
(-1)^{(\widetilde{X}+1)|D|}D\widetilde{\otimes}X)\\
& \quad = (-1)^{\widetilde{X}}
\textup{\textlbrackdbl}
\overline{X}, \overline{D}\textup{\textrbrackdbl}\, .
\end{split}
\end{equation*}

\item
By direct computation, we have
\begin{equation*}
\begin{split}
& \quad \, \, \, L_Q\nabla^{\mathrm{can}}_{X}
D-
(-1)^{\widetilde{X}}\nabla^{\mathrm{can}}_{X}
L_Q D
-
\nabla^{\mathrm{can}}_{[Q,X]}D \\
& =
[Q,XD]-(-1)^{\widetilde{X}}X[Q,D]-[Q,X]D =0\, .
\end{split}
\end{equation*}
\end{itemize}
Now Equation  \eqref{Eqt:toproven=1case} is clear.
\end{proof}

As an immediate consequence, we have
\begin{corollary}\label{bracket2}
The Atiyah cocycle $\alpha^{\nabla^{\mathrm{can}}}_{\Lp}$
is given by
\[
\alpha^{\nabla^{\mathrm{can}}}_{\Lp}
(\overline{X}, \text{ }
\textup{\textlbrackdbl} \overline{D_1},\cdots , \textup{\textlbrackdbl}
\overline{D_{n-1}}, \overline{D_{n}} \textup{\textrbrackdbl},\cdots  \textup{\textrbrackdbl}
)
=
\textup{\textlbrackdbl} \overline{X}, \textup{\textlbrackdbl} \overline{D_1}, \cdots , \textup{\textlbrackdbl}
\overline{D_{n-1}}, \overline{D_{n}} \textup{\textrbrackdbl},\cdots  \textup{\textrbrackdbl}
\textup{\textrbrackdbl}\, ,
\]
for all $X\in \XX(\cM)$ and
$D_i\in \Dp$.
\end{corollary}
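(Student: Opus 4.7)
The plan is to deduce this corollary directly from Proposition \ref{bracket} by exploiting the fact that $\Lp$ is a Lie subalgebra of $\tot\Dpp$ containing all the elements we need, together with the observation already made in the text that the canonical connection $\nabla^{\mathrm{can}}$ preserves $\Lp$. The strategy has two conceptual ingredients: identifying iterated Lie brackets with alternating sums of cup products, and using naturality of the Atiyah cocycle under inclusion of $(\smf_{\cM}, Q)$-complexes that are stable under the chosen connection.

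First I would record the basic observation that the $(\smf_{\cM}, Q)$-complex $(\Lp, L_Q, d_H)$ sits inside $(\Dpp, L_Q, d_H)$ as a subcomplex preserved by $\nabla^{\mathrm{can}}$. Consequently, the restriction of $\at^{\nabla^{\mathrm{can}}}_{\Dpp}$ to $\XX(\cM)[-1]\otimes_{\smf_{\cM}}\tot\Lp$ coincides with $\at^{\nabla^{\mathrm{can}}}_{\Lp}$ as long as its image is contained in $\tot\Lp$. Next I would unfold the iterated bracket: by the definition of $\nbk$ in Equation \eqref{freeLiebracket} (with $\widetilde{\otimes}=\cup$ at the level of $\Dpp$), each expression
\[
\textup{\textlbrackdbl}\overline{D_1},\cdots,\textup{\textlbrackdbl}\overline{D_{n-1}},\overline{D_n}\textup{\textrbrackdbl},\cdots\textup{\textrbrackdbl}
\]
is a finite $\fK$-linear combination of elements of the form $\overline{D_{\sigma(1)}}\cup\cdots\cup\overline{D_{\sigma(n)}}$, with Koszul signs.

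Now I would apply Proposition \ref{bracket} termwise to this expansion, obtaining
\[
\at^{\nabla^{\mathrm{can}}}_{\Dpp}\bigl(\overline{X},\textup{\textlbrackdbl}\overline{D_1},\cdots,\textup{\textlbrackdbl}\overline{D_{n-1}},\overline{D_n}\textup{\textrbrackdbl},\cdots\textup{\textrbrackdbl}\bigr)=\textup{\textlbrackdbl}\overline{X},\textup{\textlbrackdbl}\overline{D_1},\cdots,\textup{\textlbrackdbl}\overline{D_{n-1}},\overline{D_n}\textup{\textrbrackdbl},\cdots\textup{\textrbrackdbl}\textup{\textrbrackdbl},
\]
since the sum of Proposition \ref{bracket}'s output on the individual cup-product terms recombines into the bracket with the iterated bracket. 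Finally, since $X\in\XX(\cM)\subset\cD_{\cM}=\Dp$, the element $\overline{X}$ already lives in $\Lp=T^1\underline{\DD_{\mathrm{poly}}^1}\cap\functorL(\DD_{\mathrm{poly}}^1)$, and because $\Lp$ is a Lie subalgebra of $\tot\Dpp$ the right-hand side is automatically an element of $\tot\Lp$. This justifies interpreting the equality as an identity in $\tot\Lp$, which is exactly the desired formula for $\at^{\nabla^{\mathrm{can}}}_{\Lp}$.

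There is no serious obstacle here; the only point that requires care is the bookkeeping of signs when expanding iterated brackets into cup products and then recollecting them after Proposition \ref{bracket} has been applied. Since Proposition \ref{bracket} says $\at^{\nabla^{\mathrm{can}}}_{\Dpp}(\overline{X},\,\cdot\,)$ acts exactly as $\textup{\textlbrackdbl}\overline{X},\,\cdot\,\textup{\textrbrackdbl}$ on arbitrary cup products, the signs on the two sides are manifestly the same, so no genuine computation intervenes.
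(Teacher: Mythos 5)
Your argument is correct and is exactly the route the paper intends: Corollary \ref{bracket2} is presented there as an immediate consequence of Proposition \ref{bracket}, and your write-up simply supplies the implicit details (that $\nabla^{\mathrm{can}}$, $L_Q$ and $d_H$ all preserve $\Lp$, so $\at^{\nabla^{\mathrm{can}}}_{\Lp}$ is the restriction of $\at^{\nabla^{\mathrm{can}}}_{\Dpp}$, plus $\fK$-linearity applied to the cup-product expansion of the iterated bracket, whose image stays in $\tot\Lp$ since $\overline{X}\in\tot\Dp\subset\tot\Lp$ and $\Lp$ is a Lie subalgebra). The only blemish is the aside identifying $\Lp$ with $T^1\underline{\DD_{\mathrm{poly}}^1}\cap\functorL(\DD_{\mathrm{poly}}^1)$ (that intersection is $\Dp$, not $\Lp$), but this does not affect the argument.
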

\subsection{The main result}\

The main result in this section is the following:
\begin{theorem}\label{maintheorem}
\
\begin{itemize}
\item[(1)]
The natural inclusion map
$\theta:~ (\XX(\cM)[-1], \, L_Q) \rightarrow (\tot\Lp, \, L_Q+d_H)$ is an isomorphism in the homotopy category $\Pi(\Mod)$ of $(\smf_{\cM}, \, Q)$-modules.
Moreover, it is an isomorphism of Lie algebra objects
$(\XX(\cM)[-1], \, L_Q; \, \at_{\cM}) \rightarrow (\tot\Lp, \, L_Q+d_H; \, \nbk)$,
i.e., the   diagram
\begin{equation}\label{maindiagramII}
\xymatrix{
(\XX(\cM)[-1], \, L_Q) \otimes_{\smf_{\cM}} (\XX(\cM)[-1], \, L_Q)  \ar[r]^>>>>>>>>>>>{\at_{\cM}} \ar[d]_{\theta \otimes \theta} &
(\XX(\cM)[-1], \, L_Q)\ar[d]^{\theta}\\
(\tot\Lp, \, L_Q+d_H)
\otimes_{\smf_{\cM}}
(\tot\Lp, \, L_Q+d_H)\ar[r]^>>>>>>{\nbk} & (\tot \functorL(\DD_{\mathrm{poly}}^1), \, L_Q+d_H)\\
}
\end{equation}
commutes in $\Pi(\Mod)$.

\item[(2)]
The $(\smf_{\cM}, \, Q)$-module
$(\tot \Dpp, \, L_Q+d_H)$
is a Hopf algebra object in $\Pi(\Mod)$,
and is the universal enveloping algebra of
the Lie algebra object
$(\XX(\cM)[-1], \, L_Q; \,\at_{\cM} )$
in $\Pi(\Mod)$.	
\end{itemize}
\end{theorem}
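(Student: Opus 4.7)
The plan is to deduce both parts of Theorem \ref{maintheorem} by combining three ingredients already established in the paper: the quasi-isomorphism status of $\theta$ from Proposition \ref{betaquasi-isomorphism}, the functoriality of Atiyah classes from Proposition \ref{functoriality}, and the identification of $\at_{\Dpp}^{\nabla^{\mathrm{can}}}$ with the free Lie bracket from Proposition \ref{bracket} and Corollary \ref{bracket2}.

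For part (1), I would first observe that Proposition \ref{betaquasi-isomorphism} says $\theta: (\Tp, L_Q, 0) \to (\Lp, L_Q, d_H)$ is a quasi-isomorphism in $Ch(\Mod)$, so after applying the $\tot$ functor and passing to $\Pi(\Mod)$, the induced map $(\XX(\cM)[-1], L_Q) \to (\tot\Lp, L_Q+d_H)$ becomes an isomorphism. To verify commutativity of diagram \eqref{maindiagramII} in $\Pi(\Mod)$, I would compute along the two sides using representatives of the Atiyah class by the Atiyah cocycles associated with some connection $\nabla$ on $\XX(\cM)[-1]$ and the canonical connection $\nabla^{\mathrm{can}}$ on $\tot\Lp$. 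Functoriality (Proposition \ref{functoriality}) applied to the morphism $\theta$ yields the equality
\[
\theta \circ \at_{\cM} \;=\; \at_{\tot\Lp} \circ (\id_{\XX(\cM)[-1]} \otimes \theta)
\]
in $\mathrm{H}(\Mod)$, hence a fortiori in $\Pi(\Mod)$. On the other hand, Proposition \ref{bracket} (equivalently Corollary \ref{bracket2} restricted to $\tot\Lp$) shows that the Atiyah cocycle $\at_{\tot\Lp}^{\nabla^{\mathrm{can}}}(\overline{X}, \overline{D})$ coincides with $\textup{\textlbrackdbl}\theta(\overline{X}), \overline{D}\textup{\textrbrackdbl}$ for all homogeneous $X \in \XX(\cM)$ and $\overline{D} \in \tot\Lp$, which is precisely the statement $\at_{\tot\Lp} = \nbk \circ (\theta \otimes \id_{\tot\Lp})$. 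Chaining these two identities gives $\theta \circ \at_{\cM} = \nbk \circ (\theta \otimes \theta)$ in $\Pi(\Mod)$, which is exactly the commutativity of \eqref{maindiagramII}.

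For part (2), I would invoke Corollary \ref{UL2} which identifies $(\Dpp, L_Q, d_H)$ as the universal enveloping algebra of $(\Lp, L_Q, d_H; \nbk)$ in the derived category $D(\Mod)$. Since the functor $\tot: D(\Mod) \to \Pi(\Mod)$ is monoidal and sends Lie (resp.\ associative, resp.\ Hopf) algebra objects to Lie (resp.\ associative, resp.\ Hopf) algebra objects, the resulting $(\tot \Dpp, L_Q + d_H)$ is the universal enveloping algebra of $(\tot\Lp, L_Q+d_H; \nbk)$ in $\Pi(\Mod)$. Part (1) identifies $(\XX(\cM)[-1], L_Q; \at_{\cM})$ with $(\tot\Lp, L_Q+d_H; \nbk)$ as Lie algebra objects in $\Pi(\Mod)$, so by the universal property their enveloping algebras must agree, yielding the desired statement. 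The Hopf algebra structure on $(\tot\Dpp, L_Q+d_H)$ in $\Pi(\Mod)$ is obtained by transporting the Hopf structure already present in $Ch(\Mod)$ (described at the end of Section \ref{Dpoly} and Section \ref{Section  Hopf algebra}) through the functors $Ch(\Mod) \to \Mod \to \Pi(\Mod)$.

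The step I expect to require the most care is the identification of the two ways that $\XX(\cM)[-1]$ enters the first argument of $\at_{\tot\Lp}$: once canonically, and once via the inclusion $\theta$. While Proposition \ref{bracket} already performs this book-keeping, I would need to be mindful about signs and Koszul conventions to verify that $\nbk \circ (\theta \otimes \id_{\tot\Lp})$ truly equals $\at_{\tot\Lp}^{\nabla^{\mathrm{can}}}$ on the nose as cochain maps (not merely up to homotopy), so that the composition argument yields the commutative square in $\Pi(\Mod)$ with no residual homotopy to chase. Once this identification is pinned down, both parts of the theorem follow by formal manipulation within the diagram of categories displayed at the end of Section \ref{Sec:dgcomplexes}.
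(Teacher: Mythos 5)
Your proposal is correct and follows essentially the same route as the paper: part (1) by combining Proposition \ref{betaquasi-isomorphism}, the functoriality of Atiyah classes (Proposition \ref{functoriality}) applied to $\theta$, and the cochain-level identification of $\at^{\nabla^{\mathrm{can}}}_{\Lp}$ with the free bracket (Proposition \ref{bracket}/Corollary \ref{bracket2}); and part (2) by transporting Corollary \ref{UL2} and the Hopf structure of Section \ref{Section  Hopf algebra} through the $\tot$ functor and then using the Lie-algebra isomorphism from part (1).
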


\begin{proof}
Statement $(1)$ follows from Proposition \ref{betaquasi-isomorphism}, Proposition \ref{functoriality} (the functoriality of Atiyah classes)  and Corollary \ref{bracket2}.

Recall that in Section \ref{Section Hopf algebra}, we have defined the Hopf algebra structure on
$(\Dpp, \, L_Q, \, d_H)$, which   induces a Hopf algebra structure on
the $(\smf_{\cM}, \, Q)$-module
$(\tot \Dpp, \, L_Q+d_H)$.
By Corollary \ref{UL2},
$(\Dpp, \, L_Q, \, d_H)$ is the universal enveloping algebra of
the Lie algebra object $(\Lp, \, L_Q, \, d_H)$
in the category $Ch(\Mod)$ of
$(\smf_{\cM}, \, Q)$-complexes.
By the $\mathrm{tot}$ functor, we see that
the $(\smf_{\cM}, \, Q)$-module
$(\tot \Dpp, \, L_Q+d_H)$ is the universal enveloping algebra of
the Lie algebra object $(\tot\Lp, L_Q+d_H)$
in the category $\Mod$, as well as in the homotopy category $\Pi(\Mod)$ of
$(\smf_{\cM}, \, Q)$-modules.
As we have an isomorphism
$\theta: (\XX(\cM)[-1], \, L_Q; \, \at_{\cM}) \rightarrow (\tot\Lp, \, L_Q+d_H; \, \nbk)$
of Lie algebra objects
in $\Pi(\Mod)$,
$(\tot \Dpp, \, L_Q+d_H)$ is the universal enveloping algebra of the Lie algebra object
$(\XX(\cM)[-1], \, L_Q; \, \at_{\cM})$	
in
$\Pi(\Mod)$. This proves Statement $(2)$.
\end{proof}

The following corollaries are direct consequences of Theorem \ref{R1} and Theorem \ref{maintheorem}.
They are, respectively, analogues of Corollary $1$ and Theorem $3$ in \cite{Ramadoss}.
\begin{corollary}\label{Rcorollary1}
The following diagram commutes in the homotopy category $\Pi(\Mod)$:
\begin{equation*}\label{Rcorollary1diagram}
\xymatrix{
S^{\bullet}(\XX(\cM)[-1], \, L_Q) \otimes_{\smf_{\cM}} (\XX(\cM)[-1], \, L_Q)  \ar[r]^-{\mu \circ \frac{\widetilde{\omega}}{1-e^{- \widetilde{\omega}}}} \ar[d]_{\hkr \otimes \theta} &
S^{\bullet}(\XX(\cM)[-1], \, L_Q)\ar[d]^{\hkr}\\
(\tot\Dpp, \, L_Q+d_H)
\otimes
(\tot\Lp, \, L_Q+d_H)\ar[r]^-{\cup} & (\tot\Dpp, \, L_Q+d_H) \, . \\
}
\end{equation*}
Here $\mu$ denotes the natural symmetric product of $S^{\bullet}(\XX(\cM)[-1], \, L_Q)$, and the map
\[
\widetilde{\omega}:~~ S^{\bullet}(\XX(\cM)[-1], \, L_Q)  \otimes_{\smf_{\cM}} (\XX(\cM)[-1], \, L_Q) \rightarrow
S^{\bullet}(\XX(\cM)[-1], \, L_Q)  \otimes_{\smf_{\cM}} (\XX(\cM)[-1], \, L_Q)
\]
is defined by
\begin{equation*}\label{tildeomegaformula}
\widetilde{\omega}( (\overline{X}_1 \odot_{\smf_{\cM}} \cdots \odot_{\smf_{\cM}} \overline{X}_n) \otimes_{\smf_{\cM}}\overline{X}):~~=
\sum_{i=1}^n (-1)^{\Diamond_i}(\overline{X}_1 \odot_{\smf_{\cM}} \cdots \widehat{\overline{X}}_i \cdots \odot_{\smf_{\cM}} \overline{X}_n)\otimes_{\smf_{\cM}}
\at_{\cM}(\overline{X}_i, \, \overline{X})
\end{equation*}
for homogeneous elements $\overline{X}_i, \overline{X} \in \XX(\cM)[-1]$, where we denote by
$\odot_{\smf_{\cM}}$ the $\smf_{\cM}$-symmetric tensor product in
$S^{\bullet}(\XX(\cM)[-1])$
and
$\Diamond_i=(\widetilde{X}_{i+1}+\cdots+\widetilde{X}_{n}+n-i)(\widetilde{X}_{i}+1)$.
\end{corollary}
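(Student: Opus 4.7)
The plan is to deduce Corollary \ref{Rcorollary1} from Theorem \ref{R1} by transporting the latter along the isomorphism $\theta$ of Lie algebra objects furnished by Theorem \ref{maintheorem}. Applying the $\tot$ functor to the commutative diagram of Theorem \ref{R1} yields the identity
\[
\cup \circ (\pbw \otimes \id) = \pbw \circ \mu \circ \frac{\omega}{1-e^{-\omega}}
\]
in the category $\Mod$, and hence in $\Pi(\Mod)$. The goal is to pre-compose this identity with $S^{\bullet}(\theta) \otimes \theta$ and recognise the result as the desired commutativity.

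The crucial step is to compare the operators $\widetilde{\omega}$ and $\omega$ via $\theta$. By part (1) of Theorem \ref{maintheorem}, $\theta$ intertwines the Atiyah bracket $\at_{\cM}$ with the free Lie bracket $\nbk$ in $\Pi(\Mod)$; that is, $\theta \circ \at_{\cM} = \nbk \circ (\theta \otimes \theta)$ as morphisms in $\Pi(\Mod)$. Comparing the defining formulas of $\widetilde{\omega}$ and $\omega$ term by term and matching the signs $\Diamond_i$ with the Koszul signs appearing in $\omega$, one obtains
\[
(S^{\bullet}(\theta) \otimes \theta) \circ \widetilde{\omega} = \omega \circ (S^{\bullet}(\theta) \otimes \theta)
\]
in $\Pi(\Mod)$. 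Iterating this identity and collecting coefficients of the formal power series $\frac{z}{1-e^{-z}}$ gives
\[
(S^{\bullet}(\theta) \otimes \theta) \circ \frac{\widetilde{\omega}}{1-e^{-\widetilde{\omega}}} = \frac{\omega}{1-e^{-\omega}} \circ (S^{\bullet}(\theta) \otimes \theta)
\]
in $\Pi(\Mod)$. Finally, since $S^{\bullet}(\theta)$ is a morphism of symmetric algebras, $S^{\bullet}(\theta) \circ \mu = \mu \circ (S^{\bullet}(\theta) \otimes \theta)$, and from the proof of Proposition \ref{betaquasi-isomorphism} we have the factorisation $\hkr = \pbw \circ S^{\bullet}(\theta)$.

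Putting the pieces together in $\Pi(\Mod)$,
\[
\hkr \circ \mu \circ \frac{\widetilde{\omega}}{1-e^{-\widetilde{\omega}}}
= \pbw \circ \mu \circ (S^{\bullet}(\theta) \otimes \theta) \circ \frac{\widetilde{\omega}}{1-e^{-\widetilde{\omega}}}
= \pbw \circ \mu \circ \frac{\omega}{1-e^{-\omega}} \circ (S^{\bullet}(\theta) \otimes \theta)
= \cup \circ (\hkr \otimes \theta),
\]
which is exactly the commutativity asserted by the corollary.

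The main obstacle is the intertwining identity $(S^{\bullet}(\theta) \otimes \theta) \circ \widetilde{\omega} = \omega \circ (S^{\bullet}(\theta) \otimes \theta)$. The essential input, namely that $\theta$ is a Lie algebra morphism in $\Pi(\Mod)$, is available from Theorem \ref{maintheorem}, but applying it inside the $n$-fold symmetric tensors requires a careful bookkeeping of the positional signs $\Diamond_i$ against the Koszul signs built into $\omega$ and into the $\mathrm{S}_n$-action on $\widetilde{\odot}^{n}$. Apart from this sign matching—which is a routine but delicate calculation using the conventions of Section \ref{Section2}—the remainder of the argument is formal manipulation in $\Pi(\Mod)$.
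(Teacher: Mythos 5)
Your proposal is correct and follows exactly the route the paper intends: the corollary is stated there as a direct consequence of Theorem \ref{R1} and Theorem \ref{maintheorem}, and your argument (apply $\tot$ to Theorem \ref{R1}, intertwine $\widetilde{\omega}$ with $\omega$ via the Lie-algebra isomorphism $\theta$, and use $\hkr=\pbw\circ S^{\bullet}(\theta)$) is precisely that deduction. The sign matching you flag does go through, since $|\theta(\overline{X}_i)|=\widetilde{X}_i+1$ makes $(-1)^{\Diamond_i}$ coincide with the sign $(-1)^{(|G_{i+1}|+\cdots+|G_n|)|G_i|}$ in the definition of $\omega$.
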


\begin{corollary}\label{R3}
For any $(\smf_{\cM}, \, Q)$-module
$(\cMmodule, \, L_Q)$, the Atiyah class
\[
\at_{\cMmodule}:~~
(\XX(\cM)[-1], \, L_Q)\otimes_{\smf_{\cM}}(\cMmodule, \, L_Q) \rightarrow
(\cMmodule, \, L_Q)
\]
can be uniquely lifted to a morphism
$\at_{\cMmodule}':~~ (\tot \Dpp, \, L_Q + d_H)\otimes_{\smf_{\cM}} (\cMmodule, \, L_Q)\rightarrow (\cMmodule, \, L_Q)$ in the homotopy category $\Pi(\Mod)$, i.e., the following diagram
\begin{equation*}\label{lifting}
\begin{xy}
(0,25)*+{(\XX(\cM)[-1], \, L_Q)\otimes_{\smf_{\cM}}(\cMmodule, \, L_Q)}="a";
(0,0)*+{(\tot\Dpp, \, L_Q+d_H) \otimes_{\smf_{\cM}} (\cMmodule, \, L_Q)}="b";
(50,25)*+{(\cMmodule, \, L_Q) \, .}="c";
{\ar^{\theta \otimes \id} "a";"b"};
{\ar_{\exists \mathrm{!} \, \at_{\cMmodule}' } "b";"c"};
{\ar^>>>>>>>>>{\at_{\cMmodule}} "a";"c"};
\end{xy}
\end{equation*}
commutes in the homotopy category $\Pi(\Mod)$.
\end{corollary}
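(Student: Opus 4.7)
The plan is to read $\at_{\cMmodule}$ as a Lie algebra module action and then apply the universal property of the enveloping algebra delivered by Theorem \ref{maintheorem}. First, Theorem \ref{Atiyahclassasmodulestructuremap} guarantees that $\at_{\cMmodule}$ equips $(\cMmodule, L_Q)$ with the structure of a Lie algebra module over $(\XX(\cM)[-1], L_Q; \at_{\cM})$ in $\mathrm{H}(\Mod)$, and hence also in $\Pi(\Mod)$ via the natural functor. Second, by Theorem \ref{maintheorem}(1), $\theta$ is an isomorphism of Lie algebra objects in $\Pi(\Mod)$, so I would transport the action along $\theta^{-1}$ to produce a Lie action
\[
\mu_{\cMmodule} := \at_{\cMmodule} \circ (\theta^{-1} \otimes \id): (\tot\Lp, L_Q + d_H) \otimes_{\smf_{\cM}} (\cMmodule, L_Q) \to (\cMmodule, L_Q)
\]
of $(\tot\Lp, L_Q + d_H; \nbk)$ on $(\cMmodule, L_Q)$ in $\Pi(\Mod)$; by construction $\mu_{\cMmodule} \circ (\theta \otimes \id) = \at_{\cMmodule}$.

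The third step is to invoke Theorem \ref{maintheorem}(2), which asserts that $(\tot\Dpp, L_Q + d_H)$ is the universal enveloping algebra of $(\tot\Lp, L_Q + d_H; \nbk)$ in $\Pi(\Mod)$. Using the standard correspondence between Lie module structures over a Lie algebra and associative module structures over its enveloping algebra, the Lie action $\mu_{\cMmodule}$ should extend uniquely to an associative action
\[
\at_{\cMmodule}': (\tot\Dpp, L_Q + d_H) \otimes_{\smf_{\cM}} (\cMmodule, L_Q) \to (\cMmodule, L_Q)
\]
in $\Pi(\Mod)$ whose precomposition with the natural inclusion $\tot\Lp \hookrightarrow \tot\Dpp$ recovers $\mu_{\cMmodule}$. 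Combining with the second step yields $\at_{\cMmodule}' \circ (\theta \otimes \id) = \at_{\cMmodule}$, so the required diagram commutes in $\Pi(\Mod)$. Uniqueness of $\at_{\cMmodule}'$ will follow from uniqueness in the universal property, together with the fact that $\theta \otimes \id$ is an isomorphism in $\Pi(\Mod)$.

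The main obstacle is translating the module version of the universal property of the enveloping algebra into the localized category $\Pi(\Mod)$. As stated in the excerpt, the universal property concerns morphisms of Lie algebras into associative algebras, not module actions. To bridge this gap cleanly one could either introduce an internal endomorphism object $\underline{\End}(\cMmodule)$ in $\Pi(\Mod)$ and reinterpret actions as Lie morphisms into it (which requires verifying that this internal Hom exists with the correct properties), or, following the explicit approach of Ramadoss \cite{Ramadoss}, build $\at_{\cMmodule}'$ directly via the PBW decomposition of $\tot\Dpp$ by an iterated application of the Lie action, in the spirit of Corollary \ref{Rcorollary1}, and then verify that this prescription descends to a well-defined and unique morphism in $\Pi(\Mod)$.
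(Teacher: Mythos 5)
Your overall strategy---view $\at_{\cMmodule}$ as a Lie-module action of $(\XX(\cM)[-1],\,L_Q;\,\at_{\cM})$ via Theorem \ref{Atiyahclassasmodulestructuremap}, identify this Lie algebra object with $(\tot\Lp,\,L_Q+d_H;\,\nbk)$ through $\theta$, and extend over the universal enveloping algebra $(\tot\Dpp,\,L_Q+d_H)$---is the intended one: the paper offers no more than a citation, deducing the corollary from Theorem \ref{R1} and Theorem \ref{maintheorem} in the manner of Ramadoss's Theorem 3. But as written your argument has two unjustified steps, and the first you flag without resolving. The universal property furnished by Corollary \ref{UL2}/Theorem \ref{maintheorem} concerns morphisms of Lie algebra objects into associative algebra objects; to apply it to a module action you must convert the action into an algebra morphism into an internal endomorphism object $\End_{\smf_{\cM}}(\cMmodule)$, and in $\Pi(\Mod)$ this tensor--Hom translation is not automatic: for an arbitrary $(\smf_{\cM},\,Q)$-module $\cMmodule$ the functors $-\otimes_{\smf_{\cM}}\cMmodule$ and $\Hom_{\smf_{\cM}}(\cMmodule,-)$ need not preserve quasi-isomorphisms, hence do not descend to the localization, and a morphism $\tot\Dpp\otimes_{\smf_{\cM}}\cMmodule\rightarrow\cMmodule$ in $\Pi(\Mod)$ (a zig-zag) need not correspond to a morphism $\tot\Dpp\rightarrow\End_{\smf_{\cM}}(\cMmodule)$ there. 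Since the corollary is asserted for every $(\smf_{\cM},\,Q)$-module, this bridge is exactly the content that must be supplied, not assumed.

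The same phenomenon undermines your second step and your uniqueness argument. Although $\theta$ is invertible in $\Pi(\Mod)$, the expression ``$\theta^{-1}\otimes\id_{\cMmodule}$'' is not defined there: tensoring a roof representing $\theta^{-1}$ with $\cMmodule$ need not keep the wrong-way arrow a quasi-isomorphism. Likewise you cannot assert that $\theta\otimes\id$ is an isomorphism in $\Pi(\Mod)$; and note that the restriction map in the corollary's triangle lands in $\tot\Dpp\otimes_{\smf_{\cM}}\cMmodule$, where the underlying map $\XX(\cM)[-1]\rightarrow\tot\Dpp$ is in general not a quasi-isomorphism, so uniqueness cannot come from invertibility of that arrow in any case. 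The repair is the route you mention only in passing and defer: following Ramadoss, construct $\at_{\cMmodule}'$ explicitly, using the PBW isomorphism and the cup-product comparison of Theorem \ref{R1} (this is precisely why the paper cites Theorem \ref{R1} for this corollary), so that the action of $\tot\Dpp$ is expressed through iterated brackets with elements of $\tot\Lp\simeq\XX(\cM)[-1]$, i.e.\ through iterated Atiyah cocycles; this yields an honest representative in $\Mod$, gives uniqueness because any compatible lift is determined on the image of $\pbw$ by $\at_{\cMmodule}$, and never requires inverting a tensored quasi-isomorphism. Until that construction (or a verified internal-Hom formalism in $\Pi(\Mod)$) is carried out, the proposal remains a plan with a genuine gap rather than a proof.
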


\section{Hopf algebras arising from Lie pairs}\label{Section6}
 Throughout this section, we denote by $R=\smf(M, \fK)$ the ring of $\fK$-valued smooth functions on a smooth manifold $M$.
\subsection{The Atiyah class $\at_{L/A}$ of a Lie pair $(L, \, A)$}\

The notion of Lie pair is a natural framework encompassing a range of diverse geometric contexts including complex manifolds, foliations, and $\mathfrak{g}$-manifolds (i.e., manifolds endowed with an infinitesimal action of a Lie algebra
$\mathfrak{g}$). By a \textbf{Lie pair}, we mean an inclusion $i:A \hookrightarrow L$ of Lie $\fK$-algebroids over a smooth manifold $M$. Recall that a \textbf{Lie $\fK$-algebroid} is a $\fK$-vector bundle $L \rightarrow M$, whose space of sections is endowed with a Lie bracket $\embk$, together with a bundle map $\rho: ~~ L \rightarrow T_M\otimes_{\R} \fK$ called \textbf{anchor} such that
$\rho:~~ \Gamma(L) \rightarrow \XX(M)\otimes_{\R}\fK$ is a morphism of Lie algebras and
\[
[X, \, fY]=f[X, \,Y]+(\rho(X)f)Y,
\]
for all $X, Y \in \Gamma(L)$ and $f\in R$.

Given a Lie pair $(L, \, A)$.
For convenience, let us denote the quotient
vector bundle $L/A$ by $B$.
Denote by $\mathrm{pr}_{B}: L \rightarrow B$
the projection map.
Note that $B$ is naturally an $A$-module:
\begin{equation}\label{Bott A-module}
\begin{split}
\Bott:~ & \Gamma(A)\otimes_{\fK}\Gamma(B)\rightarrow \Gamma(B) \, , \\
& \Bott_{a}b:=\mathrm{pr}_{B}[a, \, l] \, ,
\end{split}
\end{equation}
where $a\in \Gamma(A)$, $b\in \Gamma(B)$ and
$l\in \Gamma(L)$ is any section of $L$ satisfying
$\mathrm{pr}_{B}(l)=b$.
The flat $A$-connection $\Bott$ on $B$
is also known as the Bott connection
\cites{Bott, C-S-X2}.

We have a natural short exact sequence
of vector bundles over $M$:
\begin{equation}\label{Liepairshortexactsequence}
0\rightarrow A \stackrel{i}{\longrightarrow} L \stackrel{\mathrm{pr}_B}{\longrightarrow} B \rightarrow 0 \, .
\end{equation}
A splitting of Sequence
\eqref{Liepairshortexactsequence}
is a map $j: B \rightarrow L$ of vector bundles
such that $\mathrm{pr}_B \circ j=\id_{B}$.
The choice of a splitting determines an isomorphism of vector bundles over $M$:
\begin{equation*}
\begin{split}
A \oplus B & \simeq L \, , \\
(a,b)& \mapsto i(a)+j(b) \, ,
\end{split}	
\end{equation*}
for all $a\in \Gamma(A)$ and $b\in \Gamma(B)$.
In what follows, we fix the splitting $j$ of Sequence \eqref{Liepairshortexactsequence}
so that one treats $L=A\oplus B$ directly.

Recall the definition of Atiyah class of the Lie pair
$(L, \, A)$ introduced in \cite{C-S-X2}.
Let $\nabla:~~ \Gamma(L) \otimes_{\fK} \Gamma(B) \rightarrow \Gamma(B)$ be a smooth $L$-connection on $B$ which extends the Bott connection, i.e.,
\begin{equation}\label{compatible with Bott}
\nabla_{i(a)}b=\Bott_{a}b ,  \quad
\forall a\in \Gamma(A) , \, b \in \Gamma(B) \, .
\end{equation}
Denote by $A^{\vee}$ (resp. $B^{\vee}$)
the $\fK$-dual of the vector bundle $A$
(resp. $B$).
 We have a $1$-cocycle $\at^{\nabla}_{B}$
in the Chevalley-Eilenberg complex
$(\Gamma(B^{\vee}\otimes\End(B))\otimes_R \Gamma(\wedge^{\bullet}(A^{\vee})), \, d_{\mathrm{CE}})$
of the Lie algebroid $A$ valued in the $A$-module $B^{\vee}\otimes\End(B)$:
\begin{equation*}
\at^{\nabla}_{B} \in
\Gamma( (B^{\vee}\otimes\End(B)) \otimes A^{\vee})
\simeq \Hom_{R}(\Gamma(A\otimes B), \Gamma(\mathrm{End}(B)))\, .	
\end{equation*}
It is defined by the following formula:
\begin{equation*}\label{LiepairAtiyahcocycleformula}
\at^{\nabla}_B(a, b)e:~~=\Bott_{a}\nabla_{j(b)}e
-\nabla_{j(b)}\Bott_{a}e-\nabla_{[i(a),j(b)]}e \, ,
\end{equation*}
for all $a \in \Gamma(A)$ and $b, e \in \Gamma(B)$.
We call $\at^{\nabla}_{B}$
the \textbf{Atiyah cocycle of the Lie pair $(L, \, A)$}.
The cohomology class
\begin{equation*}\label{LiepairAtiyahclassdefinition}
\at_B=[\at^{\nabla}_B]\in H^1_{\mathrm{CE}}(A, B^{\vee}\otimes\End(B))	
\end{equation*}
does not depend on the choice of $j$ and $\nabla$.
We call $\at_{B}$ the \textbf{Atiyah class of the Lie pair
$(L, \, A)$}.

According to a theorem of Va\u{\i}ntrob \cite{Vaintrob},
given a $\fK$-vector bundle $A$
over a smooth manifold $M$, the homological vector fields on the graded manifold
$A[1]$ are in one-one correspondence with
the Lie algebroid structures on $A$.
Indeed,
the space of functions on the graded manifold $A[1]$ is
$\Omega(A):=\Gamma(\wedge^{\bullet}(A^{\vee}))$,
and the homological vector field on
$A[1]$ is the Chevalley-Eilenberg differential
$d_{\mathrm{CE}}^{A}: \Gamma(\wedge^{\bullet}(A^\vee))\to \Gamma(\wedge^{\bullet+1}(A^\vee))$.
In what follows,
we denote by
$(A[1], \, d_{\mathrm{CE}}^{A})$ the dg manifold
arising from the Lie algebroid structure of $A$,
and by
$(\Omega(A), \, d_{\mathrm{CE}}^{A})$
the dg algebra of functions on
$(A[1], \, d_{\mathrm{CE}}^{A})$.

Let $B^{!}$ be the graded vector bundle over
$A[1]$ which is the
pullback of the bundle $B\rightarrow M$ along the natural projection
$A[1] \rightarrow M$:
\[
\xymatrix{
B^{!}  \ar[r] \ar[d] & B \ar[d]\\
A[1] \ar[r] & M \, .\\
}
\]
By definition we have
$\Gamma(B^{!})=\Gamma(B \otimes \wedge^{\bullet}(A^{\vee}))=\Gamma(B)\otimes_{R}\Omega(A)$,
so $\Gamma(B^{!})$ has a natural $\Omega(A)$-module structure.
Meanwhile, the natural $A$-module structure of $B$ (see Equation  \eqref{Bott A-module})
defines
the Chevalley-Eilenberg differential
$d_{\mathrm{CE}}^{B^{!}}: \Gamma(B^{!}) \rightarrow \Gamma(B^{!})$.
It turns out that the Chevalley-Eilenberg complex
$(\Gamma(B^{!}), \, d_{\mathrm{CE}}^{B^{!}})$
is a
$(\Omega(A), \, d_{\mathrm{CE}}^{A})$-module.
In other words, we have a dg vector bundle
$(B^{!}, \, d_{\mathrm{CE}}^{B^{!}}) \rightarrow (A[1], \, d_{\mathrm{CE}}^{A})$.

As the Lie pair Atiyah class $\at_{B}$ is an element in the space
\begin{equation*}
\begin{split}
H^1_{\mathrm{CE}}(A, & B^{\vee}\otimes \End(B))
\simeq \\
&\Hom_{\mathrm{H}((\Omega(A),\, d_{\mathrm{CE}}^A)\mathbf{-mod})}
((\Gamma(B^{!})[-1], \, d_{\mathrm{CE}}^{B^{!}})\otimes_{\Omega(A)}
(\Gamma(B^{!})[-1], \, d_{\mathrm{CE}}^{B^{!}})
, (\Gamma(B^{!})[-1], \, d_{\mathrm{CE}}^{B^{!}})) \, ,
\end{split}
\end{equation*}
we consider it as a morphism
\[
\at_{B}:~~ (\Gamma(B^{!})[-1], \, d_{\mathrm{CE}}^{B^{!}})\otimes_{\Omega(A)}
(\Gamma(B^{!})[-1], \, d_{\mathrm{CE}}^{B^{!}})
\rightarrow (\Gamma(B^{!})[-1], \, d_{\mathrm{CE}}^{B^{!}})
\]
in the homology category
$\mathrm{H}((\Omega(A), \, d_{\mathrm{CE}}^A)\mathbf{-mod})$ of
$(\Omega(A), \, d_{\mathrm{CE}}^{A})$-modules.

\subsection{The Hopf algebra $(D_{\mathrm{poly}}^{\bullet}(L/A), \, d_H)$}\label{Sec:HopfalgebraDpolyLA}\

It is known \cite{Xu} that the universal enveloping algebra $U(L)$ of a Lie algebroid $L$ admits a cocommutative coassociative coproduct $\Delta:~~ U(L) \rightarrow U(L)\otimes_R U(L)$, which is defined on generators as follows:
\begin{equation*}
\Delta(f)=f\otimes_R 1=1 \otimes_R f, \, \forall f \in R, \,
\quad \quad
\Delta(p)=p\otimes_R 1 + 1 \otimes_R p, \, \forall p \in \Gamma(L).	
\end{equation*}
Moreover, $U(L)$ is an $L$-module since
each section $l$ of $L$ acts on $U(L)$
by left multiplication.

Now given the Lie pair $(L, \, A)$, consider the quotient $D_{\mathrm{poly}}^1(B):=\frac{U(L)}{U(L)\Gamma(A)}$.
It is straightforward to see that the coproduct on $U(L)$ induces a coproduct
$\Delta: D_{\mathrm{poly}}^1(B) \rightarrow D_{\mathrm{poly}}^1(B)\otimes_R D_{\mathrm{poly}}^1(B)$
on $D_{\mathrm{poly}}^1(B)$ and the action of $L$ on $U(L)$ determines
an action of $A$ on
$D_{\mathrm{poly}}^1(B)$.
It turns out that
the quotient $D_{\mathrm{poly}}^{1}(B)$
is simultaneously a cocommutative coassociative
$R$-coalgebra and an $A$-module.

Let $D_{\mathrm{poly}}^{n}(B)$ denote
the $n$-th tensorial power
$D_{\mathrm{poly}}^{1}(B)\otimes_R \cdots \otimes_R D_{\mathrm{poly}}^{1}(B)$
of $D_{\mathrm{poly}}^{1}(B)$ and,
for $n=0$, set $D_{\mathrm{poly}}^{0}(B)=R$.
Sti\'enon, Xu, and the second author defined a complex
$(D_{\mathrm{poly}}^{\bullet}(B), \, d_H)$
in the category of $A$-modules \cite{C-S-X},
where
$d_H:~~ D_{\mathrm{poly}}^n(B) \rightarrow D_{\mathrm{poly}}^{n+1}(B)$
is the Hochschild differential\footnote{Our definition of $d_H$ on $D_{\mathrm{poly}}^{\bullet}(B)$ is different from the   one   in   \cite{C-S-X} by a negative sign.
The reason is that we wish  
  $d_H$ on $  D_{\mathrm{poly}}^{\bullet}(B) $ to be  compatible with the one on $  \Verticalthings{D}_{\mathrm{poly}}$ given by Equation \eqref{Hochschild differential definition}. See Diagram \eqref{Imap}.} given by
\begin{equation*}\label{HochschildofLiepair}
\begin{split}
d_H(p_1 \otimes_R \cdots \otimes_R p_n)& =-(1\otimes_R p_1 \cdots \otimes_R p_n - \Delta(p_1) \otimes_R p_2 \cdots \otimes_R p_n + p_1 \otimes_R \Delta(p_2) \cdots \otimes_R p_n - \\
& \quad \quad \cdots +(-1)^n p_1 \otimes_R \cdots \otimes_R \Delta(p_n) +(-1)^{n+1}p_1\otimes_R \cdots \otimes_R p_n \otimes_R 1),
\end{split}
\end{equation*}
for $p_1, \cdots , p_n \in D_{\mathrm{poly}}^1(B)$.

Consider the inclusion
$\eta: R \hookrightarrow D_{\mathrm{poly}}^{\bullet}(B)$,
the projection
$\varepsilon : D_{\mathrm{poly}}^{\bullet}(B) \rightarrow  R$,
and the maps
$t: D_{\mathrm{poly}}^{\bullet}(B) \rightarrow D_{\mathrm{poly}}^{\bullet}(B)$
and
$\widetilde{\Delta}: D_{\mathrm{poly}}^{\bullet}(B) \rightarrow
D_{\mathrm{poly}}^{\bullet}(B)\otimes_{R} D_{\mathrm{poly}}^{\bullet}(B)$ defined, respectively, by
\begin{equation*}
t(p_1\otimes_R p_2 \otimes_R  \cdots \otimes_R p_n)
=(-1)^{\frac{n(n-1)}{2}}	
p_n \otimes_R p_{n-1} \otimes_R \cdots \otimes_R p_1
\end{equation*}
and
\begin{equation*}
\widetilde{\Delta}(p_1 \otimes_R p_2 \otimes_R \cdots \otimes_R  p_n)=\sum_{i+j=n}\sum_{\sigma \in \mathrm{Sh}(i, \, j)}\kappa(\sigma)\,
(p_{\sigma(1)}\otimes_R \cdots \otimes_R p_{\sigma(i)})
\otimes_R
(p_{\sigma(i+1)}\otimes_R \cdots \otimes_R p_{\sigma(n)}) \, ,
\end{equation*}
where $\mathrm{Sh}(i, \, j)$ denotes the set of
$(i, \, j)$-shuffles.
With the multiplication $\otimes_{R}$,
the comultiplication $\widetilde{\Delta}$,
the unit $\eta$,
the counit $\varepsilon$,
and the antipole $t$,
$(D_{\mathrm{poly}}^{\bullet}(B), \, d_H)$ is a Hopf algebra object in the category of cochain complexes of
$A$-modules \cite{C-S-X}.

Let $\functorL(D_{\mathrm{poly}}^{1}(B))$
be the free graded Lie algebra generated over $R$
by $D_{\mathrm{poly}}^{1}(B)$, where
$D_{\mathrm{poly}}^{1}(B)$ is placed in degree
$(+1)$.
In other words,
$\functorL(D_{\mathrm{poly}}^{1}(B))$
is the smallest Lie subalgebra of
$D_{\mathrm{poly}}^{\bullet}(B)$
containing $D_{\mathrm{poly}}^{1}(B)$.
The Lie bracket of two elements
$u\in D_{\mathrm{poly}}^{i}(B)$
and $v\in D_{\mathrm{poly}}^{j}(B)$
is the element
$\textup{\textlbrackdbl} u, \, v  \textup{\textrbrackdbl}
=u\otimes_R v- (-1)^{ij} v\otimes_R u \in D_{\mathrm{poly}}^{i+j}(B)$.
It turns out that
$(\functorL(D_{\mathrm{poly}}^{1}(B)), \, d_H; \, \nbk)$ is a Lie algebra object
in the category of cochain complexes of $A$-modules.

\subsection{The Fedosov dg Lie algebroid $(\cF, \, \LQF)$}\label{The Fedosov dg Lie algebroid}\

Let $(L, \, A)$ be a Lie pair
and let $\nabla:~~\Gamma(L)\otimes_{\fK} \Gamma(B) \rightarrow \Gamma(B)$ be an $L$-connection which extends the Bott $A$-connection on $B$ (see Equation  \eqref{compatible with Bott}).
Suppose further that $\nabla$ is torsion-free, i.e.,
\begin{equation*}
\nabla_{l_1}\mathrm{pr}_{B}(l_2)-\nabla_{l_2}\mathrm{pr}_{B}(l_1)=\mathrm{pr}_{B}([l_1, \, l_2]) \, , \,\,
\forall \, l_1 ,\, l_2 \in \Gamma(L) \, .
\end{equation*}
Here $\mathrm{pr}_{B}:~ \Gamma(L) \rightarrow \Gamma(B)$ denotes the projection map.
In \cite{S-X}, Sti\'enon and Xu defined a dg manifold
\[
(\Mfedo, \, \Qfedo):~~=(L[1]\oplus B, \, \fedo)
\]
which they called the \textbf{Fedosov dg manifold}
associated with the Lie pair $(L, \, A)$.
There are two equivalent constructions of
the homological vector field $\Qfedo$ arising from $(L, \, A)$.
One is by way of Fedosov's iteration method, the other is by way of the PBW map.
We will briefly recall the second one.
Let
\begin{equation*}\label{exp}
\pbw^{\nabla,j}:~~ \Gamma(SB) \rightarrow D_{\mathrm{poly}}^1(B)	
\end{equation*}
be the PBW map introduced in \cites{LG-S-X1,LG-S-X2}. Here $SB$ denotes the symmetric tensor algebra of the vector bundle $B$.
The map $\pbw^{\nabla,j}$ is an isomorphism of $R$-modules.
There is a canonical flat $L$-connection
\begin{equation}\label{canonical connection on Dpoly1B}
\begin{split}
\nabla^{\mathrm{can}}:~~ \Gamma(L)\otimes_{\fK} & D_{\mathrm{poly}}^1(B)\rightarrow D_{\mathrm{poly}}^1(B) \, , \\
& \nabla^{\mathrm{can}}_{l}u:~~ =l \cdot u
\end{split}
\end{equation}
for all $l\in \Gamma(L)$ and
$u\in D_{\mathrm{poly}}^1(B)$.
Pulling back   $\nabla^{\mathrm{can}}$
via the map $\pbw^{\nabla,j}$, we obtain a flat $L$-connection on $SB$:
\begin{equation*}\label{lighting connection on SB}
\begin{split}
\nabla^{\lightning}:~~ & \Gamma(L)\otimes_{\fK}\Gamma(SB) \rightarrow \Gamma(SB)\, , \\\nabla^{\lightning}_{l}s:~~ = & (\pbw^{\nabla,j})^{-1} \circ \nabla^{\mathrm{can}}_{l}\circ \pbw^{\nabla,j}(s)
\end{split}
\end{equation*}
for $l \in \Gamma(L)$ and $s \in \Gamma(SB)$.
Let $\hat{S}(B^{\vee})=\Hom_{\fK}(SB, \, M\times \fK)$ be the $\fK$-dual bundle of $SB$.
It turns out that there is a flat connection on
$\hat{S}(B^{\vee})$,
which is denoted by the same symbol $\nabla^{\lightning}$.
We denote the corresponding Chevalley-Eilenberg
differential by
\begin{equation*}
d_{L}^{\nabla^{\lightning}}:~
\Gamma(\hat{S}(B^{\vee}))\otimes_R
\Gamma(\wedge^{\bullet}(L^{\vee}))
\rightarrow
\Gamma(\hat{S}(B^{\vee}))\otimes_R
\Gamma(\wedge^{\bullet+1}(L^{\vee})) \, .
\end{equation*}
Henceforth we will write
$\Omega(L)$ for $\Gamma(\wedge^{\bullet}(L^{\vee}))$.

The ring of functions on the graded manifold
$\Mfedo=L[1]\oplus B$ is clearly
$\Gamma(\hat{S}(B^{\vee}))\otimes_{R}\Omega(L)$.
Together with the homological vector field
$\Qfedo:=\fedo$,
the pair $(\Mfedo, \, \Qfedo)$ is called the Fedosov dg manifold.
(For more details, see \cite[Section $2$]{S-X} and \cites{L-S,B-S-X}.)
Clearly, $\Mfedo$ can be thought of as a vector bundle over $L[1]$. We denote by
$\pi:~~ \Mfedo \rightarrow L[1]$ the natural projection.
 It can be verified that $\pi:~(\Mfedo, \, \Qfedo)\to (L[1], \, d_{\mathrm{CE}}^L)$ is a morphism of dg manifolds (see \cite{B-V} and \cite{S-X} for details).
 Hence, the dg subbundle
 $(\cF, \, L_{\Qfedo}):=(\ker \pi_{\ast}, \, L_{\Qfedo}) \subset  (T_{\Mfedo}, \, L_{\Qfedo})$ is a dg foliation of the dg manifold $(\Mfedo, \, \Qfedo)$.
Sti\'enon and Xu called it a \textbf{Fedosov dg Lie algebroid}.
 By the construction of $\cF$, we have the following fact (see \cite{B-S-X}):
\begin{equation*}\label{Fexpression}
\Gamma(\cF) =\mathrm{Der}_{\Omega(L)}(\smf_{\Mfedo}, \,\smf_{\Mfedo})
\simeq\Gamma(B)\otimes_{R}\smf_{\Mfedo}=\Gamma(B\otimes \wedge^{\bullet}(L^{\vee}) \otimes \hat{S}(B^{\vee})) \, .
\end{equation*}
In other words,
the space $\Gamma(\cF)$ consists of \textbf{vertical vector fields} along the natural projection 
$\pi:~~ \Mfedo \rightarrow L[1]$.

We have an obvious embedding map $\iota: A[1]\hookrightarrow \Mfedo=L[1]\oplus B$.
Moreover,
it is proved in \cites{S-X}
that
\begin{equation}\label{embedding of dg manifolds}
\iota: (A[1], \, d_{\mathrm{CE}}^{A})\hookrightarrow (\Mfedo, \, \Qfedo)
\end{equation}
is a morphism of dg manifolds as well as a quasi-isomorphism.
In other words, the induced map
$\iota^{\ast}:
(\smf_{\Mfedo}, \, \Qfedo)=(\Gamma(\wedge^{\bullet}(L^{\vee})\otimes\hat{S}(B^{\vee})), \,
\fedo)
\rightarrow (\Omega(A), \, d_{\mathrm{CE}}^{A})$
is a quasi-isomorphism of differential graded algebras.

By construction of the Fedosov dg Lie algebroid
$(\cF, \, L_{\Qfedo})$,
we have a natural pullback diagram of dg vector bundles
\[
\xymatrix{
(B^{!}, \, d_{\mathrm{CE}}^{B^{!}})  \ar[r]^{\iota} \ar[d] & (\cF, \, \LQF) \ar[d]\\
(A[1], \, d_{\mathrm{CE}}^{A}) \ar[r]^{\iota} & (\Mfedo, \, \Qfedo ) \, .\\
}
\]
In other words, we have
$\iota^{\ast}(\cF, \, \LQF)=
(B^{!}, \, d_{\mathrm{CE}}^{B^{!}})$, or
\[
(\Gamma(B^{!}), \, d_{\mathrm{CE}}^{B^{!}})=(\Gamma(\cF), \, \LQF)\otimes_{\smf_{\Mfedo}}
(\Omega(A), \, d_{\mathrm{CE}}^{A})\, .
\]	
By this fact, we can treat
the $(\Omega(A), \, d_{\mathrm{CE}}^{A})$-module
$(\Gamma(B^{!}), \, d_{\mathrm{CE}}^{B^{!}})$ as a
$(\smf_{\Mfedo}, \, \Qfedo)$-module.

Consider the
$(\smf_{\Mfedo}, \, \Qfedo)$-complexes of polyvector fields and polydifferential operators
on $(\Mfedo, \, \Qfedo)$, which are denoted by
$(\Tpp(\Mfedo), \, L_{\Qfedo}, \, \delta=0)$ and
$(\Dpp(\Mfedo), \, L_{\Qfedo}, \, \delta=d_H)$, respectively.
Let $\Verticalthings{D}_{\mathrm{poly}}^1\subset \cD_{\Mfedo}$ be the subspace
of \textbf{vertical differential operators} along the natural projection 
$\pi:~\Mfedo \rightarrow L[1]$.
Indeed, $\Verticalthings{D}_{\mathrm{poly}}^1$ can be expressed by
\begin{equation*}\label{D(F)expression}
\Verticalthings{D}_{\mathrm{poly}}^1=\Gamma(SB)\otimes_{R}\smf_{\Mfedo}=\Gamma(SB \otimes \wedge^{\bullet}(L^{\vee})\otimes \hat{S}(B^{\vee})) \, .
\end{equation*}
(See \cite{B-S-X}.)
As
$(\cF, \, L_{\Qfedo})$
is a dg vector bundle,
$(\Verticalthings{D}_{\mathrm{poly}}^1, \,  L_{\Qfedo})$
is a $(\smf_{\Mfedo}, \, \Qfedo)$-module.
We also have
the $(\smf_{\Mfedo}, \, \Qfedo)$-subcomplex
\begin{equation*}
(\Verticalthings{T}_{\mathrm{poly}}, \, L_{\Qfedo}, \, \delta=0)\subset (\Tpp(\Mfedo), \, L_{\Qfedo}, \, 0)	
\end{equation*}
of \textbf{vertical polyvector fields} and
the $(\smf_{\Mfedo}, \, \Qfedo)$-subcomplex
\begin{equation*}
(\Verticalthings{D}_{\mathrm{poly}}, \, L_{\Qfedo}, \, \delta=d_H)\subset
(\DD_{\mathrm{poly}}(\Mfedo), \, L_{\Qfedo}, \, d_H)
\end{equation*}
of \textbf{vertical polydifferential operators}.
Meanwhile, we have identifications of
$\smf_{\Mfedo}$-modules (see \cite{B-S-X}):
\begin{equation*}
\begin{split}
& \tot\Verticalthings{T}_{\mathrm{poly}}=
\Gamma(\wedge^{\bullet}B)\otimes_R \smf_{\Mfedo}
=
\Gamma(\wedge^{\bullet}B \otimes \wedge^{\bullet}(L^{\vee})\otimes \hat{S}(B^{\vee}))  \\
\text{and}\quad\quad\quad\quad
& \tot\Verticalthings{D}_{\mathrm{poly}}=\Gamma(\otimes^{\bullet}(SB))\otimes_R \smf_{\Mfedo}=
\Gamma(\otimes^{\bullet}(SB) \otimes \wedge^{\bullet}(L^{\vee})\otimes \hat{S}(B^{\vee}))  \, .
\end{split}
\end{equation*}


\subsection{The Atiyah class $\at_{\cF}$}\

In general, given a dg foliation
$(\cF, \, L_Q)$ of a dg manifold $(\cM, \, Q)$, one has the notions of $\cF$-Atiyah cocycle and $\cF$-Atiyah class of a $(\smf_{\cM}, \, Q)$-module
$(\cMmodule, \, L_Q)$.
Their definitions are completely analogous to the definitions of the Atiyah cocycle and the Atiyah class of the $(\smf_{\cM}, \, Q)$-module $(\cMmodule, \, L_Q)$.
For details, see the paper \cite{M-S-X}.
In particular, we consider the Fedosov dg Lie algebroid
 $(\cF, \, \LQF)$ and the $(\smf_{\Mfedo}, \, \Qfedo)$-module $(\Gamma(\cF)[-1], \, \LQF)$.
 In this situation, we have a canonical $\cF$-connection $\nabla^{\cF}$ on the $R$-module $\Gamma(\cF)$ characterized by the relation (see \cite{L-S-X2} for more details)
 \[
\nabla^{\cF}_{X}Y=0,\qquad
\forall X, Y \in \Gamma(B)\subset \Gamma(\cF).
\]
The operator $\nabla^{\cF}$ also defines a
$\cF$-connection on $\Gamma(\cF)[-1]$.
Accordingly, the \textbf{$\cF$-Atiyah cocycle} associated to the $\cF$-connection $\nabla^{\cF}$ on $\Gamma(\cF)[-1]$ is a morphism of
$(\smf_{\Mfedo}, \, \Qfedo)$-modules:
\[
\at^{\nabla^{\cF}}_{\Gamma(\cF)[-1]}:~~ (\Gamma(\cF)[-1], \, \LQF)\otimes_{\smf_{\Mfedo}} (\Gamma(\cF)[-1], \, \LQF)
\rightarrow
(\Gamma(\cF)[-1], \, \LQF) \, ,
\]
\[
\at^{\nabla^{\cF}}_{\Gamma(\cF)[-1]}(\overline{X},\overline{Y})
:=
(-1)^{\widetilde{X}}(\LQF(\nabla^{\cF}_X \overline{Y})-\nabla^{\cF}_{[\Qfedo,X]}\overline{Y}-(-1)^{\widetilde{X}}\nabla^{\cF}_{X}\LQF(\overline{Y})).
\]
Here the notation $\overline{X}\in\Gamma(\cF)[-1]$ (resp.  $\overline{Y}\in \Gamma(\cF)[-1]$) denotes the element that corresponds to $X\in \Gamma(\cF)$ (resp. $Y\in \Gamma(\cF)$).
Here and in the rest of this paper,
we denote by
$(C_{\Mfedo}^{\infty}, \Qfedo)\mathrm{-}\mathbf{mod}$
the category of
$(\smf_{\Mfedo}, \, \Qfedo)$-modules.
The \textbf{$\cF$-Atiyah class} of $(\Gamma(\cF)[-1], \, \LQF)$, denoted simply by $\at_{\cF}$, is the morphism
in the homology category
$\mathrm{H}( (C_{\Mfedo}^{\infty}, \Qfedo)\mathrm{-}\mathbf{mod} )$
induced by
$\at^{\nabla^{\cF}}_{\Gamma(\cF)[-1]}$:
\begin{equation*}
\at_{\cF}: (\Gamma(\cF)[-1], \, \LQF) \otimes_{\smf_{\Mfedo}}(\Gamma(\cF)[-1], \, \LQF)
\rightarrow
(\Gamma(\cF)[-1], \, \LQF) \, .	
\end{equation*}
Similar to Theorem \ref{Thm:AtiyahdefinesLiebracket},
 the Atiyah class $\at_{\cF}$ defines a Lie algebra object
$(\Gamma(\cF)[-1], \, \LQF; \, \at_{\cF})$ in
$\mathrm{H}( (C_{\Mfedo}^{\infty}, \Qfedo)\mathrm{-}\mathbf{mod} )$.

Let
$\functorL(\Verticalthings{D}_{\mathrm{poly}}^1)$ $\subset$   $\functorL(\Dpp^1(\Mfedo))$
be the free Lie algebra generated over
$\smf_{\Mfedo}$ by
$\Verticalthings{D}_{\mathrm{poly}}^1$ (where
$\Verticalthings{D}_{\mathrm{poly}}^1$ is placed in horizontal degree $(+1)$).
As the vertical differential
$L_{\Qfedo}$: $
\functorL(\Dpp^1(\Mfedo))$ $\rightarrow$ $\functorL(\Dpp^1(\Mfedo)) $
and the horizontal differential
$d_H$: $\functorL(\Dpp^1(\Mfedo))$ $\rightarrow$ $\functorL(\Dpp^1(\Mfedo))$
preserve
the subspace
$\functorL(\Verticalthings{D}_{\mathrm{poly}}^1)$ of $ \functorL(\Dpp^1(\Mfedo))$,
$(\functorL(\Verticalthings{D}_{\mathrm{poly}}^1)$,   $\LQF$, $d_H)$  
is a Lie subalgebra object of $(\functorL(\Dpp^1(\Mfedo) )$, $\LQF$, $d_H)$ in the
category
$Ch( (\smf_{\Mfedo}, \, \Qfedo)\mathrm{-}\mathbf{mod} )$
of $(\smf_{\Mfedo}$,   $\Qfedo)$-complexes.
  We have the following analogue of Theorem \ref{maintheorem}:
\begin{theorem}\label{betaforfoliation}
Let $(\cF, \, \LQF)$ be the Fedosov dg Lie algebroid as above.
\begin{itemize}
\item[(1)]	
The natural inclusion map
$\theta:~~ (\Gamma(\cF)[-1], \, \LQF) \rightarrow (\tot \functorL(\Verticalthings{D}_{\mathrm{poly}}^1), \, \LQF+d_H )$
is an isomorphism in
the homotopy category
$\Pi( (\smf_{\Mfedo}, \, \Qfedo)\mathrm{-}\mathbf{mod} )$
of $(\smf_{\Mfedo}, \, \Qfedo)$-modules.
Moreover, it is an isomorphism of Lie algebra objects
$(\Gamma(\cF)[-1], \, \LQF; \, \at_{\cF}) \rightarrow (\tot \functorL(\Verticalthings{D}_{\mathrm{poly}}^1),$
$\LQF+d_H; \, \nbk )$, i.e.,
the following diagram
\begin{equation*}\label{foliationdiagram}
\xymatrix{
(\Gamma(\cF)[-1], \, \LQF) \otimes_{\smf_{\Mfedo}} (\Gamma(\cF)[-1], \, \LQF)  \ar[r]^-{\at_{\cF}} \ar[d]_{\theta \otimes \theta} & (\Gamma(\cF)[-1], \, \LQF) \ar[d]^{\theta}\\
(\tot\functorL(\Verticalthings{D}_{\mathrm{poly}}^1), \, \LQF+d_H )
\otimes_{\smf_{\Mfedo}}
(\tot \functorL(\Verticalthings{D}_{\mathrm{poly}}^1), \, \LQF+d_H ) \ar[r]^-{\nbk} & (\tot \functorL(\Verticalthings{D}_{\mathrm{poly}}^1), \, \LQF+d_H )\, .\\
}
\end{equation*}
commutes in
$\Pi( (\smf_{\Mfedo}, \, \Qfedo)\mathrm{-}\mathbf{mod} )$.

\item[(2)]
The $(\smf_{\Mfedo},\Qfedo)$-module
$(\tot \Verticalthings{D}_{\mathrm{poly}},\LQF+d_H)$ is a Hopf algebra object in the category
$\Pi((\smf_{\Mfedo},\Qfedo)\mathrm{-}\mathbf{mod})$,
and is
the universal enveloping algebra of the Lie algebra object $(\Gamma(\cF)[-1], \, L_Q; \,\at_{\cM} )$ in
$\Pi((\smf_{\Mfedo}, \, \Qfedo)\mathrm{-}\mathbf{mod})$.
\end{itemize}
\end{theorem}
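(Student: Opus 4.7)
The plan is to mirror the proof of Theorem \ref{maintheorem} step by step, with $(\cM, \, Q)$ replaced by the Fedosov dg manifold $(\Mfedo, \, \Qfedo)$ and the role of the full tangent dg module played by the dg Lie algebroid $(\cF, \, \LQF)$. The key observation is that the vertical subcomplex $\Verticalthings{D}_{\mathrm{poly}}$ inherits all relevant structure from $\Dpp(\Mfedo)$: both $\LQF$ and $d_H$ preserve verticality; the Hopf algebra structure maps ($\cup$, $\widetilde{\Delta}$, $\eta$, $\varepsilon$, $t$) of $\Dpp(\Mfedo)$ restrict to $\Verticalthings{D}_{\mathrm{poly}}$; and a direct check, mimicking the proof of the lemma following Theorem \ref{UL}, gives $d_H(\Verticalthings{D}_{\mathrm{poly}}^1) \subset \functorL(\Verticalthings{D}_{\mathrm{poly}}^1) \cap \Verticalthings{D}_{\mathrm{poly}}^2$. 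Hence Theorem \ref{UL} and Corollary \ref{UL2} apply verbatim to the vertical setting, yielding that $(\Verticalthings{D}_{\mathrm{poly}}, \, \LQF, \, d_H)$ is the universal enveloping algebra of $(\functorL(\Verticalthings{D}_{\mathrm{poly}}^1), \, \LQF, \, d_H; \, \nbk)$ in both $Ch((\smf_{\Mfedo}, \, \Qfedo)\mathrm{-}\mathbf{mod})$ and the corresponding derived category.

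To prove part (1), I would first establish a foliated HKR theorem: the symmetrization map $\hkr:(\Verticalthings{T}_{\mathrm{poly}}, \, \LQF, \, 0) \to (\Verticalthings{D}_{\mathrm{poly}}, \, \LQF, \, d_H)$ is a quasi-isomorphism of $(\smf_{\Mfedo}, \, \Qfedo)$-complexes. Under the identifications $\tot\Verticalthings{T}_{\mathrm{poly}} \cong \Gamma(\wedge^{\bullet}B) \otimes_R \smf_{\Mfedo}$ and $\tot\Verticalthings{D}_{\mathrm{poly}} \cong \Gamma(\otimes^{\bullet}(SB)) \otimes_R \smf_{\Mfedo}$, this is the classical HKR isomorphism at the level of fibres, extended $\smf_{\Mfedo}$-linearly, and it commutes with $\LQF$ because $\Qfedo$ acts by derivations on $\Verticalthings{D}_{\mathrm{poly}}$. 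Exactly as in Proposition \ref{betaquasi-isomorphism}, the factorization $\hkr = \pbw \circ S^{\bullet}(\theta)$ together with the fact that $\pbw$ is an isomorphism of $(\smf_{\Mfedo}, \, \Qfedo)$-complexes forces $\theta$ to be a quasi-isomorphism, hence invertible in $\Pi((\smf_{\Mfedo}, \, \Qfedo)\mathrm{-}\mathbf{mod})$.

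For the commutativity of the Lie-bracket square, I would introduce a canonical $\cF$-connection on $\Verticalthings{D}_{\mathrm{poly}}^1$ by $\nabla^{\mathrm{can}}_{X}D := X \circ D$ for $X \in \Gamma(\cF)$, which takes values in $\Verticalthings{D}_{\mathrm{poly}}^1$ because the Gerstenhaber composition of a vertical vector field with a vertical operator remains vertical. Extending $\nabla^{\mathrm{can}}$ to all $\Verticalthings{D}_{\mathrm{poly}}^n$ and to $\functorL(\Verticalthings{D}_{\mathrm{poly}}^1)$ by the Leibniz rule, the verbatim analogues of Lemma \ref{canonicalconnectionandDelta}, Lemma \ref{inductionlemma}, Proposition \ref{bracket}, and Corollary \ref{bracket2} would yield
\[
\alpha^{\nabla^{\mathrm{can}}}_{\tot\functorL(\Verticalthings{D}_{\mathrm{poly}}^1)}(\overline{X}, \, E) = \textup{\textlbrackdbl}\overline{X}, \, E\textup{\textrbrackdbl}
\]
for all $X \in \Gamma(\cF)$ and $E \in \tot\functorL(\Verticalthings{D}_{\mathrm{poly}}^1)$. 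Combined with the functoriality of the $\cF$-Atiyah class (the foliated analogue of Proposition \ref{functoriality}), this gives commutativity of the bracket diagram in $\Pi((\smf_{\Mfedo}, \, \Qfedo)\mathrm{-}\mathbf{mod})$, completing (1). Part (2) then follows by transporting the universal enveloping property along the isomorphism $\theta$, together with the fact that the Hopf algebra structure on $(\Verticalthings{D}_{\mathrm{poly}}, \, \LQF, \, d_H)$ descends under $\tot$ to a Hopf algebra structure on $(\tot\Verticalthings{D}_{\mathrm{poly}}, \, \LQF + d_H)$.

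The principal obstacle is the foliated HKR quasi-isomorphism: Theorem \ref{LSX} is formulated for full dg manifolds, whereas here we must restrict to the subcomplex of vertical objects. A separate argument, for instance a Fedosov-style filtration on the $\hat{S}(B^{\vee})$-grading and an associated spectral sequence, seems to be needed to verify that the restricted $\hkr$ remains a quasi-isomorphism. A secondary subtlety is that although $[\Qfedo, \, X]$ need not be a vertical vector field when $X \in \Gamma(\cF)$, the cancellation of the $\LQF$-terms in the proof of Proposition \ref{bracket} is structural and survives the restriction, since $\nabla^{\mathrm{can}}$ is tested only along vertical directions and the relevant identity $[\Qfedo, \, \id_{\smf_{\Mfedo}} \widetilde{\otimes} \id_{\smf_{\Mfedo}}] = 0$ continues to hold inside $\Verticalthings{D}_{\mathrm{poly}}$.
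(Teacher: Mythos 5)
Your proposal follows essentially the route the paper intends: Theorem \ref{betaforfoliation} is presented as the vertical (foliated) analogue of Theorem \ref{maintheorem}, and its proof is exactly the verbatim adaptation you describe --- the vertical analogues of Corollary \ref{UL2}, Proposition \ref{betaquasi-isomorphism}, Proposition \ref{functoriality}, Lemma \ref{inductionlemma}, Proposition \ref{bracket} and Corollary \ref{bracket2}, with the canonical $\cF$-connection $\nabla^{\mathrm{can}}_X D = X\circ D$ for $X\in\Gamma(\cF)$, $D\in\Verticalthings{D}_{\mathrm{poly}}^1$. Two remarks on the points you flag. The ``principal obstacle,'' namely the HKR quasi-isomorphism between vertical polyvector fields and vertical polydifferential operators on $(\Mfedo,\Qfedo)$, does not call for a new filtration or spectral-sequence argument: this is precisely the sort of statement supplied by the cited reference \cite{B-S-X}, the same source the paper invokes for the identifications of $\tot\Verticalthings{T}_{\mathrm{poly}}$ and $\tot\Verticalthings{D}_{\mathrm{poly}}$ and for the quasi-isomorphisms entering Diagram \eqref{Imap}, so it should be quoted rather than reproved. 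Second, your worry that $[\Qfedo, X]$ might fail to be vertical for $X\in\Gamma(\cF)$ is unfounded: since $\pi:(\Mfedo,\Qfedo)\to (L[1], d_{\mathrm{CE}}^{L})$ is a morphism of dg manifolds, $(\cF,\LQF)=(\ker\pi_{\ast},\LQF)$ is a dg subbundle of $(T_{\Mfedo},\LQF)$, so $L_{\Qfedo}=[\Qfedo,\cdot\,]$ preserves $\Gamma(\cF)$; hence the term $\nabla^{\mathrm{can}}_{[\Qfedo,X]}$ in the Atiyah-cocycle computation is defined and stays inside the vertical complex with no extra argument needed.
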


\subsection{Relation between $\at_{L/A}$ and $\at_{\cF}$}\

Recall the embedding map
$\iota: (A[1], \, d_{\mathrm{CE}}^{A})\hookrightarrow (\Mfedo, \, \Qfedo)$
of dg manifolds (see Equation  \eqref{embedding of dg manifolds}).
The following facts were proved in 
\cite[Proposition $3.3$]{B-S-X}
and \cite[Corollary $1.21$ and Proposition A.$11$]{L-S-X2}.

\begin{proposition}\label{leftface}\
\begin{itemize}
\item[(1)]
The restriction map $\iota^{\ast}:~~
(\Gamma(\cF), \, \LQF) \rightarrow
(\Gamma(B^{!}), \, d_{\mathrm{CE}}^{B^{!}})$
is a quasi-isomorphism of
$(\smf_{\Mfedo},$
$\Qfedo)$-modules.

\item[(2)]
The induced map
\begin{equation*}
\begin{split}
&  \iota^{\ast}:~~ (\Hom   _{\smf_{\Mfedo}}^{\bullet}(\Gamma(\cF)\otimes_{\smf_{\Mfedo}} \Gamma(\cF), \, \Gamma(\cF)), \, \LQF) \\
& \quad\quad\quad \rightarrow
( \Hom_{R}(\Gamma(B)\otimes_{R}\Gamma(B), \, \Gamma(B))\otimes_{R}\Omega(A), \, d_{\mathrm{CE}}^{B^{!}})
\end{split}
\end{equation*}
sends $\at_{\Gamma(\cF)[-1]}^{\nabla^{\cF}}$ to
$\at_{B}^{\nabla}$.

\item[(3)]
The following diagram commutes in the homotopy category
$\mathrm{\Pi}( (\smf_{\Mfedo}, \, \Qfedo)\mathrm{-}\mathbf{mod} )$:
\begin{equation*}
\xymatrix{
(\Gamma(\cF)[-1], \, \LQF)\otimes_{\smf_{\Mfedo}} (\Gamma(\cF)[-1], \, \LQF)
  \ar[r]^>>>>>>{\at_{\cF}} \ar[d]_{\iota^{\ast} \otimes \iota^{\ast}} &
 (\Gamma(\cF)[-1], \, \LQF) \ar[d]^{\iota^{\ast}}\\
(\Gamma(B^{!})[-1], \, d_{\mathrm{CE}}^{B^{!}})
\otimes_{\Omega(A)} (\Gamma(B^{!})[-1], \, d_{\mathrm{CE}}^{B^{!}}) \ar[r]^>>>>>{\at_{B}} & (\Gamma(B^{!})[-1], \, d_{\mathrm{CE}}^{B^{!}}) \, . \\
}
\end{equation*}
\end{itemize}
\end{proposition}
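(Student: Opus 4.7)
The plan is to exploit the structure of the Fedosov dg manifold as a vector bundle $\pi:\Mfedo\to L[1]$ whose fiber is essentially $\hat{S}(B^{\vee})$, together with the fact, established in \cite{S-X}, that the embedding $\iota:(A[1],\,d_{\mathrm{CE}}^{A})\hookrightarrow(\Mfedo,\,\Qfedo)$ is a quasi-isomorphism of dg manifolds, so that the pullback $\iota^{\ast}:(\smf_{\Mfedo},\,\Qfedo)\to(\Omega(A),\,d_{\mathrm{CE}}^{A})$ is a quasi-isomorphism of differential graded algebras.

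For part (1), I would first identify $\Gamma(\cF)=\Gamma(B)\otimes_{R}\smf_{\Mfedo}$ and $\Gamma(B^{!})=\Gamma(B)\otimes_{R}\Omega(A)$, so that $\iota^{\ast}$ on $\Gamma(\cF)$ is literally $\id_{\Gamma(B)}\otimes_{R}\iota^{\ast}$. Since $\Gamma(B)$ is finitely generated projective over $R$, tensoring preserves quasi-isomorphisms, so it suffices to check that the differentials $\LQF$ on $\Gamma(\cF)$ and $d_{\mathrm{CE}}^{B^{!}}$ on $\Gamma(B^{!})$ are compatible under $\iota^{\ast}$. This reduces to unpacking $\LQF=[\Qfedo,\,\cdot\,]$ on a generating set $\Gamma(B)\subset\Gamma(\cF)$ and matching it, after restriction to $A[1]$, with the Chevalley--Eilenberg differential associated with the Bott $A$-action on $B$, which is exactly how $\nabla^{\lightning}$ was constructed.

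For part (2), the essential content is that the canonical $\cF$-connection $\nabla^{\cF}$ on $\Gamma(\cF)$, characterized by $\nabla^{\cF}_{X}Y=0$ for $X,Y\in\Gamma(B)\subset\Gamma(\cF)$, corresponds via $\iota^{\ast}$ to the $L$-connection $\nabla$ on $B$ that extends the Bott connection. I would verify this by evaluating both Atiyah cocycles on pairs of sections in $\Gamma(B)$ and applying the defining formula
\[
\at^{\nabla^{\cF}}_{\Gamma(\cF)[-1]}(\overline{X},\overline{Y})=(-1)^{\widetilde{X}}\bigl(\LQF(\nabla^{\cF}_{X}\overline{Y})-\nabla^{\cF}_{[\Qfedo,X]}\overline{Y}-(-1)^{\widetilde{X}}\nabla^{\cF}_{X}\LQF(\overline{Y})\bigr),
\]
then pulling back along $\iota^{\ast}$ and reading off the result as $\at^{\nabla}_{B}$ via the explicit form of $\Qfedo$ coming from Fedosov's iteration. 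The vanishing $\nabla^{\cF}_{X}Y=0$ makes only the middle term contribute, and that term evaluates precisely to the bracket expression defining the Lie pair Atiyah cocycle.

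For part (3), once parts (1) and (2) are established, the commutativity of the diagram in $\mathrm{\Pi}((\smf_{\Mfedo},\Qfedo)\mathrm{-}\mathbf{mod})$ follows formally: the naturality of Atiyah classes (Proposition \ref{functoriality}) applied to the morphism $\iota^{\ast}$ of $(\smf_{\Mfedo},\,\Qfedo)$-modules, combined with part (2) identifying the representative cocycles on the nose and part (1) promoting $\iota^{\ast}$ to an isomorphism in the homotopy category, gives the desired diagram. The main obstacle I anticipate is part (2): tracking the signs in the Atiyah cocycle formula and verifying that the Fedosov differential $\Qfedo$, when restricted along $\iota$, reproduces exactly the Bott connection data and the extending $L$-connection $\nabla$ in the form required to realize $\at^{\nabla}_{B}$; this is the content where the specific compatibility of the PBW-dual connection $\nabla^{\lightning}$ with $\nabla$ must be used in full detail.
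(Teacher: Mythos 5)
Note first that the paper does not actually prove this proposition: statements (1) and (2) are imported from \cite[Proposition 3.3]{B-S-X} and \cite[Corollary 1.21, Proposition A.11]{L-S-X2}, and the only argument given in the paper is the observation that (3) is a direct consequence of (1) and (2) (which your part (3) also captures, although citing Proposition \ref{functoriality} is not quite the right tool — what you really use is that (2) gives the identity $\iota^{\ast}\circ \at^{\nabla^{\cF}}_{\Gamma(\cF)[-1]}=\at^{\nabla}_{B}\circ(\iota^{\ast}\otimes\iota^{\ast})$ on the nose at the cochain level, after which commutativity in $\Pi((\smf_{\Mfedo},\Qfedo)\mathrm{-}\mathbf{mod})$ is immediate).

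Your sketches of (1) and (2), however, have genuine gaps. In (1), the reduction ``$\Gamma(B)$ is finitely generated projective, tensoring preserves quasi-isomorphisms, so it suffices to check compatibility of the differentials'' does not work: the differential $\LQF=[\Qfedo,\,\cdot\,]$ on $\Gamma(\cF)\simeq\Gamma(B)\otimes_{R}\smf_{\Mfedo}$ is \emph{not} of the form $\pm\,\id_{\Gamma(B)}\otimes \Qfedo$ — it contains connection-type terms mixing the two factors (these terms are precisely where the Bott connection and the Atiyah data sit), and likewise $d_{\mathrm{CE}}^{B^{!}}$ is not $\id\otimes d_{\mathrm{CE}}^{A}$. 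So flatness of $\Gamma(B)$ only tells you that tensoring the quasi-isomorphism $\iota^{\ast}:\smf_{\Mfedo}\to\Omega(A)$ with $\Gamma(B)$ is a quasi-isomorphism for the \emph{untwisted} differentials, and checking that $\iota^{\ast}$ intertwines the actual differentials only shows it is a chain map, not that it is a quasi-isomorphism. Closing this gap requires, e.g., filtering by the polynomial degree in $\hat{S}(B^{\vee})$ and running a (complete-filtration) spectral sequence or homotopy-contraction argument — this is the content of \cite[Proposition 3.3]{B-S-X}. In (2), the assertion that $\nabla^{\cF}_{X}Y=0$ leaves ``only the middle term'' is not justified: for $X,Y\in\Gamma(B)$ the first term indeed drops, but $\LQF(\overline{Y})=[\Qfedo,Y]$ is not a constant section of $B$, so the third term $\nabla^{\cF}_{X}\LQF(\overline{Y})$ does not vanish in general; the genuine identification of the restricted cocycle with $\at^{\nabla}_{B}$ needs the explicit Fedosov/PBW form of $\Qfedo$, which you correctly flag as the main obstacle but do not carry out (this is \cite[Proposition A.11]{L-S-X2}). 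As it stands, your proposal reproduces the formal deduction of (3) but does not supply proofs of the two nontrivial inputs.
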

Here we have treated
the Lie pair Atiyah class
\[
\at_{B}:~~ (\Gamma(B^{!})[-1], \, d_{\mathrm{CE}}^{B^{!}})\otimes_{\Omega(A)}
(\Gamma(B^{!})[-1], \, d_{\mathrm{CE}}^{B^{!}})
\rightarrow (\Gamma(B^{!})[-1], \, d_{\mathrm{CE}}^{B^{!}})
\]
in Statement $(3)$
as a morphism in the homotopy category
$\Pi((C_{\Mfedo}^{\infty}, \Qfedo)\mathrm{-}\mathbf{mod})$.
Note that Statement $(3)$ is a direct consequence of Statements $(1)$ and $(2)$.

\subsection{Relation between $\Verticalthings{D}_{\mathrm{poly}}^1$ and $D_{\mathrm{poly}}^{\bullet}(L/A)$}\

Recall that the PBW map
$\pbw^{\nabla,j}:~~ \Gamma(SB) \rightarrow D_{\mathrm{poly}}^1(B)$
is an isomorphism of $R$-modules, and thus the induced map
\[
\pbw^{\nabla, j}\otimes_{R} \id:~~
\Verticalthings{D}_{\mathrm{poly}}^1=\Gamma(SB)\otimes_R \smf_{\Mfedo} \rightarrow D_{\mathrm{poly}}^1(B) \otimes_{R} \smf_{\Mfedo}
\]
is an isomorphism of graded $\smf_{\Mfedo}$-modules.
The isomorphism $\pbw^{\nabla, j}\otimes_{R} \id$ transports the differential $L_{\Qfedo}$ on
$\Verticalthings{D}_{\mathrm{poly}}^1$ to a differential $d_{\mathrm{can}}$ on $D_{\mathrm{poly}}^1(B) \otimes_{R} \smf_{\Mfedo}$ so that the following diagram commutes:
\begin{equation*}
\xymatrix{
\Verticalthings{D}_{\mathrm{poly}}^1 \ar[d]^{L_{\Qfedo}}\ar[rrr]^-{\pbw^{\nabla, j}\otimes_{R} \id}
& & &
D_{\mathrm{poly}}^1(B) \otimes_{R} \smf_{\Mfedo}
\ar[d]^{d_{\mathrm{can}}}
\\
\Verticalthings{D}_{\mathrm{poly}}^1 \ar[rrr]^-{\pbw^{\nabla, j}\otimes_{R} \id}
& & &
D_{\mathrm{poly}}^1(B) \otimes_{R} \smf_{\Mfedo} \, .  \\
}
\end{equation*}
In other words, we have the isomorphism of
$(\smf_{\Mfedo}, \, \Qfedo)$-modules:
\begin{equation*}\label{D(F)toD(B)}
\pbw^{\nabla, j}\otimes_{R} \id:~~
(\Verticalthings{D}_{\mathrm{poly}}^1, \, L_{\Qfedo}) \rightarrow (D_{\mathrm{poly}}^1(B) \otimes_{R} \smf_{\Mfedo}, \, d_{\mathrm{can}}).
\end{equation*}

Recall that $D_{\mathrm{poly}}^1(B)$ is a left
$A$-module.
We use the notation $d_{\mathrm{CE}}^{B}$ to denote the Chevalley-Eilenberg differential on
$D_{\mathrm{poly}}^1(B)\otimes_{R}\Omega(A)$.
We note that in \cite{Vitagliano},
for the Lie pair
$(L=T_M,\, A=T_\mathfrak{F})$ arising from a foliation $\mathfrak{F}$ on $M$,
it is proved that the space
$D_{\mathrm{poly}}^1(B)\otimes_{R}\Omega(A)$
admits an $A_\infty$-algebra structure whose
unary bracket is $d_{\mathrm{CE}}^{B}$.

Now we are able to draw a commutative diagram
 in the category
 $(\smf_{\Mfedo}, \, \Qfedo)\mathrm{-}\mathbf{mod}$ of
 $(\smf_{\Mfedo},$
 $\Qfedo)$-modules 
\begin{equation}\label{Imap}
\begin{xy}
(0,25)*+{(\tot \Verticalthings{D}_{\mathrm{poly}}, \, \LQF+d_H)}="a";
(70,25)*+{(D_{\mathrm{poly}}^{\bullet}(B)\otimes_{R}\smf_{\Mfedo}, \, d_{\mathrm{can}}+ d_H \otimes_{R} \id)}="b";
(70,0)*+{(D_{\mathrm{poly}}^{\bullet}(B)\otimes_{R}\Omega(A), \, d_{\mathrm{CE}}^{B}  + d_H \otimes_{R} \id) \, ,}="c";
{\ar^-{\pbw^{\nabla, j}\otimes_{R}\id} "a";"b"};
{\ar_-{ \id \otimes_{R}\,\iota^{\ast} } "b";"c"};
{\ar^>>>>>>>>>>>>>>>>>>>>>>>{I} "a";"c"};
\end{xy}
\end{equation}
where
$\id\otimes_{R}\,\iota^{\ast}:~~ D_{\mathrm{poly}}^{\bullet}(B)\otimes_{R}\smf_{\Mfedo} \rightarrow
D_{\mathrm{poly}}^{\bullet}(B)\otimes_{R}\Omega(A)$ is the restriction map and
$I$ is the composition of the horizontal and vertical arrows in the diagram.
According to \cite[Proposition $3.13$ and Lemma $3.14$]{B-S-X}, we have the following facts:
\begin{itemize}
	\item
	The horizontal map ${\pbw^{\nabla, j}\otimes_{R}\id}$ is an isomorphism of
$(\smf_{\Mfedo}, \, \Qfedo)$-modules;
	\item
	The vertical map $\id \otimes_{R}\,\iota^{\ast}$ is a quasi-isomorphism of
$(\smf_{\Mfedo}, \, \Qfedo)$-modules.
\end{itemize}
Consequently, $I$ is a quasi-isomorphism of
$(\smf_{\Mfedo}, \, \Qfedo)$-modules.


The next proposition can be verified directly.
\begin{proposition}\label{rightface}\

The following diagram commutes in the homotopy category
$\Pi( (\smf_{\Mfedo}, \, \Qfedo)\mathrm{-}\mathbf{mod} )$:
\begin{equation*}\label{rightfacediagram}
\xymatrix{
\otimes_{\smf_{\cM}}^{2}(\tot \functorL(\Verticalthings{D}_{\mathrm{poly}}^1), \, \LQF+d_H)  \ar[r]^-{\nbk} \ar[d]_{I^{\otimes 2} } &
 (\tot \functorL(\Verticalthings{D}_{\mathrm{poly}}^1), \, \LQF+d_H) \ar[d]^{I}\\
\otimes_{\Omega(A)}^{2}(\functorL(D_{\mathrm{poly}}^{1}(B))\otimes_{R}\Omega(A), \, d_{\mathrm{CE}}^{B}+ d_H\otimes_R \id)
\ar[r]^-{\nbk } & (\functorL(D_{\mathrm{poly}}^{1}(B))\otimes_{R}\Omega(A), \, d_{\mathrm{CE}}^{B}+ d_H\otimes_R \id) \, . \\
}
\end{equation*}
\end{proposition}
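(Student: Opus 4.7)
The plan is to prove that the diagram in fact commutes strictly (before descending to $\Pi$), by establishing that $I$ is a morphism of associative algebras with respect to the cup products on both sides. Once that is done, since the free Lie brackets $\nbk$ on both $\tot\functorL(\Verticalthings{D}_{\mathrm{poly}}^1)$ and $\functorL(D_{\mathrm{poly}}^1(B))\otimes_R\Omega(A)$ are defined tautologically as graded commutators with respect to the cup product (see Equation~\eqref{freeLiebracket}), preservation of brackets is immediate, and strict commutativity in $(\smf_{\Mfedo},\Qfedo)\mathrm{-}\mathbf{mod}$ descends to commutativity in the homotopy category $\Pi((\smf_{\Mfedo},\Qfedo)\mathrm{-}\mathbf{mod})$.

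To carry out the first step, I would unpack $I$ as the composition in Diagram~\eqref{Imap} and handle the two factors separately. The horizontal factor $\pbw^{\nabla,j}\otimes_R \id$, when extended over all horizontal degrees of $\tot\Verticalthings{D}_{\mathrm{poly}}=\Gamma(\otimes^\bullet SB)\otimes_R\smf_{\Mfedo}$, acts by applying $\pbw^{\nabla,j}$ in each tensor slot; since the cup products on $\Verticalthings{D}_{\mathrm{poly}}$ and $D_{\mathrm{poly}}^\bullet(B)\otimes_R\smf_{\Mfedo}$ are both concatenation of tensor factors (see Section~\ref{Dpoly} and Section~\ref{Sec:HopfalgebraDpolyLA}), this map is tautologically an algebra morphism. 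The vertical factor $\id\otimes_R\iota^*$ is likewise an algebra morphism because $\iota^*:(\smf_{\Mfedo},\Qfedo)\to(\Omega(A), d_{\mathrm{CE}}^A)$ is a morphism of differential graded algebras. Combining these, $I$ is an algebra morphism. Consequently, for homogeneous $D,E\in\tot\functorL(\Verticalthings{D}_{\mathrm{poly}}^1)$,
\[
I(\textup{\textlbrackdbl}D,E\textup{\textrbrackdbl})=I(D\widetilde{\otimes}E-(-1)^{|D||E|}E\widetilde{\otimes}D)=\textup{\textlbrackdbl}I(D),I(E)\textup{\textrbrackdbl},
\]
and, by induction on bracket depth, $I$ restricts to a well-defined map $\functorL(\Verticalthings{D}_{\mathrm{poly}}^1)\to\functorL(D_{\mathrm{poly}}^1(B))\otimes_R\Omega(A)$ intertwining the two Lie brackets.

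The main obstacle, which is technical rather than conceptual, is pinning down the algebra structure on $D_{\mathrm{poly}}^\bullet(B)\otimes_R\Omega(A)$ compatibly with the Koszul sign conventions used throughout the paper. One must recognize it as the tensor product of the dg algebra $(D_{\mathrm{poly}}^\bullet(B),d_H)$ with $(\Omega(A),d_{\mathrm{CE}}^A)$, so that the cup product carries the Koszul sign determined by the total degrees, matching the conventions on $\tot\Verticalthings{D}_{\mathrm{poly}}$ transferred via $I$. Once the signs are verified, the argument above yields strict commutativity, hence the commutativity in $\Pi((\smf_{\Mfedo},\Qfedo)\mathrm{-}\mathbf{mod})$ asserted by the proposition.
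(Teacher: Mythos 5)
Your argument is correct and amounts to exactly the direct verification the paper leaves to the reader (the paper offers no written proof, saying only that the proposition "can be verified directly"): since both brackets are commutators for the respective concatenation (cup) products and $I=(\id\otimes_R\iota^{\ast})\circ(\pbw^{\nabla,j}\otimes_R\id)$ is compatible with these products slot-wise, $I$ intertwines the brackets on the nose, giving strict commutativity in $(\smf_{\Mfedo},\Qfedo)\mathrm{-}\mathbf{mod}$ and hence in $\Pi((\smf_{\Mfedo},\Qfedo)\mathrm{-}\mathbf{mod})$. Your attention to the Koszul-sign conventions and to the fact that $I$ lands in $\functorL(D_{\mathrm{poly}}^{1}(B))\otimes_R\Omega(A)$ is precisely what the omitted verification requires.
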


\subsection{The big diagram}\

Let
$\beta:~~ (\Gamma(B^{!})[-1], \, d_{\mathrm{CE}}^{B^{!}})
\rightarrow
(\functorL(D_{\mathrm{poly}}^{1}(B))\otimes_{R}\Omega(A), \, d_{\mathrm{CE}}^{B}+d_H\otimes_R \id)$
be the natural inclusion map of
$(\Omega(A), \, d_{\mathrm{CE}}^{A})$-modules.
As we have the morphism
$\iota^{\ast}: (\smf_{\Mfedo}, \, \Qfedo) \rightarrow (\Omega(A), \, d_{\mathrm{CE}}^{A})$ of differential graded algebras,
$\beta$ is also an inclusion map of
$(\smf_{\Mfedo}, \, \Qfedo)$-modules.

\begin{theorem}\label{bigdiagram}
The following diagram commutes in the homotopy category
$\Pi( (\smf_{\Mfedo}, \, \Qfedo)\mathrm{-}\mathbf{mod} )$:

\begin{equation}\label{CUBEDIAGRAM}
\begin{tiny}
\xymatrixcolsep{-1.8pc}
\xymatrix{
\otimes_{\smf_{\Mfedo}}^{2}(\Gamma(\cF)[-1], \, \LQF)\ar[rd]^{(\iota^{\ast})^{\otimes 2}}
\ar[rr]^-{\theta^{\otimes 2}}
    \ar[dd]^{\alpha_{\cF}}&&
    \otimes_{\smf_{\Mfedo}}^{2}(\tot \functorL(\Verticalthings{D}_{\mathrm{poly}}^1), \, \LQF+d_H )\ar[rd]^{I^{\otimes 2}} \ar[dd]_<<<<<<<{\textup{\textlbrackdbl}\; , \; \textup{\textrbrackdbl}}|(.51)\hole\\
    &\otimes_{\Omega(A)}^{2}(\Gamma(B^{!})[-1], \, d_{\mathrm{CE}}^{B^{!}})\ar[rr]^>>>>>>>>{\beta^{\otimes 2}}\ar[dd]_<<<<<<<{\alpha_{B}} &&
    \otimes_{\Omega(A)}^{2}(\functorL(D_{\mathrm{poly}}^{1}(B)){\otimes_{R}}\Omega(A), \, d_{\mathrm{CE}}^{B}+d_H\otimes_R\id)  \ar[dd]_{\textup{\textlbrackdbl}\; , \; \textup{\textrbrackdbl}}
    \\
    (\Gamma(\cF)[-1], \, \LQF)\ar[rd]^{\iota^{\ast}}\ar[rr]^>>>>>>>{\theta}|(.45)\hole && (\tot \functorL(\Verticalthings{D}_{\mathrm{poly}}^1), \, \LQF+d_H ) \ar[rd]^{I} \hole\\
    &(\Gamma(B^{!})[-1], \, d_{\mathrm{CE}}^{B^{!}})\ar[rr]^>>>>>>>>>>>>>{\beta}&&
    (\functorL(D_{\mathrm{poly}}^{1}(B)){\otimes_{R}}\Omega(A), \, d_{\mathrm{CE}}^{B}+d_H\otimes_R\id) \, . \\
}
\end{tiny}
\end{equation}
Moreover, the map $\beta$ in the front lower  edge is an isomorphism in
$\Pi(  (\smf_{\Mfedo}, \, \Qfedo)\mathrm{-}\mathbf{mod} )$.
\end{theorem}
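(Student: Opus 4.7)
The plan is to verify the cube face by face, exploiting that three of the six faces are already established. The back face commutes by Theorem \ref{betaforfoliation}(1), the left face by Proposition \ref{leftface}(3), and the right face by Proposition \ref{rightface}. It remains to verify the bottom face, the top face (which will follow from the bottom by functoriality of tensor products), and the front face.

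For the bottom face, I claim that $I \circ \theta = \beta \circ \iota^*$ holds on the nose as morphisms of $(\smf_{\Mfedo}, \, \Qfedo)$-modules. Unfolding the identifications in Section \ref{Section6}, a section $X \in \Gamma(\cF) \simeq \Gamma(B) \otimes_R \smf_{\Mfedo}$ is mapped by $\theta$ to itself regarded as a degree-one vertical polydifferential operator, i.e., an element of $\Verticalthings{D}_{\mathrm{poly}}^1 \simeq \Gamma(SB) \otimes_R \smf_{\Mfedo}$ sitting in the Lie subalgebra $\tot \functorL(\Verticalthings{D}_{\mathrm{poly}}^1)$. Next, $\pbw^{\nabla,j} \otimes \id$ sends this to the corresponding element of $D_{\mathrm{poly}}^1(B) \otimes_R \smf_{\Mfedo}$, because by construction (cf.\ \cites{LG-S-X1, LG-S-X2}) the PBW map restricted to the symmetric degree-one subspace $\Gamma(B) \subset \Gamma(SB)$ coincides with the canonical inclusion $b \mapsto [j(b)]$ into $D_{\mathrm{poly}}^1(B) = U(L)/U(L)\Gamma(A)$. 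Finally $\id \otimes \iota^*$ restricts the $\smf_{\Mfedo}$-coefficient to $\Omega(A)$. The composite $\beta \circ \iota^*$ performs the same operations in the opposite order, and the two outcomes agree. The top face is the tensor square of the bottom face and commutes by functoriality, once one notes that both $\iota^*$ and $I$ intertwine the module structures so that the change from $\otimes_{\smf_{\Mfedo}}$ to $\otimes_{\Omega(A)}$ is well-defined.

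Granted the five established faces, the front face commutes in $\Pi( (\smf_{\Mfedo}, \, \Qfedo)\mathrm{-}\mathbf{mod})$ by a diagram chase:
\begin{equation*}
\beta \circ \at_{B} \circ (\iota^*)^{\otimes 2}
= \beta \circ \iota^* \circ \at_{\cF}
= I \circ \theta \circ \at_{\cF}
= I \circ \nbk \circ \theta^{\otimes 2}
= \nbk \circ I^{\otimes 2} \circ \theta^{\otimes 2}
= \nbk \circ \beta^{\otimes 2} \circ (\iota^*)^{\otimes 2},
\end{equation*}
where successive equalities use left, bottom, back, right, and top faces respectively. Since $\iota^*$ is a quasi-isomorphism by Proposition \ref{leftface}(1), so is $(\iota^*)^{\otimes 2}$; being an isomorphism in $\Pi$ it can be cancelled on the right, yielding commutativity of the front face.

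For the final assertion, the bottom-face identity rearranges to $\beta = I \circ \theta \circ (\iota^*)^{-1}$ in $\Pi$. Each of $\theta$ (by Theorem \ref{betaforfoliation}(1)), $\iota^*$ (by Proposition \ref{leftface}(1)), and $I$ (by the discussion following Diagram \eqref{Imap}) is an isomorphism in $\Pi((\smf_{\Mfedo}, \, \Qfedo)\mathrm{-}\mathbf{mod})$, so $\beta$ is as well. The main technical care lies in checking the sign and shuffle conventions across the four corners of the bottom face and ensuring the PBW compatibility on the degree-one piece; once these are in place the rest is the formal diagram chase above.
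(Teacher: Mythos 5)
Your proposal is correct and follows essentially the same route as the paper: commutativity of the back, left, and right faces is cited from Theorem \ref{betaforfoliation} and Propositions \ref{leftface} and \ref{rightface}, the top and bottom faces are checked directly (the paper merely asserts this, while you spell out the PBW compatibility on the degree-one piece), and the front face plus the invertibility of $\beta$ are deduced by the same cancellation argument using that $\iota^{\ast}$, $I$, and $\theta$ are isomorphisms in $\Pi((\smf_{\Mfedo}, \, \Qfedo)\mathrm{-}\mathbf{mod})$.
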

\begin{proof}
It is not hard to see that the top and the bottom faces of the above cubic Diagram \eqref{CUBEDIAGRAM} are both commutative.
By Theorem \ref{betaforfoliation}, the back face commutes.
By Proposition \ref{leftface}, the left face commutes, and by Proposition \ref{rightface}, the right face commutes.
Note that $\iota^{\ast}$ and $I$ are isomorphisms in the category
$\Pi(  (\smf_{\Mfedo}, \, \Qfedo)\mathrm{-}\mathbf{mod}  )$.
Thus the front face commutes, and the map $\beta$ in the front lower edge is an isomorphism in
$\Pi( (\smf_{\Mfedo}, \, \Qfedo)\mathrm{-}\mathbf{mod}   )$.
\end{proof}

As an application of Theorem \ref{bigdiagram}, we recover the following result of
Chen-Sti\'enon-Xu \cite{C-S-X}:
\begin{corollary}
Let $\beta:~~ \Gamma(B)[-1] \rightarrow (\functorL(D_{\mathrm{poly}}^{1}(B)), \, d_H)$ be the natural inclusion map.
\begin{itemize}
\item[(1)]	
The map $\beta$ is an isomorphism in the derived category $D^b(\mathcal{A})$ of $A$-modules. Moreover, it is an isomorphism of Lie algebra objects, i.e., the following diagram commutes in $D^b(\mathcal{A})$:
\[
\xymatrix{
\Gamma(B)[-1] \otimes_{R} \Gamma(B)[-1]  \ar[r]^-{\beta \otimes \beta} \ar[d]_{\at_{B}} & (\functorL(D_{\mathrm{poly}}^{1}(B)), \, d_H)
\otimes_{R} (\functorL(D_{\mathrm{poly}}^{1}(B)), \, d_H) \ar[d]^{\nbk}\\
\Gamma(B)[-1] \ar[r]^-{\beta} & (\functorL(D_{\mathrm{poly}}^{1}(B)), \, d_H) \, .\\
}
\]

\item[(2)]
The cochain complex of $A$-modules
$(D_{\mathrm{poly}}^{\bullet}(B), \, d_H)$ is the universal enveloping algebra of the Lie algebra object
$(\Gamma(B)[-1]; \, \at_{B})$, and is a Hopf algebra object in $D^b(\mathcal{A})$.
\end{itemize}	
\end{corollary}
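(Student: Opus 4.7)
The plan is to deduce the corollary from Theorem \ref{bigdiagram} by transporting the front face of the cube \eqref{CUBEDIAGRAM} back to the derived category $D^b(\mathcal{A})$ of $A$-modules via the Chevalley-Eilenberg resolution. Concretely, for any $A$-module $N$ the complex $(N \otimes_R \Omega(A), d_{\mathrm{CE}}^N)$ is a $(\Omega(A), d_{\mathrm{CE}}^A)$-module, and this assignment extends to a fully faithful monoidal functor $F$ from $D^b(\mathcal{A})$ into the homotopy category $\Pi((\Omega(A), d_{\mathrm{CE}}^A)\mathrm{-}\mathbf{mod})$. Under $F$, the object $\Gamma(B)[-1]$ is sent to $(\Gamma(B^{!})[-1], d_{\mathrm{CE}}^{B^{!}})$, the Hopf algebra $(D_{\mathrm{poly}}^{\bullet}(B), d_H)$ to $(D_{\mathrm{poly}}^{\bullet}(B) \otimes_R \Omega(A), d_{\mathrm{CE}}^{B} + d_H \otimes_R \id)$, the Lie algebra $(\functorL(D_{\mathrm{poly}}^{1}(B)), d_H; \nbk)$ to the object in the front lower right corner of the cube, and the Lie pair Atiyah class $\at_B$ to the morphism $\at_B$ on the front left edge of the cube.

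With this dictionary in hand, statement (1) follows from the commutativity of the front face of \eqref{CUBEDIAGRAM} together with the assertion in Theorem \ref{bigdiagram} that $\beta$ on the front lower edge is an isomorphism in $\Pi((\smf_{\Mfedo}, \Qfedo)\mathrm{-}\mathbf{mod})$. Since $\iota^*: (\smf_{\Mfedo}, \Qfedo) \to (\Omega(A), d_{\mathrm{CE}}^A)$ is a quasi-isomorphism of differential graded algebras, invertibility in the larger homotopy category restricts to invertibility in $\Pi((\Omega(A), d_{\mathrm{CE}}^A)\mathrm{-}\mathbf{mod})$, and by full faithfulness of $F$ both the commutativity and the isomorphism property descend to $D^b(\mathcal{A})$.

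For statement (2), I would combine the identification of Lie algebra objects just obtained with a version of Corollary \ref{UL2} adapted to the Lie pair setting. The construction recalled in Section \ref{Sec:HopfalgebraDpolyLA} already equips $(D_{\mathrm{poly}}^{\bullet}(B), d_H)$ with the structure of a Hopf algebra object in the category of cochain complexes of $A$-modules, and the proof of Corollary \ref{UL2} adapts verbatim (being purely algebraic) to show that this Hopf algebra is the universal enveloping algebra of $(\functorL(D_{\mathrm{poly}}^{1}(B)), d_H; \nbk)$ in that category, hence in $D^b(\mathcal{A})$. Composing with the isomorphism of Lie algebra objects $(\Gamma(B)[-1]; \at_B) \cong (\functorL(D_{\mathrm{poly}}^{1}(B)), d_H; \nbk)$ from statement (1) yields the desired universal enveloping algebra description of $(\Gamma(B)[-1]; \at_B)$ in $D^b(\mathcal{A})$.

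The main obstacle is the precise verification that $F$ is monoidal and that it behaves correctly with respect to the Lie and Hopf algebra structure maps, so that the front face of the cube—an equation in $\Pi((\smf_{\Mfedo}, \Qfedo)\mathrm{-}\mathbf{mod})$—really is the image under $F$ of the desired diagram in $D^b(\mathcal{A})$. This is essentially categorical bookkeeping: all substantive geometric content has already been packaged into Propositions \ref{leftface} and \ref{rightface} and Theorem \ref{betaforfoliation}, so no further computation is required.
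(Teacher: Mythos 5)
Your route is the same as the paper's: the paper gives no separate proof of this corollary, deriving it directly from Theorem \ref{bigdiagram} exactly as you do --- the front face of the cube \eqref{CUBEDIAGRAM} (whose commutativity and whose isomorphism $\beta$ are the content of that theorem) is the Chevalley--Eilenberg incarnation of the diagram in statement (1), and statement (2) is obtained from the Lie-pair analogue of Corollary \ref{UL2} together with the isomorphism of Lie algebra objects, which matches your plan.

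One caveat on your packaging: you invoke a \emph{fully faithful} monoidal functor $F\colon D^b(\mathcal{A})\to \Pi((\Omega(A),\,d_{\mathrm{CE}}^{A})\mathrm{-}\mathbf{mod})$ on all of $D^b(\mathcal{A})$. That blanket claim is a Koszul-duality-type statement which is stronger than what is needed and is not justified (nor needed) in the paper; what the argument actually requires is the comparison of the specific hom spaces, namely that morphisms in $D^b(\mathcal{A})$ out of $\Gamma(B)[-1]\otimes_R\Gamma(B)[-1]$ into the complexes $\Gamma(B)[-1]$, $(\functorL(D_{\mathrm{poly}}^{1}(B)),d_H)$ and $(D_{\mathrm{poly}}^{\bullet}(B),d_H)$ are computed by Chevalley--Eilenberg (hyper)cohomology. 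This holds because the $A$-modules involved ($\Gamma(B)$, $D_{\mathrm{poly}}^{n}(B)\simeq\Gamma(\otimes^n SB)$) are projective over $R$, and it is exactly the identification $H^1_{\mathrm{CE}}(A,\Hom(B\otimes B,B))\simeq\Hom_{D^b(\mathcal{A})}(\Gamma(B)[-1]\otimes_R\Gamma(B)[-1],\Gamma(B)[-1])$ already quoted from \cite{C-S-X}; phrased this way, your descent from $\Pi((\smf_{\Mfedo},\Qfedo)\mathrm{-}\mathbf{mod})$ (equivalently, from $(\Omega(A),d_{\mathrm{CE}}^{A})$-modules, using that $\iota^{*}$ is a quasi-isomorphism) to $D^b(\mathcal{A})$ is sound. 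Likewise, in (2) the step ``in that category, hence in $D^b(\mathcal{A})$'' is not automatic but is precisely the Ramadoss-type passage carried out in Theorem \ref{UL}(3)/Corollary \ref{UL2}(3), which, as you say, adapts to the Lie pair setting; it should be cited as such rather than treated as immediate.
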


\section{Relation with Ramadoss's work}\label{Section7}
In \cite{Ramadoss}, Ramadoss studied the algebraic Atiyah classes of algebraic vector bundles over a field $\fK$ of characteristic zero.
In this part, we will describe without proofs the relations between some of Ramadoss's results and ours in the case $\fK=\C$.

Let $(X, \cO_{X})$ be a smooth scheme over $\C$, and $E$  an algebraic vector bundle over $X$.
The \textbf{algebraic Atiyah class} $\at(E) \in \Ext ^1_{\cO_X}(T_X \otimes_{\cO_X} E,E)$
of $E \rightarrow X$
is the extension class of the jet sequence
\begin{equation*}\label{ShortII}
0\rightarrow E
\stackrel{i}{\longrightarrow}
D_{X}^{\leqslant 1} \underset{\cO_{X}}{\otimes}E
\stackrel{j}{\longrightarrow}
T_{X}\otimes_{\cO_{X}}E
\rightarrow 0
\end{equation*}
in the category of $\cO_{X}$-modules.

We denote by $\at_{X}$ the algebraic Atiyah class of the algebraic tangent bundle $T_X$.
In the category of cochain complexes of coherent
$\cO_{X}$-modules, we have the Hopf algebra object
$(D_{\mathrm{poly}}^{\bullet}, \, d_H)$ and
the Lie algebra object
$(\functorL(D_{\mathrm{poly}}^1), \, d_H; \, \nbk)$ arising from the scheme $(X, \, \cO_X)$ \cite{Ramadoss}.
They are, respectively, algebraic versions of
$(\Dpp, \, L_Q, \, d_H)$ and $(\Lp, \, L_Q, \, d_H; \, \nbk)$ arising from a dg manifold.
The following theorem was obtained by Ramadoss \cite[Theorem 2]{Ramadoss}:
\begin{theorem}\label{Ramadossresult}
The diagram
\begin{equation}\label{Ramadossmaindiagram}
\xymatrix{
T_X [-1] \otimes_{\cO_X} T_X [-1]  \ar[r]^-{\at_{X}} \ar[d]_{\theta \otimes \theta} &
T_X [-1] \ar[d]^{\theta}\\
(\functorL(D_{\mathrm{poly}}^1), \, d_H)
\otimes_{\cO_X}
(\functorL(D_{\mathrm{poly}}^1), \, d_H)\ar[r]^-{\nbk} & (\functorL(D_{\mathrm{poly}}^1), \, d_H)
}
\end{equation}
commutes in  the derived category $D^{+}(\cO_X)$ of bounded below complexes of coherent $\cO_X$-modules, where $\theta: T_{X}[-1] \rightarrow (\functorL(D_{\mathrm{poly}}^1), \, d_H)$ is the natural inclusion map.	
\end{theorem}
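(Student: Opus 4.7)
The plan is to transport the proof strategy of Theorem \ref{maintheorem} to the algebraic category $D^{+}(\cO_X)$. The purely formal ingredients, namely the identification of $\functorL(\DD_{\mathrm{poly}}^1)$ as a Lie subalgebra of $\DD_{\mathrm{poly}}$, Corollary \ref{UL2}, the PBW factorization of $\hkr$, and the functoriality of Atiyah classes, carry over verbatim to sheaves of $\cO_X$-complexes. What has to be reproved in a genuinely algebraic fashion is (i) that $\hkr$ is still a quasi-isomorphism, (ii) that $\DD_{\mathrm{poly}}^1$ admits a canonical connection whose Atiyah cocycle computes $\nbk$, and (iii) the functoriality of Atiyah classes in the absence of globally defined $\cO_X$-linear splittings of the jet sequence.

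First I would invoke the algebraic Hochschild--Kostant--Rosenberg theorem (in the form established by Yekutieli and by Calaque--Van den Bergh) to obtain the quasi-isomorphism $\hkr: (\TT_{\mathrm{poly}}, 0) \to (\DD_{\mathrm{poly}}, d_H)$ in $D^{+}(\cO_X)$. Since $\pbw: S^{\bullet}(\functorL(\DD_{\mathrm{poly}}^1)) \to \DD_{\mathrm{poly}}$ is an honest isomorphism of sheaves of complexes and $\hkr = \pbw \circ S^{\bullet}(\theta)$, the natural map $\theta: T_X[-1] \to (\functorL(\DD_{\mathrm{poly}}^1), d_H)$ must be a quasi-isomorphism, i.e.\ an isomorphism in $D^{+}(\cO_X)$.

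Next I would compute the Atiyah cocycle of $\DD_{\mathrm{poly}}^1$ with respect to the canonical connection given by the Gerstenhaber product $\nabla^{\mathrm{can}}_X D = X \circ D$. Working \'etale-locally on an affine cover where algebraic connections on $T_X$ exist, one extends $\nabla^{\mathrm{can}}$ to all of $\DD_{\mathrm{poly}}$ as a derivation of the cup product $\cup$. The local arguments of Lemma \ref{canonicalconnectionandDelta} through Proposition \ref{bracket} --- the crucial input being the commutation of $\nabla^{\mathrm{can}}$ with the coproduct $\Delta$ --- go through unchanged and yield on cocycles
\[
\at^{\nabla^{\mathrm{can}}}_{\DD_{\mathrm{poly}}}(X, D) = \textup{\textlbrackdbl} X, D \textup{\textrbrackdbl}.
\]
The restriction to the Lie subalgebra $\functorL(\DD_{\mathrm{poly}}^1)$ then identifies its Atiyah cocycle with $\nbk$, in direct analogy with Corollary \ref{bracket2}.

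Finally, I would invoke the algebraic version of Proposition \ref{functoriality} for the inclusion $\theta: T_X[-1] \hookrightarrow \functorL(\DD_{\mathrm{poly}}^1)$; combining this with the preceding cocycle identification forces Diagram \eqref{Ramadossmaindiagram} to commute in $D^{+}(\cO_X)$. I expect the last step to be the main obstacle, because in the algebraic setting the jet sequence need not split $\cO_X$-linearly, so one cannot mimic the partition-of-unity proof of Proposition \ref{functoriality} directly. The cleanest remedy is to globalize via a \v{C}ech or simplicial resolution of $T_X$, or, as in Ramadoss's original treatment, via a semi-free resolution of the diagonal of $X$ which represents the universal Atiyah class and intertwines it with the Gerstenhaber bracket on polydifferential operators. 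Once this globalization is in place the argument is formally identical to that of Theorem \ref{maintheorem}.
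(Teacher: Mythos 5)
First, a point of comparison: the paper does not prove this statement. Theorem \ref{Ramadossresult} is quoted verbatim from Ramadoss \cite{Ramadoss}*{Theorem 2}, and Section \ref{Section7} announces explicitly that it describes the relation with Ramadoss's work \emph{without proofs}. So there is no internal argument to match your proposal against; the closest model in the text is the proof of Theorem \ref{maintheorem}, which is indeed the template you follow, while Ramadoss's own published argument is an algebraic one that the paper simply cites.

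Judged on its own terms, your outline is reasonable up to the HKR/PBW step (so $\theta$ is an isomorphism in $D^{+}(\cO_X)$), but it has a genuine gap exactly at the step you defer, and your diagnosis of where the difficulty lies is off target. No connection on $T_X$ is needed anywhere: the canonical connection $\nabla^{\mathrm{can}}_X D = X\circ D$ on $\Dp$, extended to $\Dpp$ as a derivation of $\cup$, is defined by composition of differential operators and is already global and algebraic, so the analogue of Lemma \ref{canonicalconnectionandDelta} and Proposition \ref{bracket} (with the $L_Q$-terms absent, i.e. $[d_H,\nabla^{\mathrm{can}}](X,D)=\nbk(X,D)$) is a global cochain-level identity; invoking an \'etale cover ``where connections on $T_X$ exist'' is a red herring. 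What is actually missing, and what your closing paragraph waves at rather than proves, are the two bridging facts that convert this identity into commutativity in $D^{+}(\cO_X)$: (i) for a bounded-below complex whose terms all carry connections --- here $(\functorL(\DD_{\mathrm{poly}}^1), d_H)$ with $\nabla^{\mathrm{can}}$ --- the algebraic Atiyah class (the extension class of its jet sequence) is represented by the $\cO_X$-linear commutator $[d_H,\nabla^{\mathrm{can}}]$; and (ii) Atiyah classes, defined as jet-sequence extension classes, are functorial in $D^{+}(\cO_X)$ along the quasi-isomorphism $\theta$ \emph{without} any choice of splitting, because the jet sequence is functorial in the module --- one cannot appeal to the connection-homotopy proof of Proposition \ref{functoriality}, since $T_X$ itself admits no global algebraic connection (that is precisely what $\at_X$ obstructs). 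Since the identification of $\theta\circ\at_X$ with $\nbk\circ(\theta\otimes\theta)$ is the entire content of the theorem, leaving this globalization to an unspecified \v{C}ech, simplicial, or diagonal-resolution argument means the proof is not complete as written.
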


Denote by $X^{sm}$ the underlying smooth manifold of the scheme $X$, and $X^{an}$ the underlying complex manifold.
Let $(\cX=T^{0,1}_{X^{sm}}[1], \, Q=\overline{ \partial})$ be the dg manifold associated with the Dolbeault resolution of the sheaf of holomorphic functions $\cO^{an}_X$. In other words, the support of $\cX$ is $X^{sm}$, the dg algebra of smooth functions
$\smf_{\cX}$ is $\Omega_{X^{sm}}^{0,\bullet}$, and the homological vector field $Q$ is
$\overline{\partial}$.

We fix the scheme $(X, \, \cO_X)$ and the corresponding
dg manifold $(\cX, \, Q)$ defined as above.
Then we construct a functor $c_0$ from the category of algebraic vector bundles over $X$ to the category of dg vector bundles over $(\cX, \, Q)$:
Let $E\to X$ be an algebraic vector bundle over $X$.
Denote by $E^{sm}$ (resp. $E^{an}$) the underlying smooth (resp. holomorphic) vector bundle over $X^{sm}$ (resp. $X^{an}$).
We define $c_0(E)$ to be the dg vector bundle
$(\cE, \, L_Q) \rightarrow (\cX, \, Q)$ associated with the Dolbeault resolution of $E^{an}$.
In other words, we have
$\Gamma(\cE)=\Omega_{X^{sm}}^{0,\bullet}\otimes_{C^\infty(X^{sm},\C)}\Gamma(E^{sm})$ and $L_Q=\overline{\partial}$.
Meanwhile,
let $g: E_1 \rightarrow E_2$ be a morphism of algebraic vector bundles.
We define
$c_0(g): (\cE_1, \, L_Q)\rightarrow (\cE_2, \, L_Q)$ to be the natural morphism of dg vector bundles arising from the induced map
$g:~\Gamma(E_1^{sm})\rightarrow \Gamma(E_2^{sm})$.

In a natural manner, we can upgrade the aforementioned functor $c_0$
to a functor
\[
c: ~~ D^{+}(\cO_X) \rightarrow \Pi((\smf_{\cX}, \, Q)\mathbf{-mod}) \, .
\]

\begin{proposition}\

With the notations as above,
the functor
$c$ sends
the Atiyah class
$\at(E)\in \Ext ^1_{\cO_X}(T_X \otimes_{\cO_{X}} E, \, E)$ of the algebraic vector bundle
$E \rightarrow X$
to the Atiyah class
$\at_{\cE}\in \Hom_{\Pi((\smf_{\cX}, \, Q)\mathbf{-mod}     )}(\XX(\cX)[-1]$
$\otimes_{\smf_{\cX}} \cE, \, \cE)$
of the dg vector bundle
$(\cE, \, L_Q) \rightarrow (\cX, \, Q)$.
\end{proposition}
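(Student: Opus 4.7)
The plan is to prove the proposition by showing that $c$ sends the algebraic jet sequence defining $\at(E)$ to the dg jet sequence defining $\at_{\cE}$, up to natural quasi-isomorphism of $(\smf_{\cX}, \, Q)$-modules. Since both Atiyah classes are Yoneda extension classes, and $\mathrm{Ext}^{1}$ is identified with $\Hom$ in the corresponding homotopy category via the standard isomorphism used throughout Section~\ref{Section4}, matching the defining extensions will suffice to conclude $c(\at(E)) = \at_{\cE}$.

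First, I would note that $c_0$ is an exact functor from the category of algebraic vector bundles on $X$ to the category of dg vector bundles on $(\cX, \, Q)$: analytification is exact by GAGA, and Dolbeault resolution is exact on coherent analytic sheaves. Applying $c_0$ to the algebraic jet sequence
\[
0 \to E \to D_{X}^{\leqslant 1} \otimes_{\cO_X} E \to T_{X} \otimes_{\cO_X} E \to 0
\]
therefore yields a short exact sequence of $(\smf_{\cX}, \, Q)$-modules whose Yoneda extension class represents $c(\at(E))$ by functoriality of $\mathrm{Ext}^{1}$.

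The next step is to identify this image sequence with the dg jet sequence
\[
0 \to \cE \to \cD_{\cX}^{\leqslant 1} \otimes_{\smf_{\cX}} \cE \to \XX(\cX) \otimes_{\smf_{\cX}} \cE \to 0
\]
of the $(\smf_{\cX}, \, Q)$-module $(\cE, \, L_Q)$. This reduces to establishing natural quasi-isomorphisms
$c_0(T_X) \simeq (\XX(\cX), \, L_Q)$ and $c_0(D_X^{\leqslant 1}) \simeq (\cD_{\cX}^{\leqslant 1}, \, L_Q)$
of $(\smf_{\cX}, \, Q)$-modules, compatible with the symbol map and with tensoring by $\cE$. The compatibility with tensor products is the fact that tensoring with $\Omega^{0, \bullet}_{X^{sm}}$ over $\cO_X^{an}$ commutes with Dolbeault resolution, i.e., the projection formula.

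The main obstacle will be establishing the two quasi-isomorphisms above. Graded vector fields on $\cX = T^{0,1}_{X^{sm}}[1]$ split, through a choice of Hermitian connection, into a horizontal part along the base $X^{sm}$ and a vertical part along the fibers $T^{0,1}_X[1]$. One needs to verify that the vertical part together with the relevant component of $L_Q$ forms an acyclic subcomplex, while the remaining horizontal $(1,0)$-part with the induced $\overline{\partial}$-differential yields precisely $\Omega^{0, \bullet}_{X^{sm}}(T^{1,0}_{X^{an}}) = c_0(T_X)$. This is exactly the content of the correspondence between complex manifolds and their associated Dolbeault dg manifolds recorded in Mehta-Sti\'enon-Xu \cite{M-S-X}, and an entirely analogous decomposition handles $\cD_{\cX}^{\leqslant 1}$. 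Granting these identifications and their naturality in $E$, the image under $c$ of the algebraic jet sequence coincides with the dg jet sequence of $c_0(E)$ in $\Pi( (\smf_{\cX}, \, Q)\mathbf{-mod} )$, so their extension classes coincide, which is the desired equality $c(\at(E)) = \at_{\cE}$.
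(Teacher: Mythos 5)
A preliminary remark: the paper itself offers no argument for this proposition — Section \ref{Section7} is explicitly stated to be "without proofs" — so there is no written proof to measure you against; what follows assesses your plan on its own terms. Your overall strategy is the natural one and its skeleton is sound: both classes are extension classes of jet sequences, $c_0$ does send the algebraic jet sequence of $E$ to a short exact sequence of $(\smf_{\cX},\,Q)$-modules (though not "by GAGA", which concerns proper varieties and is irrelevant here; exactness follows simply because a short exact sequence of vector bundles is locally split, or because $\cO_X^{an}$ and $\smf$ are flat over $\cO_X$), and the problem indeed reduces to matching that sequence with the dg jet sequence of $(\cE,\,L_Q)$ through quasi-isomorphisms compatible with the unit inclusion and the symbol map.

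The gap is that precisely this matching — where all the content of the proposition sits — is asserted rather than carried out, and the one concrete mechanism you propose for it is not quite right as stated. Splitting $\XX(\cX)$ by a Hermitian connection does give a decomposition into vertical, $(0,1)$-horizontal and $(1,0)$-horizontal summands of graded $\smf_{\cX}$-modules, but the $(1,0)$-horizontal summand is in general \emph{not} preserved by $L_Q=[\overline{\partial},\,\cdot\,]$, so one cannot speak of "the induced $\overline{\partial}$-differential" on it, nor is the complement obviously an acyclic subcomplex; the clean statement is that the connection-free, $\Omega^{0,\bullet}_{X^{sm}}$-linear map $\alpha\otimes v \mapsto \alpha\,\iota_v\partial$ from the Dolbeault resolution of $T^{1,0}_{X^{an}}$ into $(\XX(\cX),\,L_Q)$ is a chain map and a quasi-isomorphism (proved, e.g., by filtering and using acyclicity of the complementary generators), with an analogous map for order $\leqslant 1$ operators. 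What then still has to be checked, and is not in your text, is that this map, tensored with $\cE$, intertwines the twisted differential and the bimodule-type $\smf_{\cX}$-structure on the middle terms $c_0(D_X^{\leqslant 1}\otimes_{\cO_X}E)$ and $\cD_{\cX}^{\leqslant 1}\otimes_{\smf_{\cX}}\cE$, and commutes with $i$ and $j$, so that one has an actual ladder of extensions inducing the identification $c(T_X[-1]\otimes_{\cO_X}E)\simeq \XX(\cX)[-1]\otimes_{\smf_{\cX}}\cE$ implicit in the statement. A shorter route that avoids the middle terms altogether is to compare cocycles: choose a smooth $(1,0)$-connection $\nabla$ on $E^{an}$; it represents $\at(E)$ in Dolbeault cohomology by $[\overline{\partial},\nabla]\in\Omega^{0,1}_{X^{sm}}((T^{1,0})^{\vee}\otimes\mathrm{End}\,E)$, while the same $\nabla$ defines a smooth connection on the $(\smf_{\cX},\,Q)$-module $\Gamma(\cE)$ whose Atiyah cocycle \eqref{Atiyahcocycleformula} pulls back, along the quasi-isomorphism $\alpha\otimes v\mapsto\alpha\,\iota_v\partial$, to that very cocycle; this is exactly how the dg-manifold Atiyah class is compared with the classical one in \cite{M-S-X}, and it would close your argument with a single computation.
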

In other words,
the algebraic Atiyah class $\at(E)$ and
the dg Atiyah class $\at_{\cE}$
coincide at the lower right corner of the following diagram:
\[
\begin{xy}
(0,15)*+{\at(E)\in\Ext ^1_{\cO_X}(T_X \otimes_{\cO_{X}} E, \, E)}="d";
(80,15)*+{\at_{\cE}\in\Hom_{\mathrm{H}((\smf_{\cX}, \, Q)\mathbf{-mod}   )}(\XX(\cX)[-1]\otimes_{\smf_{\cX}} \cE, \,\cE)}="a";
(0,0)*+{\Hom_{D^{+}(\cO_X)}(T_X[-1]\otimes_{\cO_{X}}E, \, E)}="b";
(80, 0)*+{\Hom_{\Pi((\smf_{\cX}, \, Q)\mathbf{-mod}     )}(\XX(\cX)[-1]\otimes_{\smf_{\cX}} \cE, \, \cE) \, .}="c";
{\ar "a"; "c"};
{\ar^-{c} "b"; "c"};
{\ar^{\simeq} "d"; "b" };
\end{xy}
\]
Here the vertical arrow on the right hand side comes from the natural functor
$\mathrm{H}((\smf_{\cX}, \, Q)\mathbf{-mod})$
$\rightarrow \Pi((\smf_{\cX}, \, Q)\mathbf{-mod})$.
\begin{proposition}
The functor
$c$
sends Ramadoss's Diagram \eqref{Ramadossmaindiagram} in the category  $D^{+}(\cO_X)$ to the commutative diagram
\begin{equation}\label{final diagram}
\xymatrix{
(\XX(\cX)[-1], \, L_Q) \otimes_{\smf_{\cX}} (\XX(\cX)[-1], \, L_Q)  \ar[r]^>>>>>>>>>>>{\at_{\cX}} \ar[d]_{\theta \otimes \theta} &
(\XX(\cX)[-1], \, L_Q)\ar[d]^{\theta}\\
(\tot\Lp, \, L_Q+d_H)
\otimes_{\smf_{\cX}}
(\tot\Lp, \, L_Q+d_H)\ar[r]^>>>>>{\nbk} & (\tot \functorL(\DD_{\mathrm{poly}}^1), \, L_Q+d_H) \\
}
\end{equation}
in the category
$\Pi((\smf_{\cX}, \, Q)\mathbf{-mod})$.
\end{proposition}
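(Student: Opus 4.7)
The plan is to deduce the commutativity of diagram \eqref{final diagram} from that of Ramadoss's diagram \eqref{Ramadossmaindiagram} by applying the functor $c$. The argument proceeds in three stages: first, identify the images under $c$ of the four corners of \eqref{Ramadossmaindiagram} with the corresponding corners of \eqref{final diagram}; second, identify the images of the three structural morphisms --- the inclusion $\theta$, the Atiyah class $\at_X$, and the Lie bracket $\nbk$ --- with their dg counterparts; third, invoke functoriality of $c$ (and compatibility with tensor products up to quasi-isomorphism) to transport commutativity from $D^{+}(\mathcal{O}_X)$ into $\Pi((\smf_{\cX}, Q)\mathbf{-mod})$.

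For the corner identifications, I would first show that $c(T_X[-1])$ is isomorphic to $(\XX(\cX)[-1], L_Q)$ in $\Pi((\smf_{\cX}, Q)\mathbf{-mod})$. By definition, $c(T_X)$ is represented by the Dolbeault complex $(\Omega^{0,\bullet}(T^{1,0}X), \overline{\partial})$ of the holomorphic tangent bundle, whereas $(\XX(\cX), L_Q)$ is the space of vector fields on the graded manifold $\cX = T^{0,1}X[1]$, which decomposes as a $\smf_{\cX}$-module into horizontal and vertical parts; a direct inspection shows that this complex, equipped with $L_Q = [\overline{\partial},\, \cdot\,]$, is quasi-isomorphic to the Dolbeault resolution of $T^{1,0}X$. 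By a parallel but more intricate argument, I would identify $c(\functorL(D_{\mathrm{poly}}^1), d_H)$ with $(\tot\Lp, L_Q + d_H)$, and the analogous statement for the tensor squares, using that Dolbeault resolution respects tensor products in the derived sense.

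For the morphism identifications, the natural inclusion $\theta$ of the tangent complex into the free Lie algebra complex is canonical in both settings and is manifestly preserved by $c$ as it is defined on generators. The Lie bracket $\nbk$ is given in both settings by the same commutator formula in the ambient tensor algebra, hence is preserved by $c$. Most importantly, the identification $c(\at_X) = \at_{\cX}$ follows from the preceding proposition applied to the algebraic vector bundle $E = T_X$, since on the dg side $\at_{\cX}$ is, by definition, the Atiyah class of $(\XX(\cX)[-1], L_Q) \simeq c(T_X[-1])$. With all six identifications in place, applying $c$ to the commutative square \eqref{Ramadossmaindiagram} produces diagram \eqref{final diagram}, which therefore commutes in $\Pi((\smf_{\cX}, Q)\mathbf{-mod})$.

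The principal obstacle will be the quasi-isomorphism $c(\functorL(D_{\mathrm{poly}}^1), d_H) \simeq (\tot\Lp, L_Q + d_H)$: although both complexes are designed to represent the derived free Lie algebra of differential operators on $X^{an}$, matching the algebraic Hochschild differential with its dg-manifold analogue in the presence of the Dolbeault differential requires delicate bookkeeping, in particular to control how algebraic differential operators on $X$ interact with the completion along $T^{0,1}X$ present in $\smf_{\cX}$. This is essentially a sheaf-theoretic comparison between the algebraic and the analytic/smooth constructions of polydifferential operators, and establishing it rigorously is the technical heart of the proof; once done, the rest of the argument follows mechanically from functoriality of $c$ and the previously stated proposition on Atiyah classes.
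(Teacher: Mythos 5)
The paper itself states this proposition without proof (Section \ref{Section7} is explicitly a description ``without proofs''), so there is no authorial argument to compare against; your transport-of-structure strategy --- apply $c$ to Ramadoss's square \eqref{Ramadossmaindiagram} and match the four corners and three arrows --- is evidently the intended reading of the statement. Note, however, that the commutativity of \eqref{final diagram} is not actually at stake: as the paper remarks right after the proposition, \eqref{final diagram} is the special case of Diagram \eqref{maindiagramII} for the dg manifold $(\cX,\,Q)$, hence already follows from Theorem \ref{maintheorem}(1). The entire mathematical content of the proposition is therefore the system of identifications under $c$, i.e.\ exactly the part of your proposal that is left as a sketch.

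Two of those identifications are genuine gaps rather than routine checks. First, $c(\at_X)=\at_{\cX}$ does not follow ``by definition'' from the preceding proposition: with $E=T_X$ that proposition produces the Atiyah class of the dg module $(\Omega^{0,\bullet}_{X^{sm}}(T^{1,0}_X),\,\overline{\partial})=\Gamma(c_0(T_X))$, whereas $\at_{\cX}$ in \eqref{final diagram} is the Atiyah class of the strictly larger tangent module $(\XX(\cX)[-1],\,L_Q)$; you must transfer one to the other through the inclusion quasi-isomorphism $\Omega^{0,\bullet}_{X^{sm}}(T^{1,0}_X)\hookrightarrow \XX(\cX)$, using the functoriality of Atiyah classes (Proposition \ref{functoriality}) in the module slot and a corresponding compatibility in the $\XX(\cX)[-1]$ slot. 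Second, the comparison $c\bigl(\functorL(D_{\mathrm{poly}}^1),\,d_H\bigr)\simeq(\tot\Lp,\,L_Q+d_H)$, which you rightly call the technical heart, is more than algebraic-versus-analytic bookkeeping: $\cD_{\cX}$ consists of differential operators on the graded algebra $\Omega^{0,\bullet}_{X^{sm}}$ and so contains operators differentiating along the odd fibres of $T^{0,1}_{X^{sm}}[1]$, which have no counterpart in the Dolbeault resolution of holomorphic polydifferential operators; one needs an explicit comparison map (e.g.\ extending holomorphic polydifferential operators to act on $\Omega^{0,\bullet}_{X^{sm}}$, checking compatibility with $d_H$, with $L_Q=[\overline{\partial},\cdot\,]$, and with the inclusions $\theta$ on both sides), and a proof that it is a quasi-isomorphism, for instance by reducing via the HKR maps (Theorem \ref{LSX} on the dg side, its algebraic analogue on Ramadoss's side) to the tangent-complex comparison you did sketch. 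Until these two points are supplied, the proposal is an accurate road map but not yet a proof of the proposition's actual content.
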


In fact, Diagram \eqref{final diagram} is a special case of Diagram \eqref{maindiagramII} where the dg manifold is $(\cX, \, Q)$.
So, Theorem \ref{maintheorem} can be seen as an analogue to Theorem \ref{Ramadossresult} (Theorem $2$ in \cite{Ramadoss}) in the setting of dg manifolds.
Also, among our results, Theorem \ref{R1}, Corollary \ref{Rcorollary1}, and Corollary \ref{R3}, are, respectively, analogous to  Ramadoss's Theorem $1$, Corollary $1$, and Theorem $3$ in \cite{Ramadoss}.

\section{Appendix}\label{Section8}
Let $\cM$ be a graded manifold.
In this part we will establish a key relation (Proposition \ref{Deltaformula}) between the coproduct $\Delta$ defined by
Equation  \eqref{Delta}, the canonical Lie bracket $\nbk$ defined by
Equation  \eqref{freeLiebracket}, and the Gerstenhaber product $\circ$ defined by
Equation  \eqref{Gerstenhaberproductformula}.

Recall that the space of differential operators
$\cD_{\cM}$ is spanned (over $\fK$) by elements of the form
\begin{equation}
\label{Eqt:Dform}D=X_1 \circ \cdots \circ X_n \, ,
\end{equation}
where $X_1,\cdots,X_n \in \XX(\cM)$ are homogeneous elements. In what follows, we will also treat the above $D$ as an element in $\DD_{\mathrm{poly}}^1$ and
$X_i$ as elements in  $\TT_{\mathrm{poly}}^1$.

\begin{proposition}\label{Deltaformula}
Given an element $D\in \DD_{\mathrm{poly}}^1$ as in
Equation  \eqref{Eqt:Dform},
we have
\begin{equation}\label{Eqt:Deltaexpressioninbrackets}
\Delta(D)=\sum_{\sigma \in \bS }
(-1)^{\tau(\sigma)}\kappa(\sigma)\,
\textup{\textlbrackdbl}X_1 \circ X_{\sigma(2)} \circ \cdots \circ X_{\sigma(p)}, \, X_{\sigma(p+1)} \circ \cdots \circ X_{\sigma(n)}
\textup{\textrbrackdbl} \, .
\end{equation}	
Here $\bS$ is the set of permutations $\sigma:~~\{1,\, \cdots, \, n\} \rightarrow \{1,\, \cdots, \, n\}$ such that $\sigma(1)=1$ and there exists   $p$ in $\{1,\, 2, \, \cdots,\, n\}$ for which 
$1<\sigma(2)< \cdots < \sigma(p)$ and
$\sigma(p+1)< \cdots < \sigma(n)$.
The number $\tau(\sigma)$ is the sum $\widetilde{X}_{\sigma(p+1)}+\cdots + \widetilde{X}_{\sigma(n)}$ and
$\kappa(\sigma)$ is the Koszul sign determined by the ordinary degrees   $\widetilde{X}_i$, i.e.,
\[
X_2\odot_{\smf_{\cM}} \cdots \odot_{\smf_{\cM}} X_n =\kappa(\sigma) X_{\sigma(2)} \odot_{\smf_{\cM}} \cdots \odot_{\smf_{\cM}} X_{\sigma(n)}\, .
\]
Here $\odot_{\smf_{\cM}}$ denotes the symmetric product in $S^{\bullet}(\XX(\cM))$.
\end{proposition}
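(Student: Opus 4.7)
The plan is to reduce the identity to an iterated Leibniz expansion of $(X_1 \circ \cdots \circ X_n)(f_1 f_2)$ and then pair the resulting terms to reconstruct the Lie brackets on the right-hand side.

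First, I would dispatch the base case $n=1$. From the defining formula \eqref{Delta} and the graded Leibniz rule for a single homogeneous vector field, a short direct calculation gives
\[
\Delta(X_1) = X_1 \widetilde{\otimes} \id_{\smf_{\cM}} + (-1)^{\widetilde{X_1}} \id_{\smf_{\cM}} \widetilde{\otimes} X_1,
\]
which by \eqref{freeLiebracket} is exactly $\textup{\textlbrackdbl}X_1, \id_{\smf_{\cM}}\textup{\textrbrackdbl}$ -- the unique term of the proposed sum when $n=1$.

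For the general case, the first main step is the iterated Leibniz expansion. Applying $X_n, X_{n-1}, \ldots, X_1$ successively to $f_1 f_2$, using the graded Leibniz rule $X(fg)=X(f)g+(-1)^{\widetilde{X}\widetilde{f}}fX(g)$ at each stage, produces a sum indexed by subsets $S\subseteq\{1,\ldots,n\}$ of the form
\[
(X_1\circ\cdots\circ X_n)(f_1 f_2)=\sum_{S}\epsilon(S)\,(X_{s_1}\circ\cdots\circ X_{s_k})(f_1)\cdot(X_{r_1}\circ\cdots\circ X_{r_{n-k}})(f_2),
\]
where $S=\{s_1<\cdots<s_k\}$, $S^c=\{r_1<\cdots<r_{n-k}\}$, and $\epsilon(S)$ is the Koszul factor accumulated whenever an $X_i$ with $i\notin S$ must be commuted past the partially-formed left expression. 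Multiplying through by $(-1)^{\widetilde{f_1}+1}$ and re-interpreting the product on the right via the sign convention that converts a tensor $A\widetilde{\otimes}B$ into its evaluation on $(f_1,f_2)$ (as given in Section \ref{Section2} just before Remark \ref{two f}), I obtain an expansion
\[
\Delta(X_1\circ\cdots\circ X_n)=\sum_{S\subseteq\{1,\ldots,n\}} c_S\,X_S\widetilde{\otimes}X_{S^c},
\]
with explicit coefficients $c_S$ read off from $\epsilon(S)$ and the vertical degrees of the $X_i$.

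The second main step is to pair these terms into brackets. Subsets $S\subseteq\{1,\ldots,n\}$ come in complementary pairs $\{S,S^c\}$, and exactly one subset in each pair contains the index $1$; parametrizing a pair by that distinguished subset identifies the pairs with the set $\bS$ via $A:=\{\sigma(2),\ldots,\sigma(p)\}\subseteq\{2,\ldots,n\}$. Expanding
\[
\textup{\textlbrackdbl}X_1\circ X_{\sigma(2)}\circ\cdots\circ X_{\sigma(p)},\;X_{\sigma(p+1)}\circ\cdots\circ X_{\sigma(n)}\textup{\textrbrackdbl}
\]
via \eqref{freeLiebracket} produces precisely the two terms $X_{A\cup\{1\}}\widetilde{\otimes}X_{A^c}$ and $\pm\,X_{A^c}\widetilde{\otimes}X_{A\cup\{1\}}$ corresponding to this pair, so the proposition reduces to the sign identity $c_{A\cup\{1\}}=(-1)^{\tau(\sigma)}\kappa(\sigma)$ together with a complementary check for $c_{A^c}$.

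The main obstacle is the sign bookkeeping in this last step. Three sign conventions must be reconciled: the Koszul signs produced by the iterated Leibniz rule, the Koszul sign in the free Lie bracket \eqref{freeLiebracket} (which uses the total degrees $|X_i|=1+\widetilde{X_i}$), and the factor $(-1)^{\tau(\sigma)}\kappa(\sigma)$ in the target formula (which uses only the vertical degrees $\widetilde{X_i}$). The shift from $|X_i|$ to $\widetilde{X_i}$ in the second factor of each bracket is precisely what produces the $(-1)^{\tau(\sigma)}$ twist; checking that the residual rearrangement sign then reduces to $\kappa(\sigma)$ is a direct but tedious computation, which I expect to be the most error-prone part of the argument.
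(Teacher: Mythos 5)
Your route is genuinely different from the paper's. The paper argues by induction on $n$: writing $D=X_1\circ D'$, it combines the relation \eqref{coproduct} between $\Delta$ and $d_H$ on $\Dp$, the derivation property \eqref{HochschildcoboundaryandGerstenhaberproduct} of $d_H$ for the Gerstenhaber product (together with $d_H(X_1)=0$), and the identity \eqref{Gerstenhaberproductandcupproduct} that pushes $X_1\circ$ inside a bracket, and then reindexes the resulting permutations (the $\widetilde{\sigma}'$ and $\hat{\sigma}'$ constructions) to recover $\bS$ at stage $n+1$. You instead expand $(X_1\circ\cdots\circ X_n)(f_1f_2)$ by the iterated graded Leibniz rule, obtaining a sum over subsets $S\subseteq\{1,\ldots,n\}$, and pair complementary subsets into the brackets of \eqref{Eqt:Deltaexpressioninbrackets}. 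Your structural claims are correct: $\bS$ is in bijection with subsets $A\subseteq\{2,\ldots,n\}$ (including the degenerate $A=\{2,\ldots,n\}$, which yields $\textup{\textlbrackdbl}D,\id_{\smf_{\cM}}\textup{\textrbrackdbl}$ with the empty second block read as $\id_{\smf_{\cM}}$), the ordering inside each block matches the increasing-index composition produced by the Leibniz expansion, and each bracket, expanded via \eqref{freeLiebracket}, contributes exactly the two tensors indexed by the complementary pair $\{A\cup\{1\},\,\{2,\ldots,n\}\setminus A\}$. This direct approach works and is arguably more transparent than the induction, since it exhibits the formula as a repackaging of the Leibniz rule; what the paper's induction buys is that all sign bookkeeping is localized in two small lemmas rather than one global Koszul computation.

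That localization is exactly where your write-up stops short. The entire content of the proposition is the sign identity you defer: you never record the closed form of $\epsilon(S)$ (which depends on $\widetilde{f}_1$ through the Leibniz passes), nor verify that it combines with the Tamarkin--Tsygan evaluation sign $(\widetilde{f}_1+1)|D_2|$ and the prefactor $(-1)^{\widetilde{f}_1+1}$ of \eqref{Delta} into an $f_1$-independent coefficient $c_S$, nor prove $c_{A\cup\{1\}}=(-1)^{\tau(\sigma)}\kappa(\sigma)$ together with the complementary check that $c_{A^c}$ matches the second term of the bracket, whose sign $(-1)^{|P||E|}$ is computed with \emph{total} degrees while $\tau(\sigma)$ and $\kappa(\sigma)$ use only vertical degrees. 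A low-order check (for $n=2$ the formula reads $\Delta(X_1\circ X_2)=\textup{\textlbrackdbl}X_1\circ X_2,\id_{\smf_{\cM}}\textup{\textrbrackdbl}+(-1)^{\widetilde{X}_2}\textup{\textlbrackdbl}X_1,X_2\textup{\textrbrackdbl}$, which does follow from your expansion) confirms the plan is viable, but as submitted the proof is an outline: to complete it you must write down $\epsilon(S)$ and $c_S$ explicitly and carry out the general-$n$ sign verification you label as the error-prone step.
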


We need the following lemma first.
Recall the Hochschild differential
$d_H: \Dpp^n \rightarrow \Dpp^{n+1}$ defined by
Equation  \eqref{Hochschild differential definition}.
\begin{lemma}
For all homogeneous elements $D, \, E \in \Dpp$ and $X \in \Tp$, we have
\begin{equation}\label{HochschildcoboundaryandGerstenhaberproduct}
\quad d_H(D\circ E)=d_H(D)\circ E +(-1)^{|D|-1}D \circ d_H(E) \, ,
\end{equation}
\begin{equation}\label{Gerstenhaberproductandcupproduct}
\text{and \quad}
X\circ \textup{\textlbrackdbl}
D, \, E
\textup{\textrbrackdbl}=
\textup{\textlbrackdbl}
X \circ D, \, E
\textup{\textrbrackdbl}+
(-1)^{|D||E|+1}
\textup{\textlbrackdbl}
X \circ E, \, D
\textup{\textrbrackdbl} \, .	
\end{equation}
\end{lemma}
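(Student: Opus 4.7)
The plan is to establish the two identities separately, each by a direct computation from the defining formulas \eqref{cupproductformula}, \eqref{Hochschild differential definition}, \eqref{coproduct}, and \eqref{Gerstenhaberproductformula}, exploiting the Tamarkin-Tsygan sign conventions throughout.

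For \eqref{HochschildcoboundaryandGerstenhaberproduct}, I would invoke the identification $d_H=[\mult,\,\cdot\,]$ from \eqref{dH via bracket} together with the graded Jacobi identity \eqref{Jacobi}. Substituting $D \leftarrow \mult$ into \eqref{Jacobi} and using the obvious fact $[\mult,\mult]=0$, which encodes the associativity of pointwise multiplication on $\smf_{\cM}$, one obtains that $d_H$ is a graded derivation of the Gerstenhaber bracket. Unpacking the bracket in the form $[D,E]=D\circ E-(-1)^{(|D|+1)(|E|+1)}E\circ D$ inside this derivation relation and then collecting the pre-Lie pieces yields \eqref{HochschildcoboundaryandGerstenhaberproduct}; this is essentially Gerstenhaber's classical identity (see \cite{Gerstenhaber} and \cite{Tamarkin-Tsygan}). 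As an alternative route, one can write $D=D_1\widetilde{\otimes}\cdots\widetilde{\otimes}D_n$ and $E=E_1\widetilde{\otimes}\cdots\widetilde{\otimes}E_m$, expand $D\circ E$ over insertion positions via \eqref{Gerstenhaberproductformula}, apply $d_H$ to both sides using \eqref{Hochschild differential definition}, and match the resulting sums term by term indexed by the insertion position of $E$ into $D$.

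For \eqref{Gerstenhaberproductandcupproduct}, the crucial observation is that an element $X\in\Tp$ has a single slot, so \eqref{Gerstenhaberproductformula} collapses to $(X\circ F)(f_1,\ldots,f_k)=X(F(f_1,\ldots,f_k))$ for any $F\in\Dpp^k$. Applied to $F=D\cup E$ and combined with the Leibniz rule for the vector field $X$ acting on the product $D(\vec f)\cdot E(\vec g)$ that appears inside \eqref{cupproductformula}, this produces the intermediate Leibniz-type identity
\[
X\circ(D\cup E)=(X\circ D)\cup E+(-1)^{(|X|-1)|D|}\,D\cup(X\circ E).
\]
I would then expand both sides of \eqref{Gerstenhaberproductandcupproduct} using $\textup{\textlbrackdbl}P,Q\textup{\textrbrackdbl}=P\cup Q-(-1)^{|P||Q|}Q\cup P$: the left-hand side becomes $X\circ(D\cup E)-(-1)^{|D||E|}X\circ(E\cup D)$, which by the above Leibniz identity splits into four $\cup$-monomials; each of these matches one of the four monomials obtained by expanding the two brackets on the right-hand side, after a sign check using $|X\circ D|=|X|+|D|-1$ and $|X\circ E|=|X|+|E|-1$.

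The main obstacle in both parts is sign bookkeeping under the bigraded Tamarkin-Tsygan conventions, particularly the extra Koszul shift $(-1)^{\widetilde{f}_1}$ built into \eqref{multsign} and the degree shift $|D|-1$ appearing in the pre-Lie grading; once the exponents are reduced modulo $2$, each matching pair of monomials coincides on the nose.
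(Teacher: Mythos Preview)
For \eqref{Gerstenhaberproductandcupproduct} your argument coincides with the paper's: both first prove the Leibniz rule $X\circ(D\widetilde{\otimes}E)=(X\circ D)\widetilde{\otimes}E+(-1)^{(|X|+1)|D|}D\widetilde{\otimes}(X\circ E)$ (your exponent $|X|-1$ agrees with $|X|+1$ modulo $2$) and then expand $\nbk$ on both sides and match the four resulting $\cup$-monomials.

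For \eqref{HochschildcoboundaryandGerstenhaberproduct}, however, your primary route via the Jacobi identity has a genuine gap. Substituting $\mult$ into \eqref{Jacobi} shows only that $d_H$ is a derivation of the Gerstenhaber \emph{bracket}. If you set $\Phi(D,E):=d_H(D\circ E)-d_H(D)\circ E-(-1)^{|D|-1}D\circ d_H(E)$ and expand the bracket-derivation identity in terms of $\circ$, all you obtain is the symmetry $\Phi(D,E)=(-1)^{(|D|+1)(|E|+1)}\Phi(E,D)$; there is no legitimate way to ``collect the pre-Lie pieces'' and conclude $\Phi=0$ from this alone. Indeed, in Gerstenhaber's classical formula the defect $\Phi$ is a cup-commutator, which is graded symmetric but not identically zero, so any argument that sees only the antisymmetrisation cannot decide whether such a term is present. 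The paper does not attempt this shortcut: it says the identity is verified directly from \eqref{Hochschild differential definition} and \eqref{Gerstenhaberproductformula} and refers to \cite{Gerstenhaber}. Your alternative route---expanding $D\circ E$ over insertion positions and applying $d_H$ termwise---is exactly that direct verification, so make it the main argument and drop the Jacobi detour.
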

\begin{proof}
Equation  \eqref{HochschildcoboundaryandGerstenhaberproduct} can be verified directly by definitions of the Hochschild differential $d_H$ and the Gerstenhaber product $\circ$. In fact, there is a similar identity for
Hochschild complexes of associative algebras in \cite{Gerstenhaber}.

To prove Equation  \eqref{Gerstenhaberproductandcupproduct},
we observe that
\[
X\circ(D\widetilde{\otimes}E)=(X\circ D)\widetilde{\otimes}E +(-1)^{(|X|+1)|D|}D\widetilde{\otimes}(X\circ E) \, ,
\]
for homogeneous elements
$D, \,E \in \Dpp$ and $X\in \Tp$,
and similarly, we have
\[
X\circ(E\widetilde{\otimes}D)=(X\circ E)\widetilde{\otimes}D +(-1)^{(|X|+1)|E|}E\widetilde{\otimes}(X\circ D) \, .
\]
Therefore, the right hand side of Equation  \eqref{Gerstenhaberproductandcupproduct}
can be computed as follows:
\begin{equation*}
\begin{split}
X\circ \textup{\textlbrackdbl}
D, \, E
\textup{\textrbrackdbl} & =X \circ (D\widetilde{\otimes} E- (-1)^{|D||E|}E \widetilde{\otimes}D) \\
& \quad = \textup{\textlbrackdbl}
X \circ D, \, E
\textup{\textrbrackdbl}+
(-1)^{(|X|+1)|D|}
\textup{\textlbrackdbl}
D, \, X \circ E
\textup{\textrbrackdbl} \\
& \quad\quad =
\textup{\textlbrackdbl}
X \circ D, \, E
\textup{\textrbrackdbl}+
(-1)^{(|X|+1)|D|+(|X|+|E|-1)|D|+1}
\textup{\textlbrackdbl}
X\circ E, \, D
\textup{\textrbrackdbl} \\
& \quad\quad\quad =
\textup{\textlbrackdbl}
X \circ D, \, E
\textup{\textrbrackdbl}+
(-1)^{|D||E|+1}
\textup{\textlbrackdbl}
X\circ E, \, D
\textup{\textrbrackdbl} \, .
\end{split}
\end{equation*}	
Thus Equation  \eqref{Gerstenhaberproductandcupproduct}
is proved.
\end{proof}

We now give the
\begin{proof}[Proof of Proposition \ref{Deltaformula}]
We adopt an inductive approach.
The $n=1$ case can be easily examined:
\begin{equation*}
\begin{split}
\Delta(X_1) & = X_1\widetilde{\otimes}\id_{\smf_{\cM}}+(-1)^{\widetilde{X}_1} \id_{\smf_{\cM}}\widetilde{\otimes}X_1 \\
& \quad = X_1\widetilde{\otimes}\id_{\smf_{\cM}}-(-1)^{\widetilde{X}_1+1} \id_{\smf_{\cM}}\widetilde{\otimes}X_1  =
\textup{\textlbrackdbl}
X_1, \, \id_{\smf_{\cM}}
\textup{\textrbrackdbl} \, .
\end{split}
\end{equation*}
Suppose that
Equation  \eqref{Eqt:Deltaexpressioninbrackets} is proved for some $n\geqslant 1$.
For the $n+1$ case,
we write the element $D=X_1 \circ \cdots \circ X_{n+1}$ as
\begin{equation*}
D=X_1 \circ \cdots \circ X_{n+1} =X_1 \circ D' \, ,
\end{equation*}
where $D'=X_2 \circ \cdots \circ X_{n+1}$.
By the induction assumption, we have
\begin{equation}\label{middle 1}
\begin{split}
\Delta(D')-	\textup{\textlbrackdbl}
D', \, \id_{\smf_{\cM}}
\textup{\textrbrackdbl} & = \sum_{\sigma' \in \bS'}
(-1)^{\tau(\sigma')}\kappa(\sigma')\,
\textup{\textlbrackdbl}X_2 \circ X_{\sigma'(3)} \circ \cdots \circ X_{\sigma'(p+1)}, \, X_{\sigma'(p+2)} \circ \cdots \circ X_{\sigma'(n+1)}
\textup{\textrbrackdbl} \, ,
\end{split}	
\end{equation}
where $\bS'$ is the set of permutations
$\sigma':~~\{2, \cdots, n+1\} \rightarrow \{2, \cdots, n+1\}$ such that $\sigma'(2)=2$,
$2< \sigma'(3) < \cdots  <\sigma'(p+1)$, and
$\sigma'(p+2) < \cdots < \sigma'(n+1)$ for some $1 \leqslant p \leqslant n-1$.

Applying $X_1\circ \cdot \,$ to each term in the right hand side of
Equation  \eqref{middle 1}, and using Equation  \eqref{Gerstenhaberproductandcupproduct}, we have
\begin{equation*}
\begin{split}
& X_1 \circ \textup{\textlbrackdbl}X_2 \circ X_{\sigma'(3)} \circ \cdots \circ X_{\sigma'(p+1)}, \, X_{\sigma'(p+2)} \circ \cdots \circ X_{\sigma'(n+1)}\textup{\textrbrackdbl} \\
& \quad \quad = \textup{\textlbrackdbl}X_1 \circ X_2 \circ X_{\sigma'(3)} \cdots \circ X_{\sigma'(p+1)}, \, X_{\sigma'(p+2)} \circ \cdots \circ X_{\sigma'(n+1)}\textup{\textrbrackdbl} \\
& \quad\quad\quad\quad +(-1)^{(A+1)(B+1)+1}
\textup{\textlbrackdbl}
X_1 \circ X_{\sigma'(p+2)} \circ \cdots \circ X_{\sigma'(n+1)}, \,
X_2 \circ X_{\sigma'(3)} \circ \cdots \circ X_{\sigma'(p+1)}
\textup{\textrbrackdbl} \, ,
\end{split}	
\end{equation*}
where
$A=\widetilde{X}_2 + \widetilde{X}_{\sigma'(3)}+ \cdots +\widetilde{X}_{\sigma'(p+1)}$ and $B= \widetilde{X}_{\sigma'(p+2)}+ \cdots + \widetilde{X}_{\sigma'(n+1)}$.
So by Equation  \eqref{middle 1} we get
\begin{equation}\label{middle 2}
\begin{split}
X_1 & \circ  (\Delta(D') -\textup{\textlbrackdbl}
D', \, \id_{\smf_{\cM}}
\textup{\textrbrackdbl})
= \\
& \sum_{\sigma' \in \bS'}
(-1)^{\tau(\sigma')}\kappa(\sigma')\,
X_1 \circ \textup{\textlbrackdbl}X_2 \circ X_{\sigma'(3)} \circ \cdots \circ X_{\sigma'(p+1)}, \, X_{\sigma'(p+2)} \circ \cdots \circ X_{\sigma'(n+1)}
\textup{\textrbrackdbl}
\\
& \,\, =
\sum_{\sigma' \in \bS'}
(-1)^{\tau(\sigma')}\kappa(\sigma')\,
\textup{\textlbrackdbl}X_1 \circ X_2 \circ X_{\sigma'(3)} \circ \cdots \circ X_{\sigma'(p+1)}, \, X_{\sigma'(p+2)} \circ \cdots \circ X_{\sigma'(n+1)}
\textup{\textrbrackdbl}\\
& \,\,\,\, +\!\! \sum_{\sigma' \in \bS'}
(-1)^{\tau(\sigma')+A+B+AB}\kappa(\sigma')\,
\textup{\textlbrackdbl}
X_1 \circ X_{\sigma'(p+2)} \circ \cdots \circ X_{\sigma'(n+1)}, \,
X_2 \circ X_{\sigma'(3)} \circ \cdots \circ X_{\sigma'(p+1)}
\textup{\textrbrackdbl} \, .
\end{split}
\end{equation}
Now we have
\begin{equation}\label{final 1}
\begin{split}
\Delta(D) &=\Delta(X_1 \circ D')\\
& =
\textup{\textlbrackdbl}
X_1 \circ D', \, \id_{\smf_{\cM}}
\textup{\textrbrackdbl}
-
(-1)^{\widetilde{X}_1+ \widetilde{D}'+1}
d_H(X_1 \circ D')  \quad\quad\,\,\,\,\,\,\, \text{(by Equation  \eqref{coproduct})}\\
& =
\textup{\textlbrackdbl}
X_1 \circ D', \, \id_{\smf_{\cM}}
\textup{\textrbrackdbl}
+(-1)^{\widetilde{D}'}X_1 \circ d_H(D') \quad\quad\quad\quad\quad\,\,\,\,\,
\text{(by Equation  \eqref{HochschildcoboundaryandGerstenhaberproduct} and $d_H(X_1)=0$) } \\
& =
\textup{\textlbrackdbl}
X_1 \circ D', \, \id_{\smf_{\cM}}
\textup{\textrbrackdbl}
+X_1 \circ
(
\Delta(D')-
\textup{\textlbrackdbl}
D', \, \id_{\smf_{\cM}}
\textup{\textrbrackdbl}
)
\quad\quad\,\, \,
\text{(by Equation  \eqref{coproduct}) } \\
& =
\textup{\textlbrackdbl}
X_1 \circ D', \, \id_{\smf_{\cM}}
\textup{\textrbrackdbl}\\
& \,\, + \!\!
\sum_{\sigma' \in \bS'}\!
(-1)^{\tau(\sigma')}\kappa(\sigma')
\textup{\textlbrackdbl}X_1 \circ X_2 \circ X_{\sigma'(3)} \circ \cdots \circ X_{\sigma'(p+1)}, \, X_{\sigma'(p+2)} \circ \cdots \circ X_{\sigma'(n+1)}
\textup{\textrbrackdbl}\\
&
\,\, + \!\!
\sum_{\sigma' \in \bS'}\!
(-1)^{\tau(\sigma')+A+B+AB}\kappa(\sigma')
\textup{\textlbrackdbl}
X_1 \circ X_{\sigma'(p+2)} \circ \cdots \circ X_{\sigma'(n+1)}, \,
X_2 \circ X_{\sigma'(3)} \circ \cdots \circ X_{\sigma'(p+1)}
\textup{\textrbrackdbl}.\\
& \quad\quad\quad\quad\quad\quad\quad\quad\quad\quad\quad\quad\quad\quad\quad\quad\quad\quad\quad\quad\quad\quad\quad \,\,\,
\text{ (by Equation  \eqref{middle 2}) }
\end{split}
\end{equation}

We then verify that we have the desired signs in the last two lines of Equation  \eqref{final 1}.
For each $\sigma' \in \bS' $ as above, we define two new permutations of $\{1, \cdots , n+1\}$ as follows:
\begin{itemize}
\item
Let
$\widetilde{\sigma}':~~ \{1, \cdots, n+1\} \rightarrow \{1, \cdots, n+1\}$
be the permutation defined by
$\widetilde{\sigma}'(1)=1$ and
$\widetilde{\sigma}'|_{\{2, \cdots, n+1\}}=\sigma'$. It is clear that we have
\begin{equation}\label{final 2}
\tau(\widetilde{\sigma}')=\tau(\sigma')=B\, , \quad \kappa(\widetilde{\sigma}')=\kappa(\sigma')\, .
\end{equation}
%

\item
Let
$\hat{\sigma}':~~ \{1, \cdots, n+1\} \rightarrow \{1, \cdots, n+1\}$
be the permutation defined by setting
\begin{gather*}
\hat{\sigma}'(1) =1 \, , \\
\quad\,\,\,\, \hat{\sigma}'(2) =\sigma'(p+2), \, \cdots, \, \hat{\sigma}'(n+1-p)=\sigma'(n+1) \, , \\
\hat{\sigma}'(n+2-p)=2, \, \cdots, \, \hat{\sigma}'(n+1)=\sigma'(p+1) \, .
\end{gather*}
It is clear that we have
$\tau(\hat{\sigma}')=A$ and
$\kappa(\hat{\sigma}')=(-1)^{AB}\kappa(\sigma')$.
Therefore, we have
\begin{equation}\label{final 3}
(-1)^{\tau(\hat{\sigma}')}\kappa(\hat{\sigma}')=(-1)^{A+AB}\kappa(\sigma')=(-1)^{\tau(\sigma')+A+B+AB}\kappa(\sigma') \, .
\end{equation}

\end{itemize}
Combining Equations \eqref{final 1}, \eqref{final 2},
and \eqref{final 3}, we finally get
\begin{equation*}
\begin{split}
\Delta(D) & =\textup{\textlbrackdbl}
X_1 \circ D', \, \id_{\smf_{\cM}}
\textup{\textrbrackdbl}\\
& \,\, +\!\! \sum_{\{\widetilde{\sigma}':~~ \sigma' \in\bS'\}}\!\!\!
(-1)^{\tau(\widetilde{\sigma}')}\kappa(\widetilde{\sigma}')\,
\textup{\textlbrackdbl}X_1 \circ X_2 \circ X_{\widetilde{\sigma}'(3)}  \circ \cdots \circ X_{\widetilde{\sigma}'(p+1)}, \, X_{\widetilde{\sigma}'(p+2)} \circ \cdots \circ X_{\widetilde{\sigma}'(n+1)}
\textup{\textrbrackdbl}\\
& \,\, +\!\! \sum_{\{\hat{\sigma}':~~ \sigma' \in \bS' \}}\!\!\!
(-1)^{\tau(\hat{\sigma}')}\kappa(\hat{\sigma}')\,
\textup{\textlbrackdbl}
X_1 \circ X_{\hat{\sigma}'(2)} \circ \cdots \circ X_{\hat{\sigma}'(n+1-p)}, \,
X_2 \circ X_{\hat{\sigma}'(n+3-p)} \circ \cdots \circ X_{\hat{\sigma}'(n+1)}
\textup{\textrbrackdbl} \, .
\end{split}
\end{equation*}
Observe that the union of sets $\{ \id_{\{1, \cdots, n+1 \}}\}\cup \{\widetilde{\sigma}':~~ \sigma' \in\bS'\} \cup \{\hat{\sigma}':~~ \sigma' \in \bS' \}$ is exactly the set of all permutations
$\sigma:~~\{1, \cdots ,n+1\} \rightarrow \{1, \cdots ,n+1\}$ such that $\sigma(1)=1$,
$1< \sigma(2) < \cdots < \sigma(p)$ and
$\sigma(p+1)< \cdots < \sigma(n+1)$ for some $p\geqslant 1$.
Thus Equation  \eqref{Eqt:Deltaexpressioninbrackets} is proved for the $n+1$ case.
\end{proof}

\begin{bibdiv}
\begin{biblist}

\bib{AKSZ}{article}{
   author={Alexandrov, Mikhail},
   author={Schwarz, Albert},
   author={Zaboronsky, Oleg},
   author={Kontsevich, Maxim},
   title={The geometry of the master equation and topological quantum field
   theory},
   journal={Internat. J. Modern Phys. A},
   volume={12},
   date={1997},
   number={7},
   pages={1405--1429},
   issn={0217-751X},
   review={\MR{1432574}},
}

\bib{Atiyah}{article}{
	author={Atiyah, Michael Francis},
	title={Complex analytic connections in fibre bundles},
	journal={Trans. Amer. Math. Soc.},
	volume={85},
	date={1957},
	pages={181--207},
	issn={0002-9947},
	review={\MR{86359}},
}

\bib{B-S-X}{article}{
	author={Bandiera, Ruggero},
	author={Sti\'{e}non, Mathieu},
	author={Xu, Ping},
	title={Polyvector fields and polydifferential operators associated with Lie pairs},
	journal={available at https://arxiv.org/abs/1901.04602 },
}

\bib{B-C-S-X3}{article}{
	author={Bandiera, Ruggero},
	author={Chen, Zhuo},
	author={Sti\'{e}non, Mathieu},
	author={Xu, Ping},
	title={Shifted Derived Poisson Manifolds Associated with Lie Pairs},
	journal={Comm. Math. Phys.},
	volume={375},
	date={2020},
	number={3},
	pages={1717--1760},
	issn={0010-3616},
	review={\MR{4091493}},
	}

\bib{B-V}{article}{
	author={Batakidis, Panagiotis},
	author={Voglaire, Yannick},
	title={Atiyah classes and dg-Lie algebroids for matched pairs},
	journal={J. Geom. Phys.},
	volume={123},
	date={2018},
	pages={156--172},
	issn={0393-0440},
	review={\MR{3724780}},
}

\bib{Bott}{article}{
	author={Bott, Raoul},
	title={Lectures on characteristic classes and foliations},
	note={Notes by Lawrence Conlon, with two appendices by J. Stasheff},
	conference={
		title={Lectures on algebraic and differential topology},
		address={Second Latin American School in Math., Mexico City},
		date={1971},
	},	
	date={1972},
	pages={1--94. Lecture Notes in Math., Vol. 279},
	book={
		publisher={Springer, Berlin},
	},
	review={\MR{0362335}},
}

\bib{Calaque-VandeBergh}{article}{
	author={Calaque, Damien},
	author={Van den Bergh, Michel},
	title={Hochschild cohomology and Atiyah classes},
	journal={Adv. Math.},
	volume={224},
	date={2010},
	number={5},
	pages={1839--1889},
	issn={0001-8708},
	review={\MR{2646112}},
}

\bib{C-S-X}{article}{
	author={Chen, Zhuo},
	author={Sti\'{e}non, Mathieu},
	author={Xu, Ping},
	title={A Hopf algebra associated with a Lie pair},
	journal={C. R. Math. Acad. Sci. Paris},
	volume={352},
	date={2014},
	number={11},
	pages={929--933},
	issn={1631-073X},
	review={\MR{3268765}},
}

\bib{C-S-X2}{article}{
	author={Chen, Zhuo},
	author={Sti\'{e}non, Mathieu},
	author={Xu, Ping},
	title={From Atiyah classes to homotopy Leibniz algebras},
	journal={Comm. Math. Phys.},
	volume={341},
	date={2016},
	number={1},
	pages={309--349},
	issn={0010-3616},
	review={\MR{3439229}},
}

\bib{C-L-X-HHA}{article}{
 author={Chen, Zhuo},
 author={Liu, Zhangju},
 author={Xiang, Maosong},
 title={Kapranov's construction of sh Leibniz algebras},
 journal={Homology Homotopy Appl.},
 volume={22},
 date={2020},
 number={1},
 pages={141--165},
 issn={1532-0073},
 review={\MR{4031996}},
}

\bib{Costello}{article}{
	author={Costello, Kevin},
	title={A geometric construction of the Witten genus, II},
	journal={available at https://arxiv.org/abs/1112.0816},
}

\bib{G-Z}{book}{
	author={Gabriel, Peter},
	author={Zisman, Michel},
	title={Calculus of fractions and homotopy theory},
	series={Ergebnisse der Mathematik und ihrer Grenzgebiete, Band 35},
	publisher={Springer-Verlag New York, Inc., New York},
	date={1967},
	pages={x+168},
	review={\MR{0210125}},
}

\bib{Gerstenhaber}{article}{
	author={Gerstenhaber, Murray},
	title={The cohomology structure of an associative ring},
	journal={Ann. of Math. (2)},
	volume={78},
	date={1963},
	pages={267--288},
	issn={0003-486X},
	review={\MR{161898}},
}

\bib{G-G}{article}{
	author={Gwilliam, Owen},
	author={Grady, Ryan},
	title={One-dimensional Chern-Simons theory and the $\hat A$ genus},
	journal={Algebr. Geom. Topol.},
	volume={14},
	date={2014},
	number={4},
	pages={2299--2377},
	issn={1472-2747},
	review={\MR{3331615}},
}

\bib{Kapranov}{article}{
	author={Kapranov, Mikhail},
	title={Rozansky-Witten invariants via Atiyah classes},
	journal={Compositio Math.},
	volume={115},
	date={1999},
	number={1},
	pages={71--113},
	issn={0010-437X},
	review={\MR{1671737}},
}

\bib{Keller}{article}{
	author={Keller, Bernhard},
	title={On differential graded categories},
	conference={
		title={International Congress of Mathematicians. Vol. II},
	},
	book={
		publisher={Eur. Math. Soc., Z\"{u}rich},
	},
	date={2006},
	pages={151--190},
	review={\MR{2275593}},
}

\bib{Kontsevich1}{article}{
	author={Kontsevich, Maxim},
	title={Rozansky-Witten invariants via formal geometry},
	journal={Compositio Math.},
	volume={115},
	date={1999},
	number={1},
	pages={115--127},
	issn={0010-437X},
	review={\MR{1671725}},
}

\bib{Kontsevich2}{article}{
	author={Kontsevich, Maxim},
	title={Deformation quantization of Poisson manifolds},
	journal={Lett. Math. Phys.},
	volume={66},
	date={2003},
	number={3},
	pages={157--216},
	issn={0377-9017},
	review={\MR{2062626}},
}

\bib{LG-S-X1}{article}{
	author={Laurent-Gengoux, Camille},
	author={Sti\'{e}non, Mathieu},
	author={Xu, Ping},
	title={Exponential map and $L_\infty$ algebra associated to a Lie pair},
	journal={C. R. Math. Acad. Sci. Paris},
	volume={350},
	date={2012},
	number={17-18},
	pages={817--821},
	issn={1631-073X},
	review={\MR{2989383}},
}

\bib{LG-S-X2}{article}{
	author={Laurent-Gengoux, Camille},
	author={Sti\'{e}non, Mathieu},
	author={Xu, Ping},
	title={Poincar\'{e}-Birkhoff-Witt isomorphisms and Kapranov dg-manifolds},
	journal={available at https://arxiv.org/abs/1408.2903},
}

\bib{LJL}{article}{
	author={Lang,Honglei},
	author={Jia,Xiao},
	author={Liu,Zhangju},
	title={The Atiyah class of generalized holomorphic vector bundles},
	journal={available at https://arxiv.org/abs/2102.07177},
}
\bib{L-S}{article}{
	author={Liao, Hsuan-Yi},
	author={Sti\'{e}non, Mathieu},
	title={Formal exponential map for graded manifolds},
	journal={Int. Math. Res. Not. IMRN},
	date={2019},
	number={3},
	pages={700--730},
	issn={1073-7928},
	review={\MR{3910470}},
}

\bib{L-S-X3}{article}{
   author={Liao, Hsuan-Yi},
   author={Sti\'{e}non, Mathieu},
   author={Xu, Ping},
   title={Formality theorem for $\germ{g}$-manifolds},
   language={English, with English and French summaries},
   journal={C. R. Math. Acad. Sci. Paris},
   volume={355},
   date={2017},
   number={5},
   pages={582--589},
   issn={1631-073X},
   review={\MR{3650387}},
}

\bib{L-S-X}{article}{
	author={Liao, Hsuan-Yi},
	author={Sti\'{e}non, Mathieu},
	author={Xu, Ping},
	title={Formality theorem for differential graded manifolds},
	journal={C. R. Math. Acad. Sci. Paris},
	volume={356},
	date={2018},
	number={1},
	pages={27--43},
	issn={1631-073X},
	review={\MR{3754617}},
}

\bib{L-S-X2}{article}{
	author={Liao, Hsuan-Yi},
	author={Sti\'{e}non, Mathieu},
	author={Xu, Ping},
	title={Formality and Kontsevich-Duflo type theorems for Lie pairs},
	journal={Adv. Math.},
	volume={352},
	date={2019},
	pages={406--482},
	issn={0001-8708},
	review={\MR{3964152}},
}

\bib{Mackenzie}{article}{
   author={Mackenzie, Kirill C. H.},
   title={Ehresmann doubles and Drinfel'd doubles for Lie algebroids and Lie
   bialgebroids},
   journal={J. Reine Angew. Math.},
   volume={658},
   date={2011},
   pages={193--245},
   issn={0075-4102},
   review={\MR{2831518}},
}

\bib{MR2472137}{article}{
	author={Markarian, Nikita},
	title={The Atiyah class, Hochschild cohomology and the Riemann-Roch
		theorem},
	journal={J. Lond. Math. Soc. (2)},
	volume={79},
	date={2009},
	number={1},
	pages={129--143},
	issn={0024-6107},
	review={\MR{2472137}},
}

\bib{M-S-X}{article}{
	author={Mehta, Rajan Amit},
	author={Sti\'{e}non, Mathieu},
	author={Xu, Ping},
	title={The Atiyah class of a dg-vector bundle},
	journal={C. R. Math. Acad. Sci. Paris},
	volume={353},
	date={2015},
	number={4},
	pages={357--362},
	issn={1631-073X},
	review={\MR{3319134}},
}

\bib{Mokri}{article}{
   author={Mokri, Tahar},
   title={Matched pairs of Lie algebroids},
   journal={Glasgow Math. J.},
   volume={39},
   date={1997},
   number={2},
   pages={167--181},
   issn={0017-0895},
   review={\MR{1460632}},
}

\bib{Molino}{article}{
	author={Molino, Pierre},
	title={Classe d'Atiyah d'un feuilletage et connexions transverses
		projetables},
	journal={C. R. Acad. Sci. Paris S\'{e}r. A-B},
	volume={272},
	date={1971},
	pages={A779--A781},
	issn={0151-0509},
	review={\MR{281224}},
}

\bib{Ramadoss}{article}{
	author={Ramadoss, Ajay C.},
	title={The big Chern classes and the Chern character},
	journal={Internat. J. Math.},
	volume={19},
	date={2008},
	number={6},
	pages={699--746},
	issn={0129-167X},
	review={\MR{2431634}},
}

\bib{MR2661534}{article}{
	author={Roberts, Justin},
	author={Willerton, Simon},
	title={On the Rozansky-Witten weight systems},
	journal={Algebr. Geom. Topol.},
	volume={10},
	date={2010},
	number={3},
	pages={1455--1519},
	issn={1472-2747},
	review={\MR{2661534}},
}

\bib{Schwarz}{article}{
   author={Schwarz, Albert},
   title={Geometry of Batalin-Vilkovisky quantization},
   journal={Comm. Math. Phys.},
   volume={155},
   date={1993},
   number={2},
   pages={249--260},
   issn={0010-3616},
   review={\MR{1230027}},
}

\bib{Shoikhet}{article}{
	author={Shoikhet, Boris},
	title={On the Duflo formula for $L_{\infty}$-algebras and $Q$-manifolds},
	journal={available at https://arxiv.org/abs/math/9812009},
}

\bib{S-X}{article}{
   author={Sti\'{e}non, Mathieu},
   author={Xu, Ping},
   title={Fedosov dg manifolds associated with Lie pairs},
   journal={Math. Ann.},
   volume={378},
   date={2020},
   number={1-2},
   pages={729--762},
   issn={0025-5831},
   review={\MR{4150934}},
}

\bib{Tamarkin-Tsygan}{article}{
	author={Tamarkin, Dmitri},
	author={Tsygan, Boris},
	title={The ring of differential operators on forms in noncommutative
		calculus},
	conference={
		title={Graphs and patterns in mathematics and theoretical physics},
	},
	book={
		series={Proc. Sympos. Pure Math.},
		volume={73},
		publisher={Amer. Math. Soc., Providence, RI},
	},
	date={2005},
	pages={105--131},
	review={\MR{2131013}},
}

\bib{Vaintrob}{article}{
   author={Va\u{\i}ntrob, Arkady Yu.},
   title={Lie algebroids and homological vector fields},
   language={Russian},
   journal={Uspekhi Mat. Nauk},
   volume={52},
   date={1997},
   number={2(314)},
   pages={161--162},
   issn={0042-1316},
   translation={
      journal={Russian Math. Surveys},
      volume={52},
      date={1997},
      number={2},
      pages={428--429},
      issn={0036-0279},
   },
   review={\MR{1480150}},
}

\bib{Vitagliano}{article}{
   author={Vitagliano, Luca},
   title={On the strong homotopy associative algebra of a foliation},
   journal={Commun. Contemp. Math.},
   volume={17},
   date={2015},
   number={2},
   pages={1450026, 34},
   issn={0219-1997},
   review={\MR{3313214}},
}

\bib{Xu}{article}{
	author={Xu, Ping},
	title={Quantum groupoids},
	journal={Comm. Math. Phys.},
	volume={216},
	date={2001},
	number={3},
	pages={539--581},
	issn={0010-3616},
	review={\MR{1815717}},
}

\end{biblist}
\end{bibdiv}

\end{document}